\documentclass{amsart}
\usepackage{graphicx,amssymb, verbatim, tikz-cd}
\usepackage[dvipsnames]{xcolor}
\definecolor{DarkPurple}{RGB}{88, 41, 123}
\usepackage[
    colorlinks=true,
    linkcolor=Green,
    citecolor=Purple,
    urlcolor=Blue
]{hyperref}
\usepackage{enumerate}
\newcommand{\Hecke}{\mathcal{H}}
\newcommand{\Z}{\mathbb{Z}}

\newcommand{\Ring}{\mathsf{R}}
\newcommand{\bone}{\mathbf{1}}
\newcommand{\Ha}{\mathcal{H}_{W}^a}

\newcommand{\gr}{\operatorname{gr}}
\newcommand{\B}{\mathcal{B}}
\newcommand{\mhm}{\mathrm{MHM}}
\newcommand{\mc}[1]{\mathcal{#1}}
\newcommand{\mb}[1]{\mathbb{#1}}
\newcommand{\rank}{\mathop{\mathrm{rank}}}
\newtheorem{Thm}{Theorem}[section]
\newtheorem{Lem}[Thm]{Lemma}

\newtheorem{Prop}[Thm]{Proposition}
\newtheorem{Cor}[Thm]{Corollary}
\theoremstyle{definition}
\newtheorem{defi}[Thm]{Definition}

\newtheorem{Rem}[Thm]{Remark}

\newtheorem{Ex}[Thm]{Example}

\numberwithin{equation}{section}

\title[A bar operation on the double affine Hecke algebra]{A bar operation on the\\ double affine Hecke algebra}
\author{Dougal Davis}
\author{Ivan Losev}
\author{Calder Morton-Ferguson}

\begin{document}

\begin{abstract}
    We construct a bar-type operation for the affine Hecke algebra $\Ha$ attached to a symmetrizable Kac--Moody group. Our main tool is a geometric bimodule defined using the equivariant $K$-theory of a suitable version of the Steinberg variety for this Kac--Moody group. Using elements of this bimodule corresponding to standard and costandard Hodge $D$-modules on the Kashiwara flag variety, we define a bar involution on $\Ha$ by relating the left and right actions of $\Ha$ on this bimodule. A central issue is that the resulting formulas for the bar operation do not preserve the algebra itself and naturally produce infinite expansions. To address this, we introduce certain completions of $\Ha$ on which the bar operation is well-defined. For this construction, we prove a number of combinatorial finiteness results for products in $\Ha$. In affine type, we further analyze the level-zero part of the algebra and obtain an explicit combinatorial formula for a bar operation on a suitable localization of Cherednik's double affine Hecke algebra. In this case the bar operation itself is not an involution but we also show that this bar operation becomes an involution when the lattice parameter $q$ is set to $1$.
\end{abstract}

\maketitle

\begin{center}
{\it Dedicated to George Lusztig on his 80th birthday with admiration.}
\end{center}

\section{Introduction}

For the Hecke algebra associated to a Weyl group $W$, the bar involution is a basic structural operation, first defined in \cite{KL79}. Writing $T_{w}$ for the standard basis of the Hecke algebra over $\mathbb{Z}[v^{\pm 1}]$, { the bar involution is the $\mathbb{Z}$-linear operator on the Hecke algebra  characterized by}
\begin{align}
    \overline{v}=v^{-1},\qquad \overline{T_w}=T_{w^{-1}}^{-1},\label{eqn:bar-tw}
\end{align}
for all $w \in W$.

In the affine case, if one writes $X_\lambda$ for the Bernstein generator corresponding to a weight $\lambda$ (we establish conventions in Definition \ref{def:affine-hecke}), one obtains (for example, from the discussion preceding Lemma 7.1 in \cite{L83}) the formula
\begin{align}
\overline{X_{\lambda}} = T_{w_0}X_{w_0\lambda} T_{w_0}^{-1},\label{eqn:bar-ylambda}
\end{align}
{ where $w_0$ is the longest element in the finite Weyl group.}

The bar involution is fundamental in the construction of Kazhdan--Lusztig bases and arises from Verdier duality in the geometric realization of the Hecke algebra via constructible sheaves on the flag variety. 

It is natural to ask for an analogous operation in the double affine setting.
The purpose of the present paper is to construct such an operation for the affine Hecke algebra $\mathcal{H}_W^a$ attached to a Kac--Moody group in the sense of Garland--Grojnowski \cite{GG}, in the case when the Kac-Moody group is affine we get a version of the double affine Hecke algebra. This algebra is defined by a Bernstein presentation: it is generated by the Hecke algebra of $W$ { (with basis $T_w, w\in W$)} together with the group algebra of the weight lattice { $P$ (with basis $X_\lambda, \lambda\in P$)}, subject to the usual quadratic, braid, and Bernstein relations (which we recall in equations \eqref{eq:hecke_quadratic}--\eqref{eq:Bernstein_alternative}). 

When the Weyl group $W$ is finite, the algebra defined in loc.\ cit.\ is an affine Hecke algebra, and our bar operation recovers the ordinary bar involution. { When $W$ is infinite, we obtain a new bar operation. A basic difficulty here is as follows: one can define $\overline{T_w}$ by (\ref{eqn:bar-tw}), but
(\ref{eqn:bar-ylambda}) does not even make sense for infinite $W$ as there is no longest element.}


Our construction is motivated by geometry. Let $G$ be an extended symmetrizable Kac--Moody group with positive and negative Borel subgroups $B$ and $B^-$ (a setup described in \S \ref{sec:kacmoodysetup}), let $Y=G/B^-$ be Kashiwara's  flag variety as introduced in \cite{K}, and consider the quotient stack
\[
\mathcal Y=[B\backslash G/B^-].
\]

For example, suppose $G=G_0(\!(z)\!)$ for $G_0$ a connected reductive complex algebraic group with chosen opposite Borel subgroups $B_0^+$ and $B_0^-$, with $B=I^+$ the standard Iwahori subgroup and $B^-=I^-$ the co-Iwahori subgroup. Then
\[
Y=G/I^-
\]
is the thick affine flag variety, equivalently the moduli space of $G_0$-bundles on $\mathbb P^1$ with full level structure at $0$ and a $B_0^-$-reduction at $\infty$ (as recalled in \cite{Yun}), and
\[
\mathcal Y=[I^+\backslash G_0(\!(z)\!)/I^-].
\]

Returning to the general case, the cotangent stack $T^*\mathcal Y$ carries a natural $\mathbb G_m$-action { by fiberwise dilations}, and we show that its equivariant $K$-group
\[
\B = K_0^{\mathbb G_m}(T^*\mathcal Y)
\]
inherits commuting left and right actions of $\Ha$. Inside $\B$ there are natural standard and costandard classes, denoted $\bone_!^w$ and $\bone_*^w$, coming from the $B$-orbit stratification on $Y$. { They are constructed as the associated graded sheaves of the corresponding Hodge $D$-modules on $\mathcal{Y}$.} The central idea behind our construction is to use the commutation of an element of $\Ha$ past the bimodule element $\bone_!^1$ as a double-affine analog of the twisted conjugation by $T_{w_0}$ occurring in the formula (\ref{eqn:bar-ylambda}).

The equation we obtain from this commutation gives a candidate for the bar operation, but by itself it does not explain
why this operation should respect products. To prove multiplicativity one has to
compare two different ways of moving a product through the bimodule, and this
comparison naturally involves infinite expansions both in the Hecke algebra and in $\B$. We thus devote part of the paper to the construction of appropriate
completions in which our operation is a well-defined ring automorphism.

{  Now we digress to present a  way to justify the necessity of  completions. In the case of affine Hecke algebras, the bar involution comes from Verdier duality on the affine flag stack $[I^+\backslash G^\vee_0(\!(z)\!)/I^+]$, where $G^\vee_0$ is the Langlands dual of $G_0$. The partial order of $I^+\times I^+$-orbits on $G_0^\vee(\!(z)\!)$, i.e., the Bruhat order $\preceq$ on the affine Weyl group $W^a$, is ``ideal-finite'' in the sense that for every $y\in W^a$ the set $\{x\in W^a| x\preceq y\}$ is finite. In particular, Verdier duality sends the class of the shriek-extended constant local system on every orbit to a finite iterated cone of such objects associated to smaller orbits. In the case of the general (extended) Kac--Moody group $G$ one can still form the Langlands dual group $G^\vee$, the loop group $G^\vee(\!(z)\!)$, and its positive Iwahori subgroup $I^{da}$ (``da'' stands for ``double affine'', we use this notation to distinguish from the usual Iwahori subgroup in $G^\vee$ itself). It was observed by Garland, \cite{Garland}, in the case of untwisted affine $G^\vee$ that in order to have the Cartan decomposition one needs to replace $G^\vee(\!(z)\!)$ with a certain sub-monoid $G^\vee(\!(z)\!)^+\subset G^\vee(\!(z)\!)$. The same submonoid has the Bruhat decomposition, \cite{Braverman_Kazhdan_Patnaik}, where the strata are indexed by $W\ltimes \mathcal{T}$ with $\mathcal{T}\subset P$ being the Tits cone. Correspondingly, one can consider the $\Z[v^{\pm 1}]$-subalgebra  $\mathcal{H}_W^+\subset\mathcal{H}_W^a$ spanned by $T_w X_\lambda$ with $w\in W,\lambda\in \mathcal{T}$. It was proved in \cite{Braverman_Kazhdan_Patnaik} that $\mathcal{H}^+_W$ is essentially the Hecke algebra associated to the $\mathbb{F}_q$-points of  $I^{da}\backslash G^\vee(\!(z)\!)^+/I^{da}$. It should be noted that as a geometric object (as opposed to the set of $\mathbb{F}_q$ points),  $I^{da}\backslash G^\vee(\!(z)\!)^+/I^{da}$ is far more complicated than the usual affine flag stack, in particular, the theory of sheaves on this object has not been well understood, although a related category was studied in \cite{Bouthier_Vasserot}. In any case, one can imagine that the bar operation we define should have something to do with Verdier duality for sheaves on $I^{da}\backslash G^\vee(\!(z)\!)^+/I^{da}$.

To finish this part of the discussion let us mention that results and constructions of \cite{Braverman_Kazhdan_2011, Braverman_Kazhdan_Patnaik} were extended to the case of general Kac--Moody group $G$ by Gaussent and Rousseau, \cite{Gaussent_Rousseau} and by Bardy-Panse, Gaussent and Rousseau, \cite{BPGR}.

Now we get back to the discussion of completions. The set $W\ltimes \mathcal{T}$ comes with a partial order, to be called the double affine Bruhat order, that generalizes the usual affine Bruhat order in the case of finite $G$, see \cite{Muthiah_2018} and \cite{Muthiah_Orr}. If $G$ is infinite, the order is not ideal finite. Muthiah  in \cite{Muthiah_2018} and Bardy-Panse, Gaussent, Rousseau, \cite{BPGR} defined the standard basis in $\mathcal{H}_W^+$ (over $\Z[v^{\pm 1}]$,  the case of specialization to $v=q$ was handled already in \cite{Braverman_Kazhdan_Patnaik}) generalizing the usual affine case. The discussion of the ``double affine flag stack'' above implies that one should expect that applying the bar operation to the standard basis element labeled by $x\in W\ltimes \mathcal{T}$ one gets an infinite sum of standard basis elements, namely, those
labeled by elements $y\preceq x$.

However, it is not clear to us whether the set of infinite sums of standard basis elements going down with respect to the double affine Bruhat order actually has an algebra structure extending that on $\mathcal{H}^+_W$. In \cite{HM}, Hebert and Muthiah constructed a somewhat larger algebra completion containing all such infinite sums. We consider a related but easier completion of an ideal  $\mathcal{H}_W^>\subset\mathcal{H}_W^+$ that is the span of $T_w X_\lambda$ with $\lambda$ being ``strictly Tits'' (in the affine case this means that the level is strictly positive, the general definition will be given in Section  \ref{sec:productsinh}). The completion we need is introduced in Section \ref{SS_large_completion} under the name of the ``large completion''. We will see that this large completion still has a well-defined product, that the bar operation extends to the large completion and respects the product there.}


The combinatorial input needed for { establishing the product on the large completion, extending the bar-operation there, and checking the compatibility between the two} is an analysis of products of the form
\[
T_w^{-1}X_\mu T_u.
\]
When $\mu$ lies in the strict Tits cone, the possible weights that can occur are still constrained strongly enough for one to obtain useful finiteness statements.

{
To finish our discussion of the bar operation on the large completion, we mention several related works. In the case of the Hecke algebras associated to Weyl groups, the bar operation is recovered from the so-called $R$-polynomials. Deodhar in \cite{Deodhar_1985} found a combinatorial formula to compute these polynomials. Muthiah in the untwisted affine case, \cite{Muthiah_2019}, and then Hebert and Philippe, \cite{Hebert_Philippe} in the general Kac--Moody case found generalizations of Deodhar's formula, hence producing a version of the bar operation on the completion of $\mathcal{H}_W^+$ coming from the double affine Bruhat order. A relation between the bar operation defined in this way and the bar operation that we produce remains quite mysterious, at least to us.
}

{We now restrict to the case when $G$ is untwisted affine (and, for technical reasons, not of types $E_8,F_4,G_2$). So far we have discussed the Hecke algebra associated to the submonoid $W\ltimes \mathcal{T}$. The case of $W\ltimes (-\mathcal{T})$ can be handled similarly. So, what is missing is $W\ltimes P_0$, where $P_0$ is the sublattice of level $0$ weights, the case originally considered by Cherednik, \cite{Cherednik}.
We  study this case separately; the advantage of this setting is that for the algebra in this case, which we identify with Cherednik's double affine Hecke algebra, we give explicit combinatorial formulas (on generators) for a bar operation that is now defined on a suitable localization. We would like to emphasize that this construction is not, strictly speaking, a special case of the construction using the K-theory of an affine Steinberg variety, rather, it is inspired by a version of that construction}. The motivation for the formula we provide in the level-zero setting is closely tied to expected relations between the twisted standard and costandard generators originally introduced from the Hodge theory point of view in \cite[\S 3.2--3.3]{DMB}; computations with these elements provide a heuristic model for our construction.

{ The motivation for the bar operation on (a completion of) $\mathcal{H}^+_W$ comes from the Verdier duality for the affine flag variety of the Langlands dual Kac--Moody group $G^\vee$, so should arise from Langlands duality for Kac--Moody groups, currently in its nascency. A conceptual reason for our level 0 duality is even more mysterious. There is, however, a hint that it can be connected to the Dolbeault form of the usual Langlands duality. Namely, the localization we take is with respect to the lattice inside the original affine Hecke algebra $\Hecke_W$. So, on the Dolbeault Langlands dual side, this will be essentially the familiar localization to torus fixed points on the affine Steinberg variety for the usual Langlands dual group.}

{ A notable feature of the bar operations on various versions of $\mathcal{H}^a_W$ is that it is not clear whether they are involutions -- and in the level 0 affine situation we show that it is not.
Rather, we believe the resulting bar operation is best viewed as a deformation of an involution. In the level-zero Cherednik setting we show that after specializing the lattice parameter $q$ to $1$, the bar becomes an honest involution, while in rank $1$ we compute explicitly that for generic $q$ its square is nontrivial.} 

A possibly related failure of involutivity is observed in \cite{PPThesis}, which was { mentioned} to us by Dinakar Muthiah; a related phenomenon also occurs in forthcoming work of Muthiah and Tolmachov.

{ We now briefly describe the structure of the paper.}
We begin in \S \ref{sec:setup} by recalling the Kac--Moody affine Hecke algebra of \cite{GG} and continue in \S \ref{sec:geometric} by constructing the bimodule $\B$ geometrically from the Kashiwara flag variety. In \S \ref{sec:products}, we then develop the combinatorics of products in $\Ha$ and in $\B$, which leads to the two completions mentioned above {in \S\ref{sec:completions}}. With these tools in place, we define in \S \ref{sec:barinvolution} the bar operation on the appropriate completed algebra and prove that it is multiplicative. In \S \ref{sec:level0}, we treat the level-zero setting, compare with Cherednik's double affine Hecke algebra, and obtain an explicit formula for the resulting localized bar operation there.

{ {\bf Acknowledgements}. I.L. has been partially supported by the NSF under grant DMS-2501558. D.D. has been supported by the ARC grants DP250100824 and FL200100141. We would like to thank Dinakar Muthiah, Manish Patnaik, and Quan Situ for stimulating discussions.}

\section{Setup and Background}\label{sec:setup}\label{sec:kacmoodysetup}

Fix an extended symmetrizable Kac--Moody group $G$ over $k = \mathbb{C}$,
together with a Borel subgroup $B \subset G$ containing a Cartan subgroup $T$. Let $W=N_G(T)/T$ be the Weyl group, with simple reflections $\{s_i\}_{i\in I}$.
Let $P = X^*(T)$ be the weight lattice, let $Q=\bigoplus_{i\in I}\mathbb Z\alpha_i\subset P$
be the root lattice, and let $P^\vee=X_*(T)$ be the coweight lattice. Let $\Delta$
be the set of roots, with $\Delta_+$ and $\Delta_-$ the positive and negative roots
respectively, and $\{\alpha_i\}_{i \in I}$ the simple roots.
Set \(P_{\mathbb R}:=P\otimes_{\mathbb Z}\mathbb R\).

Let $B^-$ denote the negative Borel appearing in Kashiwara's construction of the
thick flag variety $G/B^-$ \cite[\S 5]{K}.
In finite type, $B^-$ is the ordinary opposite Borel
algebraic group. In the case of $G=G_0(\!(z)\!)$ for a reductive group
$G_0$, which we will call the \emph{untwisted affine case}, this negative
Borel is the co-Iwahori subgroup
\[I^-=\{g\in G_0[z^{-1}]\mid g(\infty)\in B_0^-\}\]
for the opposite Borel $B_0^- \subset G_0$. In general \(G/B^-\) is not a finite-type flag variety but Kashiwara's thick
flag variety; in \S\ref{sec:unionofstacks}, we will work with it through
finite-type approximations.

Let $\Ring=\mathbb Z[v^{\pm 1}]$ and set $\hbar=v-v^{-1}$.
Let $\Hecke_W$ be the Iwahori--Hecke algebra of $W$ over $\Ring$,
generated by $\{T_i\}_{i\in I}$ with the braid relations and
\begin{equation}\label{eq:hecke_quadratic}
(T_i-v)(T_i+v^{-1})=0 \qquad\text{(equivalently }T_i^{-1}=T_i-\hbar\text{)}.
\end{equation}
Let $\Ring P$ be the group algebra of $P$ over $\Ring$, with basis elements
$\{X_\lambda\}_{\lambda\in P}$ and multiplication $X_\lambda X_\mu=X_{\lambda+\mu}$.
The action of $W$ on $P$ extends $\Ring$-linearly to $\Ring P$; for $f\in \Ring P$ we
write $f^{s_i}$ for its image under $s_i$.

The following definition was first made in the Kac--Moody setting in \cite{GG}, although we use different normalizations in the present paper; see Remark \ref{rem:comparison} for a comparison between the two setups.
\begin{defi}\label{def:affine-hecke}
The \emph{affine Hecke algebra} $\Ha$ is the $\Ring$-algebra generated by
$\Hecke_W$ and $\Ring P$ along with the Bernstein relations:
\begin{equation}\label{eq:Bernstein_general}
fT_i-T_i f^{s_i}=\hbar\,\frac{f-f^{s_i}}{1-X_{\alpha_i}}
\qquad (f\in \Ring P).
\end{equation}
Equivalently, for $\mu\in P$ with $k=\langle \mu,\alpha_i^\vee\rangle$, one has
\begin{equation}
\label{eq:Bernstein_alternative}
X_\mu T_i - T_i X_{s_i\mu}
=
\begin{cases}
-\hbar\,(X_{s_i\mu}+ X_{s_i\mu+\alpha_i}+ \cdots + X_{\mu-\alpha_i}) & k>0,\\[2pt]
0 & k=0,\\[2pt]
\hbar\,(X_{\mu}+ X_{\mu+\alpha_i}+ \cdots + X_{s_i\mu-\alpha_i}) & k<0.
\end{cases}
\end{equation}
\end{defi}

\begin{Rem}\label{rem:comparison}
In \cite{GG} the Hecke parameter is denoted \(q\) and the quadratic relation is \((T_s+1)(T_s-q)=0\), whereas we use the more symmetric normalization over \(\Ring=\mathbb Z[v^{\pm1}]\) in which \((T_s-v)(T_s+v^{-1})=0\); these are related by the rescaling together with the identification \(q=v^2\). Likewise, both \cite{L83} and \cite{GG} write the Bernstein relation with denominator \(1-X_{-\alpha_i}\) while we write \(1-X_{\alpha_i}\) in \(\Ring P\). One can see this either as a flip from $X_\lambda$ to $X_{-\lambda}$ in $\Ring P$, or equivalently as a flip between $T_i$ and $T_i^{-1}$. We make this choice so that in Section \ref{sec:level0}, our conventions match the standard ones used for Cherednik's double affine Hecke algebra.
\end{Rem}

We will need a consequence of (\ref{eq:Bernstein_alternative}).

\begin{defi}\label{defi:Conv}
For a $W$-orbit $W\lambda$ in $P$, let $\operatorname{Conv}(W\lambda)$ denote the intersection of the convex hull of $W\lambda$ in $P\otimes_{\mathbb{Z}}\mathbb{R}$ with $\lambda+Q$. Define the partial order $\leqslant$ on $W$-orbits in $P$ by $W\mu\leqslant W\lambda$ if $\mu\in \operatorname{Conv}(W\lambda)$.
\end{defi}


\begin{Lem}\label{Lem:Bernstein_consequence}
For all $\mu\in P$ and $w\in W$, we have
$$X_\mu T_w=T_w X_{w^{-1}\mu}+\sum_{u\in W, \nu\in P}a_u^w T_u X_\nu,$$
where $a_{u,\nu}^w=0$ unless $u$ is less than $w$ in the Bruhat order, and $\nu\in \operatorname{Conv}(W\mu)$.
\end{Lem}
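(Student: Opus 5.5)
We argue by induction on the length $\ell(w)$. The case $w=1$ is trivial. For the inductive step, write $w=s_iw'$ with $\ell(w')=\ell(w)-1$, so that $T_w=T_iT_{w'}$.

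First, rewrite \eqref{eq:Bernstein_alternative} in the form
$$X_\mu T_i=T_iX_{s_i\mu}+\sum_{\nu}c_\nu X_\nu,$$
where $c_\nu\in\Ring$ and each $\nu$ lies on the segment between $\mu$ and $s_i\mu$. More precisely, the $\nu$ that occur are of the form $\mu-j\alpha_i$, strictly between or including one endpoint. Such $\nu$ lie in $\mu+Q$ and in the convex hull of $\{\mu,s_i\mu\}\subset \operatorname{Conv}(W\mu)$. Hence $W\nu\leqslant W\mu$, i.e. $\operatorname{Conv}(W\nu)\subseteq\operatorname{Conv}(W\mu)$, since the convex hull of $W\mu$ is $W$-stable and contains $\nu$.

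Next, multiply on the right by $T_{w'}$:
$$X_\mu T_w=T_iX_{s_i\mu}T_{w'}+\sum_\nu c_\nu X_\nu T_{w'}.$$
Apply the inductive hypothesis to $X_{s_i\mu}T_{w'}$. This gives
$$T_iT_{w'}X_{w'^{-1}s_i\mu}=T_wX_{w^{-1}\mu}$$
plus terms $T_iT_uX_{\nu'}$, where $u<w'$ and $\nu'\in\operatorname{Conv}(W\mu)$ (note that $Ws_i\mu=W\mu$). Applying the inductive hypothesis to $X_\nu T_{w'}$ gives terms $T_uX_{\nu'}$ with $u\le w'$ and $\nu'\in\operatorname{Conv}(W\nu)\subseteq\operatorname{Conv}(W\mu)$. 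Since $w'<w$, these terms are of the required form.

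It remains to handle the products $T_iT_u$ with $u<w'$. If $s_iu>u$, then $T_iT_u=T_{s_iu}$. If $s_iu<u$, then
$$T_iT_u=T_{s_iu}+\hbar T_u,$$
using the quadratic relation $T_i=T_i^{-1}+\hbar$. In either case only $T_{s_iu}$ and $T_u$ occur. We need $s_iu<w$ and $u<w$. The second is clear from $u<w'<w$. The first is the main point to check, and it follows from the standard lifting property (Z-property) of Bruhat order: since $u\le w'$ and $s_iw'=w>w'$, we get $s_iu\le w$. Equality $s_iu=w$ would force $u=w'$, which is excluded because $u<w'$. Hence $s_iu<w$.

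All $X$-weights stay in $\operatorname{Conv}(W\mu)$ throughout, since they only pass through the right factor. This completes the induction. The only nontrivial inputs are the lifting property of Bruhat order and the $W$-stability of convex hulls.
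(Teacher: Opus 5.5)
Your proof is correct and is essentially the paper's argument. The paper only says that the simple-reflection case follows from the Bernstein relation and that the general case follows by induction on $\ell(w)$. You have supplied the details it leaves implicit: the $W$-stability of $\operatorname{Conv}(W\mu)$ for the correction terms, and the lifting property of Bruhat order, which shows that $T_iT_u$ with $u<w'$ only produces $T_{s_iu}$ and $T_u$ with $s_iu,u<w$.
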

\begin{proof}
If $w$ is a simple reflection, the claim follows from (\ref{eq:Bernstein_alternative}). The general case is handled using the case of simple reflections and induction on $\ell(w)$.
\end{proof}

\section{Geometric construction of the bimodule $\B$}\label{sec:geometric}

The main constructions of the present paper are obtained by means of a bimodule $\B$ over $\Ha$, which will be an object of central importance for our results. In this section, we define $\B$ geometrically, giving also a geometric realization of $\Ha$, and we explain how this perspective naturally endows $\B$ with the structure of an $\Ha$-bimodule.

\subsection{$B\backslash G / B^-$ as a union of stacks}\label{sec:unionofstacks}
In \cite{K}, Kashiwara defined $G/B^-$, the \emph{Kashiwara flag variety}. In the untwisted affine case, this is called the \emph{thick affine flag variety}. In \cite[\S 5.2]{Yun}, it is explained how to express $B\backslash G/B^-$ as a union of finite type smooth stacks in the untwisted affine case. The goal of this section will be to give a similar expression for $B\backslash G/B^-$ in the general case of symmetrizable Kac--Moody $G$. We first review the construction in loc.\ cit., which in turn is based on Kashiwara's results from \cite{K}, and we then explain how to generalize it beyond this untwisted affine case.

\subsubsection{Recollections from \cite{Yun}}\label{sec:yun-recollections}

We begin by recalling the setup for the affine flag variety as discussed in \cite[\S 5.2]{Yun}. Suppose, for this subsection, that $G$ is untwisted affine. For the purposes of this section we can take $G = G_0(\!(z)\!)$. Let $I^+ \subset G_0[[z]]$ be the standard Iwahori subgroup, defined as the preimage of the Borel subgroup $B^+_0 \subset G_0$ under the reduction map $G_0[[z]] \to G_0$. Similarly, let $I^- \subset G_0[z^{-1}]$ be the co-Iwahori subgroup, defined as the preimage of the opposite Borel $B^-_0$ under evaluation at $z = \infty$.

The affine flag variety is the ind-scheme $X = G/I^+$, while the \emph{thick affine flag variety} is the infinite-type scheme $Y = G/I^-$, which parametrizes $G_0$-torsors on $\mathbb{P}^1$ with full level structure at $0$ and a $B^-_0$-reduction at $\infty$. We let $X_{\preceq w} = \overline{X_{w}} \subset X$ denote the closure of the Schubert stratum $X_w = I^+w I^+/I^+$ considered with its reduced scheme structure.

The orbits of $I^+$ on $Y$ are indexed by $W$. For any $w \in W$, we denote the corresponding infinite-dimensional orbit by $Y_{w} = I^+ w I^- / I^-$. The closure relations are determined by the opposite Bruhat order: $Y_{w'} \subset \overline{Y_{w}}$ if and only if $w' \succeq w$. For a fixed element $u \in W$, we consider the open subscheme $Y_{\preceq u} = \bigsqcup_{w \preceq u} Y_{w}$.

In \cite[\S 5.2]{Yun} a principal congruence subgroup $K \subset G[[z]]$ is introduced which acts freely on $Y_{\preceq u}$ and trivially on the corresponding Schubert variety $X_{\preceq u}$. We now make this choice of $K$ explicit.

Let
\[
\Phi_{\preceq u}=\bigcup_{w\preceq u}\{\alpha\in\Delta_+\mid w^{-1}\alpha<0\}.
\]
This is finite, since the Bruhat interval $\{w\mid w\preceq u\}$ is finite and each
inversion set is finite. For an affine root $\alpha+n\delta$, the associated root
{ subalgebra} is contained in $z^n\mathfrak g_\alpha\subset \mathfrak g_0(\!(z)\!)$; we
refer to this integer $n$ as its $z$-degree. Let $m$ be an integer strictly greater
than the $z$-degree of any root in $\Phi_{\preceq u}$. We define $K = K_m$ to be the principal
congruence subgroup of level $m$:
\[
K_m = \{ g \in G[[z]] \mid g \equiv 1 \pmod{z^m} \}.
\]
With this choice, $K_m$ acts freely on $Y_{\preceq u}$ and trivially on $X_{\preceq u}$. The quotient $Z = K \backslash Y_{\preceq u}$ is a scheme of finite type, and since $K$ is normal in the Iwahori $I^+$, the quotient group $I^+/K$ acts on $Z$, stratifying it into finitely many orbits.

\subsubsection{The general Kac--Moody case}

We now generalize this construction to the setting of a general symmetrizable Kac--Moody
group $G$. Formally, we use the ``maximal'' Kac--Moody group defined in \cite[\S 6.1.16]{Kumar}.
For example, in the untwisted affine case, $G=T_\ell\ltimes \hat{G}$, where $T_\ell$ stands for the loop rotation $\mathbb{G}_m$ and $\hat{G}$ is a central extension of $G_0(\!(z)\!)$ by $\mathbb{G}_m$.

We again denote the Kashiwara flag variety by $Y = G/B^-$, where $B^-$ is the
negative Borel subgroup. In this geometric subsection, the left positive Borel $B$ is understood in
Kashiwara's completed sense when tail subgroups are used, see \cite[\S5]{K}. The $B$-orbits on $Y$
are indexed by $W$; for $w\in W$ we write
\[
Y_w = BwB^-/B^- \subset Y,
\qquad
Y_{\preceq w} = \bigsqcup_{v\preceq w} Y_v.
\]
(As in the untwisted affine case, the closure relations on $\{Y_w\}$ are opposite to
Bruhat order, hence each $Y_{\preceq w}$ is a $B$-stable open subscheme of $Y$; see
\cite[\S 5]{K} and cf.\ \cite[\S 5.2]{Yun}.)

For any $w \in W$, we wish to define a quotient $Z_{\preceq w}$, analogous to the
finite-type scheme constructed in \S\ref{sec:yun-recollections}, and then prove
that it is again a finite-type scheme. To do so, we must construct a specific
subgroup $K_w \subset B$ that generalizes the principal congruence subgroup $K_m$
from the untwisted affine case.

Following \cite[\S 5]{K}, we use the filtration of the completed positive
unipotent radical of $B$ by ``tail" subgroups. More precisely, if
$\widehat{\mathfrak n}^+=\prod_{\alpha\in\Delta_+}\mathfrak g_\alpha$ denotes the completed
positive nilpotent Lie algebra appearing in Kashiwara's construction, then a cofinite
ideal $S\subset\Delta_+$ determines the closed pro-unipotent subgroup
$U_S\subset B$ with Lie algebra
\[
\widehat{\mathfrak n}_S=\prod_{\alpha\in S}\mathfrak g_\alpha .
\]

\begin{defi}
A subset $S \subset \Delta_+$ is called a \emph{cofinite ideal} if it satisfies:
\begin{enumerate}
    \item The complement $\Delta_+ \setminus S$ is a finite set.
    \item The ideal property: $(S + \Delta_+) \cap \Delta_+ \subset S$.
\end{enumerate}
If $S$ is a cofinite ideal, the group $U_S$ is a closed normal subgroup of $B$, and
$B/U_S$ is a finite-dimensional solvable algebraic group \cite[\S 3.4, pp.~171--172]{K}.
\end{defi}

We now specify how to choose $S$ depending on the Weyl group element $w$. We replace the notion of ``degree" from \S\ref{sec:yun-recollections} with
the height of a root. For $\alpha = \sum k_i \alpha_i \in \Delta_+$, let
$\mathrm{ht}(\alpha) = \sum k_i$.

Let $w \in W$. For $v\in W$, let
$\Phi_v = \{ \alpha \in \Delta_+ \mid v^{-1}\alpha < 0 \}$ be the inversion set of $v$, and set
\[
\Phi_{\preceq w}=\bigcup_{v\preceq w}\Phi_v .
\]
This is finite, since the Bruhat interval below $w$ is finite. Let
\[
H_{\mathrm{max}} =
\begin{cases}
\max \{ \mathrm{ht}(\alpha) \mid \alpha \in \Phi_{\preceq w} \}, & \Phi_{\preceq w}\ne\varnothing,\\
0, & \Phi_{\preceq w}=\varnothing .
\end{cases}
\]
Define
\[
S_w = \{ \beta \in \Delta_+ \mid \mathrm{ht}(\beta) > H_{\mathrm{max}} \},
\qquad
K_w = U_{S_w}\subset B.
\]

\begin{Lem}\label{lem:Kw}
The subset $S_w$ is a cofinite ideal (so $K_w$ is a normal subgroup of $B$, hence $\mathcal{G}_w=B/K_w$ is a
finite-dimensional solvable algebraic group), and the subgroup $K_w$ satisfies the following properties:
\begin{enumerate}
    \item $K_w$ acts trivially on the Schubert variety $X_{\preceq w} \subset X=G/B$.
    \item $K_w$ acts freely on the truncated thick flag variety $Y_{\preceq w} \subset Y=G/B^-$.
\end{enumerate}
\end{Lem}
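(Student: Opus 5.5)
First I would check that $S_w$ is a cofinite ideal. For finiteness of $\Delta_+\setminus S_w$: a positive root of height at most $H_{\max}$ is a nonnegative integer combination $\sum k_i\alpha_i$ with $\sum k_i\le H_{\max}$, and there are finitely many such tuples ($I$ is finite). For the ideal property: if $\beta\in S_w$, $\gamma\in\Delta_+$ and $\beta+\gamma\in\Delta_+$, then $\mathrm{ht}(\beta+\gamma)=\mathrm{ht}(\beta)+\mathrm{ht}(\gamma)>\mathrm{ht}(\beta)>H_{\max}$. Normality of $K_w$ in $B$ and finite-dimensionality of $B/K_w$ then follow from the result of \cite[\S 3.4]{K} quoted in the definition.

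For (1), I would use that the Schubert variety $X_{\preceq w}$ is the union of the cells $BvB/B$ for $v\preceq w$, and that each cell is $U_v\cdot vB/B$, where $U_v$ is the finite-dimensional unipotent group with Lie algebra $\bigoplus_{\alpha\in\Phi_v}\mathfrak g_\alpha$. It suffices to show that $K_w$ fixes every point $u v B$ with $u\in U_v$, that is, $v^{-1}u^{-1}K_w u v\subset B$. Since $K_w$ is normal in $B$, we have $u^{-1}K_wu=K_w$. Hence we only need $v^{-1}K_wv\subset B$. Because $K_w$ is pro-unipotent with Lie algebra $\prod_{\beta\in S_w}\mathfrak g_\beta$, this reduces to showing $v^{-1}\beta>0$ for every $\beta\in S_w$. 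This holds because $v^{-1}\beta<0$ would mean $\beta\in\Phi_v\subset\Phi_{\preceq w}$, giving $\mathrm{ht}(\beta)\le H_{\max}$. The group-level statement (conjugating a closed pro-unipotent subgroup by $\dot v$) I would justify by working modulo deeper tail subgroups, where everything is finite-dimensional, or directly from Kashiwara's description of the $N_G(T)$-action on $\widehat{\mathfrak n}^+$.

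For (2), the orbits in $Y_{\preceq w}$ are $Y_v=BvB^-/B^-$ with $v\preceq w$. By $B$-equivariance it suffices to check that the stabilizer in $K_w$ of each point $bvB^-$ is trivial. Since $K_w$ is normal in $B$, this stabilizer is conjugate by $b$ to $K_w\cap vB^-v^{-1}$. So it is enough to show $K_w\cap vB^-v^{-1}=\{1\}$. On Lie algebras, $vB^-v^{-1}$ meets the positive root spaces exactly in $\mathfrak g_\alpha$ for $\alpha>0$ with $v^{-1}\alpha<0$, i.e. $\alpha\in\Phi_v$, and $S_w\cap\Phi_v=\varnothing$ by the same height argument. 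Thus the intersection has trivial Lie algebra. Since $K_w$ is pro-unipotent, its closed subgroups are determined by their Lie algebras, so the intersection is trivial. Freeness should also cover scheme-theoretic stabilizers; I would argue through the finite-type approximations $K_w/U_{S'}$ for deeper cofinite ideals $S'$.

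The main obstacle is making the last step rigorous in Kashiwara's infinite-type setting. One has to control $\mathrm{Stab}_{K_w}(vB^-)=K_w\cap vB^-v^{-1}$ at the level of group schemes, not only Lie algebras. My plan is to use Kashiwara's result that $U_S\times vB^-\to G$ restricts to an open embedding onto a neighbourhood of $v$ whenever $S\cap\Phi_v=\varnothing$. Equivalently, I would use the product decomposition $U=U_v\cdot U_{\Delta_+\setminus\Phi_v}$ and the fact that $U_{v^{-1}(\Delta_+\setminus\Phi_v)}\cap B^-=1$. This would give freeness of the action of $K_w\subset U_{\Delta_+\setminus\Phi_v}$ on the orbit directly, as in \cite[\S 5]{K} and \cite[\S 5.2]{Yun}.
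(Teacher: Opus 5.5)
Your proposal is correct and follows essentially the same route as the paper. The cofinite-ideal property comes from additivity of height. For (1), you use $v^{-1}S_w\subset\Delta_+$, so $v^{-1}K_wv\subset B$, together with normality of $K_w$ in $B$. For (2), you use normality to reduce to $K_w\cap vB^-v^{-1}$, whose positive root support is $\Phi_v$ and so is disjoint from $S_w$.

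Your closing observation is a slightly cleaner way to finish (2) than the paper's appeal to root-space factorization. Since $K_w\subset U\cap vUv^{-1}$, we have $v^{-1}K_wv\subset U$, and $U\cap B^-=1$ by the big-cell embedding. This kills the stabilizer directly at the group-scheme level.
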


\begin{proof}
First, we verify that $S_w$ is a cofinite ideal. Since $\Delta_+\subset \sum_{i\in I}
\mathbb Z_{\geqslant 0}\alpha_i$ and $I$ is finite, the set of all $\sum_i k_i\alpha_i$ with
$\sum_i k_i\leqslant H_{\mathrm{max}}$ is finite; hence
$\{\alpha\in \Delta_+:\mathrm{ht}(\alpha)\leqslant H_{\mathrm{max}}\}$ is finite and
$\Delta_+\setminus S_w$ is finite. For the ideal property, if $\beta \in S_w$ and
$\gamma \in \Delta_+$ with $\beta+\gamma\in \Delta_+$, then
\[
\mathrm{ht}(\beta + \gamma)=\mathrm{ht}(\beta)+\mathrm{ht}(\gamma)>H_{\mathrm{max}},
\]
so $\beta+\gamma\in S_w$. Thus $S_w$ is a cofinite ideal.

We now prove (1). Fix $v\preceq w$. By construction
$\Phi_v\subset \Phi_{\preceq w}$, hence $S_w\cap \Phi_v=\varnothing$. Equivalently,
$v^{-1}S_w\subset\Delta_+$, where positive imaginary roots remain positive under the
Weyl group action. Kashiwara's root-space description of the completed unipotent
radical then gives
\[
K_w\subset B\cap vBv^{-1}.
\]
Thus $K_w$ fixes the $T$-fixed point $vB\in X$. Since $K_w$ is normal in $B$, it fixes
the entire $B$-orbit $X_v=B\cdot(vB)$. Therefore $K_w$ acts trivially on
$\bigcup_{v\preceq w} X_v = X_{\preceq w}$.

Finally we prove (2). It suffices to check that $K_w$ has trivial stabilizer on each
stratum $Y_v$ for $v\preceq w$. Let $y_v=vB^-\in Y_v$. Then
\[
\mathrm{Stab}_B(y_v)=B\cap vB^-v^{-1}.
\]
The description of the $B$-orbit structure in \cite[4.5.7--4.5.9]{K} and of the intersection
$\widehat U^+\cap v\widehat U^-v^{-1}$ in \cite[(4.5.2)]{K} imply that the unipotent radical of
$B\cap vB^-v^{-1}$ has completed Lie algebra supported precisely on the inversion
roots $\Phi_v$. Since $S_w\cap\Phi_v=\varnothing$, the root-space
factorization of the completed unipotent radical gives
\[
K_w \cap (B\cap vB^-v^{-1})=\{1\},
\]
the torus part being irrelevant because $K_w$ is pro-unipotent. For a point
$b\cdot y_v\in Y_v$, the stabilizer is the $b$-conjugate of $\mathrm{Stab}_B(y_v)$; since $K_w$ is
normal in $B$, the same intersection computation applies. Hence $K_w$ acts freely on
$Y_v$, and therefore on $Y_{\preceq w}$.
\end{proof}

Using this subgroup, we then define
\[
Z_{\preceq w}=K_w\backslash Y_{\preceq w}
\]
as the quotient by the free $K_w$-action.

\begin{Prop}\label{prop:Zfinite}
The quotient $Z_{\preceq w}$ is a smooth scheme of finite type. Moreover, $\mathcal G_w=B/K_w$ acts on $Z_{\preceq w}$, and
this action has finitely many orbits $Z_v$ with $v\preceq w$ so that
\[
Z_{\preceq w}=\bigsqcup_{v\preceq w}Z_v,\qquad Z_v=K_w\backslash Y_v .
\]
\end{Prop}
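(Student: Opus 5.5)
The plan is to reduce everything to a local model near each $T$-fixed point, using Kashiwara's affine open charts on $Y$, and then descend along the free $K_w$-action of Lemma~\ref{lem:Kw}(2).

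First, recall from \cite[\S 4--5]{K} that $Y$ is covered by the translated big cells $vU^-_{\mathrm{compl}}B^-/B^-\cong v\widehat{U}^+ v^{-1}\cdot vB^-/B^-$. For $v\in W$ these are open $T$-stable affine subschemes isomorphic to $\widehat{U}^+=\prod_{\alpha\in\Delta_+}\mathfrak g_\alpha$, an infinite-dimensional affine space. Since $Y_{\preceq w}$ is a finite union of $B$-orbits and every $B$-orbit closure contains a fixed point $uB^-$, I would argue that $Y_{\preceq w}$ is covered by the finitely many opens $\mathcal U_u:=Y_{\preceq w}\cap u\widehat U^+ B^-/B^-$ for $u\preceq w$. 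It suffices to cover the closed orbits of the open set $Y_{\preceq w}$, namely the maximal $u$, and then use $B$-stability together with $K_w$-normality.

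Next, on each chart I make the $K_w$-action explicit. Factor $\widehat U^+=U_{S_w}\cdot U_{\Delta_+\setminus S_w}$ as schemes; this is possible because $S_w$ is a cofinite ideal, so $U_{S_w}$ is normal and the quotient is the finite-dimensional unipotent group with Lie algebra $\bigoplus_{\alpha\notin S_w}\mathfrak g_\alpha$. The chart around $uB^-$ is $\widehat U^+\cdot uB^-\cong (\widehat U^+\cap u\widehat U^- u^{-1})\backslash\widehat U^+$ times a transversal. More usefully, I would use $\widehat U^+\cdot uB^-\supset$ orbit and the fact (proof of Lemma~\ref{lem:Kw}) that $K_w\cap uB^-u^{-1}=1$ and $K_w\cdot uB^-$ is the closed $K_w$-orbit. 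This gives a $K_w$-equivariant isomorphism $\mathcal U_u\cong K_w\times F_u$, where $F_u$ is a finite type smooth slice. Concretely, $F_u$ is the product of the finitely many root spaces $u\mathfrak g_{-\beta}$ with $u\beta\notin S_w$, intersected with $Y_{\preceq w}$. The quotient $K_w\backslash\mathcal U_u\cong F_u$ is then a smooth finite type affine scheme, and these glue along the $K_w$-stable overlaps to give $Z_{\preceq w}$. Separatedness follows from that of $Y$ plus freeness, via properness of the action graph computed chartwise. The main obstacle is establishing this product decomposition: infinitely many root directions must be absorbed exactly by $K_w$. I would first try to verify this at the Lie algebra level, showing $\mathfrak k_w\oplus T_{uB^-}F_u=T_{uB^-}Y$ with $\mathfrak k_w\to T_{uB^-}Y$ injective, which is Lemma~\ref{lem:Kw}(2). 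I would then integrate using pro-unipotence, i.e.\ a successive quotient argument over the height filtration of $S_w$, where each step is a free action of a vector group on an affine space.

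Finally, $\mathcal G_w=B/K_w$ acts on $Z_{\preceq w}$ because $K_w$ is normal in $B$, so the $B$-action descends. The $B$-orbits $Y_v$ for $v\preceq w$ are $K_w$-stable, since $K_w\subset B$. Their images $Z_v=K_w\backslash Y_v$ are therefore exactly the $\mathcal G_w$-orbits, and they are locally closed subschemes because $Y_v$ is locally closed and $K_w$-saturated. The decomposition $Z_{\preceq w}=\bigsqcup_{v\preceq w}Z_v$ is the image of $Y_{\preceq w}=\bigsqcup_{v\preceq w}Y_v$. It is finite because Bruhat intervals in $W$ are finite.
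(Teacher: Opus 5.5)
Your skeleton matches the paper's. You cover $Y_{\preceq w}$ by the finitely many translated big cells $\Omega_u=u\widehat U^+B^-/B^-$ ($u\preceq w$), take $K_w$-quotients chartwise, glue, and read off the $\mathcal G_w$-orbits from normality of $K_w$ in $B$. Your last paragraph is fine.

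The gap is in the step you yourself flag as the main obstacle, which the proposal never actually carries out:
\begin{itemize}
\item You never check that $\mathcal U_u$ is $K_w$-stable, and your gluing needs this.
\item The facts $K_w\cap uB^-u^{-1}=1$ and that $K_w\cdot uB^-$ is closed do not yield $\mathcal U_u\cong K_w\times F_u$.
\item A tangent-space splitting at the single point $uB^-$ gives no global product structure.
\item Successive quotients by free actions of vector groups on affine spaces are not automatically trivial fibrations. Freeness alone does not suffice; you need the action to be by group translations.
\item The factorization $\widehat U^+=U_{S_w}\cdot U_{\Delta_+\setminus S_w}$, which you justify by normality, is the relevant one only for the chart at $u=1$.
\end{itemize}

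The missing idea is a one-line observation, and it is how the paper argues. For $u\preceq w$ one has $S_w\cap\Phi_u=\varnothing$, so $u^{-1}S_w\subset\Delta_+$ and $K_{w,u}:=u^{-1}K_wu\subset\widehat U^+$. The identity $k\cdot unB^-=u\,(u^{-1}ku)\,n\,B^-$ then shows two things: $\Omega_u$ is $K_w$-stable, and in the coordinate $\Omega_u\cong\widehat U^+$, $unB^-\mapsto n$, the group $K_w$ acts by left translation by $K_{w,u}$. Since $\Delta_+\setminus u^{-1}S_w$ is finite (this is exactly your ``infinitely many root directions absorbed by $K_w$''), $K_{w,u}\supset U_{S_N}$ with $S_N=\{\beta\in\Delta_+\mid\operatorname{ht}(\beta)>N\}$ for $N\gg0$. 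Hence $K_w\backslash\Omega_u\cong\overline K_{w,u}\backslash(\widehat U^+/U_{S_N})$ is a homogeneous space of a finite-dimensional unipotent group, so it is smooth of finite type, and $K_w\backslash\mathcal U_u$ is open in it. No slice is needed.

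Note that $u^{-1}S_w$ need not be an ideal. In type $A_2$ with $w=u=s_1$, one has $S_w=\{\alpha_1+\alpha_2\}$ and $u^{-1}S_w=\{\alpha_2\}$. So $K_{w,u}$ need not be normal in $\widehat U^+$, and your normality argument does not transfer; this is why one passes through $\widehat U^+/U_{S_N}$.

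Smaller points:
\begin{itemize}
\item $\widehat U^+\cdot uB^-$ is the orbit $Y_u$, not a chart.
\item Your slice directions should be $\mathfrak g_{u\gamma}$ for $\gamma\in\Delta_+\setminus u^{-1}S_w$, since the tangent space at $uB^-$ is $u\widehat{\mathfrak n}^+$. The expression $u\mathfrak g_{-\beta}$ has a sign slip.
\item The finite cover follows directly from $Y_u\subset\Omega_u$: split $u^{-1}\widehat U^+u$ into a part in $\widehat U^+$ and a finite part in $B^-$. Your closed-orbit argument, done carefully, only gives a cover by $B$-translates of $\mathcal U_w$.
\item Freeness does not give properness of the action, so your separatedness claim is unjustified; it is also not part of the statement.
\end{itemize}
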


\begin{proof}
Let \(x_0=B^-/B^-\in Y\).  For \(v\preceq w\), set
\[
\Omega_v=vUB^-/B^-=vUx_0\subset Y .
\]
Each \(\Omega_v\) is an open affine chart of
\(Y=G/B^-\), and \(Y_v=Bvx_0\subset \Omega_v\); hence the finitely many opens
\(\Omega_v\cap Y_{\preceq w}\), for \(v\preceq w\), cover \(Y_{\preceq w}\), as explained in \cite[\S5.8]{K}.  Moreover, since
\(S_w\cap\Phi_v=\varnothing\), we have \(v^{-1}S_w\subset\Delta_+\), and therefore
\(v^{-1}K_wv\subset U\).  Thus each \(\Omega_v\cap Y_{\preceq w}\) is \(K_w\)-stable.
Under the identification
\[
\Omega_v\simeq U,\qquad vu x_0\longmapsto u,
\]
the action of \(K_w\) is the left translation action of the closed pro-unipotent
subgroup
\[
K_{w,v}=v^{-1}K_wv\subset U .
\]

The subgroup \(K_{w,v}\) has finite codimension in \(U\).  Indeed,
\(v^{-1}S_w\subset\Delta_+\) has finite complement, so for some integer \(N_v\) the
set
\[
S_{N_v}=\{\alpha\in\Delta_+\mid \operatorname{ht}(\alpha)>N_v\}
\]
is contained in \(v^{-1}S_w\).  Hence \(U_{S_{N_v}}\subset K_{w,v}\).  Kashiwara's
finite-dimensional quotient construction for \(U\) gives a finite-dimensional
unipotent algebraic group
\[
U^{(N_v)}=U/U_{S_{N_v}}
\]
\cite[Lemma 4.4.2]{K}.  If \(\overline K_{w,v}\) denotes the image
of \(K_{w,v}\) in \(U^{(N_v)}\), then
\[
K_{w,v}\backslash U \simeq
\overline K_{w,v}\backslash U^{(N_v)} .
\]
The latter quotient is a smooth finite-dimensional scheme, since it is the quotient
of a finite-dimensional unipotent algebraic group by a closed unipotent subgroup.

Since \(\Omega_v\cap Y_{\preceq w}\) is \(K_w\)-stable, its image in \(U^{(N_v)}\) is a
\(\overline K_{w,v}\)-stable open subscheme.  Therefore
\[
K_w\backslash(\Omega_v\cap Y_{\preceq w})
\]
is an open subscheme of
\(\overline K_{w,v}\backslash U^{(N_v)}\).  On overlaps these local quotients agree,
because they are obtained by quotienting the same \(K_w\)-stable open subschemes of
\(Y_{\preceq w}\).  Thus the local quotients glue, exactly as in the criterion given in \cite[\S5.7]{K}, to the quotient \(Z_{\preceq w}\).

Since the interval \(\{v\in W\mid v\preceq w\}\) is finite, \(Z_{\preceq w}\) is covered by
finitely many open subschemes of finite-dimensional smooth quotients
\(\overline K_{w,v}\backslash U^{(N_v)}\).  Hence \(Z_{\preceq w}\) is a smooth scheme of finite type.  Finally, because $K_w$ is normal in $B$, the quotient
group
\[
\mathcal G_w=B/K_w
\]
acts on \(Z_{\preceq w}\).  Its orbits are precisely the images of the \(B\)-orbits
\(Y_v\), \(v\preceq w\); in other words,
\[
Z_{\preceq w}=\bigsqcup_{v\preceq w} Z_v,\qquad
Z_v=K_w\backslash Y_v .
\]
\end{proof}

\subsubsection{Constructing the colimit of stacks}

We are now in a position to describe the full double quotient stack $[B \backslash G /
B^-]$ as a colimit of smooth stacks of finite type.

Let $\mathcal{Y}$ denote the stack quotient $[B \backslash Y]$, where $B$ acts on
$Y = G/B^-$ by left multiplication. We define a system of finite-type approximations
$\mathcal{Y}_{\preceq w}$ indexed by the Weyl group $W$. For each $w \in W$, let
\[
\mathcal{Y}_{\preceq w} = [\mathcal{G}_w \backslash Z_{\preceq w}]
= [(B/K_w) \backslash (K_w \backslash Y_{\preceq w})].
\]
Note that this is naturally isomorphic to the stack quotient $[B \backslash Y_{\preceq w}]$.

Consider two elements $w, w' \in W$ such that $w \preceq w'$. Then $Y_{\preceq w} \subset Y_{\preceq
w'}$ is a $B$-stable open immersion, hence it induces an open immersion of quotient stacks
\[
[B \backslash Y_{\preceq w}] \hookrightarrow [B \backslash Y_{\preceq w'}].
\]

\begin{Thm}\label{thm:Y-colimit}
The stack $\mathcal{Y} = [B \backslash G / B^-]$ is a colimit of smooth stacks of finite
type. Specifically,
\[
\mathcal{Y} \cong \varinjlim_{w \in W} \mathcal{Y}_{\preceq w},
\]
where the transition maps $\mathcal{Y}_{\preceq w} \hookrightarrow \mathcal{Y}_{\preceq w'}$ are open
immersions for $w \preceq w'$.
\end{Thm}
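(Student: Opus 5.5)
The plan has three steps: identify each finite-type piece with an open substack of $\mathcal{Y}$, check that these opens exhaust $\mathcal{Y}$, and conclude that a directed union of opens is a colimit.

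\emph{Step 1: the pieces.} For each $w$, the natural map $Y_{\preceq w}\to Z_{\preceq w}$ is a $K_w$-torsor. This is what Lemma \ref{lem:Kw}(2) provides, together with the local charts in Proposition \ref{prop:Zfinite}: there, $K_w$ acts on $\Omega_v\cap Y_{\preceq w}$ by left translation of the pro-unipotent subgroup $K_{w,v}\subset U$. That translation action is trivial as a torsor, since $U\simeq K_{w,v}\times (K_{w,v}\backslash U)$ as schemes, via a section coming from the root-space factorization. Using the torsor structure and the normality of $K_w$ in $B$, we get
\[
[B\backslash Y_{\preceq w}]\simeq [\mathcal G_w\backslash Z_{\preceq w}],
\]
which justifies the ``naturally isomorphic'' claim above. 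By Proposition \ref{prop:Zfinite}, $Z_{\preceq w}$ is smooth of finite type and $\mathcal G_w$ is a finite-dimensional algebraic group. Hence $\mathcal{Y}_{\preceq w}$ is a smooth algebraic stack of finite type.

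\emph{Step 2: the transition maps and the colimit.} Since each $Y_{\preceq w}$ is a $B$-stable open of $Y$, the inclusion
\[
\mathcal{Y}_{\preceq w}=[B\backslash Y_{\preceq w}]\hookrightarrow [B\backslash Y]=\mathcal{Y}
\]
is an open immersion, and for $w\preceq w'$ the transition maps are compatible with these inclusions. The index set $W$ with the Bruhat order is directed: given $w_1,w_2$, any element of sufficiently large length lying above both in the Bruhat order works. For example, write $w_1=s_{i_1}\cdots s_{i_k}$ and take $w_1 s_{j_1}\cdots$ chosen to contain a reduced word for $w_2$ as a subword. This is standard via the subword property and the Demazure product $w_1 * w_2$, which dominates both. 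Next I would show that the opens exhaust $Y$, i.e.\ $Y=\bigcup_w Y_{\preceq w}$. This holds because every point lies in some orbit $Y_v$, and $Y_v\subset Y_{\preceq v}$.

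\emph{Step 3: from exhaustion to the colimit.} A stack covered by an increasing directed family of open substacks is the colimit of that family in the $2$-category of stacks (as fpqc/étale sheaves of groupoids). The reason is that for any test scheme $S$ that is quasi-compact, a map $S\to\mathcal{Y}$ factors through some $\mathcal{Y}_{\preceq w}$. More fundamentally, the colimit is computed as a sheafification, and opens satisfy descent, so the comparison map is an equivalence on each open and hence globally. This yields $\mathcal{Y}\cong\varinjlim_{w}\mathcal{Y}_{\preceq w}$.

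\emph{Main obstacle.} I expect the hard part to be Step 1, namely making the torsor claim rigorous for the pro-unipotent, infinite-type group $K_w$ acting on the infinite-type scheme $Y_{\preceq w}$. Concretely, this means checking that the locally defined quotients of Proposition \ref{prop:Zfinite} really give a $K_w$-torsor, so that the two quotient stacks agree. I would first try to reduce this chart by chart to the group-theoretic splitting $U\simeq K_{w,v}\times(\overline K_{w,v}\backslash U^{(N_v)})$. Then I would use that pro-unipotent groups have no nontrivial torsors over affines, which gives that the torsor is locally trivial. Finally I would deduce the stack isomorphism by quotienting in stages: first by $K_w$, then by $B/K_w$.
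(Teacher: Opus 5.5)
Your proposal is correct and follows essentially the same route as the paper. In both, each $\mathcal{Y}_{\preceq w}=[\mathcal G_w\backslash Z_{\preceq w}]$ is smooth of finite type by Proposition~\ref{prop:Zfinite}, and the identification then comes from the universal property of a colimit over an exhaustive directed family of open substacks. You also justify two points the paper leaves implicit, and both are handled correctly: the identification $[B\backslash Y_{\preceq w}]\simeq[\mathcal G_w\backslash Z_{\preceq w}]$, via triviality of the $K_w$-torsor on the charts $\Omega_v$ and quotienting in stages, and the directedness of the Bruhat order, via the Demazure product.
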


\begin{proof}
For each $w$, the scheme $Z_{\preceq w}$ is smooth of finite type and the group
$\mathcal{G}_w=B/K_w$ is a smooth finite-dimensional solvable algebraic group. It follows
that the quotient stack $\mathcal{Y}_{\preceq w} = [\mathcal{G}_w \backslash Z_{\preceq w}]$ is a smooth stack of
finite type.

The quotient stack $\mathcal{Y}$ is covered by the open substacks $\mathcal{Y}_{\preceq w}$, and
the transition maps for $w\preceq w'$ are open immersions. The universal property of the
colimit for a cover by open substacks gives the claimed identification
$\mathcal{Y} \cong \varinjlim_{w\in W} \mathcal{Y}_{\preceq w}$.
\end{proof}
\subsection{Geometric construction of the bimodule}
Since $\mathcal{Y}_{\preceq w}=[\mathcal G_w\backslash Z_{\preceq w}]$
with $Z_{\preceq w}$ a smooth scheme of finite type and $\mathcal G_w$ a
finite-dimensional algebraic group, we use the usual cotangent stack of this
quotient as the Hamiltonian reduction
\begin{equation}\label{eq:truncated_cotangent}
T^*\mathcal{Y}_{\preceq w}\cong [\mu_w^{-1}(0)/\mathcal G_w],
\end{equation}
where $\mu_w$ is the moment map for the
$\mathcal G_w$-action on $Z_{\preceq w}$. Thus $T^*\mathcal{Y}_{\preceq w}$ is a finite-type
algebraic quotient { derived } stack, typically not a scheme; the notation $T^*\mathcal Y$
below is shorthand for the colimit of this compatible system of finite-type cotangent stacks.

By Theorem~\ref{thm:Y-colimit}, we can define $\mathcal{B}$ as the $\mathbb{G}_m$-equivariant $K$-group
\begin{equation}\label{eqn:bdef}
    \mathcal{B} = K_0^{\mathbb{G}_m}(T^*\mathcal{Y}) = \varprojlim_{w} K_0^{\mathbb{G}_m}(T^*\mathcal{Y}_{\preceq w}).
\end{equation}

Here, the action of $\mathbb{G}_m$ scales the fibers of $T^*\mathcal{Y}$ with weight $v^2$ (and thus $\mathcal{B}$ carries the structure of an $\Ring$-module) and the inverse limit is taken with respect to the restriction maps to the open substacks $\mathcal{Y}_{\preceq w}$. In this subsection, we equip $\mathcal{B}$ with the structure of a topological $(\Hecke_W^a, \Hecke_W^a)$-bimodule.

We will be particularly interested in certain standard classes in the $K$-group $\B$, constructed as the associated gradeds of mixed Hodge modules. Before we give the definition, we first fix some conventions for thinking about mixed Hodge modules on smooth stacks like $\mc{Y}$.

Recall that if $Z$ is any smooth scheme of finite type over $\mathbb{C}$, we have the category $\mhm(Z)$ of mixed Hodge modules on $Z$. This comes equipped with a forgetful functor $\mhm(Z) \to \mathrm{Perv}(Z, \mathbb{Q})$ and a Hodge-graded functor $\gr^H : \mhm(Z) \to \mathrm{Coh}^{\mathbb{G}_m'}(T^*Z)$, where we write $\mathbb{G}_m'$ for the multiplicative group with character group $u^{\mathbb{Z}}$, which acts by scaling the fibers of $T^*Z$ with weight $u^{-1}$. To fix conventions, we will take the Hodge-graded functor to be defined by taking the associated graded of the Hodge filtration on the associated left $\mathcal{D}$-modules: in particular, the Hodge-graded of the natural mixed Hodge structure on the constant perverse sheaf $\underline{\mathbb{Q}}_Z[\dim Z]$ is $\gr^H\underline{\mathbb{Q}}_Z[\dim Z] = \mathcal{O}_Z$.

The category $\mhm(Z)$ comes equipped with a Tate twist functor $(1) : \mhm(Z) \to \mhm(Z)$ satisfying $\gr^H(M(1)) = (\gr^H M) \otimes u$. It will be convenient for us to introduce a square-root of the Tate twist: we define, formally,
\[ \mhm(Z)' = \{M \oplus N(\tfrac{1}{2}) \mid M, N \in \mhm(Z)\} \cong \mhm(Z) \times \mhm(Z)\]
with half-Tate twist defined by
\[ (M \oplus N(\tfrac{1}{2}))\left(\tfrac{1}{2}\right) = N(1) \oplus M(\tfrac{1}{2}).\]
We extend $\gr^H$ to a functor
\[ \gr^H : \mhm(Z)' \to \mathrm{Coh}^{\mb{G}_m}(T^*Z),\]
by setting $\gr^H(M \oplus N(\tfrac{1}{2})) = \gr^H M \oplus (\gr^H N) \otimes v^{-1}$, where in the target $\mathbb{G}_m$ is now the multiplicative group with character group $v^{\mb{Z}}$, $v = u^{-\frac{1}{2}}$, now acting by scaling the fibers with weight $v^2$.

If $f : Z \to Z'$ is a smooth morphism, then we have the functor of intermediate pullback
\[ f^\circ = f^*[\dim Z - \dim Z'] : \mhm(Z')' \to \mhm(Z)',\]
compatible with the same functor for perverse sheaves. At the level of $\gr^H$, this satisfies
\begin{equation} \label{eqn:hodge pullback}
 \gr^H f^\circ M = q_*p^*\gr^H M,
\end{equation}
where $p$ and $q$ are the morphisms
\[ T^*Z' \xleftarrow{p} T^*Z' \times_{Z'} Z \xrightarrow{q} T^*Z.\]
The assignment $Z \mapsto \mhm(Z)'$ satisfies smooth descent with respect to the pullback $f^\circ$, so it extends formally to smooth stacks. Explicitly, if $\mc{Z} = [Z/H]$ is a smooth quotient stack, a mixed Hodge module on $\mc{Z}$ is the same thing as an $H$-equivariant mixed Hodge module on $Z$: i.e., a mixed Hodge module $M$ on $Z$ equipped with an isomorphism $\mathrm{pr}_2^\circ M \cong a^\circ M$ satisfying the usual cocycle condition, where $a, \mathrm{pr}_2 : H \times Z \to Z$ are the action and projection maps. In this presentation, to be consistent with the case where $\mc{Z}$ is a scheme, we write the forgetful functor $\mhm(\mc{Z})' \to \mhm(Z)'$ as intermediate pullback $r^\circ$ along the quotient map $r: Z \to \mc{Z}$.

The functor $\gr^H$ also extends to the case of smooth stacks. In the case of a finite type quotient stack $\mc{Z} = [Z/H]$, with quotient map $r : Z \to \mc{Z}$, for $M \in \mhm(\mc{Z})'$ with associated equivariant object $r^\circ M \in \mhm(Z)'$, applying \eqref{eqn:hodge pullback} to the isomorphism $a^\circ r^\circ M \cong \mathrm{pr}_2^\circ M$ gives that
\[ \gr^H r^\circ M \in \mathrm{Coh}^{\mb{G}_m}(\mu^{-1}(0)) \subset \mathrm{Coh}^{\mb{G}_m}(T^*Z),\]
where $\mu : T^*Z \to (\operatorname{Lie} H)^*$ is the moment map. Note that the abelian category of coherent sheaves (rather than the corresponding derived category) is insensitive to the derived structure, so we may as well regard $\mu^{-1}(0)$ as a derived scheme here. The sheaf $\gr^H r^\circ M$ will also be $H$-equivariant as a coherent sheaf, so descends to an object $\gr^H M$ on $T^*\mc{Z} = [\mu^{-1}(0)/H]$. For a union of open finite type quotient stacks (such as $\mc{Y}$ above), we can define $\gr^H M$ on each finite type piece as above and glue the results.

For example, on any smooth stack, we have the object $\underline{\mb{Q}}_\mc{Z}[\dim \mc{Z}]$, defined so that $f^\circ \underline{\mb{Q}}_{\mc{Z}}[\dim \mc{Z}] = \underline{\mb{Q}}_Z[\dim Z]$ for any smooth morphism $f : Z \to \mc{Z}$ with $Z$ a scheme. In the case of a quotient stack $\mc{Z} = [Z/H]$, this corresponds to the mixed Hodge module $\underline{\mb{Q}}_Z[\dim Z]$ on $Z$ with its tautological $H$-action. By construction, we have $\gr^H \underline{\mb{Q}}_\mc{Z}[\dim \mc{Z}] = \mc{O}_\mc{Z}$, the structure sheaf of the zero section in $T^*\mc{Z}$.

We are now ready to define the standard classes in the bimodule $\B$.

\begin{defi}\label{def:standard-bimodule-classes}
For $w \in W$, consider the $w$-stratum $\mc{Y}_w = [B \backslash BwB' / B'] \subset \mc{Y}$. This is smooth finite type substack of dimension $-\ell(w) - \rank G$. We denote by $j_{!, w} \in \mhm(\mc{Y})'$ the shriek extension of the local system $\underline{\mb{Q}}_{\mc{Y}_w}(-\tfrac{\ell(w)}{2})[-\ell(w) - \rank G]$. The Hodge-graded $\gr^H j_{!, w}$ is a $\mathbb{G}_m$-equivariant quasi-coherent sheaf on $T^*\mathcal{Y}$ that is coherent on every open substack $T^*\mathcal{Y}_{\preceq w}$. In particular, this element defines a class in
$\B$, which we denote by $\bone_!^w$.
\end{defi}


\subsection{Bimodule structure on $\mathcal{B}$}

\subsubsection{Lattice actions}\label{SSS_lattice_actions}
The $\Ring$-module $\mathcal{B}$ carries left and right actions of the weight lattice $P$ defined by tensoring with line bundles. Let $\pi_L: \mathcal{Y} \to [*/B]$ and $\pi_R: \mathcal{Y} \to [*/B^-]$ be the canonical projections. We use the following convention for comparing the two copies of the weight lattice. The character groups $X^*(B)$ and $X^*(B^-)$ are each identified with $P$ through the quotient to $T$, but when a character of $B^-$ is regarded as a character of $B$, the comparison map is $-\mathrm{id}_P$. Equivalently, the right character $\lambda$ corresponds to the left character $-\lambda$.

For $\lambda \in P$, we define the line bundles $\mathcal{L}_\lambda^L = \pi_L^* \mathcal{L}_\lambda$ and $\mathcal{L}_\lambda^R = \pi_R^* \mathcal{L}'_\lambda$, where $\mathcal{L}_\lambda$ (resp. $\mathcal{L}'_\lambda$) is the line bundle on $[*/B]$ (resp. $[*/B^-]$) associated to the character $\lambda$.

We define the operators $X_\lambda^L$ and $X_\lambda^R$ on $\mathcal{B}$ by:
\[
X_\lambda^L \cdot [\mathcal{F}] = [\mathcal{F} \otimes \mathcal{L}_{\lambda}^L], \quad \quad [\mathcal{F}] \cdot X_\lambda^R = [\mathcal{F} \otimes \mathcal{L}_{\lambda}^R].
\]
Since tensoring with a vector bundle preserves the support of a coherent sheaf, these operators induce well-defined continuous maps on the limit in (\ref{eqn:bdef}).

\subsubsection{Coxeter actions}\label{sec:coxeter-actions}
The actions of the simple reflections $s \in W$ are defined via convolution correspondences. Let $P_s \supset B$ and $P_s^- \supset B^-$ be the minimal standard parabolic subgroups associated to $s$. We consider the proper fibrations
\[
q_s^L: \mathcal{Y} \to \mathcal{Z}_s^L = [P_s \backslash G / B^-], \quad \quad q_s^R: \mathcal{Y} \to \mathcal{Z}_s^R = [B \backslash G / P_s^-].
\]
We define the left and right operators $T_s$ by
\[
T_s^L \cdot [\mathcal{F}] = -v^{-1}(q_s^L)^* (q_s^L)_* [\mathcal{F}] + v[\mathcal{F}], \quad \quad [\mathcal{F}] \cdot T_s^R = -v^{-1}(q_s^R)^* (q_s^R)_* [\mathcal{F}] + v[\mathcal{F}],
\]
where here we abuse notation by identifying $q_s^*$ and $q_{s*}$ with the corresponding maps given by cotangent correspondences for $T^*\mathcal{Y}$. For example, for $q_s^L$, if we denote the induced cotangent correspondence by
\[ T^*\mc{Y} \xleftarrow{f} T^*\mc{Z}_s^L \times_{\mc{Z}_s^L} \mc{Y} \xrightarrow{g} T^*\mc{Z}_s^L \]
then $(q_s^L)^*$ is shorthand for $f_*\circ g^*$ and $(q_s^L)_*$ is shorthand for its right adjoint $Rg_*\circ f^!$. For the left action, given the finite-type approximation $\mathcal{Y}_{\preceq x}$, replace it if necessary by the larger open truncation $\mathcal{Y}_{\preceq w}$, where $w$ is the longer element of $\{x,sx\}$. Then $\ell(sw)<\ell(w)$, and the union of strata indexed by $y\preceq w$ is preserved by the left $P_s$-saturation, so the correspondence defining $T_s^L$ preserves $K_0^{\mathbb{G}_m}(T^*\mathcal{Y}_{\preceq w})$. For the right action, the same argument uses the longer element of $\{x,xs\}$.

\begin{Prop}
For $A \in \{L, R\}$, the operators $\{X_\lambda^A, T_s^A\}$ defined above satisfy the defining relations of the affine Hecke algebra $\Hecke_W^a$. Specifically:
\begin{enumerate}
    \item For any simple reflection $s$, the quadratic relation $(T_s^A - v)(T_s^A + v^{-1}) = 0$ holds.
    \item The braid relations are satisfied by the $T_s^A$.
    \item The Bernstein relations hold:
    \[
    X_\lambda^A T_s^A - T_s^A X_{s\lambda}^A = \hbar \frac{X_\lambda^A - X_{s\lambda}^A}{1 - X_{\alpha_s}^A}.
    \]
\end{enumerate}
\end{Prop}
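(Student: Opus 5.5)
The plan is to reduce everything to finite-type geometry and then to rank-one (or rank-two) computations, using the truncations $\mathcal{Y}_{\preceq w}$ of Theorem~\ref{thm:Y-colimit}. All operators are defined compatibly with restriction to open substacks. So it suffices to check each relation on $K_0^{\mathbb{G}_m}(T^*\mathcal{Y}_{\preceq w})$ for $w$ large enough, namely $w$ chosen so that $\mathcal{Y}_{\preceq w}$ is stable under the relevant parabolic saturations (by $P_s$, and by $P_{s,t}$ for the braid relation). On such a truncation everything is a finite-type quotient stack, and we are in the classical setting of the convolution action of the Steinberg-type correspondences on $K$-theory of cotangent stacks, as in Chriss--Ginzburg and Lusztig. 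By the symmetry $G\to G$ exchanging $B$ and $B^-$ (via the Chevalley involution, with the sign convention on characters fixed in \S\ref{SSS_lattice_actions}), the right action reduces to the left one. We therefore treat only $A=L$.

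\textbf{Quadratic relation.} Write $\theta_s=(q_s^L)^*(q_s^L)_*$, so that $T_s=v-v^{-1}\theta_s$. The relation $(T_s-v)(T_s+v^{-1})=0$ is equivalent to $\theta_s^2=(v+v^{-1})\theta_s$, i.e.\ $(q_s)_*(q_s)^*=v+v^{-1}$ on $K_0^{\mathbb{G}_m}(T^*\mathcal{Z}_s^L)$. Since $q_s$ is a smooth $\mathbb{P}^1$-fibration, the cotangent correspondence composite $(q_s)_*(q_s)^*$ is given by tensoring with the pushforward of the structure sheaf of $T^*\mathcal{Z}_s\times_{\mathcal{Z}_s}\mathcal{Y}$ along the $\mathbb{P}^1$-fibre. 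This fibred product is a derived zero locus of the relative cotangent directions, with Koszul class $1-v^{2}\,\Omega_{q_s}^{\vee}$ or similar. Its $\mathbb{G}_m$-equivariant Euler characteristic over $\mathbb{P}^1$ is $\chi(\mathcal{O})-v^{2}\chi(\mathcal{O}(2))$-type, which after the normalising shift built into $f_*g^*$ versus $Rg_*f^!$ (a twist by $v^{\pm1}$ and the relative dualising sheaf) gives $v+v^{-1}$. I expect to fix the precise normalisation by testing on $\mathcal{O}_{\mathcal{Y}}$.

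\textbf{Braid relations.} For $s,t$ with $m_{st}<\infty$, let $P_{st}\supset B$ be the rank-two parabolic, with finite Weyl group $W_{st}$. Work on a truncation stable under $P_{st}$. The subalgebra generated by $T_s,T_t$ acts through convolution over the fibration $\mathcal{Y}\to[P_{st}\backslash G/B^-]$, whose fibre is the finite flag variety $P_{st}/B$. The action is thus obtained by base change from the standard Demazure--Lusztig action of $\mathcal{H}_{W_{st}}$ on $K^{\mathbb{G}_m}$ of the relative Steinberg variety, where the braid relations are classical (Kazhdan--Lusztig, Chriss--Ginzburg Ch.~7). Concretely, the composite correspondences for $sts\cdots$ and $tst\cdots$ both coincide with the one given by the $B$-orbit closure of the longest element of $W_{st}$ in $P_{st}/B\times P_{st}/B$, by the usual Bott--Samelson-type argument.

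\textbf{Bernstein relation.} Here I reduce to rank one. Tensoring with $\mathcal{L}^L_\lambda$ commutes with $\theta_s$ when $\langle\lambda,\alpha_s^\vee\rangle=0$, because such a line bundle is pulled back from $\mathcal{Z}_s^L$. Using multiplicativity of both sides in $\lambda$ (the right-hand side is a twisted derivation), it suffices to treat $\lambda$ with $\langle\lambda,\alpha_s^\vee\rangle=1$. Then the projection formula together with relative Borel--Weil--Bott on the $\mathbb{P}^1$-fibre computes $\theta_s\circ X_\lambda-X_{s\lambda}\circ\theta_s$, via the exact sequence $0\to\mathcal{L}_{s\lambda}\to q_s^*q_{s*}\mathcal{L}_\lambda\to\mathcal{L}_\lambda\to0$ twisted by the relevant $\mathbb{G}_m$-weights. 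This reproduces \eqref{eq:Bernstein_alternative} in the case $k=1$.

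\textbf{Main obstacle.} The hardest step is the sign and weight bookkeeping: making the $\mathbb{G}_m$-weights from the derived fibre products match $v$ rather than $v^{-1}$, matching the convention $1-X_{\alpha_i}$ in the denominator, and handling the $B$ versus $B^-$ character sign on the right. I would first carry out the whole computation for finite-type $G=SL_2$, where $\mathcal{Y}=[B\backslash\mathbb{P}^1]$. There I can adjust conventions until the relations hold, and then transport these normalisations verbatim through the local reduction.
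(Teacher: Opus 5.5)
Your overall skeleton matches the paper's: finite-type truncations, the $\mathbb{P}^1$-fibrations $q_s$ for the quadratic and Bernstein relations, and the finite rank-two Levi together with Chriss--Ginzburg for the braid relations. However, several steps fail as written. The main gap is the reduction of the right action to the left one. In this setting $B$ is Kashiwara's completed Borel acting on the thick scheme $G/B^-$, and the truncations $\mathcal{Y}_{\preceq w}$ are built from the left action. The Chevalley involution carries the completed positive Borel to a completed negative one, which is not Kashiwara's $B^-$. So it does not by itself give an automorphism of $\mathcal{Y}$, or of the system of truncations, interchanging $q_s^L$ and $q_s^R$. Even granting such a symmetry, it turns the right action, which reverses composition, into a left one. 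Relation (3) is then carried to (3) only if $\mathcal{L}^L_\lambda$ goes to $\mathcal{L}^R_{\lambda}$. If it goes to $\mathcal{L}^R_{-\lambda}$, you get the Bernstein relation with denominator $1-X_{-\alpha_s}$, which holds for $-T_s^{-1}$, not for $T_s$. So the sign issue you call the main obstacle is assumed rather than resolved. The paper instead repeats the rank-one computation on the $P_s^-/B^-$-fibres of $q_s^R$, using the longer of $x,xs$ to get stable truncations. It also uses a separate dictionary to Chriss--Ginzburg: their generator is $T_s^R$ on the right but $-(T_s^L)^{-1}$ on the left. Relatedly, you cannot ``adjust conventions until the relations hold'' in an $SL_2$ test. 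The operator $T_s$, the bundles $\mathcal{L}^L_\lambda$ and $\mathcal{L}^R_\lambda$, and the fibre weight are all fixed by the definitions, and the proof has to verify them.

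The quadratic step has the wrong target. Since $(T_s-v)(T_s+v^{-1})=v^{-2}\theta_s^2-v^{-1}(v+v^{-1})\theta_s$, you need $\theta_s^2=(1+v^2)\theta_s$, i.e.\ $(q_s)_*(q_s)^*=1+v^2$, not $v+v^{-1}$. No normalising twist is built into $f_*g^*$ or $Rg_*f^!$, so an answer of $v+v^{-1}$ would contradict the relation. Done with $f^!$, one gets $f^!f_*=1-v^2[\Omega_{q_s}]$, where $\Omega_{q_s}$ is the relative cotangent line. Hence $Rg_*f^!f_*g^*=\chi(\mathcal{O}_{\mathbb{P}^1})-v^2\chi(\Omega^1_{\mathbb{P}^1})=1+v^2$, exactly as required; your $\chi(\mathcal{O}(2))$ uses the dual line. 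The ``concrete'' braid argument is also false here. The $\theta_s$ are not idempotent, and in fact $\theta_s\theta_t\theta_s-\theta_t\theta_s\theta_t=v^2(\theta_s-\theta_t)$ when $m_{st}=3$. The Bott--Samelson argument works for Demazure operators on $K(G/B)$, not for $\mathbb{G}_m$-equivariant cotangent correspondences. What actually carries the braid relation is the citation, as in the paper: $L_J$ is finite-dimensional when $m_{st}<\infty$ (Kac, Prop.~3.13); apply Chriss--Ginzburg Thm.~7.6.10 to $L_J/B_J$; then transport along the $P_J$-torsor. Your Bernstein argument is a sound and more self-contained alternative to the paper's appeal to Chriss--Ginzburg Lemma~7.1.10. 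It reduces to $k=0,1$ via the twisted-derivation property, then uses the $\mathbb{P}^1$ exact sequence on $T^*\mathcal{Z}_s\times_{\mathcal{Z}_s}\mathcal{Y}\to T^*\mathcal{Z}_s$. To complete it, record that the fibre weight enters only through $f_*f^!=1-v^2[\Omega_{q_s}]$, and that $P$ is generated by $\ker\alpha_s^\vee$ together with a weight of pairing $1$.
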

\begin{proof}
For (1), let $\pi: E \to X$ be a $\mathbb{P}^1$-bundle. In $K_0^{\mathbb{G}_m}(T^*E)$, the operator $\Theta = \pi^* \pi_* - v^{-1}$ satisfies $\Theta^2 = (v - v^{-1})\Theta + 1$, where again $v$ denotes the $\mathbb{G}_m$-scaling action. Rescaling via the identification in \cite[Theorem 7.2.5]{CG} of the rank-one $K$-theoretic convolution algebra associated to this correspondence with the Hecke algebra yields the quadratic relation.

For (2), let $s$ and $t$ be distinct simple reflections such that the order of $st$ in $W$ is finite, and write
$J=\{s,t\}$. By \cite[Proposition~3.13]{Kac_infdim}, the rank--two
Cartan matrix indexed by $J$ is of finite type, so the corresponding
Levi subgroup $L_J$ is finite dimensional.

The projection
\begin{equation}\label{eq:rank-two-projection-left}
q_J^L:\mathcal Y=[B\backslash G/B^-]\longrightarrow
[P_J\backslash G/B^-]
\end{equation}
has fiber
$P_J/B\simeq L_J/B_J$, where $B_J=B\cap L_J$. The cotangent
correspondences defining $T_s^L$ and $T_t^L$ are the relative versions
of the standard Steinberg correspondences for $L_J/B_J$. Under the
convolution realization of the Hecke algebra in
\cite[Theorem~7.6.10]{CG}, with the generators specified in \cite[(7.6.1)]{CG},
and after setting $q=v^2$, these give (after division by $v$)
standard generators of loc.\ cit.\ on one side and their negative inverses on the
other. Both choices satisfy the same braid relations, and hence the
braid relation satisfied by $s$ and $t$ inside $W$.

More precisely, this braid relation is an equality in the
$L_J\times\mathbb G_m$-equivariant convolution algebra. Forming
associated bundles with the $P_J$-torsor
\[
G/B^-\longrightarrow[P_J\backslash G/B^-]
\]
transports this equality to the
relative cotangent correspondences associated with
\eqref{eq:rank-two-projection-left}. This proves the braid
relation for the left action. The right action is handled in the same
way using $P_J^-$ and $B_J^-=B^-\cap L_J$.

For (3), fix a simple reflection $s$ and $\lambda\in P$, and put
$k=\langle\lambda,\alpha_s^\vee\rangle$. With the conventions
of \S\ref{SSS_lattice_actions}, the restrictions to the fibers of the
two projections are
\[
\left.\mathcal L_\lambda^L\right|_{P_s/B}
    \cong \mathcal O_{\mathbb P^1}(-k),
\qquad
\left.\mathcal L_\lambda^R\right|_{P_s^-/B^-}
    \cong \mathcal O_{\mathbb P^1}(k).
\]
In the notation of
\cite[(7.6.36)]{CG}, these two restrictions correspond respectively to $L_{-\lambda}$ and $L_{\lambda}$.
Thus, when $k=0$, the character extends to $P_s$, respectively to
$P_s^-$, and the Bernstein relation follows from the projection formula.

For arbitrary $k$, we use the rank--one calculation of
\cite[Lemma~7.1.10]{CG}. Under the geometric realization in
\cite[(7.6.1)]{CG}, the element $e^\mu$ in the notation of loc.\ cit.\ corresponds
to $[\mathcal O_{-\mu}]$. Thus our lattice generator corresponds to
$e^\lambda$ on the left and to $e^{-\lambda}$ on the right. Moreover,
after setting $q=v^2$ and comparing the kernel in \cite[(7.6.1)]{CG}
with our normalization, the Hecke
generator corresponds to $T_s^R$ for the right action and to
$\hbar-T_s^L=-(T_s^L)^{-1}$ for the left action (after division by $v$).

This means taking $\mu=-\lambda$ in
\cite[Lemma~7.1.10]{CG} gives the right-action relation
\[
X_\lambda^R T_s^R-T_s^R X_{s\lambda}^R
  =\hbar\frac{X_\lambda^R-X_{s\lambda}^R}
  {1-X_{\alpha_s}^R}.
\]
For the left action, taking $\mu=\lambda$ gives
\[
\hbar(X_\lambda^L-X_{s\lambda}^L)
  -(X_\lambda^L T_s^L-T_s^L X_{s\lambda}^L)
  =\hbar\frac{X_\lambda^L-X_{s\lambda}^L}
  {1-X_{-\alpha_s}^L}.
\]
Since $1-1/(1-z^{-1})=1/(1-z)$, rearranging yields the left-action version of the
relation in (3) as well. Hence
\[
X_\lambda^A T_s^A-T_s^A X_{s\lambda}^A
  =\hbar\frac{X_\lambda^A-X_{s\lambda}^A}
  {1-X_{\alpha_s}^A},\qquad A\in\{L,R\}.
\]
\end{proof}

\begin{Lem}\label{Lem:bone_properties}
The following claims hold.
\begin{enumerate}
\item For any $w \in W$, we have $T_w\bone_!^1=\bone_!^1 T_w=\bone_!^w$.
\item The elements $\bone_!^w$ form a topological basis of $\mathcal{B}$ as a left topological $\Ring P$-module.
\end{enumerate}
\end{Lem}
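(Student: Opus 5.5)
Part (1) reduces to the case of a simple reflection $s$ with $\ell(sw)>\ell(w)$. If we show $T_s\bone_!^w=\bone_!^{sw}$ and $\bone_!^wT_s=\bone_!^{ws}$ whenever the length goes up, then induction on $\ell(w)$ along a reduced expression gives $T_w\bone_!^1=\bone_!^w=\bone_!^1T_w$. The base case $\bone_!^1=\bone_!^1$ is trivial.

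For the inductive step we work on a finite-type truncation $\mathcal{Y}_{\preceq sw}$ containing both strata. The stack $[P_s\backslash P_s wB^-/B^-]$ is a single stratum of $\mathcal{Z}_s^L$, and its preimage under $q_s^L$ is $\mathcal{Y}_w\sqcup\mathcal{Y}_{sw}$, a $\mathbb{P}^1$-fibration in which $\mathcal{Y}_w$ is the open $\mathbb{A}^1$-bundle complement of the closed section $\mathcal{Y}_{sw}$. Recall that closures are opposite to Bruhat order, so $\mathcal{Y}_{sw}$ lies in the closure of $\mathcal{Y}_w$. We then compute at the level of mixed Hodge modules. $(q_s^L)_*j_{!,w}$ is the $!$-extension of the constant sheaf on the base stratum, since pushforward along an $\mathbb{A}^1$-fibration (with $j_!$) shifts and twists by the fibre cohomology. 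Pulling back along $q_s^L$ gives the $!$-extension of the constant sheaf on $\mathcal{Y}_w\sqcup\mathcal{Y}_{sw}$. The open–closed triangle then splits the latter in $K$-theory as $[j_{!,w}]$ plus a twisted $[j_{!,sw}]$. Passing to $\gr^H$, which is compatible with these functors through the cotangent correspondences and sends Tate twists to powers of $v$, the combination $-v^{-1}(q_s^L)^*(q_s^L)_*+v$ kills the $\bone_!^w$ term and leaves exactly $\bone_!^{sw}$. The normalization $(-\tfrac{\ell(w)}2)[-\ell(w)-\rank G]$ in Definition~\ref{def:standard-bimodule-classes} is designed to make the coefficient equal to $1$.

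The right action is handled symmetrically using $q_s^R$ and $P_s^-$. The only change is that the relevant fibration is now $\mathcal{Y}_w\sqcup\mathcal{Y}_{ws}$.

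I expect the main obstacle to be the bookkeeping of signs and twists. One must check that $\gr^H$ intertwines $!$-pushforward and smooth pullback of Hodge modules with the $K$-theoretic correspondences $f_*g^*$ and $Rg_*f^!$. One must also check that the half-Tate twists produce precisely $v$ and $v^{-1}$ and not their inverses. I would first verify this in the rank-one model $B_s\backslash P_s/B_s^-$, that is on $\mathbb{P}^1$ with the two $B$-orbits, and then transport by base change along the $P_s$-torsor as in the proof of the braid relation.

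For (2), we filter $\mathcal{Y}_{\preceq w}$ by its finitely many strata $\mathcal{Y}_v$, $v\preceq w$ (Proposition~\ref{prop:Zfinite}). Each $T^*$-restricted piece is the conormal stack to a stratum. Each $\mathcal{Y}_v\cong[T\backslash\mathrm{pt}]$ up to a unipotent factor, so $K_0^{\mathbb{G}_m}(T^*_{\mathcal{Y}_v}\mathcal{Y})\cong\Ring P$, freely generated by the restriction of $\bone_!^v$ (its restriction is a line bundle); the $\Ring P$-action through $X_\lambda^L$ is the identification $X^*(T)=P$. Localization sequences for equivariant $K$-theory of the stratification show that $K_0^{\mathbb{G}_m}(T^*\mathcal{Y}_{\preceq w})$ is free over $\Ring P$ with basis $\{\bone_!^v\}_{v\preceq w}$. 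Here we use triangularity: $\bone_!^v$ is supported on the closure of the conormal to $\mathcal{Y}_v$, which misses the strata not in its closure. Injectivity on the left follows from a cellular/weight argument, since odd $K$-groups vanish for these affine-space-like pieces. Passing to the inverse limit $\varprojlim_w$, with restriction maps sending basis elements $\bone_!^v$ to $\bone_!^v$ or $0$, gives that every element is a unique convergent sum $\sum_v f_v\bone_!^v$, i.e.\ a topological basis.
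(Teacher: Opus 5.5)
Your proposal is correct and follows essentially the same route as the paper: for (1), the rank-one computation $(-v^{-1}\pi^*\pi_*+v)[\gr^H j_{\mathbb{A}^1!}]=v[\gr^H j_{\infty!}]$ on the $\mathbb{P}^1$-fibration $\mathcal{Y}_w\sqcup\mathcal{Y}_{sw}$, propagated by induction on $\ell(w)$ on both sides. For (2), the filtration of $\mu_w^{-1}(0)$ by conormal bundles to the finitely many orbits, each with $K$-theory $\Ring P$ (stabilizers are $T\times\mathbb{G}_m$ times a unipotent group), is assembled by \cite[Lemma 5.5.1]{CG} and then passed to the inverse limit.

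Two small corrections. First, the odd algebraic $K$-groups of these pieces do not literally vanish (e.g.\ $K_1(\mathbb{C})=\mathbb{C}^\times$); what you need is that the boundary maps in the localization sequences vanish, and that is exactly what the cellular fibration lemma \cite[Lemma 5.5.1]{CG}, invoked by the paper, provides. Second, $\bone_!^v$ is supported on the preimage of $\overline{\mathcal{Y}_v}$ (the union of conormals to strata in the closure), not on the closure of the single conormal to $\mathcal{Y}_v$; the consequence you actually use still holds.
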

\begin{proof}
The statement in (1) follows from the compatibility of the Hodge associated graded with smooth pullbacks and proper pushforwards (see, for example, \cite[\S 28,30]{SchnellMHM}), together with the following rank-one computation. Let $s$ be a simple reflection and observe that $\mathcal{Y}_{\preceq s}$ has a smooth atlas identified with $\mathbb{P}^1$, stratified as $\mathbb{P}^1 \cong \mathbb{A}^1 \sqcup \{\infty\}$. Pulling back to this atlas, $\bone_!^1$ becomes the class of the Hodge associated graded of the shriek extension $j_{\mathbb{A}^1!}$ of $\underline{\mb{Q}}_{\mb{A}^1}[1]$. Letting $\pi : \mathbb{P}^1 \to \{*\}$, we have $\pi^*\pi_*j_{\mathbb{A}^1!} \cong \underline{\mathbb{Q}}_{\mathbb{P}^1}(-1)[-1]$, and therefore, recalling that the Tate twist $(-1)$ becomes $v^2$ after taking $\gr^H$,
\[
(-v^{-1}\pi^*\pi_* + v)[\gr^H j_{\mathbb{A}^1!}]
= -v[\gr^H \underline{\mathbb{Q}}_{\mathbb{P}^1}[1]] + v[\gr^H j_{\mathbb{A}^1!}]
= v[\gr^H j_{\infty!}]
= \bone_!^s.
\]
Here we have used the general fact that, for $\pi : Z \to Z'$ smooth and proper,
\[ [\gr^H \pi_* M] = \pi_*[\gr^H M [\dim \pi]] \quad \text{and} \quad [\gr^H \pi^*N] = \pi^*[\gr^H N [-\dim \pi]] \]
for $M \in \mhm(Z)'$, $N \in \mhm(Z')'$. By the construction of the operators $T_s^L,T_s^R$, the expression $(-v^{-1}\pi^*\pi_* + v)[\gr^H j_{\mathbb{A}^1!}]$ coincides with both $T_s\bone^1_!$ and $\bone^1_! T_s$ finishing the proof when $w = s$. The general case follows by induction on $\ell(w)$, where for the induction step we use the calculation above regarding $\mb{P}^1$ as a smooth atlas of $\mc{Y}_w \cup \mc{Y}_{sw}$.

To prove (2), fix a finite truncation $\mathcal{Y}_{\preceq z}$. Recall, (\ref{eq:truncated_cotangent}), that $T^*\mathcal{Y}_{\preceq z}=[\mu^{-1}_z(0)/G_z]$, where $\mu_z:T^*Z_{\preceq z}\rightarrow \operatorname{Lie}(\mathcal{G}_z)^*$ is the moment map. Since $K_0$ does not depend on the derived/ non-reduced structure, we have $K_0^{\mathbb{G}_m}(T^*\mathcal{Y}_{\preceq z})=K_0^{\mathbb{G}_m\times \mathcal{G}_z}(\mu_z^{-1}(0))$, where we consider $\mu_z^{-1}(0)$ as a variety. Since $\mathcal{G}_z$ has finitely many orbits on $Z_z$, $\mu_z^{-1}(0)$ is the union of their conormals. In particular, \cite[Lemma 5.5.1]{CG} applies for the filtration of $\mu_z^{-1}(0)$ by the preimages of orbits. We note that each stabilizer for the action of $\mathcal{G}_z\times \mathbb{G}_m$ on $Z_{\preceq z}$ is a semidirect product of $T\times \mathbb{G}_m$ and a unipotent group. Moreover,  the class of $\bone^w_!$ is supported on the orbits labeled by elements that are less than or equal to $w$ in the Bruhat order, and the contribution of the orbit labeled $w$ is (up to a twist with a character of $T\times \mathbb{G}_m$) 
the class of the conormal bundle to this orbit.
Now our claim follows from  \cite[Lemma 5.5.1]{CG}.
\end{proof}

Thus we have the following.
\begin{Cor}\label{cor:levelzero-a-t-relation}
    As a { topological} $\Ring P$-module, $\mathcal{B}$ is {(topologically)} freely generated by $\{\bone_!^w\}_{w \in W}$.
We have
$$\mathcal{B}=\prod_{w} \Ring P\cdot \bone_!^w.$$
\end{Cor}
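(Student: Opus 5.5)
The plan is to deduce the corollary from Lemma~\ref{Lem:bone_properties}(2) together with the definition \eqref{eqn:bdef} of $\B$ as an inverse limit. I will work one truncation at a time and then pass to the limit.

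\emph{Finite truncations.} Fix $z\in W$. Write $\B_{\preceq z}=K_0^{\mathbb G_m}(T^*\mathcal Y_{\preceq z})$. By the proof of Lemma~\ref{Lem:bone_properties}(2), the filtration of $\mu_z^{-1}(0)$ by conormals to the finitely many orbits $Z_v$, $v\preceq z$, together with \cite[Lemma 5.5.1]{CG}, produces a finite filtration of $\B_{\preceq z}$ whose associated graded pieces are
\[
K_0^{\mathbb G_m\times \mathcal G_z}(T^*_{Z_v}Z_{\preceq z})\cong K_0^{T\times\mathbb G_m}(\mathrm{pt})=\Ring P .
\]
Each piece is free of rank one and is generated by the image of $\bone_!^v$; this uses the statement that the top contribution of $\bone_!^v$ is a character twist of the conormal class. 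The $T$-characters act through the left lattice action $X^L_\lambda$, because the orbit stabilizers contain $T$ via the left $B$-action. An upper-triangular argument with respect to Bruhat order on the finite interval $\{v\preceq z\}$ then shows that the restrictions of $\bone_!^v$, $v\preceq z$, form an $\Ring P$-basis of $\B_{\preceq z}$. Note also that the restriction of $\bone_!^v$ vanishes for $v\not\preceq z$, since it is supported on $\overline{\mathcal Y_v}$, which misses $\mathcal Y_{\preceq z}$.

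\emph{Passage to the limit.} For $z\preceq z'$, the restriction map $\B_{\preceq z'}\to\B_{\preceq z}$ sends basis elements to basis elements or to zero, and it is $\Ring P$-linear. So it is the coordinate projection
\[
\bigoplus_{v\preceq z'}\Ring P\,\bone_!^v\longrightarrow \bigoplus_{v\preceq z}\Ring P\,\bone_!^v .
\]
The lower Bruhat intervals are cofinal and exhaust $W$. Hence the inverse limit of these projection systems is $\prod_{w\in W}\Ring P\cdot \bone_!^w$, with the product topology. This is the claimed equality and shows that $\B$ is topologically free on the $\bone_!^w$.

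\emph{Main obstacle.} The delicate point is the identification of each associated graded piece with $\Ring P$, generated exactly by $\bone_!^v$ up to a unit $\pm v^kX_\lambda$. This requires that $K_0$ of the stabilizer $T\times\mathbb G_m\ltimes(\text{unipotent})$ equals $K_0^{T\times\mathbb G_m}(\mathrm{pt})$, and that the leading term of $\gr^H j_{!,v}$ on the open stratum is an invertible multiple of the conormal structure sheaf. I would cite Lemma~\ref{Lem:bone_properties}(2) for both facts.
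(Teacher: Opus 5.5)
Your proposal is correct and follows the paper's route. The paper deduces the corollary directly from Lemma~\ref{Lem:bone_properties}(2), whose proof is your truncation-wise argument: the stratification of $\mu_z^{-1}(0)$ by conormals to the orbits, stabilizers of the form $(T\times\mathbb{G}_m)\ltimes(\text{unipotent})$, and \cite[Lemma 5.5.1]{CG}; the passage to $\varprojlim$ is left implicit there. Your explicit limit step via coordinate projections, together with your support convention $\overline{\mathcal Y_v}=\bigsqcup_{v'\succeq v}\mathcal Y_{v'}$ (which agrees with Lemma~\ref{Lem:star}), simply spells out what the paper does not write down.
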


For $\beta\in\mathcal B$, we write $\operatorname{res}_w(\beta)\in\Ring P$ for
the coefficient of $\bone_!^w$ in this expansion, so that
\begin{equation}\label{eq:res_defn}
\beta=\sum_{x\in W}\operatorname{res}_x(\beta)\bone^x_!.
\end{equation}
We write an element of $\mathcal{B}$ as $(b_w)$ with $w\in W, b_w\in \Ring P\cdot \bone_!^w$.
{ We note that $\mathcal{B}$ can be viewed as} a complete topological { $\Ring$-module} equipped with the topology of the direct product.

For a finite Bruhat interval $\mathcal{J}\subset W$, we consider the neighborhood of zero $\mathcal{B}_{\mathcal{J}}=\{(b_w)| b_w=0, \forall w\in \mathcal{J}\}$ and the quotient $\mathcal{B}^{\mathcal{J}}=\mathcal{B}/\mathcal{B}_{\mathcal{J}}$ so that $\mathcal{B}=\varprojlim_{\mathcal{J}} \mathcal{B}/\mathcal{B}_{\mathcal{J}}$.

In what follows we will also need another topological basis of $\mathcal{B}$, the elements $\bone_*^w, w\in W,$ obtained similarly to $\bone_!^w$ but using the star pushforward instead of the shriek pushforward. We have $\bone_*^w=T_{w^{-1}}^{-1}\bone_*^1=\bone_*^1 T_{w^{-1}}^{-1}$.

\begin{Rem}\label{Rem:star_shriek_expansion}
We want to understand the expansion of the element $\bone_*^1$ in terms of the topological basis $\bone_!^w$ of the right $\Ring P$-module $\mathcal{B}$. Since the classes of $j_{!,w}$ form a topological basis in the Grothendieck group of the category of mixed Hodge modules on $B\backslash G/B^-$, we have elements $a_w\in \Ring$ with $\bone_*^1=\sum_{w\in W}a_w \bone_!^w$.
\end{Rem}

\begin{Rem}\label{Rem:finite_type}
{ When $W$ is finite, $\mathcal{B}$ is a twisted (by the automorphism corresponding to $-w_0$) regular $\mathcal{H}^a_W$-bimodule, where the twist is by the involution of $G$ giving $-w_0$ on the Cartan subalgebra, and $\bone^!_1=T_{w_0}$. Outside of finite type, we do not know any algebraic (i.e., without the Hodge theory) construction of $\mathcal{B}$ and its standard basis $\{\bone^!_w\}_{w \in W}$. We believe it would be very interesting to find one.}
\end{Rem}

\subsection{Twisted standard and costandard generators}\label{sec:twisted-generators}

We now introduce a collection of elements of $\mathcal{B}$ which correspond to the twisted
standard and costandard generators appearing in the Hodge-theoretic construction of
\cite[\S 3.2--3.3]{DMB}. We outline a set of relations that these prospective elements should satisfy in accordance with their geometric origins. We will make use of these elements and their relations only for heuristic purposes in \S\ref{sec:heuristic} in the final section of the paper, which will serve as a justification for our rigorous definition of the bar operation on the level-zero DAHA. Thus the purpose of this section is ultimately to explain why Definition~\ref{def:levelzero-bar} is inspired by and related to \cite{DMB}, and thus rooted in geometry.

For each real parameter \(\lambda\in P_{\mathbb R}\) and each
\(x\in W\), we introduce elements \(\Delta_x(\lambda), \nabla_x(\lambda) \in \Ha\) and  elements
\(\bone_!^x(\lambda)\in \B\), and state expected relations describing the interaction of these elements.

For a real parameter \(\lambda\), set
\[
m_i(\lambda)=\lceil\langle\lambda,\alpha_i^\vee\rangle\rceil,
\qquad i\in I.
\]
The dependence of the following elements only on these integral parts is the analogue of the
deformation dependence in \cite[Theorem~3.13(1)]{DMB}. Define
\begin{equation}\label{eq:affine-simple-factor}
\nabla_{s_i}(\lambda)
=
T_i-\hbar\,\frac{1-X_{m_i(\lambda)\alpha_i}}
{1-X_{\alpha_i}},
\qquad i\in I.
\end{equation}
For \(i\in I\), the Bernstein relation says
\[
\nabla_{s_i}(\lambda)=X_{-s_i\mu}T_iX_{\mu}\]
for $\langle\mu,\alpha_i^\vee\rangle=m_i(\lambda).$

If \(x=s_{i_1}\cdots s_{i_r}\) is a reduced expression in
\(W\), set
\begin{equation}\label{eq:affine-twisted-costandard-general}
\nabla_x(\lambda)
=
\nabla_{s_{i_1}}(s_{i_2}\cdots s_{i_r}\lambda)
\nabla_{s_{i_2}}(s_{i_3}\cdots s_{i_r}\lambda)
\cdots
\nabla_{s_{i_{r-1}}}(s_{i_r}\lambda)
\nabla_{s_{i_r}}(\lambda).
\end{equation}
A rank-two computation shows that this element is independent of the
chosen reduced expression. In particular, if
\(\ell(x_1x_2)=\ell(x_1)+\ell(x_2)\), then
\begin{equation}\label{eq:twisted-generator-multiplicativity}
\nabla_{x_1x_2}(\lambda)=\nabla_{x_1}(x_2\lambda)\nabla_{x_2}(\lambda).
\end{equation}
This is the combinatorial counterpart of the length-additive convolution
identity for the free-monodromic costandard objects in
\cite[Theorem~3.12(1)]{DMB}. Thus \(\nabla_x(0)=T_x\); this is the normalization compatible with \(T_w\bone_!^1=\bone_!^w\), which is opposite to the normalization in \cite{DMB} by which \( T_x = \Delta_x(0)\).




These elements {should} have a geometric interpretation in terms of the setup of \S\ref{sec:geometric}. Namely, let
\[
j_x:[B\backslash BxB/B]\hookrightarrow [B\backslash G/B]
\]
be the inclusion. For a real parameter \(\lambda\),
let \(\mathcal L_{x,\lambda}\) be the rank-one monodromic local
system on \([B\backslash BxB/B]\) whose right \(T\)-monodromy is \(-\lambda\)
and whose left \(T\)-monodromy is \(x\lambda\). With the same Tate-shift normalization as in
Definition~\ref{def:standard-bimodule-classes}, the element \(\nabla_x(\lambda)\) should be the
class
\[
\left[
\gr^H j_{x,*}\bigl(\mathcal L_{x,\lambda}(\ell(x)/2)[\ell(x)]\bigr)
\right]
\in K_0^{\mathbb G_m}\bigl(T^*([B\backslash G/B])\bigr).
\]
This is the associated-graded Kac--Moody analogue of { the
specialization to the closed point of the free-monodromic costandard object} \(\tilde{\nabla}_w^{(\lambda)}\) appearing in
\cite[Notation~3.1]{DMB}. We note that at this point the basics of associated graded of Hodge modules on ind-schemes have not been worked out, so the identification of \(\nabla_x(\lambda)\) with the associated graded class is not rigorous. Nevertheless, one can obtain formulas (\ref{eq:twisted-bimodule-action})
and (\ref{eq:twisted-bimodule-right-action}) below (that are inspired by Hodge theory) from the elementary definition (\ref{eq:affine-twisted-costandard-general}) similarly to the proof of Lemma \ref{Lem:bone_properties}.

We now explain analogous elements for the bimodule
\(\mathcal B\). For \(x\in W\), let
\[
j_x:\mathcal Y_x=[B\backslash Y_x]\hookrightarrow \mathcal Y
\]
be the locally closed stratum. For a real parameter \(\lambda\), let
\(\mathcal L^{\mathcal Y}_{x,\lambda}\) denote the rank-one free-monodromic local system on
\(\mathcal Y_x\) with right \(T\)-monodromy \(\lambda\) and
left monodromy \(-x\lambda\). We write
\[
\bone_!^x(\lambda)=
\left[
\gr^H j_{x,!}\bigl(\mathcal L^{\mathcal Y}_{x,\lambda}(-\ell(x)/2)[-\ell(x)]\bigr)
\right]
\in \mathcal B .
\]
This is the corresponding bimodule analogue of the free-monodromic standard
objects of \cite[Notation~3.1]{DMB}.
We normalize \(\bone_!^x(0)=\bone_!^x\).

The convolution formalism used for the
actions in \S\ref{sec:coxeter-actions}, parallel to
\cite[Theorem~3.12(1)]{DMB}, gives
\begin{align}
\label{eq:twisted-bimodule-action}
\nabla_{x_1}(-x_2\lambda)\bone_!^{x_2}(\lambda)& =\bone_!^{x_1x_2}(\lambda),\\
\label{eq:twisted-bimodule-right-action}
\bone_!^{x_1}(x_2\lambda)\nabla_{x_2}(\lambda)& =\bone_!^{x_1x_2}(\lambda)
\end{align}
for all $x_1, x_2$ with $\ell(x_1x_2) = \ell(x_1) + \ell(x_2)$. 

We also record the effect of the lattice generators on these deformed classes;
compare the translation relation in
\cite[Theorem~3.12(4)]{DMB}. With the preceding parameter convention and the
left/right character comparison fixed in \S\ref{SSS_lattice_actions}, the
corresponding bimodule relation is expected to take the form
\begin{equation}\label{eq:twisted-bimodule-parameter-translation}
X_{x\mu}\bone_!^x(\lambda)X_{\mu}
=
\bone_!^x(\lambda+\mu).
\end{equation}
Here \(\mu\in P\). These signs are also forced by
\(\nabla_x(\lambda+\mu)=X_{-x\mu}\nabla_x(\lambda)X_\mu\) and the two
convolution relations above.
In particular, at $\lambda=0$ this gives the usual formula for
the right action of the lattice part,
\begin{equation}\label{eq:twisted-bimodule-right-lattice-action}
\bone_!^xX_\mu
=
X_{-x\mu}\bone_!^x(\mu).
\end{equation}

For completeness we also keep track of the corresponding standard
elements. We denote the Hecke-side standard family by
\(\Delta_x(\lambda):=\nabla_{x^{-1}}(x\lambda)^{-1}\); in the conventions of this paper
\[
\Delta_x(0)=T_{x^{-1}}^{-1}.
\]
The analogues of the multiplicativity, translation, and
deformation-dependence relations for both \(\Delta\) and \(\nabla\) are proved
in the finite-dimensional setting in \cite[Theorems~3.12--3.13]{DMB}.

\begin{Rem}\label{Rem:alcove_dependence}
{ We would like to comment on the dependence of objects $\nabla_x(\lambda)$ and $\bone^x_!(\lambda)$ on the parameter $\lambda$. Let us start with $\nabla_x(\lambda)$, where it is easier. We consider the hyperplane arrangement $\beta^\vee=m$ in $P_{\mathbb{R}}$, where $m$ runs over
all integers, while $\beta$ runs over the real roots such that $y\beta<0$ for some $y\preceq x$.
These hyperplanes partition $P_{\mathbb{R}}$ into the union of alcoves, and for $\lambda$ in the interior of one of these alcoves, $\nabla_x(\lambda)$ only depends on the alcove. This can be seen from (\ref{eq:affine-simple-factor}) and (\ref{eq:affine-twisted-costandard-general}).

The situation with $\bone^x_!(\lambda)$ for general $\lambda$ is more complicated. Let us explain what happens for $x=1$. Pick a finite poset ideal $\mathcal{J}\subset W$. We say that a positive real root $\beta$ is {\it relevant} for $\mathcal{J}$ if $w\beta<0$ for some $w\in \mathcal{J}$. By an {\it open $\mathcal{J}$-alcove} in $P_{\mathbb{R}}$ we mean a connected component of the complement to the union of hyperplanes $\beta^\vee=m$ for relevant roots $\beta$. The Hodge-theoretic construction from \cite{DMB} shows that the projection of $\bone_!^1(\lambda)$ to $\mathcal{B}^{\mathcal{J}}$ only depends on the $\mathcal{J}$-alcove containing $\lambda$ provided $\lambda$ lies in its interior.

Now we concentrate on the case when $G$ is untwisted affine and $\lambda\in P_{0,\mathbb{R}}$, where $P_{0,\mathbb{R}}$ is the $\mathbb{R}$-span of the roots. Since the value of $\beta^\vee$ on $P_{0,\mathbb{R}}$ only depends on the projection of $\beta^\vee$ to the finite Cartan, the hyperplanes $\beta^\vee=m$ partition
$P_{0,\mathbb{R}}$ into the usual affine alcoves.
For such an alcove $A$ we use the notation $\Delta_x(A),\nabla_x(A), \bone^x_!(A)$ for the elements corresponding to $\lambda$ that lies in the relative interior of $A$.}
 \end{Rem}

In the rest of this section we will restrict to the situation when $G$ is untwisted affine, and $\lambda\in P_{0,\mathbb{R}}$ lies in the relative interior of an alcove, say $A$.
 For \(\mu\in P\cap P_{0,\mathbb R}\), equation
 (\ref{eq:twisted-bimodule-parameter-translation}) becomes
 \begin{equation}\label{eq:twisted-bimodule-alcove-translation}
X_{x\mu}\bone_!^x(A)X_{\mu}
=
\bone_!^x(A+\mu).
\end{equation}

The specialization of \eqref{eq:twisted-bimodule-action} to
\(x_2=1\) gives
\begin{equation}\label{eq:twisted-bimodule-left-identity-transport}
\nabla_w(-A)\bone_!^1(A)=\bone_!^w(A).
\end{equation}
Using (\ref{eq:twisted-bimodule-right-action}) we obtain
\begin{equation}\label{eq:twisted-bimodule-right-standard-transport}
\bone_!^1(wA)\nabla_w(A)=\bone_!^w(A),
\end{equation}
thus we obtain
\begin{equation}\label{eq:twisted-bimodule-mixed-transport}
\nabla_w(-A)\bone_!^1(A)=\bone_!^1(wA)\nabla_w(A).
\end{equation}
A specialization of \eqref{eq:twisted-bimodule-mixed-transport} serves as a key step in the level-zero computations we make in \S\ref{sec:heuristic}.

\section{The combinatorics of products in $\Ha$ and $\mathcal{B}$}\label{sec:products}

\subsection{Characterization of $\bone_!^1X_\lambda$}

From the geometric definition of the bimodule structure on $\mathcal{B}$, we obtain the following.
\begin{Lem}\label{Lem:star}
     For each $w\in W$, the element $T_w\bone_!^1 X_\lambda=\bone_!^w X_\lambda$ has an expansion of the form
     \begin{align}
         T_w\bone_!^1X_\lambda & = X_{-w\lambda}\bone_!^w+\sum_{u\succ w}F_u^\lambda \bone_!^u,
     \end{align}
     for $F_u^{\lambda} \in \Ring P$.
\end{Lem}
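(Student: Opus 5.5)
The plan is to compute $\bone_!^w X_\lambda$ geometrically, using that tensoring with a line bundle preserves supports, and then to read off the leading coefficient by restricting to the open stratum of the support. First, $T_w\bone_!^1 X_\lambda=\bone_!^w X_\lambda$ is immediate from Lemma \ref{Lem:bone_properties}(1), since the left and right actions commute. By definition of the right lattice action in \S\ref{SSS_lattice_actions}, $\bone_!^w X_\lambda$ is the class of $\gr^H j_{!,w}\otimes \mathcal{L}^R_\lambda$. Tensoring with a line bundle does not change the support, and $\gr^H j_{!,w}$ is supported on the preimage in $T^*\mathcal{Y}$ of the closure $\overline{\mathcal{Y}_w}=\bigsqcup_{u\succeq w}\mathcal{Y}_u$; recall that closures are taken with respect to the opposite Bruhat order.

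Next I expand in the topological basis. On each truncation $\mathcal{Y}_{\preceq z}$, I apply the filtration argument of \cite[Lemma 5.5.1]{CG} used in the proof of Lemma \ref{Lem:bone_properties}(2). A class supported on the union of conormals to the orbits $\mathcal{Y}_u$ with $u\succeq w$ then expands, as a left $\Ring P$-combination, only in the $\bone_!^u$ with $u\succeq w$. This uses the triangularity already noted there: $\bone_!^u$ restricted to the open piece of the support over $\mathcal{Y}_u$ is, up to a character twist, the conormal class of that orbit. Passing to the inverse limit over $z$ yields an expansion of the form $c\,\bone_!^w+\sum_{u\succ w}F^\lambda_u\bone_!^u$ with $c, F_u^\lambda\in\Ring P$.

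It remains to identify $c=X_{-w\lambda}$. I restrict to the open substack of the support lying over $\mathcal{Y}_w$, i.e.\ the conormal of $\mathcal{Y}_w=[B\backslash BwB^-/B^-]\cong[(B\cap wB^-w^{-1})\backslash *]$. There the classes involved are classes in $K^{\mathbb{G}_m}$ of a point stack whose reductive part is $T$. The right line bundle $\mathcal{L}^R_\lambda$ pulled back to $\mathcal{Y}_w$ corresponds to the character of the stabilizer $B\cap wB^-w^{-1}$, via $b\mapsto w^{-1}bw\in B^-$, given by the $B^-$-character $\lambda$. Under the convention that a right character $\lambda$ corresponds to the left character $-\lambda$, this becomes the left $T$-character $-w\lambda$, which is exactly $X^L_{-w\lambda}$ restricted to $\mathcal{Y}_w$. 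Therefore $\bone_!^wX_\lambda$ and $X_{-w\lambda}\bone_!^w$ have the same restriction to this open piece. Their difference is then supported on strata $u\succ w$, so $c=X_{-w\lambda}$.

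The main obstacle is this last step: keeping the sign and $w$-twist conventions straight, including the identification $X^*(B^-)\to X^*(B)$ by $-\mathrm{id}$ and the conjugation by $w$ of the stabilizer. As a sanity check, the result is consistent with the expected formula \eqref{eq:twisted-bimodule-right-lattice-action}. A secondary issue is justifying that restriction to the open stratum detects the coefficient of $\bone_!^w$ in the infinite product. I would handle this on a single truncation $\mathcal{Y}_{\preceq z}$ with $z\succeq w$, where the expansion is finite and \cite[Lemma 5.5.1]{CG} applies directly.
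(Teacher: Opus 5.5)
Your proposal is correct and follows essentially the same argument as the paper: the support of $\bone_!^wX_\lambda$ lies over $\bigsqcup_{u\succeq w}\mathcal{Y}_u$, which gives the triangularity, and the leading coefficient is obtained by pulling the $B^-$-character $\lambda$ back along $\mathrm{Ad}(w^{-1})$ on the stabilizer $B\cap wB^-w^{-1}$ to get $w\lambda$, after which the sign convention gives $X_{-w\lambda}$. The paper phrases the last step as restricting to the open truncation $\mathcal{Y}_{\preceq w}$, which meets the support exactly in $\mathcal{Y}_w$, so this is the same as your restriction to the open piece of the support.
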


\begin{proof}
By construction, $\bone_!^w$ is supported on {
$\mathcal{Y}_{\succeq w}=\bigsqcup_{z\succeq w}\mathcal{Y}_z$}, as is $\bone_!^w X_\lambda$. It follows that in the
expansion of $\bone_!^w X_\lambda$ in the $\{\bone_!^u\}$ basis only indices $u\succeq w$ can occur:
\[
\bone_!^w X_\lambda \;=\; F_w^\lambda\,\bone_!^w \;+\; \sum_{u\succ w} F_u^\lambda\,\bone_!^u
\qquad (F_u^\lambda\in \Ring P).
\]
It remains to compute $\operatorname{res}_w(\bone_!^w X_\lambda)\in \Ring P$.
To do so, restrict $\mathcal L_\lambda^R=\pi_R^*\mathcal L'_\lambda$ to $\mathcal{Y}_{\preceq w}$.
The map $\pi_R:\mathcal Y\to[*/B^-]$ restricts on stabilizers as
\[
B\cap wB^-w^{-1}\hookrightarrow wB^-w^{-1}\xrightarrow{\;\mathrm{Ad}(w^{-1})\;} B^-.
\]
The character $\lambda$ of $B^-$ then pulls back along $\mathrm{Ad}(w^{-1})$ to the character $w\lambda$ of $T$,
and thus (by our sign convention for comparing characters of $B^-$ and $B$ { from the beginning of Section \ref{SSS_lattice_actions}}), we obtain the leading term $X_{-w\lambda}$ in $\Ring P$.
\end{proof}

\begin{Lem}\label{Lem:convexity}
If $X_{\mu}\bone_?^u$ for some $\mu\in P$ and $u\in W$ occurs in the expansion of $\bone_\bullet^1 X_\lambda$, where $?,\bullet\in \{*,!\}$, then $\mu\in \operatorname{Conv}(-W\lambda)$. A completely analogous result holds for $\bone_?^u X_\mu$ and $X_\lambda \bone_\bullet^1$.
\end{Lem}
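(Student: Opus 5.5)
We reduce the statement to a convexity property of a single weight. The key is to exploit the $W$-equivariance of the set $\operatorname{Conv}(-W\lambda)$. We also use the fact that the bimodule relations let us move $X_\lambda$ across $\bone_\bullet^1$ at the cost of terms controlled by Lemma~\ref{Lem:Bernstein_consequence}.

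\emph{Step 1: reduce to $\lambda$ dominant.} Choose $\lambda$ so that $-\lambda$ is dominant (when $W\lambda$ meets the Tits cone or its negative), or work directly with an arbitrary $\lambda$, as follows. Using Lemma~\ref{Lem:bone_properties}(1), write $\bone_!^1 X_\lambda$. We aim to show that every coefficient $\operatorname{res}_u(\bone_!^1X_\lambda)$ lies in the span of $X_\mu$ with $\mu\in\operatorname{Conv}(-W\lambda)$. Consider the finite truncation $\mathcal{B}^{\mathcal J}$ for a finite Bruhat ideal $\mathcal J$, and argue by induction on $u$ in Bruhat order.

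\emph{Step 2: the inductive mechanism.} Apply Lemma~\ref{Lem:star} to $w=u$. The term $\bone_!^u X_\lambda$ has leading coefficient $X_{-u\lambda}$, and $-u\lambda\in -W\lambda$. Next, compute $\operatorname{res}_u(\bone_!^1 X_\lambda)$ by a different route. Write $\bone_!^u=T_u\bone_!^1$ and use the left $\Ring P$-linearity of the expansion. The left action of $X_\nu$ and $T_s$ on the basis $\bone_!^w$ is given by the Bernstein relation. Concretely, $T_s(X_\mu\bone_!^w)$ equals $X_{s\mu}T_s\bone_!^w$ plus a correction coming from \eqref{eq:Bernstein_alternative}. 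The correction's weights lie on the $\alpha_s$-string between $\mu$ and $s\mu$, hence in $\operatorname{Conv}(W\mu)$. Meanwhile $T_s\bone_!^w$ equals either $\bone_!^{sw}$ or $\bone_!^{sw}+\hbar\bone_!^w$. Consequently, for any element whose coefficients all lie in the span of $X_\mu$ with $\mu\in\operatorname{Conv}(-W\lambda)$, applying $T_s$ or $T_s^{-1}$ preserves this property. This uses that $\operatorname{Conv}(-W\lambda)$ is $W$-stable and is saturated along root strings.

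\emph{Step 3: conclusion via the commutation relation.} The identity $\bone_!^1 X_\lambda T_w = \bone_!^1 T_w (T_w^{-1}X_\lambda T_w)$, together with Lemma~\ref{Lem:Bernstein_consequence}, yields a triangular system for the coefficients. Combine this with $T_w\bone_!^1=\bone_!^1T_w$. In this system the diagonal terms are known by Lemma~\ref{Lem:star}, and the off-diagonal terms are expressed through $X_\nu$ with $\nu\in\operatorname{Conv}(W\lambda)$. Solve it inductively on $\mathcal J$. The cases with stars follow from the ones with shrieks, using Remark~\ref{Rem:star_shriek_expansion}: since $\bone_*^1=\sum a_w\bone_!^w$ with $a_w\in\Ring$, there are no weight shifts, and $\bone_*^w=T_{w^{-1}}^{-1}\bone_*^1$. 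The left/right analogue follows by symmetry.

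The main obstacle is Step 3. We must ensure the triangular system really determines the coefficients, and that Lemma~\ref{Lem:star} controls more than the leading term. If this fails, fall back on geometry. The class $\bone_!^1X_\lambda$ is $T$-equivariant, so we can localize to $T$-fixed points $wB^-$. There, $\mathcal L^R_\lambda$ has weight $-w\lambda$, so all characters appearing lie in the convex hull of $\{-w\lambda\}$ through the localization formula. Using that formula would require that the characters of the normal weights, namely roots, produce only root-string intermediate weights.
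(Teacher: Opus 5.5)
Your Step 2 is correct and is a real ingredient: left multiplication by $T_s^{\pm1}$ preserves the property that all coefficients in the left $\Ring P$-basis $\{\bone_!^w\}$ lie in $\Ring\operatorname{Conv}(-W\lambda)$. Your reduction of the $*$-cases to the case $?=\bullet=!$ is also sound. The transition matrices between $\{\bone_!^w\}$ and $\{\bone_*^w\}$ have entries in $\Ring$ and are Bruhat-unitriangular, and $\bone_*^1X_\lambda=\sum_w a_wT_w(\bone_!^1X_\lambda)$ is then covered by Step 2. This is slightly more economical than the paper's redoing of the argument in other bases.

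The main case, however, is not proved. A preservation property needs a starting point. The only information you extract about $\bone_!^1X_\lambda$ itself is its diagonal coefficient $X_{-\lambda}$, and the off-diagonal coefficients are exactly what is at stake. Step 3 gives no constraint on them. The identity $\bone_!^1X_\lambda T_w=\bone_!^w(T_w^{-1}X_\lambda T_w)$ only re-expresses one unknown right action of a lattice element through other unknown right actions on $\bone_!^w$, so no triangular system comes out of it, as you suspect. The geometric fallback does not close the gap either. Restricting to the stratum $\mathcal Y_w$ only recovers the diagonal coefficient $X_{-w\lambda}$ of Lemma~\ref{Lem:star}, and convexity of the off-diagonal coefficients is precisely what would remain to be shown.

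The missing idea is to use the \emph{support} part of Lemma~\ref{Lem:star}, applied to $u^{-1}$ rather than $u$. For $u\neq1$, $T_{u^{-1}}(\bone_!^1X_\lambda)=\bone_!^{u^{-1}}X_\lambda$ is supported on $\{x\succeq u^{-1}\}$, hence $\operatorname{res}_1(T_{u^{-1}}\bone_!^1X_\lambda)=0$; this is condition (!2) of Remark~\ref{Rem:uniqueness}. Then argue by minimal counterexample, as the paper does:
\begin{enumerate}
\item Write $\bone_!^1X_\lambda=\sum_wF_w\bone_!^w$. Let $u$ be Bruhat-minimal such that $F_u$ contains some $X_\mu$ with $\mu\notin\operatorname{Conv}(-W\lambda)$, so $u\neq1$, and take such $\mu$ with $W\mu$ maximal.
\item Summands with $w\not\preceq u$ contribute nothing to $\operatorname{res}_1(T_{u^{-1}}\bone_!^1X_\lambda)$. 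After moving $T_{u^{-1}}$ past $F_w$, only $T_z$ with $z\preceq u^{-1}$ occur, and the coefficient of $T_1$ in $T_zT_w$ vanishes unless $z=w^{-1}$.
\item Summands with $w\prec u$ contribute only weights in $\operatorname{Conv}(-W\lambda)$, by minimality of $u$ together with your Step 2.
\item The summand $w=u$ contributes $F_u$ with $u^{-1}$ applied to its weights, up to weights strictly lower in the orbit order.
\end{enumerate}
Hence $X_{u^{-1}\mu}$, with $u^{-1}\mu\notin\operatorname{Conv}(-W\lambda)$, survives with nonzero coefficient in a quantity that must vanish, which is a contradiction. This double-minimality argument is what has to replace your Step 3.
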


\begin{proof}
We will handle the case when $?=\bullet=!$ and then explain necessary modifications in the other three cases.

Let $\Sigma$ be the set of pairs $(u, \mu)$ such that the term $X_\mu \bone_!^u$ appears with a nonzero coefficient, { to be denoted by $c_{u,\nu}$}, and $\mu \notin \operatorname{Conv}(-W\lambda)$, and suppose for contradiction that $\Sigma$ is nonempty. Choose $u$ to be a minimal element in the projection of $\Sigma$ to $W$ (with respect to the Bruhat order). Among the pairs $(u, \mu) \in \Sigma$ with this minimal $u$, choose $\mu$ so that $W\mu$ is maximal with respect to the order $\leqslant$ on $W$-orbits from Definition \ref{defi:Conv}. Note that $u \neq 1$ by Lemma \ref{Lem:star} (with $w=1$).

We apply the operator $T_{u^{-1}}$ to the expansion of $\bone_!^1 X_\lambda$:
\begin{align}
    T_{u^{-1}} (\bone_!^1 X_\lambda) & = \sum_{(w, \nu) \in \Sigma} c_{w,\nu} T_{u^{-1}} X_\nu \bone_!^w.\label{eq:lhsrhs}
\end{align}
By Lemma \ref{Lem:star},  $\operatorname{res}_1(T_{u^{-1}}\bone_!^1 X_\lambda)=0$ (where $\operatorname{res}_1$ is defined by (\ref{eq:res_defn})).
We will now analyze the contribution to the $\bone_!^1$ term from each summand $T_{u^{-1}} X_\nu \bone_!^w$ on the right-hand side of (\ref{eq:lhsrhs}).
\begin{enumerate}
\item Assume, first, that $w\not\preceq u$. { From Lemma \ref{Lem:Bernstein_consequence}, it follows that}   $T_{u^{-1}}X_{\nu}T_w$ is of the form $\sum_{u'}F_{u'}T_{u'}$ with $F_{u'}\in \Ring P$ and $F_1=0$.  Hence $w\not\preceq u$ indeed implies $F_1=0$.
    \item When $w = u$, note that
    \[
    T_{u^{-1}} X_\mu \bone_!^u = T_{u^{-1}} X_\mu T_u \bone_!^1.
    \]
    Using Lemma \ref{Lem:Bernstein_consequence}, we see that $T_{u^{-1}} X_\nu T_u=X_{u^{-1}\nu}+\ldots$, where ``$\ldots$'' signifies the sum of two kind of terms. First, there are terms that only involve weights in $\operatorname{Conv}(W\nu)\setminus W\nu$, and, second, similarly to (1), there are terms of the form $X_{\nu'}T_{u'}$ with $u'\neq 1$.
    Thus, $$\operatorname{res}_1(\sum_{\nu| (u,\nu)\in \Sigma}c_{u,\nu}T_{u^{-1}}X_\nu\bone^u_!)= \sum c_{u,\nu}X_{u^{-1}\nu}+\ldots,$$
    where the summation is taken over all $\nu\in W\mu$ and ``$\ldots$'' only includes weights in $\operatorname{Conv}(W\mu)\setminus W\mu$.
    Since $\mu \notin \operatorname{Conv}(-W\lambda)$, it follows that $u^{-1}\mu \notin \operatorname{Conv}(-W\lambda)$.
    \item If $w \prec u$, then by the minimality of $u$, any $(w, \nu) \in \Sigma$ with $w \prec u$ must have $\nu \in \operatorname{Conv}(-W\lambda)$. Thus $\operatorname{res}_1(T_{u^{-1}}\sum_{\nu| (w,\nu)\in \Sigma}c_{w,\nu}X_\nu \bone^w_!)$ only has weights in  $\operatorname{Conv}(-W\lambda)$.
\end{enumerate}

Thus, the right-hand side of (\ref{eq:lhsrhs}) contains a nonzero multiple of$X_{u^{-1}\mu}\bone_!^1$ with $u^{-1}\mu \notin \operatorname{Conv}(-W\lambda)$ from (2) above, while the left hand side contains no terms involving $\bone^!_1$. This is a contradiction, and thus we must conclude that $\Sigma$ is empty.

To handle the other three cases of $(\bullet,?)$ we argue as follows. If $?=*$, we expand the elements of $\mathcal{H}^a_W$ in the basis $X_\nu T_{w^{-1}}^{-1}$, while for $?=!$, we use the basis $X_\nu T_w$. And if $\bullet=?$, we multiply $\bone^1_\bullet X_\lambda$ by $T_{u}^{-1}$ from the left, while for $\bullet=!$, we still multiply by $T_{u^{-1}}$.
\end{proof}

The following result shows that Lemma \ref{Lem:star} uniquely characterizes an element once we fix the coefficient of $\bone_!^1$ in $\Ring P$.

\begin{Lem}\label{Lem:uniqueness_conjugation}
Suppose $x=\sum_{w\in W}F_w \bone_!^w$ with $F_w\in \Ring P$ satisfies the following:
 $T_u x=\sum_{w\neq 1}F_w^u \bone_!^w$ with $F_w^u\in \Ring P$ for all $u\in W$.
Then $x=0$. The same conclusion holds if we replace $T_{u}$ with $T_u^{-1}$ or $!$ with $*$ in (2), and if we reverse the order of the products.
\end{Lem}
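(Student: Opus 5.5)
Suppose $x\neq 0$. I would take a Bruhat-minimal $u\in W$ with $F_u\neq 0$ and show that $T_{u^{-1}}x$ has a nonzero $\bone_!^1$-coefficient, contradicting the hypothesis applied to $T_{u^{-1}}$. The argument mirrors the proof of Lemma \ref{Lem:convexity}. The hypothesis with the identity element (which the statement includes, since $T_1=1$) gives $F_1=0$, so $u\neq 1$.

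Write $x=\sum_w F_w\bone_!^w=\sum_w F_wT_w\bone_!^1$ using Lemma \ref{Lem:bone_properties}(1). Then
\[T_{u^{-1}}x=\sum_w T_{u^{-1}}F_wT_w\bone_!^1.\]
To extract $\operatorname{res}_1$, I would expand each $T_{u^{-1}}F_wT_w$ in the basis $\{f\,T_y\}$ with $f\in\Ring P$, $y\in W$ (all $\Ring P$ placed on the left), and keep the $T_1$-coefficient. Since $f T_y\bone_!^1=f\bone_!^y$, this $T_1$-coefficient is exactly the contribution to $\operatorname{res}_1$. By Lemma \ref{Lem:Bernstein_consequence}, moving $T_{u^{-1}}$ past $F_w$ gives
\[T_{u^{-1}}F_w=\sum_{y\preceq u^{-1}} G_y T_y,\]
with $G_{u^{-1}}=F_w^{u}$, the image of $F_w$ under $u$. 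Multiplying by $T_w$, the product $T_yT_w$ has a $T_1$-component only if $y=w^{-1}$, where it equals the nonzero leading term coming from $T_{w^{-1}}T_w$. That case requires $w^{-1}\preceq u^{-1}$, i.e. $w\preceq u$. For $w$ not below $u$ in Bruhat order, the contribution to $\operatorname{res}_1$ is therefore zero; for $w\prec u$, we have $F_w=0$ by minimality. So only $w=u$ contributes.

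For $w=u$, the product $T_{u^{-1}}F_uT_u$ has $T_1$-coefficient $F_u^{u}$ plus possible lower-order corrections. I would control these corrections with a filtration, as in Lemma \ref{Lem:convexity}: choose a weight $\mu$ in the support of $F_u$ whose orbit $W\mu$ is maximal for $\leqslant$. All error terms from Lemma \ref{Lem:Bernstein_consequence} lie in $\operatorname{Conv}(W\nu)\setminus W\nu$, as do the error terms from reducing $T_{u^{-1}}T_u$ via quadratic relations. Hence the $W\mu$-component of $\operatorname{res}_1(T_{u^{-1}}x)$ equals the $W\mu$-part of $F_u$ moved by $u$, which is nonzero. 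This contradicts the hypothesis that $T_{u^{-1}}x$ has no $\bone_!^1$-term, so $x=0$.

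The main obstacle is this leading-term bookkeeping: one has to check that every correction produced while reducing $T_{u^{-1}}F_uT_u$ either carries some $T_y$ with $y\neq1$ or has weights strictly smaller in the order $\leqslant$. This is exactly the analysis in step (2) of Lemma \ref{Lem:convexity}, and I would invoke it verbatim.

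For the variants, the same argument applies with adjustments. With $T_u^{-1}$ in place of $T_u$, one multiplies by $T_u^{-1}$ and uses the analogue of Lemma \ref{Lem:Bernstein_consequence} for $T_w^{-1}$, which is obtained by the same induction. With $*$ in place of $!$, one uses the basis $\bone_*^w=T_{w^{-1}}^{-1}\bone_*^1$ and multiplies by $T_u^{-1}$ from the left. For right multiplication, one applies the mirror argument with $\Ring P$ placed on the right.
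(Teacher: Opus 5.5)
Your proposal is correct and is the paper's own argument: take a Bruhat-minimal $u$ with $F_u\neq 0$ and show $\operatorname{res}_1(T_{u^{-1}}x)\neq 0$, exactly as in Lemma~\ref{Lem:convexity}. You could shorten it: your own observation that the $T_1$-coefficient of $T_yT_w$ is $\delta_{y,w^{-1}}$ already shows that $\operatorname{res}_1(T_{u^{-1}}x)$ equals exactly the $u^{-1}$-twist of $F_u$ (that is, $X_\nu\mapsto X_{u^{-1}\nu}$), so the orbit-filtration paragraph can be dropped; moreover, its claim that all Bernstein corrections avoid $W\nu$ is not literally true, as the term $X_{s_i\mu}$ in \eqref{eq:Bernstein_alternative} shows.
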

\begin{proof}
Take a minimal $u$ with $F_u\neq 0$, {then show that $\operatorname{Res}_1{T_{u^{-1}}}x\neq 0$ arguing} as in the proof of Lemma \ref{Lem:convexity}.
\end{proof}

\begin{Rem}\label{Rem:uniqueness}
Let us spell out the uniqueness property for $\bone_!^1 X_\lambda$ explicitly: this is the unique element $\sum_{w\in W}F_w\bone_!^w\in\mathcal{B}$ with the following properties:
\begin{itemize}
\item[(!1)] $F_1=X_{-\lambda}$,
\item[(!2)]  $T_u\sum_{w\in W}F_w\bone_!^w=\sum_{w\neq 1}F^u_w\bone_!^w$ for all $u\in W\setminus \{1\}$.
\end{itemize}
Similarly, $\bone_*^1 X_\lambda$
is the unique element $\sum_{w\in W}F'_w\bone_!^w\in\mathcal{B}$ with the following properties:
\begin{itemize}
\item[($*1$)] $F'_1=X_{-\lambda}$,
\item[($*2$)]  $T_{u^{-1}}^{-1}\sum_{w\in W}F'_w\bone_!^w=\sum_{w\neq 1}F^u_w \bone^w_*$ for all $u\in W\setminus \{1\}$.
\end{itemize}
We also note that similar characterizations in terms of the multiplications from the right work for $X_\lambda \bone_!^1$ and $X_\lambda\bone_*^1$.
\end{Rem}

For $\gamma=\sum_i n_i\alpha_i\in Q$, set
$\operatorname{ht}(\gamma)=\sum_i n_i$. Suppose $\mu,\mu'\in P$ are $W$-conjugates of dominant elements, $\mu_+,\mu'_+$, and $\mu'\in \mu+Q$. Then we write
$d(\mu,\mu')=\operatorname{ht}(\mu_+-\mu'_+)$. We note that for $\mu'\in \operatorname{Conv}(W\mu)$ we have $d(\mu,\mu')\geqslant 0$ with the equality if and only if $\mu'\in W\mu$.

\begin{Rem}\label{Rem:uniqueness_approx}
We will need an ``approximate version'' of Remark \ref{Rem:uniqueness} proved by similar methods. Assume that $\lambda$ is dominant and $d\in \Z_{\geqslant 0}$. We want to investigate expressions of the form
$$X_{-\lambda} \bone^1_!=\sum_{w\in W}\bone^w_*(F_w+G_w),$$
where $F_w,G_w\in \Ring \mathcal{T}$, and all monomials $X_\nu$ that appear in $G_w$ satisfy $\nu\in \operatorname{Conv}(W\lambda)$ and $d(\lambda,\nu)>d$. Then $\sum_{w\in W}\bone^w_*F_w$ satisfies the following conditions:
\begin{itemize}
\item[(1!)] $F_1-X_\lambda$ is an $\Ring$-linear combination of monomials $X_\nu$ with $W\nu\leqslant W\lambda$ and $d(\lambda,\nu)>d$,
\item[(2!)] For all $u\neq 1$, for the expansion $\sum_{w}\bone^w_* F_wT_u=\sum_{w\in W}\bone^w_* F^u_w$, the coefficient $F^u_1$ is the $\Ring$-linear combination of monomials $X_\nu$ with $W\nu\leqslant W\lambda$ and $d(\lambda,\nu)>d$.
\end{itemize}
\end{Rem}

\subsection{Products in $\Hecke^a_W$}\label{sec:productsinh}
Our goal in this section is to analyze the expansion of products of the form $T_w^{-1} X_\mu T_{u^{-1}}$ in terms of the basis elements $T_{(w')^{-1}}^{-1}X_{\mu'}$ of $\Hecke^a_W$ in the case when $\mu$ is \emph{strictly Tits}, defined as follows.

\begin{defi}
Let $P_+$ denote the subset of dominant weights in $P$.
    An element $\mu$ lies in the \emph{Tits cone} $\mathcal{T}$ if it is a $W$-conjugate of an element in $P_+$. This dominant element is determined uniquely by $\mu$ and is denoted by $\mu_+$. Note that this is equivalent to the following condition: the number of positive real roots $\beta$ such that $\langle\mu,\beta^\vee\rangle<0$ is finite.

    An element $\mu$ in the Tits cone is called \emph{strictly Tits} if the set of positive real roots $\beta$ for which $\langle \mu, \beta^\vee\rangle \leqslant 0$ is finite. The locus of strictly Tits elements will be denoted by $\mathcal{T}^>$.
\end{defi}

The next result is straightforward.

\begin{Lem}\label{Lem:strictly_Tits_properties}
The following claims are true:
\begin{enumerate}
\item Let $\lambda_1\in \mathcal{T}^>, \lambda_2\in \mathcal{T}$ and $\alpha_1,\alpha_2$ be positive rational numbers such that $\alpha_1\lambda_1+\alpha_2\lambda_2\in P$. Then $\alpha_1\lambda_1+\alpha_2\lambda_2\in \mathcal{T}^>$.
\item $\mathcal{T}^>$ is $W$-stable.
\end{enumerate}
\end{Lem}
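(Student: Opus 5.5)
The plan is to work entirely with the characterizations in the definition. Membership in $\mathcal{T}$ means the set $N_{\leqslant 0}(\mu)$ of positive real roots $\beta$ with $\langle\mu,\beta^\vee\rangle<0$ is finite. Membership in $\mathcal{T}^>$ means the set $N_{\leqslant 0}(\mu)$ of positive real roots with $\langle\mu,\beta^\vee\rangle\leqslant 0$ is finite. Both claims then reduce to bookkeeping with finite sets of positive real roots and the finiteness of inversion sets $\Phi_w$.

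For (1), I take a positive real root $\beta$ with $\langle \alpha_1\lambda_1+\alpha_2\lambda_2,\beta^\vee\rangle\leqslant 0$. Since $\alpha_1,\alpha_2>0$, at least one of two things holds: either $\langle\lambda_1,\beta^\vee\rangle\leqslant 0$, or $\langle\lambda_2,\beta^\vee\rangle<0$. Indeed, if both pairings were such that the first is $>0$ and the second is $\geqslant 0$, the combination would be strictly positive. Hence the bad set for the combination lies in the union of two sets: the set of $\beta$ with $\langle\lambda_1,\beta^\vee\rangle\leqslant 0$, which is finite since $\lambda_1\in\mathcal{T}^>$, and the set of $\beta$ with $\langle\lambda_2,\beta^\vee\rangle<0$, which is finite since $\lambda_2\in\mathcal{T}$ (using the equivalent formulation of the Tits cone stated in the definition).

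It remains to check that $\lambda:=\alpha_1\lambda_1+\alpha_2\lambda_2$ lies in $\mathcal{T}$ at all, i.e.\ is $W$-conjugate to a dominant weight. The finiteness just shown gives in particular that only finitely many positive real roots pair negatively with $\lambda$, and by the stated equivalence this means $\lambda\in\mathcal{T}$. If one prefers not to lean on that equivalence in this direction, I would argue directly: pick $\beta=\alpha_i$ with $\langle\lambda,\alpha_i^\vee\rangle<0$ and replace $\lambda$ by $s_i\lambda$. This decreases the number of negatively pairing positive real roots by one, since $s_i$ permutes $\Delta_+^{re}\setminus\{\alpha_i\}$. After finitely many steps we reach a dominant weight. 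Since $\lambda\in P$ by hypothesis, this gives $\lambda\in\mathcal{T}^>$.

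For (2), let $\mu\in\mathcal{T}^>$ and $w\in W$. Then $\langle w\mu,\beta^\vee\rangle=\langle\mu,(w^{-1}\beta)^\vee\rangle$, and $w^{-1}\beta$ is a real root. It is positive unless $\beta\in\Phi_w$, which is a finite set. So the bad set for $w\mu$ is contained in $\Phi_w\cup w\bigl(N_{\leqslant 0}(\mu)\bigr)$, which is finite. Also $w\mu\in\mathcal{T}$ trivially, since $\mathcal{T}$ is defined as a union of $W$-orbits.

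I expect no real obstacle. The only point needing care is the direction "finitely many negative pairings implies $W$-conjugate to dominant" in (1). The definition asserts this equivalence, and the reflection argument above provides a backup.
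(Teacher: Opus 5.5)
Your proof is correct. The paper gives no proof and calls the lemma straightforward; your direct check is the natural way to fill that in: the bad set of the combination lies in the union of the two bad sets, the inversion set $\Phi_w$ absorbs the sign changes under $w$, and the simple-reflection descent establishes membership in $\mathcal{T}$. One notational slip: in your first sentence the strict-inequality set should get its own name, such as $N_{<0}(\mu)$, rather than reusing $N_{\leqslant 0}(\mu)$.
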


For example, when $G$ is untwisted affine, we have $\mathcal{T}^>=\mathcal{T}\setminus \mathbb{Z}\delta$. The case where $\mu\in \Z\delta$ is not particularly interesting as the corresponding element $X_\mu$ is central.

We introduce some notation. Lemma \ref{Lem:Bernstein_consequence} shows that if $T_{(w')^{-1}}^{-1}X_{\mu'}$ appears in the expansion of $T_{w^{-1}}^{-1}X_\mu T_{u^{-1}}$, then $\mu'\in \operatorname{Conv}(W\mu)$, in particular, $\mu'\in \mathcal{T}$ if $\mu\in \mathcal{T}$.

The following are two technical results that we aim to prove.

\begin{Prop}\label{Prop:weights_occuring1}
Let $\mu$ be strictly Tits.
For each $d>0$, there is a finite set $\Sigma(\mu,d)$ such that for all monomials $T_{(w')^{-1}}^{-1}X_{\mu'}$ with $d(\mu,\mu')\leqslant d$ that occur with nonzero coefficient in the expansion of $T_{w^{-1}}^{-1}X_\mu T_{u^{-1}}$ we have $\mu'\in u \Sigma(\mu,d)$.
\end{Prop}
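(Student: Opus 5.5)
The plan is to track the weights that appear when $X_\mu$ is moved through $T_{u^{-1}}$ one simple reflection at a time, normalising each weight by the ``expected'' Weyl group element. The main point is that the left factor $T_{w^{-1}}^{-1}$ plays no role for the weights. Multiplying on the left by $T_{w^{-1}}^{-1}$, and re-expanding products of Hecke generators in the basis $T_{(w')^{-1}}^{-1}$, only changes the $\Hecke_W$-parts. So it suffices to control the weights $\mu'$ in the expansion $X_\mu T_{u^{-1}}=\sum T_{y}X_{\mu'}$.

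Fix a reduced expression $u^{-1}=s_{i_1}\cdots s_{i_k}$ and put $y_j=s_{i_j}\cdots s_{i_1}$, so $y_k=u$. Move $X_\mu$ to the right through $T_{i_1},\dots,T_{i_k}$ using \eqref{eq:Bernstein_alternative}, and write each intermediate weight as $\nu_j=y_j\xi_j$. Each step is of one of three kinds:
\begin{enumerate}
\item \emph{reflection}: $\xi_j=\xi_{j-1}$;
\item \emph{skip}: $T_i$ is dropped and the weight is kept; up to the trivial case $k=0$, this happens only when $\langle\nu,\alpha_{i_j}^\vee\rangle<0$, and then $\xi_j=s_\beta\xi_{j-1}$ with $\beta=y_{j-1}^{-1}\alpha_{i_j}$;
\item \emph{interior string point}: $\xi_j=\xi_{j-1}-c\beta$ with $c$ strictly between $0$ and $\langle\xi_{j-1},\beta^\vee\rangle$.
\end{enumerate}
In cases (2) and (3) the root $\beta$ is a positive real root, because the expression is reduced. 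In case (2) we moreover have $\langle\xi_{j-1},\beta^\vee\rangle<0$. The final weight satisfies $u^{-1}\mu'=\xi_k$, and $\xi_0=\mu$. So it suffices to show that the $\xi$'s reachable from $\mu$ with $d(\mu,\xi)\leqslant d$ form a finite set $\Sigma(\mu,d)$, independent of $u$ and of the reduced word.

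Now I would bound the two kinds of moves. Skips preserve the $W$-orbit and strictly decrease the finite set $N(\xi)=\{\beta>0:\langle\xi,\beta^\vee\rangle<0\}$. Writing $\xi=x\xi_+$ with $x$ minimal, we have $s_\beta x<x$ in Bruhat order. Hence between two string moves only finitely many $W$-conjugates can occur, namely those indexed by a Bruhat interval below $x$. String moves land in $\operatorname{Conv}(W\xi)\setminus W\xi$, so each one raises $d(\mu,\cdot)$ by at least $1$, since $\operatorname{ht}$ of the dominant part drops. Thus at most $d$ string moves occur. By Lemma~\ref{Lem:strictly_Tits_properties}, all intermediate $\xi$ remain strictly Tits, since a string point is a positive combination of $\xi$ and $s_\beta\xi$. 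By induction on the number of string moves, it then remains to show: for a fixed strictly Tits $\xi$ and a fixed bound $d$, only finitely many $\xi-c\beta$ with $0<c<\langle\xi,\beta^\vee\rangle$ satisfy $d(\xi,\xi-c\beta)\leqslant d$.

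I expect this last finiteness to be the main obstacle. It is exactly where strict Tits-ness must be used: for a merely Tits $\mu$ (e.g.\ $\mu\in\Z\delta$) there are infinitely many $\beta$, and the statement fails. My first attempt would be to show two things. First, $\langle\xi,\beta^\vee\rangle\to\infty$ as $\operatorname{ht}\beta\to\infty$; this holds because only finitely many real $\beta>0$ have $\langle\xi,\beta^\vee\rangle\leqslant 0$, and one can iterate this using $\xi-\epsilon\rho$, which is still strictly Tits after scaling. Second, the drop in height of the dominant part from $\xi$ to $\xi-c\beta$ is bounded below by an increasing function of $\min(c,\langle\xi,\beta^\vee\rangle-c)$ together with $\operatorname{ht}\beta$. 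For the second point I would reduce to $\xi$ dominant by applying $x^{-1}$, which changes $\beta$ only for finitely many roots. Then I would compare $(\xi-c\beta)_+$ with $\xi$ via the rank-one $\mathfrak{sl}_2$-string through $\xi$ in direction $\beta$: the dominant representative of $\xi-c\beta$ lies below $\xi-\min(c,\langle\xi,\beta^\vee\rangle-c)\beta'$ for a suitable positive root $\beta'$ of comparable height. With these two facts, $d\leqslant D$ bounds $\operatorname{ht}\beta$, and hence leaves only finitely many pairs $(\beta,c)$.
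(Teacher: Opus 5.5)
Your reduction is correct and has the same skeleton as the paper's proof.

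\begin{itemize}
\item Discarding the left factor $T_{w^{-1}}^{-1}$ is legitimate: it only recombines Hecke parts and creates no new weights. This also spares you the $w$-dependent case split in (\ref{eq:intervals}).
\item Your normalised weights $\xi_j$ correspond to the paper's partial sums $\mu-\sum_{r\leqslant m}i_r\beta_{j_r}$, and your skip/string dichotomy is exactly ($\heartsuit$).
\item The bound of at most $d$ string moves is the same as in the paper.
\item Your Bruhat-interval argument for chains of skips is a clean substitute for the finiteness of $\{\nu'\in W\nu\mid \nu'\geqslant w\nu\}$.
\end{itemize}

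The gap is the step you flag yourself. It is the paper's Lemma~\ref{Lem:root_finiteness}, the only place where strict Tits-ness enters, and your plan for it is not yet an argument. Your second ``fact'' has no mechanism behind it. Take $\xi$ dominant, $\beta>0$, $k=\langle\xi,\beta^\vee\rangle$, and $y\in W$ making $\xi-c\beta$ dominant. Writing $\xi-c\beta$ either as it is or as $s_\beta(\xi-(k-c)\beta)$, the rank-one string only yields $\xi-(\xi-c\beta)_+\geqslant\min(c,k-c)\,|y\beta|$. Bounding $\operatorname{ht}|y\beta|$ from below in terms of $\operatorname{ht}\beta$ requires controlling $y$, which is the whole difficulty. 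Your sketch does not do this, and it does not say what $\beta'$ or ``comparable height'' means. So, as it stands, you have not shown that a bounded drop forces $\operatorname{ht}\beta$ to be bounded.

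The missing idea is the $W$-invariant form $(\cdot,\cdot)$ of the symmetrizable algebra, which turns a bounded drop into a bounded pairing.
\begin{itemize}
\item One has $(\xi,\xi)-(\xi-c\beta,\xi-c\beta)=c(k-c)(\beta,\beta)$.
\item If the drop is at most $d$, then $(\xi-c\beta)_+$ lies in the finite set of dominant $\nu\leqslant\xi_+$ with $\operatorname{ht}(\xi_+-\nu)\leqslant d$. Hence the left-hand side takes finitely many values.
\item Real roots have finitely many positive squared lengths, so $c(k-c)$ is bounded. Since $c$ and $k-c$ are nonzero integers of the same sign, $|k|\leqslant |c|\,|k-c|+1$ is bounded as well.
\item Strict Tits-ness then leaves only finitely many real roots $\beta$ with $0<|\langle\xi,\beta^\vee\rangle|\leqslant K$. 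This is your first fact. Alternatively, as in the paper: reduce to $\xi$ dominant, bound the coefficients of $\beta$ at the simple roots not orthogonal to $\xi$, and use positive definiteness of the form on the finite-type subsystem orthogonal to $\xi$.
\item For each such $\beta$, only finitely many $c$ remain.
\end{itemize}
This is essentially the paper's proof of Lemma~\ref{Lem:root_finiteness}. The same identity also justifies your unproved claim that a string point is never $W$-conjugate to $\xi$.

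Incidentally, $\mu\in\Z\delta$ is not a case where the statement fails: $X_\mu$ is central and no string moves occur.
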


\begin{Prop}\label{Prop:weights_occuring2}
Let $\mu$ be strictly Tits. For each $w\in W, d>0$,
there is a finite set $\Sigma'(w,\mu,d)$ such that for all $X_{\mu'}$ with $d(\mu, \mu') \leqslant d$ occurring with nonzero coefficient in the expansion of $T_{w^{-1}}^{-1} X_\mu T_{u^{-1}}$ we have $\mu'\in \Sigma'(w,\mu,d)$.
\end{Prop}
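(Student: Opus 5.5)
The plan is to read the expansion in Proposition~\ref{Prop:weights_occuring2} with the lattice monomials written on the \emph{left}, that is, in the basis $X_{\mu'}T_{(w')^{-1}}^{-1}$ of $\Ha$. With that reading the right factor $T_{u^{-1}}$ never touches the lattice part, so the finite set can be taken independent of $u$, exactly as stated. This is the mirror image of Proposition~\ref{Prop:weights_occuring1}, where the weights sit on the right and are therefore transported by $u$ (hence the set $u\Sigma(\mu,d)$).

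The reading needs one justification: in the right-hand basis $T_{(w')^{-1}}^{-1}X_{\mu'}$ the claim cannot hold uniformly in $u$. The term $T_{w^{-1}}^{-1}T_{u^{-1}}X_{u\mu}$, with $d(\mu,u\mu)=0$, would give weights $u\mu$ ranging over the infinite orbit $W\mu$. I would state this reading explicitly at the start of the proof.

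\textbf{Step 1.} I will show that, for fixed $w$ and $\mu$, there is a finite expansion
\[
T_{w^{-1}}^{-1}X_\mu=\sum_{y\preceq w,\ \nu} b_{y,\nu}\,X_\nu\,T_{y^{-1}}^{-1},\qquad b_{y,\nu}\in\Ring,
\]
with only finitely many nonzero $b_{y,\nu}$ and every $\nu\in\operatorname{Conv}(W\mu)$.

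\begin{itemize}
\item I apply Lemma~\ref{Lem:Bernstein_consequence} in its version with $T_i^{-1}$ in place of $T_i$. This version follows from \eqref{eq:Bernstein_alternative} together with $T_i^{-1}=T_i-\hbar$, and the same induction on $\ell(w)$ as in that lemma.
\item At each step, commuting one factor $T_i^{-1}$ past a monomial $X_\nu$ produces $X_{s_i\nu}T_i^{-1}$ plus finitely many monomials $X_{\nu'}$ along the $\alpha_i$-string, all lying in $\operatorname{Conv}(W\nu)\subset\operatorname{Conv}(W\mu)$.
\item After $\ell(w)$ steps only finitely many terms have been created, and they are determined by $w$ and $\mu$ alone.
\end{itemize}

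I define $\Sigma'(w,\mu,d)$ to be the set of those $\nu$ occurring with $b_{y,\nu}\neq0$ and $d(\mu,\nu)\leqslant d$. This set is finite for every $d$, even without the strictly Tits hypothesis, which is used only to guarantee $d(\mu,\nu)\geqslant 0$ for all the weights involved.

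\textbf{Step 2.} I multiply the expansion of Step 1 on the right by $T_{u^{-1}}$. Each product $T_{y^{-1}}^{-1}T_{u^{-1}}$ lies in $\Hecke_W$, so it is a finite $\Ring$-combination of the basis elements $T_{(w')^{-1}}^{-1}$. Hence
\[
T_{w^{-1}}^{-1}X_\mu T_{u^{-1}}=\sum_{\nu}X_\nu\Bigl(\sum_{y}b_{y,\nu}\,T_{y^{-1}}^{-1}T_{u^{-1}}\Bigr),
\]
and I re-expand each inner Hecke element in the basis $T_{(w')^{-1}}^{-1}$. No new lattice monomials are created, and distinct $\nu$ remain in distinct basis vectors $X_\nu T_{(w')^{-1}}^{-1}$. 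Cancellation can only remove terms, so every $X_{\mu'}$ with nonzero coefficient and $d(\mu,\mu')\leqslant d$ has $\mu'\in\Sigma'(w,\mu,d)$, for every $u\in W$.

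\textbf{Main obstacle.} The only delicate point is Step 1: checking that the $T_i^{-1}$ form of Lemma~\ref{Lem:Bernstein_consequence} keeps all weights in $\operatorname{Conv}(W\mu)$. This is a rank-one check from \eqref{eq:Bernstein_alternative}: the extra terms are $\hbar$ times $\alpha_i$-strings between $\nu$ and $s_i\nu$, all in the convex hull. I expect no genuine difficulty there; the essential content is the choice of basis ordering, which puts all dependence on $u$ into the Hecke factor.
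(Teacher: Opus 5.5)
There is a genuine gap: you prove a different statement from the paper's, and the one you prove is not the one the paper needs. In \S\ref{sec:productsinh} the expansion is fixed to be in the basis $T_{(w')^{-1}}^{-1}X_{\mu'}$. Proposition~\ref{Prop:weights_occuring2} is about the monomials $X_{\mu'}=T_1^{-1}X_{\mu'}$ themselves, i.e.\ the pure-lattice ($w'=1$) part of that expansion. Compare Proposition~\ref{Prop:weights_occuring1}, which is about all monomials $T_{(w')^{-1}}^{-1}X_{\mu'}$; the paper's argument for Proposition~\ref{Prop:weights_occuring2} explicitly assumes $w'=1$.

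Your orbit objection only rules out the reading with arbitrary $w'$. For $w'=1$ the weights $u\mu$ cancel. Already for $w=1$, $u=s$ and $k=\langle\mu,\alpha_s^\vee\rangle>0$, \eqref{eq:Bernstein_new} gives $X_\mu T_s=T_s^{-1}X_{s\mu}-\hbar\sum_{i=1}^{k-1}X_{s\mu+i\alpha_s}$: the $\hbar X_{s\mu}$ coming from $T_s=T_s^{-1}+\hbar$ is killed by the Bernstein correction.

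The same example shows that changing the basis changes the coefficients, even though the basis vectors $X_{\mu'}T_1^{-1}=T_1^{-1}X_{\mu'}$ coincide. Your expansion $X_\mu T_s=X_\mu T_s^{-1}+\hbar X_\mu$ has pure-lattice part $\hbar X_\mu$, not $-\hbar\sum_{i=1}^{k-1}X_{s\mu+i\alpha_s}$.

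The right-lattice, $w'=1$ reading is also the one used in $(\star)$ in the proof of Proposition~\ref{Prop:action_description}. There $u^{-1}\Sigma_1$ from Proposition~\ref{Prop:weights_occuring1} is intersected with $\Sigma_2$ for the same coefficient of the same expansion. Remark~\ref{Rem:weights_occuring1} likewise contrasts two right-lattice bases. Your version could not be used in $(\star)$. That your argument uses neither $\mu\in\mathcal{T}^>$ nor the bound $d$ is a symptom of having trivialized the statement. Your Step~1 finiteness is correct, but it does not touch the actual difficulty.

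What is missing is control, uniformly in $u$, of the weights created when $X_\mu$ is pushed to the right through $T_{u^{-1}}$, keeping only the $w'=1$ terms. The paper gets this from Lemma~\ref{Lem:contributions}. A $w'=1$ contribution comes from deleting letters $s_{j_1},\dots,s_{j_s}$ from a reduced word of $u$ so that $w=u_s\cdots u_0$. Its weight is $\lambda_s=w\mu-\sum_r i'_r\,u_s\cdots u_r\alpha_{j_r}$ as in \eqref{eq:lambda_s_formula2}, so it is anchored at $w\mu$ rather than $u\mu$.

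One then checks, using the length conditions in \eqref{eq:intervals}, that each step does one of three things: it has $i'_r=0$; or it strictly lowers $d$; or it stays in the same $W$-orbit while moving up by a positive multiple of a positive root. Finiteness then follows as in the proof of Proposition~\ref{Prop:weights_occuring1}. There are at most $d$ lowering steps, each with finitely many possible roots by Lemma~\ref{Lem:root_finiteness}; this is where strict Tits-ness enters. For the orbit-preserving steps, one uses that $\{\nu'\in W\nu\mid \nu'\geqslant x\nu\}$ is finite. None of this appears in your proposal.
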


The important feature is that neither of the finite sets $\Sigma'(w,\mu,d),\Sigma(\mu,d)$ in these propositions depends on $u$.

We start by providing formulas for $T_{w^{-1}}^{-1} X_\lambda T_s$ for a simple reflection $s$.
\begin{Lem}\label{Lem:simplecases} For any $w \in W$, $\lambda \in P$, and simple reflection $s$, set $k=\langle \lambda,\alpha_s^\vee\rangle$ and $\alpha=\alpha_s$. Then
    \begin{equation}\label{eq:basis_expansion}
T_{w^{-1}}^{-1}X_\lambda T_s=T_{(ws)^{-1}}^{-1}X_{s\lambda}+\hbar T_{w^{-1}}^{-1}\begin{cases} -\sum_{i=1}^{k-1}X_{s\lambda+i\alpha}& \text{if }k>0, \ell(ws)>\ell(w),\\
-\sum_{i=0}^{k-1}X_{s\lambda+i\alpha}& \text{if }k\geqslant 0, \ell(ws)<\ell(w),\\
\sum_{i=k}^0 X_{s\lambda+i\alpha}&\text{if }k< 0, \ell(ws)>\ell(w),\\
\sum_{i=k}^{-1} X_{s\lambda+i\alpha}&\text{if }k\leqslant 0, \ell(ws)<\ell(w).
\end{cases}
\end{equation}
\end{Lem}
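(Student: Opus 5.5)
The plan is to derive \eqref{eq:basis_expansion} directly from the rank-one Bernstein relation \eqref{eq:Bernstein_alternative} and the quadratic relation \eqref{eq:hecke_quadratic}. No induction is needed. First write
\[
X_\lambda T_s=T_sX_{s\lambda}+E_k ,
\]
where $E_k$ is the right-hand side of \eqref{eq:Bernstein_alternative}. Then left-multiply by $T_{w^{-1}}^{-1}$, which gives
\[
T_{w^{-1}}^{-1}X_\lambda T_s=\bigl(T_{w^{-1}}^{-1}T_s\bigr)X_{s\lambda}+T_{w^{-1}}^{-1}E_k .
\]
So everything reduces to rewriting $T_{w^{-1}}^{-1}T_s$ in terms of $T_{(ws)^{-1}}^{-1}$, and then re-indexing the sums in $E_k$.

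For the Hecke factor there are two cases.
\begin{itemize}
\item If $\ell(ws)<\ell(w)$, then $w^{-1}=s\,(ws)^{-1}$ is length-additive. Hence $T_{w^{-1}}=T_sT_{(ws)^{-1}}$, and so $T_{w^{-1}}^{-1}T_s=T_{(ws)^{-1}}^{-1}$.
\item If $\ell(ws)>\ell(w)$, then $(ws)^{-1}=s\,w^{-1}$ is length-additive. Hence $T_{(ws)^{-1}}^{-1}=T_{w^{-1}}^{-1}T_s^{-1}$. Using $T_s=T_s^{-1}+\hbar$, this gives $T_{w^{-1}}^{-1}T_s=T_{(ws)^{-1}}^{-1}+\hbar\,T_{w^{-1}}^{-1}$.
\end{itemize}
In the second case there is therefore an extra term $+\hbar\,T_{w^{-1}}^{-1}X_{s\lambda}$, which has to be absorbed into the correction sum.

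Next rewrite $E_k$ in the indexing of the statement, using $\lambda=s\lambda+k\alpha$. For $k>0$, $E_k=-\hbar\sum_{i=0}^{k-1}X_{s\lambda+i\alpha}$. For $k<0$, $E_k=\hbar\sum_{i=k}^{-1}X_{s\lambda+i\alpha}$. For $k=0$, $E_k=0$, which is consistent with both formulas being empty sums. The four cases then follow.
\begin{itemize}
\item When $\ell(ws)<\ell(w)$ there is no extra term, and we read off the second and fourth lines of \eqref{eq:basis_expansion} directly.
\item When $\ell(ws)>\ell(w)$ and $k>0$, the extra $+\hbar X_{s\lambda}$ cancels the $i=0$ summand. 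This leaves $-\sum_{i=1}^{k-1}$.
\item When $\ell(ws)>\ell(w)$ and $k<0$, the extra term adds the $i=0$ summand. This gives $\sum_{i=k}^{0}$.
\end{itemize}

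The case split in the statement deserves a check. In the first and third lines, $k$ is strictly nonzero. The case $k=0$ with $\ell(ws)>\ell(w)$ is not listed there, so verify it separately: it gives $T_{(ws)^{-1}}^{-1}X_{s\lambda}+\hbar\,T_{w^{-1}}^{-1}X_{s\lambda}$. This agrees with the third-line formula evaluated at $k=0$ (a single term $i=0$), so the statement should be read accordingly. The only real obstacle is this bookkeeping: tracking signs and the endpoints of the index ranges. The substantive input is just the length-additivity of $s\,w^{-1}$ or $s\,(ws)^{-1}$ in the two cases.
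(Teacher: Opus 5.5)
Your proof is correct and is essentially the paper's argument. Both proofs combine the rank-one Bernstein relation with $T_s=T_s^{-1}+\hbar$ and the length-additivity of $s\,w^{-1}$ or $s\,(ws)^{-1}$. The only difference is bookkeeping: the paper first rewrites $X_\lambda T_s$ as $T_s^{-1}X_{s\lambda}$ plus a shifted sum, so its extra $-\hbar T_{w^{-1}}^{-1}$ arises in the descent case, whereas you keep $T_sX_{s\lambda}$ and the extra term arises in $T_{w^{-1}}^{-1}T_s$ in the ascent case.

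Your remark about $k=0$, $\ell(ws)>\ell(w)$ is also right. The paper's own computation gives $\hbar\,T_{w^{-1}}^{-1}X_{s\lambda}$ there, so the third line of the statement should read $k\leqslant 0$.
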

\begin{proof}
By (\ref{eq:Bernstein_alternative}),

\begin{equation}\label{eq:Bernstein_new}
\begin{alignedat}{2}
X_\lambda T_s
&=T_sX_{s\lambda}
-\hbar\sum_{i=0}^{k-1}X_{s\lambda+i\alpha}
=T_s^{-1}X_{s\lambda}-\hbar\sum_{i=1}^{k-1} X_{s\lambda+i\alpha},
&\qquad& \text{if } k>0,\\
&=T_sX_{s\lambda}=T_s^{-1} X_{s\lambda}+\hbar X_{s\lambda},
&& \text{if } k=0,\\
&=T_sX_{s\lambda}+\hbar\sum_{i=k}^{-1}X_{s\lambda+i\alpha}
=T_s^{-1}X_{s\lambda}+\hbar\sum_{i=k}^{0}X_{s\lambda+i\alpha},
&& \text{if } k<0.
\end{alignedat}
\end{equation}

Combining this with $T_{w^{-1}}^{-1}T_s^{-1}=T_{(ws)^{-1}}^{-1}$ if $\ell(ws)=\ell(w)+1$ and $T_{w^{-1}}^{-1}T_s^{-1}=-\hbar T_{w^{-1}}^{-1}+T_{(ws)^{-1}}^{-1}$ otherwise, we arrive at the formula (\ref{eq:basis_expansion}).
\end{proof}

Now take $u\in W$ with reduced expression $s_\ell s_{\ell-1}\ldots s_1$. Let $\alpha_j$ denote the simple root corresponding to $s_j$. Set $\beta_j=s_1s_2\ldots s_{j-1}(\alpha_j)$. The roots $\beta_1,\ldots,\beta_\ell$ are pairwise distinct positive roots and are exactly the positive roots $\beta$ satisfying $u\beta<0$.

We now describe the term of the form $T_{(w')^{-1}}^{-1}F_{w'}$ with $F_{w'}\in \Ring P$ in the expansion of $T_{w^{-1}}^{-1} X_\mu T_{u^{-1}}$. This will require the following combinatorial setup, which we will use to index contributions to this term.

\begin{defi}
    Denote by $\underline{j} = (j_1, j_2, \dots, j_s)$ a strictly increasing sequence of elements in $\{1, \dots, \ell\}$. We can then write
    \begin{align}
        u = u_ss_{j_s}u_{s-1} \dots u_1s_{j_1}u_0,
    \end{align}
    defining elements $u_0, \dots, u_s$ and $w_0, \dots, w_s$ by
    \begin{align*}
        u_0 & = s_{j_1 - 1} \dots s_1 & w_0 & = wu_0^{-1}\\
        u_1 & = s_{j_2 - 1} \dots s_{j_1 + 1} & w_1 & = wu_0^{-1}u_1^{-1}\\[-1ex]
        \vdots & & \vdots & \\
        u_{s} & = s_\ell \dots s_{j_s + 1} & w_s & = wu_0^{-1} \dots u_s^{-1}.
    \end{align*}
    We then define the element $w(u, \underline{j}) = w_s$. We let $J(u, w')$ denote the set of sequences $\underline{j}$ for which \begin{equation}\label{eq:Weyl_element_decomposition}
w'=w(u, \underline{j}).
\end{equation}
\end{defi}

The monomial contributions to $F_{w'}$ we will describe are indexed by a choice of $\underline{j} \in J(u, w')$ along with another piece of combinatorial data which we now introduce.

\begin{defi}
    Given $\underline{j} \in J(u, w')$, let $I(u, \underline{j})$ be the set of tuples  $(i_1,\ldots,i_r)\in \Z^r$ which can be inductively obtained according to the following procedure.

    First, let $\lambda_0 = u_0\mu$. Now assume for induction that $\lambda_{r-1}$ is already defined. Then pick $i_r$ in the interval:
    \begin{equation}\label{eq:intervals}
\begin{split}
&
{[1,\langle \lambda_{r-1}, \alpha_{j_r}^\vee\rangle-1]}, \text{ if } \langle \lambda_{r-1}, \alpha_{j_r}^\vee\rangle> 0,\, \ell(w_{r-1}s_{j_r})>\ell(w_{r-1}),\\
& {[0,\langle \lambda_{r-1}, \alpha_{j_r}^\vee\rangle-1]}, \text{ if } \langle \lambda_{r-1}, \alpha_{j_r}^\vee\rangle\geqslant  0, \,\ell(w_{r-1}s_{j_r})<\ell(w_{r-1}),\\
&{[\langle\lambda_{r-1},\alpha_{j_r}^\vee\rangle,0]}, \text{ if }
\langle \lambda_{r-1}, \alpha_{j_r}^\vee\rangle< 0,\, \ell(w_{r-1}s_{j_r})>\ell(w_{r-1}),\\
&{[\langle\lambda_{r-1},\alpha_{j_r}^\vee\rangle,-1]}, \text{ if }
\langle \lambda_{r-1}, \alpha_{j_r}^\vee\rangle\leqslant 0,\, \ell(w_{r-1}s_{j_r})<\ell(w_{r-1}).
\end{split}
\end{equation}
Then set \begin{equation}\label{eq:lambda_r_formula}
\lambda_r=u_r(s_{j_r}\lambda_{r-1}{+}i_r\alpha_{j_r}).
\end{equation}
We let $I(u, w')$ denote the set of pairs $(\underline{i}, \underline{j})$ with $\underline{j} \in J(u, w')$ and $\underline{i} \in I(u, \underline{j})$.
\end{defi}
With these definitions in hand, we notice that by induction on Lemma \ref{Lem:simplecases}, we obtain the following.

\begin{Lem}\label{Lem:contributions}
    Let $T_{(w')^{-1}}^{-1}F_{w'}$ with $F_{w'} \in \Ring P$ be the corresponding term in the expansion of $T_{w^{-1}}^{-1} X_\mu T_u$. Then $F_{w'}$ is itself the sum of terms of the form
    \begin{align}
        \pm \hbar^s X_{\lambda_s}
    \end{align}
    for $(\underline{i}, \underline{j}) \in I(u, w')$.
\end{Lem}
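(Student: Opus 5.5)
The plan is an induction on $\ell(u)$, applying Lemma \ref{Lem:simplecases} one simple reflection at a time. Fix the reduced expression $u=s_\ell s_{\ell-1}\cdots s_1$. Then
\[
T_{w^{-1}}^{-1}X_\mu T_u=\bigl(\cdots\bigl((T_{w^{-1}}^{-1}X_\mu T_{s_1})T_{s_2}\bigr)\cdots\bigr)T_{s_\ell},
\]
because $T_u=T_{s_1}\cdots$ read in the order compatible with the paper's convention for the factorisation $u=u_ss_{j_s}\cdots s_{j_1}u_0$. The claim to carry through the induction is this: after processing $s_1,\dots,s_m$, the expression is a sum of terms $\pm\hbar^{r}\,T_{x^{-1}}^{-1}X_{\lambda}$. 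These terms are indexed by a strictly increasing sequence $j_1<\dots<j_r\le m$ and a compatible tuple $(i_1,\dots,i_r)$. The data $x$ and $\lambda$ are obtained from $w$ and $\mu$ by the recipe in the definitions of $J(u,\cdot)$ and $I(u,\underline j)$, truncated at step $m$.

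For the inductive step, apply Lemma \ref{Lem:simplecases} to each term $T_{x^{-1}}^{-1}X_\lambda T_{s_{m+1}}$. It produces two kinds of contributions.
\begin{enumerate}
\item The ``transport'' term $T_{(xs_{m+1})^{-1}}^{-1}X_{s_{m+1}\lambda}$. Here index $m+1$ is not added to $\underline j$, and $s_{m+1}$ is absorbed into the current block $u_r$. The weight is acted on by $s_{m+1}$ and the Weyl element by right multiplication by $s_{m+1}$. This matches $w_r=wu_0^{-1}\cdots u_r^{-1}$ and $\lambda_r=u_r(\cdots)$, with $u_r$ built up letter by letter.
\item The ``correction'' terms $\pm\hbar\,T_{x^{-1}}^{-1}X_{s_{m+1}\lambda+i\alpha_{m+1}}$. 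Here $m+1$ is appended to $\underline j$ as $j_{r+1}$ and the Weyl element stays $x$. The allowed range of $i$ is exactly one of the four intervals in \eqref{eq:intervals}, according to the sign of $\langle\lambda,\alpha_{m+1}^\vee\rangle$ and whether $\ell(xs_{m+1})$ exceeds $\ell(x)$.
\end{enumerate}

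The point needing care is the indexing. At the moment $j_{r+1}$ is appended, the current weight must be $\lambda_r$ as the definition prescribes, so that the subsequent $u_{r+1}$ letters act on $s_{j_{r+1}}\lambda_r+i\alpha_{j_{r+1}}$, giving \eqref{eq:lambda_r_formula}. Likewise the Weyl element must be the $w_r$ of the definition, since this determines the length condition in \eqref{eq:intervals}.

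A bookkeeping subtlety: the definition of $w_r$ seems to right-multiply by all of the $u$'s but not by the $s_{j}$'s. This is consistent with Lemma \ref{Lem:simplecases}, where the correction terms keep the element $w$ unchanged. I would check this carefully on a small example, say $\ell=2$, to align conventions. The statement is about $T_u$ while the surrounding propositions use $T_{u^{-1}}$, so the reading order of the reduced word must match the definition.

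At the end, $m=\ell$. Collect the terms whose Weyl element equals $w'$; their sequences $\underline j$ are exactly those in $J(u,w')$. This gives $F_{w'}$ as a signed sum of $\hbar^sX_{\lambda_s}$ over $I(u,w')$. The sign in each term is the product of the signs from Lemma \ref{Lem:simplecases}, which is why the statement has $\pm$.

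I expect the main obstacle to be this convention matching: the order of the letters, and the fact that $\lambda_r$ is defined with $u_r$ applied after the reflection. The computation itself is a routine induction.
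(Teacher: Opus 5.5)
Your proposal is correct and is essentially the paper's own argument: induction on $\ell(u)$ via Lemma~\ref{Lem:simplecases}, where each transport term extends the current block $u_r$ (giving $w_r=wu_0^{-1}\cdots u_r^{-1}$ and $\lambda_r=u_r(s_{j_r}\lambda_{r-1}+i_r\alpha_{j_r})$) and each correction term appends a new index $j_r$ while leaving the Weyl element unchanged. As you suspected, the product actually being expanded is $T_{w^{-1}}^{-1}X_\mu T_{u^{-1}}=T_{w^{-1}}^{-1}X_\mu T_{s_1}\cdots T_{s_\ell}$, as in the paper's proof, and one caveat comes from the paper rather than from your argument: Lemma~\ref{Lem:simplecases} and \eqref{eq:intervals} omit the case $\langle\lambda,\alpha^\vee\rangle=0$ with $\ell(ws)>\ell(w)$, where $T_{w^{-1}}^{-1}X_\lambda T_s=T_{(ws)^{-1}}^{-1}X_{s\lambda}+\hbar\,T_{w^{-1}}^{-1}X_{s\lambda}$, so the third case in each should be stated for $k\leqslant 0$ (contributing $i=0$), after which your bookkeeping goes through unchanged.
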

\begin{proof}
    One can use induction on the length of $u$ by Lemma \ref{Lem:simplecases} to expand the expression $T_{w^{-1}}^{-1} X_\mu T_{u^{-1}}$. This induction (whose base case and inductive step both follow from comparing equations (\ref{eq:basis_expansion}) and (\ref{eq:intervals})) reveals that the contribution of the tuples $(j_1, \dots, j_s)$ and $(i_1, \dots, i_s)$ in the resulting expansion is $\pm \hbar^s X_{\lambda_s}$, and so we get the result by the definition of $I(u, w')$.
\end{proof}

Further, we note the following formula for $\lambda_s$. Notice that $$u_ss_{j_s}\ldots u_r\alpha_{j_r}=u_ss_{j_s}\ldots u_rs_{j_r}\ldots s_1 s_1\ldots s_{j_r}\alpha_{j_r}=-u\beta_{j_r},$$
so it follows that
\begin{equation}\label{eq:lambda_s_formula1}
\lambda_s=u(\mu-\sum_{r=1}^s i_r\beta_{j_r})
\end{equation}

Our next task is to examine all possibilities to have nonzero $i_r$ in this formula under the restriction that $d(\mu,\lambda_s)\leqslant d$.

\begin{Lem}\label{Lem:root_finiteness}
Let $\mu$ be a strictly Tits element.
 The set of pairs $(i,\beta)$, where $i\in \Z_{>0}$ and $\beta$ is a real root such that $\mu-i\beta\in \operatorname{Conv}(W\mu)$ and $d(\mu,\mu-i\beta)\in \{1,\ldots,d\}$ is finite.
\end{Lem}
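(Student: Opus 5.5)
The plan is to reduce to the case of a dominant $\mu$, to bound $\langle\mu,\beta^\vee\rangle$ using an invariant bilinear form, and then to use strict Tits-ness to see that only finitely many real roots have bounded pairing with $\mu$.

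\emph{Reduction to $\mu$ dominant.} The conditions are $W$-equivariant. If $\mu=x\mu_+$, then $(i,\beta)$ satisfies the conditions for $\mu$ if and only if $(i,x^{-1}\beta)$ satisfies them for $\mu_+$, because $\operatorname{Conv}(W\mu)$ and $d(\cdot,\cdot)$ depend only on $W$-orbits. By Lemma \ref{Lem:strictly_Tits_properties}(2), $\mu_+$ is again strictly Tits, so we may assume $\mu$ is dominant. Set $\nu=\mu-i\beta$. Since $\mu$ is dominant, $\operatorname{Conv}(W\mu)\subset\mu-\sum_j\Z_{\geqslant0}\alpha_j$, so $i\beta\in Q_+$ and $\beta$ is a positive real root. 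Put $k=\langle\mu,\beta^\vee\rangle$. The $\beta$-string $\mu,\mu-\beta,\dots,s_\beta\mu=\mu-k\beta$ is the only part of the line $\mu-\Z\beta$ lying in $\operatorname{Conv}(W\mu)$. Moreover $d(\mu,\nu)\geqslant 1$ forces $\nu\notin W\mu$, so we expect $1\leqslant i\leqslant k-1$. Checking this carefully, via convexity along the $\beta$-direction, is the first routine step.

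\emph{Norm estimate.} Fix a $W$-invariant symmetric form $(\cdot,\cdot)$ on $P_{\mathbb R}$ coming from symmetrizability, normalized so that $(\alpha_j,\alpha_j)>0$ for all simple roots. A direct computation gives
\[
(\mu,\mu)-(\nu,\nu)=i\,(\beta,\beta)\,(k-i).
\]
On the other hand $(\nu,\nu)=(\nu_+,\nu_+)$, and $\gamma:=\mu-\nu_+\in Q_+$ has height $d(\mu,\nu)\leqslant d$, since $\nu_+\in\operatorname{Conv}(W\mu)$ is dominant. Hence
\[
(\mu,\mu)-(\nu,\nu)=(\gamma,2\mu-\gamma).
\]
There are only finitely many $\gamma\in Q_+$ of height at most $d$, so this quantity is bounded by a constant $C(\mu,d)$. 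Real roots have $(\beta,\beta)$ equal to $(\alpha_j,\alpha_j)$ for some $j$, so $(\beta,\beta)\geqslant c>0$. Since $1\leqslant i\leqslant k-1$, we get $i(k-i)\geqslant k-1$. Therefore both $k=\langle\mu,\beta^\vee\rangle$ and $i<k$ are bounded by a constant $N(\mu,d)$.

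\emph{Finiteness of roots with bounded pairing.} It remains to show that a dominant strictly Tits $\mu$ has $\langle\mu,\beta^\vee\rangle\leqslant N$ for only finitely many positive real roots $\beta$; I expect this to be the main step. Let $J=\{j:\langle\mu,\alpha_j^\vee\rangle=0\}$. All real roots of the subsystem generated by $J$ pair to $0$ with $\mu$, so strict Tits-ness forces this subsystem to be finite, i.e.\ $W_J$ is finite and $\mathfrak l_J$ is of finite type. Writing $\beta^\vee=\sum_j c_j\alpha_j^\vee$ with $c_j\geqslant 0$, the bound $\langle\mu,\beta^\vee\rangle\leqslant N$ bounds $\sum_{j\notin J}c_j$. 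Equivalently it bounds the coefficients of $\beta$ outside $J$, since $\beta^\vee$ is a positive rational multiple of the image of $\beta$ under the symmetrizing map, and these multiples take finitely many values.

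So only finitely many projections of $\beta$ to $\bigoplus_{j\notin J}\Z\alpha_j$ can occur. For each fixed nonzero projection $\eta$, the sum of root spaces $\bigoplus_{\gamma\mapsto\eta}\mathfrak g_\gamma$ is an integrable $\mathfrak l_J$-module. It is finitely generated, because the corresponding sum over $Q_+$ of bounded height is finite-dimensional and generates it. A finitely generated integrable module over a finite type $\mathfrak l_J$ is finite-dimensional, so only finitely many $\gamma$ occur for each $\eta$. Projection $\eta=0$ gives only the finitely many roots of $\mathfrak l_J$. Combining this with the bounds on $k$ and $i$ yields finiteness of the set of pairs $(i,\beta)$.
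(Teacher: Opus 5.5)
Your argument is correct. Its skeleton is the paper's:
\begin{itemize}
\item reduce to $\mu$ dominant, so that $\beta>0$;
\item use a $W$-invariant form and the finiteness of the orbits $W\nu$ with $d(\mu,\nu)\leqslant d$ to bound $\langle\mu,\beta^\vee\rangle$;
\item bound the coefficients of $\beta$ outside $J=\{j\mid \langle\mu,\alpha_j^\vee\rangle=0\}$;
\item use strict Tits-ness to see that $W_J$ is finite.
\end{itemize}
Two steps differ.

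\textbf{The dependence on $i$.} The paper first reduces to $i=1$, using the observation $\mu-(i+1)\beta\in\operatorname{Conv}(W(\mu-i\beta))$ for $i<\frac12\langle\mu,\beta^\vee\rangle$. It then bounds $(\mu,\beta)$ from $(\mu-\beta,\mu-\beta)$. Your identity $(\mu,\mu)-(\nu,\nu)=i(\beta,\beta)(k-i)$, combined with $1\leqslant i\leqslant k-1$, bounds $k$ and $i$ at the same time and avoids that detour.

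\textbf{The $J$-part $\beta_2$ of $\beta$.} The paper stays elementary. The form is positive definite on $\bigoplus_{j\in J}\mathbb{R}\alpha_j$, and $(\beta,\beta)$ and the non-$J$ part $\beta_1$ range over finite sets. Hence $(\beta_2,\beta_2)+2(\beta_1,\beta_2)$ is bounded, which leaves finitely many lattice points $\beta_2$.

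You instead use finite-dimensionality of the graded pieces $\bigoplus_{\gamma\mapsto\eta}\mathfrak g_\gamma$ as integrable $\mathfrak l_J$-modules. This proves more: it gives finiteness for all roots with a given projection, imaginary ones included. It costs Lie-theoretic input, though. Also, your claim that these pieces are finitely generated is only asserted; it needs an argument.
\begin{itemize}
\item A routine induction on $\operatorname{ht}(\eta)$, using that $\mathfrak n^+$ is generated by the $e_j$, will do.
\item Alternatively, you can bypass modules entirely. The roots with projection $\eta\neq 0$ form a $W_J$-stable subset of $\eta+\sum_{j\in J}\Z_{\geqslant 0}\alpha_j$. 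Applying the longest element of $W_J$ confines their $J$-parts to a finite set.
\end{itemize}
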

\begin{proof}
Suppose $\operatorname{ht}(\mu_+-(\mu-i\beta)_+)\in \{1,\ldots,d\}$. Then we have that $\mu-i\beta\in \operatorname{Conv}(W\mu)\setminus W\mu$. Indeed, otherwise $\mu$ lies in the convex hull of $W(\mu-i\beta)$ and $\operatorname{ht}(\mu_+-(\mu-i\beta)_+)\leqslant 0$.

We can assume without loss of generality that $\mu$ is dominant. Then $\beta$ must be positive. We can also assume that $i=1$: we have $\mu-(i+1)\beta\in \operatorname{Conv}(W(\mu-i\beta))$ for $i<\frac{1}{2}\langle\mu,\beta^\vee\rangle$.

Consider a $W$-invariant form $(\cdot,\cdot)$ on $P$ that is positive on the real roots. Then there are only finitely many options for $(\beta,\beta)$. Note that there are only finitely many options for $W(\mu-\beta)$, hence for $(\mu-\beta,\mu-\beta)$, hence for $(\mu,\beta)$. Let $I_1$ be the set of all labels $i$ of simple roots $\alpha_i$ such that $(\mu,\alpha_i)>0$ and $I_2$ be the complement, i.e.\ the set of $i$ such that $(\mu,\alpha_i)=0$. Then $\beta=\sum_{i\in I}m_i \alpha_i$ and $(m_i)_{i\in I_1}$ is in a finite set. Write
\[
\beta_1=\sum_{i\in I_1}m_i\alpha_i,\qquad
\beta_2=\sum_{i\in I_2}m_i\alpha_i.
\]
It remains to show that, for each possible $\beta_1$, only finitely many $\beta_2$ can occur. Since $\mu$ is dominant and strictly Tits, the real root subsystem generated by $I_2$ is finite: otherwise all its positive real roots $\gamma$ would satisfy $\langle\mu,\gamma^\vee\rangle=0$, contradicting $\mu\in \mathcal{T}^>$. Hence the restriction of $(\cdot,\cdot)$ to the span of $\{\alpha_i\mid i\in I_2\}$ is positive definite. For fixed $\beta_1$, the identity
\[
(\beta_2,\beta_2)+2(\beta_1,\beta_2)=(\beta,\beta)-(\beta_1,\beta_1)
\]
has only finitely many solutions $\beta_2$ in the lattice spanned by $\{\alpha_i\mid i\in I_2\}$, because the left-hand side is a positive-definite quadratic function plus a linear term and the right-hand side takes only finitely many values. Thus only finitely many $\beta$ can occur.
\end{proof}

\begin{proof}[Proof of Proposition \ref{Prop:weights_occuring1}]
Fix $\rho^\vee\in \mathfrak{t}_{\mathbb{Q}}$ such that $\langle\rho^\vee,-\rangle$ is $1$ on all simple roots, so that $\operatorname{ht}(\mu)=\langle\rho^\vee,\mu\rangle$ for all $\mu$ in the root lattice, and $d(\mu,\lambda)=\langle\rho^\vee,\mu_+-\lambda_+\rangle$.

Because of Lemma \ref{Lem:contributions} and equation (\ref{eq:lambda_s_formula1}), our task is to show that the set
\begin{equation}\label{eq:finite_set}
\{\lambda=\mu-\sum_{r=1}^s i_r\beta_{j_r}| 0\leqslant d(\mu,\lambda)\leqslant d\}
\end{equation}
is finite.
In the proof it is enough to consider the cases when $i_r\neq 0$.

We will show that
\begin{itemize}
\item[($\heartsuit$)]
for each $m$ we have either
\begin{equation}\label{eq:heartsuit1}
\langle\rho^\vee,(\mu-\sum_{r=1}^{m}i_r\beta_{j_r})_+\rangle>
\langle\rho^\vee,(\mu-\sum_{r=1}^{m+1}i_r\beta_{j_r})_+\rangle\end{equation}
or
\begin{equation}\label{eq:heartsuit2}
\mu-\sum_{r=1}^{m+1}i_r\beta_{j_r}\in W(\mu-\sum_{r=1}^m i_r\beta_{j_r}) \text{ and }\mu-\sum_{r=1}^{m+1}i_r\beta_{j_r}>\mu-\sum_{r=1}^m i_r\beta_{j_r}.
\end{equation}
\end{itemize}
To show ($\heartsuit$) note that
$$\langle\rho^\vee, (\mu-\sum_{r=1}^{m}i_r\beta_{j_r})_+\rangle\geqslant
\langle\rho^\vee, (\mu-\sum_{r=1}^{m+1}i_r\beta_{j_r})_+\rangle.$$
We need to understand what happens if we have the equality. In this case we see that
 $\mu-\sum_{r=1}^{m}i_r\beta_{j_r}$ and $\mu-\sum_{r=1}^{m+1}i_r\beta_{j_r}$ are in the same $W$-orbit. Furthermore, (\ref{eq:intervals}) implies that
\[
i_{m+1}=\langle\lambda_m, \alpha_{j_{m+1}}^\vee\rangle<0.
\]
Hence $\langle\mu-\sum_{r=1}^m i_r \beta_{j_r},\beta_{j_{m+1}}^\vee\rangle<0$ and since the root $\beta_{j_{m+1}}$ is positive, (\ref{eq:heartsuit2}) holds. This proves ($\heartsuit$).

Now we use ($\heartsuit$) to prove that (\ref{eq:finite_set}) is finite. In order for $\lambda$ to satisfy $0\leqslant d(\mu,\lambda)\leqslant d$, situation (\ref{eq:heartsuit1}) can occur at most $d$ times. For each $m$ where it occurs, by Lemma \ref{Lem:root_finiteness}, there are only finitely many options for $\beta_{j_{m+1}}$. So, we just need to show that the occurrences of (\ref{eq:heartsuit2}) between two consecutive occurrences of (\ref{eq:heartsuit1}) (or after the last occurrence of (\ref{eq:heartsuit1})) yield a finite set of weights   $\mu-\sum_{r=1}^{m+1}i_r\beta_{j_r}$. This is a consequence of the following general fact: 

for each dominant $\nu$ and each $w\in W$, the set $\{\nu'\in W\nu| \nu'\geqslant w\nu\}$ is finite. 

Indeed, we can present $\nu'$ as $s_{k_1}\ldots s_{k_q}\nu$ with $\langle\alpha_{k_i},s_{k_{i+1}}\ldots s_q\nu \rangle>0$ for all $i$. Then we must have $q\leqslant \operatorname{ht}(\nu-w\nu)$, which yields the finiteness claim.

This finishes the proof of the finiteness of (\ref{eq:finite_set}) and hence of the proposition.

\end{proof}

In order to prove Proposition \ref{Prop:weights_occuring2} we will need to rewrite formulas (\ref{eq:lambda_r_formula}) and
(\ref{eq:lambda_s_formula1}). Namely, set
$$i'_r=\langle\lambda_{r-1},\alpha^\vee_{j_r}\rangle{-}i_r.$$
Then (\ref{eq:lambda_r_formula}) is equivalent to
$$\lambda_r=u_r(\lambda_{r-1}-i'_r\alpha_{j_r}).$$
Assume $w'=1$, equivalently, by (\ref{eq:Weyl_element_decomposition}),
\begin{equation}
w=u_s u_{s-1}\ldots u_0.
\end{equation}
We can now rewrite (\ref{eq:lambda_s_formula1}) as
\begin{equation}\label{eq:lambda_s_formula2}
\lambda_s=w\mu-\sum_{r=1}^s i'_r u_su_{s-1}\ldots u_r\alpha_{j_r}.
\end{equation}

\begin{proof}[Proof of Proposition \ref{Prop:weights_occuring2}]
We claim that for each $r=1,\ldots,s-1$ exactly one of the following possibilities holds:
\begin{enumerate}[(i)]
\item $i'_r=0$,\label{cond:first}
\item $d(\lambda_{r},\lambda_{r+1})>0$,\label{cond:second}
\item $W\lambda_r=W\lambda_{r+1}, i'_r\neq 0$, and $\operatorname{sgn}(i'_r)u_s\ldots u_r \alpha_{j_r}$ is a negative root. \label{cond:third}
\end{enumerate}
Indeed, condition (\ref{cond:second}) occurs exactly when $i'_r\neq 0$ and $i'_r\neq \langle\lambda_{r-1},\alpha_{j_r}^\vee\rangle$. So it remains to show that all cases where $i'_r=\langle\lambda_{r-1},\alpha_{j_r}^\vee\rangle\neq 0$ (equivalently, $i_r=0$) satisfy (\ref{cond:third}). By (\ref{eq:intervals}), this can happen in two cases.

The first case is when $i'_r=\langle \lambda_{r-1}, \alpha^\vee_{j_r}\rangle< 0$. Here $\ell(w_{r-1}s_{j_r})>\ell(w_{r-1})$, equivalently (because $1=w_s=w_{r-1}u_r^{-1}\ldots u_s^{-1}$), $\ell(u_s\ldots u_r s_{j_r})>\ell(u_s\ldots u_r)$, equivalently, $u_s\ldots u_r \alpha_{j_r}>0$. So, the conclusion of  (\ref{cond:third}) holds in this case.

The second case is when $i'_r=\langle \lambda_{r-1}, \alpha^\vee_{j_r}\rangle>0$ and $\ell(w_{r-1}s_{j_r})<\ell(w_{r-1})$. We see that  (\ref{cond:third}) holds similarly to the previous paragraph.

So (\ref{cond:first})--(\ref{cond:third}) exhaust all options. We claim that the set of possible right-hand sides of (\ref{eq:lambda_s_formula2}) is finite. The proof repeats the analogous part in the proof of Proposition \ref{Prop:weights_occuring1}: deducing the finiteness of (\ref{eq:finite_set}) from ($\heartsuit$).
\end{proof}

\begin{Rem}\label{Rem:weights_occuring1}
We note that the conclusion of Proposition \ref{Prop:weights_occuring1} remains valid if instead of the basis $T_{(w')^{-1}}^{-1}X_{\mu'}$ we consider the basis $T_{w'}X_{\mu'}$. On the other hand, the conclusion of Proposition \ref{Prop:weights_occuring2} is not true in that basis.
\end{Rem}
\section{Completions of $\mathcal{H}_W^a$}\label{sec:completions}

In this section, we define two completions of certain (non-unital) subalgebras of $\Ha$ relevant for the bar operation. We begin with the \emph{small completion}, a topological algebra whose most important property is as follows: for every $\lambda\in \mathcal{T}^>$ (the strict Tits cone), there is an element $\omega(X_\lambda)$ in the small completion satisfying $X_{-\lambda} \bone^1_!=\bone^1_* \omega(X_\lambda)$.

We then proceed to the strictly Tits part of $\mathcal{H}^a_W$ and its \emph{large completion}. The latter is a topological algebra on which the bar operation makes sense and gives a ring automorphism.

\subsection{Small completion}
In this subsection, we define the small completion and establish its properties.

\begin{defi}
    Let $\Upsilon$ be the ring automorphism of $\Ha$ defined by $$\Upsilon(v)=v^{-1}, \Upsilon(T_w)=T^{-1}_{w^{-1}},\Upsilon(X_\lambda)=X_{-\lambda}.$$
\end{defi}

\begin{defi}
    Let $\Hecke_W^+$ be the $\Ring$-subalgebra of $\Ha$ generated by $T_w$ and $X_\lambda$ with $w \in W$, $\lambda\in \mathcal{T}$. Let $\Hecke_W^-=\Upsilon(\Hecke_W^+)$; this is the $\Ring$-subalgebra of $\Ha$ generated by $T_w$ and $X_{-\lambda}$ for $w \in W$, $\lambda\in \mathcal{T}$.
\end{defi}

The small completion ${\breve{\Hecke}}_W^+$ will be a topological $\Ring$-algebra which is a completion of $\Hecke_W^+$. We will also define a modified version
$\breve{\B}^+$ of $\mathcal{B}$ which is a $\breve{\Hecke}_W^+$-bimodule. Recall that we have a partial order on the $W$-orbits in $\mathcal{T}$: $W\mu\leqslant W\mu'$ if $W\mu\subset \operatorname{Conv}(W\mu')$,
Definition \ref{defi:Conv}. Also recall $\rho^\vee\in \mathfrak{t}_{\mathbb{Q}}$ such that $\langle\rho^\vee,-\rangle$ is $1$ on all simple roots.




\begin{defi}\label{defi:small_completion}
For $\vec{\mu}=(\mu_1,\ldots,\mu_k)\in P_+^k$ (where $P_+$ is the set of dominant weights)
consider the $\Ring$-submodule $\Hecke^+_{\leqslant W\vec{\mu}}$
consisting of all elements of the form $\sum_{w,\mu} a_{w,\mu}T_w X_\mu$ with the following property:
\begin{itemize}
\item[(I)] if $a_{w,\mu}\neq 0$, then $W\mu\leqslant W\mu_i$ for some $i=1,\ldots,k$.
\end{itemize}
We equip $\Hecke^+_{\leqslant W\vec{\mu}}$ with an $\Ring$-module topology  where the basis of neighborhood of $0$ is given by
$$U_d:=\{\sum_{w,\mu}a_{w,\mu}T_w X_\mu\in \Hecke^+_{\leqslant W\vec{\mu}}| a_{w,\mu}\neq 0\Rightarrow \langle\rho^\vee,\mu_+\rangle\leqslant d\}, d\in \Z.$$
We write $\breve{\Hecke}^+_{\leqslant W\vec{\mu}}$ for the completion of $\Hecke^+_{\leqslant W\vec{\mu}}$ with respect to this topology. By definition, it consists of all infinite sums
$\sum_{w,\mu}a_{w,\mu}T_w X_\mu$ satisfying (I) and the following condition:
\begin{itemize}
\item[(II)] For each $d\in \Z$ there are only finitely many pairs $(w,\mu)$ such that $a_{w,\mu}\neq 0$ and $\langle\rho^\vee, \mu_+\rangle\geqslant d$.
\end{itemize}
Note that all $\breve{\Hecke}^+_{\leqslant W\vec{\mu}}$ are subsets in the set of all infinite sums $\sum_{w,\mu}a_{w,\mu}T_wX_\mu$. Let $\breve{\Hecke}^+_W$ denote the union of all such subsets. This is the {\it small completion} of $\Hecke^+_W$.

\end{defi}

\begin{Lem}\label{Lem:small_completion_algebra}
The following claims hold:
\begin{enumerate}
\item For $\vec{\mu}=(\mu_1,\ldots,\mu_k), \vec{\mu'}=(\mu_1',\ldots,\mu'_\ell)$, two collections of dominant weights, set $$\vec{\mu}*\vec{\mu}'=(\mu_i+\mu'_j)_{i=1,\ldots,k; j=1,\ldots,\ell}.$$
Then the product $\Hecke^+_W\times \Hecke^+_W\rightarrow \Hecke^+_W$ restricts to
\begin{equation}\label{eq:resticted_product}
\Hecke^+_{\leqslant W\vec{\mu}}\times \Hecke^+_{\leqslant W\vec{\mu}'}\rightarrow \Hecke^+_{\leqslant W(\vec{\mu}*\vec{\mu}')}.
\end{equation}
\item (\ref{eq:resticted_product}) is continuous with respect to the topology introduced in
Definition \ref{defi:small_completion}.
\item $\breve{\Hecke}^+_W$ has the unique topological algebra structure extending the algebra structure on $\Hecke^+_W$ by continuity. This algebra structure is associative and unital.
\end{enumerate}
\end{Lem}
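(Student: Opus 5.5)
We reduce everything to products of basis monomials $T_wX_\mu\cdot T_{w'}X_{\mu'}$ and control the weights that appear using Lemma \ref{Lem:Bernstein_consequence} and convexity.

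\textbf{Step 1: the restricted product (1).} For (1), we rewrite the product of monomials as $T_w\,(X_\mu T_{w'})\,X_{\mu'}$. By Lemma \ref{Lem:Bernstein_consequence}, $X_\mu T_{w'}$ is a finite combination of terms $T_{u}X_\nu$ with $u\preceq w'$ and $\nu\in\operatorname{Conv}(W\mu)$. The product $T_wT_u$ is a finite combination of $T_{u'}$. It then remains to check that $\nu+\mu'\in\operatorname{Conv}(W(\mu_i+\mu'_j))$ whenever $\nu\in\operatorname{Conv}(W\mu_i)$ and $\mu'\in\operatorname{Conv}(W\mu'_j)$.

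This is the standard fact that $\operatorname{Conv}(W\lambda)+\operatorname{Conv}(W\lambda')\subset\operatorname{Conv}(W(\lambda+\lambda'))$ for dominant $\lambda,\lambda'$ in the Tits cone. We prove it in two reductions:
\begin{itemize}
\item We reduce to extreme points, since $w\lambda+w'\lambda'\in\operatorname{Conv}(W(\lambda+\lambda'))$.
\item Conjugating by $w^{-1}$, we reduce to $\lambda+y\lambda'$ with $y\lambda'=\lambda'-\gamma$, $\gamma\in Q_+$. Then $(\lambda+y\lambda')_+\leqslant \lambda+\lambda'$ in dominance order. For Tits-cone weights, the convex hull $\operatorname{Conv}(W\lambda'')$ is exactly the set of lattice points whose dominant conjugate is $\leqslant\lambda''$, i.e.\ the Kac–Peterson description.
\end{itemize}
Lattice-coset compatibility is clear.

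\textbf{Step 2: continuity (2).} Fix target $d$. We need: if $(\mu)_+$ has large $\langle\rho^\vee,\cdot\rangle$, then all weights in the product with a fixed $\mu'$ term are also large, and the same with the roles swapped. We use the estimate
\[
\langle\rho^\vee,(\nu+\mu')_+\rangle\geqslant \langle\rho^\vee,\mu_+\rangle - C,
\]
with $C$ depending only on $\vec\mu'$. The weights $\nu$ arising from $X_\mu T_{w'}$ satisfy $\nu_+\leqslant\mu_+$, but could drop. This is the main obstacle.

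A cleaner route is to observe that the neighborhoods $U_d$ should be read as cofinal conditions: elements of $U_d$ have $\langle\rho^\vee,\mu_+\rangle$ \emph{small}. The completion allows infinite sums with weights going down, so condition (II) bounds finitely many terms above each level. Continuity then requires the following: for fixed $d$, the product's component above level $d$ depends only on finitely many components of the factors above some level $d'$.

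Since $\langle\rho^\vee,(\nu+\mu')_+\rangle\leqslant\langle\rho^\vee,\nu_+\rangle+\langle\rho^\vee,\mu'_+\rangle\leqslant\langle\rho^\vee,\mu_+\rangle+\max_j\langle\rho^\vee,\mu'_j\rangle$ (subadditivity of $\lambda\mapsto\langle\rho^\vee,\lambda_+\rangle$, which holds because it is the max over $W$-conjugates paired with dominant-regular $\rho^\vee$), we get $U_d\cdot\Hecke^+_{\leqslant W\vec\mu'}\subset U_{d+c}$. The symmetric inclusion holds as well. This gives continuity, and I expect the subadditivity argument to be the delicate point.

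\textbf{Step 3: extension to the completion (3).} Bilinearity plus the above inclusions give a uniformly continuous product on each $\Hecke^+_{\leqslant W\vec\mu}\times\Hecke^+_{\leqslant W\vec\mu'}$. It therefore extends uniquely to completions. For well-definedness, each coefficient of the product of two infinite sums must be a finite sum. We check this via (II) together with the fact that a monomial $T_{w''}X_{\lambda}$ can arise from $T_wX_\mu\cdot T_{w'}X_{\mu'}$ only if $\ell(w)\leqslant\ell(w'')+\ell(w')$-type bounds hold, and the $\rho^\vee$-levels of $\mu,\mu'$ are bounded below in terms of $\lambda$. These extensions are compatible under enlarging $\vec\mu$, so they glue on the union $\breve\Hecke^+_W$. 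Associativity, unitality and uniqueness follow by density of $\Hecke^+_W$ and continuity.
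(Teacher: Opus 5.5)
Your proposal is correct and is essentially the paper's argument. The paper's proof consists only of the remark that (1) and (2) follow from Lemma \ref{Lem:Bernstein_consequence} and (3) from (2), and you make the implicit convexity input explicit: $\nu_+\leqslant\mu_+$ for $\nu\in\operatorname{Conv}(W\mu)$, and subadditivity of $\lambda\mapsto\langle\rho^\vee,\lambda_+\rangle$. These give $U_d\cdot\Hecke^+_{\leqslant W\vec{\mu}'}\subset U_{d+c}$ and its mirror image, hence continuity, coefficientwise finiteness via (II), and the extension in (3).

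Two cosmetic fixes. First, discard the lower-bound estimate at the start of Step 2; it is not needed and does not hold with a uniform constant. Second, deduce $(\lambda+y\lambda')_+\leqslant\lambda+\lambda'$ from $u\lambda+uy\lambda'\leqslant\lambda+\lambda'$ for every $u\in W$, since $\lambda+y\lambda'\leqslant\lambda+\lambda'$ alone does not control the dominant conjugate.
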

\begin{proof}
(1) and (2) follow from Lemma \ref{Lem:Bernstein_consequence}.   (3) immediately follows from (2).
\end{proof}

Since $\Ring \mathcal{T}$ is a subset of $\Hecke^+_W$, we can define $\Ring\mathcal{T}_{\leqslant W\vec{\mu}}$ as
$\Ring\mathcal{T}\cap \Hecke^+_{\leqslant W\vec{\mu}}$. Let $\Ring\breve{\mathcal{T}}$ denote the completion.

\begin{defi}\label{defi:completed_bimodule}
For $\vec{\mu}$ as in Definition \ref{defi:small_completion},
define $\B^+_{\leqslant W\vec{\mu}}$ as the subset of $\B$ consisting of all sums $\sum_{w,\mu}a_{w,\mu}\bone^!_w X_\mu$ satisfying (I) from Definition \ref{defi:small_completion}, so that $\B^+_{\leqslant W\vec{\mu}}$ is identified with $\prod_{W} \Ring \mathcal{T}_{\leqslant W\vec{\mu}}$ and we equip it with the topology of direct product of topological spaces. We define  $\breve{\B}^+_{\leqslant W\vec{\mu}}$ as the completion of this topological $\Ring$-module. In other words, an element of $\breve{\B}^+_{\leqslant W\vec{\mu}}$ is an infinite sum $\sum_{w,\mu} a_{w,\mu}\bone^!_w X_\mu$ satisfying (I) and (II) for each individual $w$. We write $\B^+$ (resp., $\breve{\B}^+$) for the union of all $\B^+_{\leqslant W\vec{\mu}}$ (resp., $\breve{\B}^+_{\leqslant W\vec{\mu}}$).


\end{defi}

For example, $\bone_*^1$ lies in $\B^+_{\leqslant 0}$ thanks to Remark \ref{Rem:star_shriek_expansion}.

\begin{Prop}\label{Prop:small_completion_bimodule}
The following claims hold:
\begin{enumerate}
\item For each $\vec{\mu},\vec{\mu}',\vec{\mu}''$, the twisted action map
$$\Hecke^a_W\times\B\times \Hecke^a_W\rightarrow \B,(a_1,b,a_2)\mapsto \Upsilon(a_1)ba_2,$$ restricts to
\begin{equation}\label{eq:restricted_bimodule_map}
\Hecke^+_{\leqslant W\vec{\mu}}\times\B^+_{\leqslant W\vec{\mu}'}\times \Hecke^+_{\leqslant W\vec{\mu}''}\rightarrow \B^+_{\leqslant W(\vec{\mu}*\vec{\mu}'*\vec{\mu}'')}
\end{equation}
\item (\ref{eq:restricted_bimodule_map}) is continuous.
\item $\breve{\B}^+$ is a topological $\breve{\Hecke}^+$-bimodule via $(a_1,b,a_2)\mapsto \Upsilon(a_1)ba_2$.
\end{enumerate}
\end{Prop}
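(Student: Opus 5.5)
We reduce the statement to generators and to the one-sided convexity bounds already proved; the basic mechanism mirrors Lemma \ref{Lem:small_completion_algebra}. Since (\ref{eq:restricted_bimodule_map}) is $\Ring$-trilinear (or $\Upsilon$-semilinear in the first slot, which does not affect supports), it suffices for (1) to treat $a_1=T_xX_\nu$ with $W\nu_+\leqslant W\mu_i$, $b=\bone_!^wX_\lambda$ with $W\lambda\leqslant W\mu'_j$, and $a_2=T_yX_\kappa$ with $W\kappa\leqslant W\mu''_k$. The right factor is easy: $\bone_!^wX_\lambda T_y X_\kappa=T_w\bone_!^1 X_\lambda T_yX_\kappa$. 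By Lemma \ref{Lem:Bernstein_consequence}, $X_\lambda T_y$ is a combination of $T_{y'}X_{\lambda'}$ with $\lambda'\in\operatorname{Conv}(W\lambda)$, and $\bone^1_!T_{y'}=\bone^{y'}_!$. So we obtain terms $T_w\bone_!^{y'}X_{\lambda'+\kappa}$, and $\operatorname{Conv}(W\lambda)+\operatorname{Conv}(W\kappa)\subset\operatorname{Conv}(W(\lambda_++\kappa_+))$ by the standard convexity fact for dominant weights in the Tits cone.

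It remains to rewrite $T_w\bone^{y'}_!=\bone^{w}_!T_{y'}$ in right form. By Lemma \ref{Lem:bone_properties}(1), $T_w\bone^{y'}_!=T_wT_{y'}\bone^1_!=\bone_!^1T_wT_{y'}$, which is a combination of $\bone^{z}_!$ with coefficients in $\Ring$. Hence no weights are created, and the right factor is fine.

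The left factor is the crux. $\Upsilon(T_xX_\nu)=T_{x^{-1}}^{-1}X_{-\nu}$, and we must move $X_{-\nu}$ past $\bone_!^w$ to the right. Writing $\bone^w_!=\bone_!^1T_w$, we have $X_{-\nu}\bone_!^1T_w$. Here Lemma \ref{Lem:convexity}, in its left-multiplication form, says that $X_{-\nu}\bone^1_!$ expands as $\sum \bone^u_!X_{\nu'}$ with $\nu'\in\operatorname{Conv}(W\nu)$; the sign flip $-\nu\mapsto \nu$ is exactly why the twist by $\Upsilon$ is needed. Multiplying on the right by $T_wX_\lambda\cdots$ then reduces to the previous case. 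The main obstacle is that this expansion is an infinite sum over $u$, so the product lands in $\B$ (a direct product) only if, for each fixed $z$, only finitely many terms contribute to $\bone^z_!$ before weights are completed. The right-moving steps send $\bone^u_!T_{w}$ to combinations of $\bone^{z}_!$ with $z$ in a finite set depending on $u,w$, and conversely each $z$ arises from only finitely many $u$, namely those with $u\preceq$ some element of $z$ times a Bruhat-bounded set. So each coordinate is a finite sum, and all weights are bounded by $\mu_i+\mu'_j+\mu''_k$. This proves (1).

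For (2), we check continuity coordinatewise. Fix $z$ and $d$. The coefficient of $\bone^z_!$ involving weights $\mu$ with $\langle\rho^\vee,\mu_+\rangle\geqslant d$ depends only on the coordinates of $b$ at finitely many $w$, and on the components of $a_1,a_2,b$ whose weights have $\langle\rho^\vee,\cdot_+\rangle\geqslant d-C$ for a constant $C$ determined by the $\vec\mu$'s. This holds because $\lambda'\in\operatorname{Conv}(W\lambda)$ forces $\langle\rho^\vee,\lambda'_+\rangle\leqslant\langle\rho^\vee,\lambda_+\rangle$, and adding weights bounded by fixed dominant weights changes $\langle\rho^\vee,\cdot_+\rangle$ by a bounded amount. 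Then (3) follows: extend by continuity, and deduce the associativity and bimodule axioms from the uncompleted case, using that $\Upsilon$ is an automorphism and that both sides are continuous.
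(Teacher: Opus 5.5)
Your proposal is correct and is essentially the paper's argument made explicit: Lemma~\ref{Lem:Bernstein_consequence} controls the weights created on the right, Lemma~\ref{Lem:convexity} controls the left factor (the $\Upsilon$-twist supplies exactly the sign $-\nu\mapsto\nu$ it needs), elements of $\Hecke_W$ pass through $\bone_!^1$ without creating weights, and continuity and the extension to completions follow from the bound $\langle\rho^\vee,(\nu+\kappa)_+\rangle\leqslant\langle\rho^\vee,\nu_+\rangle+\langle\rho^\vee,\kappa_+\rangle$, just as in Lemma~\ref{Lem:small_completion_algebra}. One refinement: the finiteness of each $\bone^z_!$-coordinate is automatic because the left and right actions on $\B$ preserve the truncations and are therefore continuous, and it is this continuity that justifies computing the infinite sums over $u$ (and over $w$ in $b$) term by term.
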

\begin{proof}
Similar to that of Lemma \ref{Lem:small_completion_algebra} but in (1) and (2) one also uses Lemma \ref{Lem:convexity}.
\end{proof}

Our main result related to these constructions is the next proposition. We define $\Hecke^>_W$ as the $\Ring$-linear span of the elements of the form $T_w X_\lambda$ with $\lambda\in \mathcal{T}^>$, this is a non-unital subalgebra. Let $\breve{\Hecke}^>_W$ denote the closure of $\Hecke^>_W$ in $\breve{\Hecke}^+_W$. Note that $\Hecke_W^>\subset\Hecke_W^+$ and
$\breve{\Hecke}_W^>\subset\breve{\Hecke}_W^+$ are two-sided ideals thanks to Lemma \ref{Lem:strictly_Tits_properties}.

\begin{Prop}\label{Prop:action_description}
For any element  $a\in \Hecke_{\leqslant W\vec{\mu}}^>$, there is a unique element $\omega(a)\in \breve{\Hecke}_{\leqslant W\vec{\mu}}^>$ such that
$\Upsilon(a)\bone_!^1=\bone_*^1\omega(a)$.
\end{Prop}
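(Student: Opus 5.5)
The plan is to reduce to monomials and then construct $\omega(X_\lambda)$ by successive approximation. By linearity, and because $\Upsilon(T_wX_\lambda)\bone^1_!=\Upsilon(X_\lambda)\Upsilon(T_w)\bone^1_!$ up to reordering via the Bernstein relations, it suffices to treat $a=X_\lambda T_w$ (or $T_wX_\lambda$) with $\lambda\in\mathcal{T}^>$. Lemma \ref{Lem:bone_properties}(1), together with $\bone_*^w=T_{w^{-1}}^{-1}\bone^1_*=\bone^1_*T_{w^{-1}}^{-1}$, lets us move $\Upsilon(T_w)=T_{w^{-1}}^{-1}$ across. Concretely, if $\Upsilon(X_\lambda)\bone^1_!=\bone^1_*\omega(X_\lambda)$, then
\[
\Upsilon(X_\lambda T_w)\bone^1_!=X_{-\lambda}T^{-1}_{w^{-1}}\bone^1_!,
\]
and we need $T^{-1}_{w^{-1}}\bone_!^1=\bone_!^1 T^{-1}_{w^{-1}}$, which holds by (1) applied to inverses. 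So the whole problem reduces to constructing $\omega(X_\lambda)$ with $X_{-\lambda}\bone^1_!=\bone^1_*\omega(X_\lambda)$, and then setting $\omega(X_\lambda T_w)=\omega(X_\lambda)T_w$, or the analogous expression in the other ordering. Continuity and the bimodule structure of Proposition \ref{Prop:small_completion_bimodule} then extend $\omega$ to sums.

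Uniqueness comes first and is the easy part. If $\bone^1_*c=0$ for some $c\in\breve{\Hecke}^>_W$, write $c=\sum_w X_{\nu}$-coefficients times $T_w$. Then $\bone^1_*T_{w^{-1}}^{-1}=\bone^w_*$, and $\{\bone^w_*\}$ is a topological basis over $\Ring P$. The right-multiplication form of Lemma \ref{Lem:uniqueness_conjugation} (with $*$) gives $c=0$. This works after truncating modulo each $U_d$ using Lemma \ref{Lem:convexity}, so that only finitely many weight levels interact.

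For existence, I would build $\omega(X_\lambda)=\sum_{d\ge0}\omega_d$ by induction on $d(\lambda_+,\cdot)$, taking $\lambda$ dominant first and using $W$-stability of $\mathcal{T}^>$ (Lemma \ref{Lem:strictly_Tits_properties}) afterwards. Expand $X_{-\lambda}\bone^1_!$ in the $\bone^w_*$ basis with right coefficients. Lemma \ref{Lem:convexity} shows that only weights in $\operatorname{Conv}(W\lambda)$ occur. Remark \ref{Rem:uniqueness_approx} tells us that a truncated candidate $\sum_w\bone^w_*F_w$, correct up to depth $d$, satisfies (1!) and (2!). Conversely, I would define $F_w$ degree by degree: given the depth-$d$ approximation, the error $X_{-\lambda}\bone^1_!-\bone^1_*\sum_w F_wT_{w^{-1}}^{-1}$ has all weights of depth $>d$, and I correct it by adding the depth-$(d+1)$ terms. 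Each such $F_w$ lies in $\Ring\mathcal{T}$, and the resulting series satisfies (I) with $\vec\mu=(\lambda_+)$.

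The main obstacle is condition (II): for each fixed depth $d$, only finitely many pairs $(w,\nu)$ may occur. Depth-bounded weights form an infinite set, since $W\lambda$ is infinite, and infinitely many $w$ could a priori contribute. Here I would use Propositions \ref{Prop:weights_occuring1} and \ref{Prop:weights_occuring2}. Rewriting $\bone^1_*F_wT_{w^{-1}}^{-1}$ and comparing with (2!) produces products $T_{w^{-1}}^{-1}X_\mu T_{u^{-1}}$. Proposition \ref{Prop:weights_occuring2}, in the $T^{-1}_{(w')^{-1}}X_{\mu'}$ basis, confines the weights of depth $\le d$ appearing with $w'=1$ to a finite set independent of $u$. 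Proposition \ref{Prop:weights_occuring1} bounds the rest uniformly in $u$ up to translation by $u$. I expect the delicate point to be turning ``weights lie in a finite set'' into ``only finitely many $w$ have $F_w$ with a depth-$\le d$ monomial''. My first attempt would argue that the leading coefficient $F_w$ must cancel the $\bone^1$-residue of $T_u$-translates, as in the proof of Lemma \ref{Lem:convexity}, forcing $w^{-1}\nu$ into the finite set $\Sigma'$. Since the stabilizer of a strictly Tits weight is finite, this should leave only finitely many $w$.
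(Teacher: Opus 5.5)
Your framework matches the paper's: uniqueness from injectivity of $c\mapsto\bone^1_*c$, reduction to dominant $\lambda\in\mathcal{T}^>$, condition (I) from Lemma \ref{Lem:convexity}, and induction on $d(\lambda,\cdot)$ with Propositions \ref{Prop:weights_occuring1} and \ref{Prop:weights_occuring2} as the finiteness input. Two small points. Nothing needs to be constructed ``degree by degree'', since $X_{-\lambda}\bone^1_!\in\B$ already has an expansion in the $\bone^w_*$; all the content is condition (II). Also, your own computation gives $\omega(X_\lambda T_w)=\omega(X_\lambda)T^{-1}_{w^{-1}}$, not $\omega(X_\lambda)T_w$; this is harmless. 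The real problem is that the step you call delicate is where the proof lives, and the argument you sketch there does not go through.

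\emph{Cancellation.} A depth-$d$ term $\bone^w_*X_\nu$, after right multiplication by $T_u$, can be cancelled in the $\bone^1_*$-coefficient by other depth-$d$ terms $\bone^{w''}_*X_{\nu''}$ with $w''\preceq w$, not only by the finitely many shallower terms. The minimality trick of Lemma \ref{Lem:convexity} controls only Bruhat-minimal $w$. There the ``good'' property, lying in $\operatorname{Conv}(-W\lambda)$, is stable under Bernstein expansions. ``Lying in a finite set'' is not: each good term $(w'',\nu'')$ brings its own finite set $\Sigma'(w'',\nu'',d)$, and a priori there are infinitely many such terms.

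\emph{The final inference.} Even a valid constraint $w^{\pm1}\nu\in\Sigma'$ does not bound $w$. The weight $\nu$ ranges over the infinite depth-$d$ part of $\operatorname{Conv}(W\lambda)$, so for fixed $\sigma\in\Sigma'$ every $w$ gives an admissible pair $(w,w^{-1}\sigma)$. Finiteness of stabilizers helps only when one and the same weight is pinned down twice, as in the paper's $\mu'\in u^{-1}\Sigma_1\cap\Sigma_2$. What that bounds is the right multiplier $u$, not the index $w$ of a term.

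The missing idea is how to pass from a bound on $u$ to finiteness of the depth-$d$ layer. The paper does this in four steps:
\begin{enumerate}
\item[(a)] It computes the top layer explicitly: $X_{-\lambda}\bone^1_!\equiv\bone^1_*T^{-2}_{w_{0,\lambda}}X_\lambda$ modulo terms with $W\mu<W\lambda$, by checking (1!) and (2!) of Remark \ref{Rem:uniqueness_approx}.
\item[(b)] In the induction step it uses the two propositions only to find $\ell$ such that, for $\ell(u)>\ell$, the finitely many shallower terms contribute nothing of depth $\leqslant d$ to the $\bone^1$-coefficient of $(\cdot)T_u$.
\item[(c)] It solves recursively, in decreasing length, for finitely many $F_w\in\Ring P$ with $\ell(w)\leqslant\ell$, so that $\sum_{\ell(w)\leqslant\ell}F_w\bone^1_!T^{-1}_{w^{-1}}$ reproduces the residues of the remainder $\beta$ modulo deeper terms. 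Approximate uniqueness then shows that $\beta$ agrees with this finite $\bone_!$-expression at depth $d$.
\item[(d)] It uses (a), together with Lemma \ref{Lem:strictly_Tits_properties}, to see that each of these finitely many summands is, modulo deeper terms, a finite combination of $\bone^u_*X_{\mu''}$.
\end{enumerate}
Your proposal has no analogue of (a), (c) and (d), so your induction contains no mechanism that actually produces finiteness of the depth-$d$ layer.
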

The same conclusion holds for all $a\in \breve{\Hecke}_{\leqslant W\vec{\mu}}^>$, but we will not need this claim.
\begin{proof}
The map $\breve{\Hecke}_W^+\rightarrow \breve{\B}^+$ given by $b\mapsto \bone_*^1 b$ is injective because every element of $\breve{\Hecke}^+_{\leqslant W\vec{\mu}}$ can be uniquely written as an infinite sum $\sum_{w\in W}T^{-1}_w F_w$ for $F_w\in \Ring\breve{\mathcal{T}}_{\leqslant W\vec{\mu}}$.

Note that to establish the existence of $\omega(a)$, it is enough to establish the existence of $\omega(X_\lambda)\in \breve{\Hecke}_{\leqslant W\lambda}^>$
for $\lambda\in P_+\cap \mathcal{T}^>$.
Indeed, the elements of the form $T_wX_\lambda T_u$ for such $\lambda$ with $\lambda\leqslant \mu_i$ for some $i$ span $\Hecke^>_{\leqslant W\vec{\mu}}$. Since $T_w\bone_!^1=\bone_!^1 T_w, T^{-1}_w\bone_*^1=\bone_*^1T^{-1}_w$ for all $w\in W$, we must have $\omega(T_wX_\lambda T_u)=T_w\omega(X_\lambda)T_u$.

By  Lemma \ref{Lem:convexity}, if $\bone_*^wX_\mu$ appears in the expansion of $X_{-\lambda} \bone_!^1$, then $W\mu\leqslant W\lambda$. We need to prove that for each $d\geqslant 0$, the number of monomials $\bone_*^w X_\mu$ with nonzero coefficients in $X_{-\lambda}\bone^1_!$ such that $d(\lambda,\mu)=d$ is finite; this will ensure that $\omega(X_{\lambda})$ actually lies in the small completion, and will thus complete the proof. We prove this claim by induction on $d$.

For the base case, suppose $d=0$. We claim that the corresponding summand of $X_{-\lambda}\bone_!^1$ is $\bone_*^1 T_{w_{0,\lambda}}^{-2} X_{\lambda}$, where we write $w_{0,\lambda}$ for the maximal length element in the standard parabolic subgroup $\operatorname{Stab}_W(\lambda)$. In other words, we want to show that $X_{-\lambda}\bone_!^1$ and $\bone_*^1 T_{w_{0,\lambda}}^{-2} X_{\lambda}$, agree up to lower degree terms, i.e.\ the sum of monomials $\bone^*_wX_{\mu}$ with $W\mu<W\lambda$. By  Remark \ref{Rem:uniqueness_approx}, this is equivalent to the following two conditions:
\begin{itemize}
\item[(1!)] The coefficient of $\bone^1_*$ in the expansion of  $\bone_*^1 T_{w_{0,\lambda}}^{-2} X_{\lambda}$ in the right $\Ring \mathcal{T}$-module basis $\bone^w_*$ is $X_{\lambda}$,
\item[(2!)] and the coefficient of $\bone^1_*$ in the expansion of $\bone_*^1 T_{w_{0,\lambda}}^{-2} X_{\lambda}T_u$ is in $\Ring\mathcal{T}_{<W\lambda}$ for all $u\neq 1$.
\end{itemize}

Note that the coefficient of $1$ in the expansion of $T_{w_{0,\lambda}}^{-2}$ in the basis $T_{w^{-1}}^{-1}$ is $1$ (this is because the coefficient of $T_{w_{0,\lambda}}$ in the expansion of $T_{w_{0,\lambda}}^{-1}$ in the basis $T_w$ is $1$). So (1!) holds. Now we need to check (2!).

We write $u$ as $u_2u_1$, where $u_1\in \operatorname{Stab}_W(\lambda)$ and $u_2$ is the shortest element in the coset $u\operatorname{Stab}_W(\lambda)$. Then
$$T_{w_{0,\lambda}}^{-2} X_{\lambda}T_u=T_{w_{0,\lambda}}^{-1}T_{(w_{0,\lambda}u_1^{-1})^{-1}}^{-1}X_{\lambda}T_{u_2}.$$
Let $u_2=s_1\ldots s_\ell$ be a reduced expression. Since $u_2$ is shortest in
$u_2\operatorname{Stab}_W(\lambda)$, we see that
$\lambda>s_\ell \lambda>s_{\ell-1}s_\ell \lambda>\ldots>s_1\ldots s_\ell \lambda$. Using
(\ref{eq:Bernstein_new}) $\ell$ times, we see that $X_{-\lambda}T_{u_2}$ and $T_{u_2^{-1}}^{-1}X_{-u_2\lambda}$ agree up to terms of the form $T_?X_\nu$ with $W\nu<W\lambda$ (to be referred to as {\it lower layer terms} for time being).
So
$$T_{w_{0,\lambda}}^{-2} X_{-\lambda}T_u=T_{w_{0,\lambda}}^{-1}T_{(w_{0,\lambda}u_1^{-1})^{-1}}^{-1}T_{u_2^{-1}}^{-1}X_{-u_2\lambda}$$
up to lower layer terms.
The coefficient of $1$ in the expansion of $$T_{w_{0,\lambda}}^{-1}T_{(w_{0,\lambda}u_1^{-1})^{-1}}^{-1}T_{u_2^{-1}}^{-1}$$ in the basis $T_w^{-1}$ can only be nonzero if $u_1=u_2=1$.
This follows because the expansion of $T_{w_{0,\lambda}}^{-1}T^{-1}_{w_{0,\lambda}u_1^{-1}}$ in the basis $T_{w}^{-1}$ only contains $w\in \operatorname{Stab}_W(\lambda)$, and the coefficient of $1$ is nonzero if and only if $u_1=1$.
This finishes the base case of the induction.

We now proceed to the induction step. Suppose that $$X_{\lambda}\bone_!^1=\bone_*^1(\sum_{i=1}^r f_i T_{w_i^{-1}}^{-1}X_{\mu_i}+\sum_{\substack{w, \mu, f:\ d(\lambda,\mu)\geqslant d}}fT_{w^{-1}}^{-1}X_\mu),$$
where $f_i, f\in \Ring$, $W\mu_i\leqslant W\lambda$, and $d(\lambda,\mu_i)<d$. We want to show that the number of nonzero terms $fT_{w^{-1}}^{-1}X_\mu$ with $d(\lambda,\mu)=d$ is finite. Note that from $W\mu_i\leqslant W\lambda$ and Lemma \ref{Lem:strictly_Tits_properties} it follows that $\mu_i\in \mathcal{T}^>$.

First, we show that
\begin{itemize}
\item[($\star$)]
there is a positive integer $\ell$ such that for all $u\in W$ with $\ell(u)>\ell$ the expansion of $T_{w_i^{-1}}^{-1}X_{\mu_i}T_u$ in the basis $T_{w^{-1}}^{-1}X_\mu$ has no terms with $w=1$ and $d(\lambda,\mu)\leqslant d$.
\end{itemize}

Suppose that the term $X_{\mu'}$ occurs with nonzero coefficient. By Propositions \ref{Prop:weights_occuring1}, \ref{Prop:weights_occuring2}, there are finite subsets $\Sigma_1,\Sigma_2\subset \mathcal{T}^>$, depending on $w_i,\mu_i,d$ (but not on $u$) such that
$\mu'\in u^{-1}\Sigma_1\cap \Sigma_2$. For fixed finite subsets $\Sigma_1,\Sigma_2\subset \mathcal{T}^>$, only finitely many $u\in W$ can satisfy $u^{-1}\Sigma_1\cap \Sigma_2\neq\varnothing$: for each pair $(\sigma_1,\sigma_2)\in \Sigma_1\times \Sigma_2$, the set of $u$ with $u\sigma_1=\sigma_2$ is either empty or a coset of the finite stabilizer of $\sigma_1$. This shows ($\star$).

Set $\beta=X_{\lambda}\bone_!^1-\bone_*^1\sum_{i=1}^r f_i T_{w_i^{-1}}^{-1}X_{\mu_i}$ so that for all terms $\bone_*^w X_\mu$ appearing in the expansion of $\beta$ we have $W\mu\leqslant W\lambda$ and $d(\lambda,\mu)\geqslant d$. What we need to show is that the number of terms with $d(\lambda,\mu)=d$ is finite. Arguing as in the previous paragraph, we see that for $\ell(u)>\ell$, the coefficient of $\bone_*^1X_{\mu'}$ in $\beta T_u$ is zero for all $\mu'$ satisfying $d(\lambda,\mu')=d$. Equivalently, by Lemma \ref{Lem:convexity}, $\operatorname{res}_1(\beta T_u)$ is the sum of lower layer terms, i.e., only includes $X_{-\nu}$ with $d(\lambda,\nu)>d$, where, $\operatorname{res}_1$ stands for the coefficient of $\bone^!_1$ in the expansion in the left $\Ring P$-module basis $\bone^w_!$.

We claim that
\begin{itemize}
\item[($\star\star$)]
there are (uniquely determined) elements $F_w=\sum_{\mu'}f_{w,\mu'}X_{-\mu'}$, where $\ell(w)\leqslant \ell$, the sum is taken over all $\mu'$ with $d(\lambda,\mu')=d$ and $f_{w,\mu'}\in \Ring$
such that, up to terms involving only lower layer terms, i.e., $X_\nu$ with $W\nu\leqslant W\lambda$ and $d(\lambda,\nu)>d$, we have
$$\beta\equiv\sum_{\ell(w)\leqslant \ell}F_w\bone_!^1T_{w^{-1}}^{-1}.$$
\end{itemize}
By the induction base proved above, we know that each $F_w\bone_!^w$ is a finite linear combination of $\bone^u_*X_{\mu''}$ modulo lower layer summands, so ($\star\star$) will finish the proof.

To prove ($\star\star$) we observe that the  $\operatorname{res}_1(F_w \bone_!^1T_{w^{-1}}^{-1}T_u)$ equals
\begin{itemize}
\item
$0$ if $\ell(u)\geqslant \ell(w)$ and $u\neq w$,
\item $F_w$ if $u=w$.
\end{itemize}
In particular, for any choice of $F_w$, 
\begin{equation}\label{eq:difference}
\operatorname{res}_1([\beta-\sum_{\ell(w)\leqslant \ell}F_w \bone_!^1 T_{w^{-1}}^{-1}]T_u)
\end{equation} is the sum of lower layer terms if $\ell(u)>\ell$. We can then solve for $F_w$ recursively in the decreasing order of $\ell(w)$ to achieve that (\ref{eq:difference}) is a lower layer term. Namely, suppose we found $F_w$ for all $w$ with $\ell(w)>\ell(w')$ and want to find $F_{w'}$. Then 
 $F_{w'}$ equals $$\operatorname{res}_1([\beta-\sum_{w|\ell(w')<\ell(w)\leqslant \ell}F_w\bone_!^1T^{-1}_{w^{-1}}]T_{w'}).$$
 modulo the lower layer terms.

Thanks to the approximate analog of Lemma \ref{Lem:uniqueness_conjugation} (cf. Remark \ref{Rem:uniqueness_approx} vs Remark \ref{Rem:uniqueness}), this choice of $F_w$ with $\ell(w)\leqslant \ell$ satisfies ($\star\star$) and finishes the proof.
\end{proof}

\subsection{Large completion}\label{SS_large_completion}
The goal of this section is to investigate a larger completion of $\Hecke_W^>$.

\begin{defi}\label{defi:large_completion}
Let $\vec{\mu}=(\mu_1,\ldots,\mu_k)\in (P_+\cap \mathcal{T}^>)^k$. Let
$\widehat{\Hecke}^{>}_{\leqslant W\vec{\mu}}$ denote the set of all infinite sums $\sum a_{w,\mu}  T_w X_\mu$, where $a_{w,\mu}\in \Ring$  ($w$ runs over $W$ and $\mu$ over $\mathcal{T}^>$) subject to (I) and
\begin{itemize}
\item[(II')] For each $d\in \Z$ there are only finitely many elements $\mu$ such that $a_{w,\mu}\neq 0$ for some $w$ and $\langle\rho^\vee, \mu_+\rangle\geqslant d$.
\end{itemize}
We write $\widehat{\Hecke}^>_W$ for the union of $\widehat{\Hecke}^>_{\leqslant W\vec{\mu}}$ (viewed as subsets in the set of all infinite sums $\sum_{w,\mu}a_{w,\mu}T_w X_\mu$).
\end{defi}
In particular, $\breve{\Hecke}_W^{>}\subset \widehat{\Hecke}^>_W$.

\begin{Rem}\label{rem:hat_topology}
We note that $\widehat{\Hecke}^>_{\leqslant W\vec{\mu}}$ comes with an $\Ring$-module topology as follows: a basis of neighborhoods of $0$ formed by the following subsets
$$U_{\ell,d}:=\{\sum_{w,\mu\in \mathcal{T}^>}a_{w,\mu}T_w X_\mu| a_{w,\mu}\neq 0\Rightarrow \ell(w)\geqslant \ell\text{ or }\langle\rho^\vee,\mu_+\rangle\leqslant d\}.$$
The submodule $\Hecke^>_{\leqslant W\vec{\mu}}$ is dense with respect to this topology, and  $\widehat{\Hecke}^>_{\leqslant W\vec{\mu}}$ is separated, but not complete.
\end{Rem}

The following lemma explains crucial properties of $\widehat{\Hecke}_W^{>}$. We write $\widehat{\Hecke}_W$ for the set of all formal sums $\sum_{w\in W}a_w T_w$ equipped with the direct product topology.

\begin{Lem}\label{Lem:completion_product}
The following claims are true:
\begin{enumerate}
\item The product map $\Hecke_W\times \Hecke^>_{\leqslant W\vec{\mu}}\rightarrow \Hecke^>_{\leqslant W\vec{\mu}}$ uniquely extends by continuity to $$\widehat{\Hecke}_W\times \breve{\Hecke}^>_{\leqslant W\vec{\mu}}\rightarrow
\widehat{\Hecke}^>_{\leqslant W\vec{\mu}}$$
\item The product map $\Hecke^>_{\leqslant W\vec{\mu}}\times \Hecke^>_{\leqslant W\vec{\mu}'}\rightarrow \Hecke^>_{\leqslant W(\vec{\mu}*\vec{\mu}')}$ uniquely extends by continuity to
\begin{itemize}
\item[(a)] $\widehat{\Hecke}^>_{\leqslant W\vec{\mu}}\times \widehat{\Hecke}^>_{\leqslant W\vec{\mu}'}\rightarrow \widehat{\Hecke}^>_{\leqslant W(\vec{\mu}*\vec{\mu}')}$,
\item[(b)]  $\breve{\Hecke}^>_{\leqslant W\vec{\mu}}\times \widehat{\Hecke}^>_{\leqslant W\vec{\mu}'}\rightarrow \breve{\Hecke}^>_{\leqslant W(\vec{\mu}*\vec{\mu}')}$.
\end{itemize}
\end{enumerate}
\end{Lem}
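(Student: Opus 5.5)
The plan is to reduce every assertion to a \emph{finiteness of coefficients} statement: each coefficient of the would-be product must be a finite sum, and the resulting infinite sum must again satisfy (I) and (II$'$), resp.\ (I) and (II). Continuity then follows from the same estimates made uniform in the neighborhoods $U_{\ell,d}$ of Remark~\ref{rem:hat_topology} and $U_d$ of Definition~\ref{defi:small_completion}. Condition (I) and the inclusion of weights in $\operatorname{Conv}$ are automatic from Lemma~\ref{Lem:Bernstein_consequence}, exactly as in Lemma~\ref{Lem:small_completion_algebra}(1). Throughout I will use a simple degree observation. On $\operatorname{Conv}(W\mu_i)$ the function $\kappa\mapsto\langle\rho^\vee,\kappa_+\rangle$ is bounded above by $\max_i\langle\rho^\vee,\mu_i\rangle$. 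Hence a lower bound $d$ on the degree of a product weight $(\mu'+\nu)_+$ forces lower bounds on the degrees of both factors' weights, using $\langle\rho^\vee,(\mu'+\nu)_+\rangle\leqslant\langle\rho^\vee,\mu_+\rangle+\langle\rho^\vee,\nu_+\rangle$. By (II$'$) or (II), only finitely many weights $\mu,\nu$ of the factors are then relevant.

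For (1), write $\sum_w a_wT_w\cdot\sum b_{u,\mu}T_uX_\mu$. The weight $\mu$ is untouched, so (II$'$) for the output follows from (II) for the second factor. For a fixed target $T_xX_\mu$ only finitely many $u$ occur, by (II) at the degree of $\mu$. For each such $u$, the element $T_x$ appears in $T_wT_u$ only if $x=wu'$ with $u'\preceq u$. So only finitely many $w$ contribute and the coefficient is a finite sum. The same bookkeeping shows that the preimage of $U_{\ell,d}$ contains a product of basic neighborhoods, which gives continuity.

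For (2)(a), fix a target monomial $T_xX_\kappa$ of degree $\geqslant d$. By the degree observation only finitely many $\mu$ (with their $w$ arbitrary) and finitely many $\nu$ contribute. Fix such $\mu,\nu$ and expand $X_\mu T_u=\sum T_{u'}X_{\mu'}$. By Proposition~\ref{Prop:weights_occuring1} in the $T_{w'}X_{\mu'}$ basis (Remark~\ref{Rem:weights_occuring1}), with $d(\mu,\mu')$ bounded, we get $\mu'\in u\Sigma(\mu,d')$ for a finite set independent of $u$. Since $\mu'=\kappa-\nu$ is fixed, and strictly Tits weights have finite stabilizers, $u$ ranges over finitely many cosets of finite groups, i.e.\ over a finite set. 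The Bruhat-support argument from (1) then bounds $w$. The output weights $\kappa$ of degree $\geqslant d$ lie in finitely many sets $\{\mu'+\nu\}$, so (II$'$) holds.

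For (2)(b) the new point, and the step I expect to be the main obstacle, is that the first factor now has only finitely many pairs $(w,\mu)$ above each degree, while the second factor may have infinitely many $u$ with the same $\nu$. We must show that only finitely many $x$ arise in degree $\geqslant d$. The leading term of $T_wX_\mu T_uX_\nu$ is $T_{wu}X_{u^{-1}\mu+\nu}$, so what is needed is the following claim: for fixed strictly Tits $\mu$, fixed $\nu$ and fixed $d'$, only finitely many $\mu'$ with $d(\mu,\mu')\leqslant d'$ appearing in the expansion of $X_\mu T_u$ satisfy $\langle\rho^\vee,(\mu'+\nu)_+\rangle\geqslant d$. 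My first attempt would use the $W$-invariant form. We have $(\mu'+\nu,\mu'+\nu)=(\mu',\mu')+2(\mu',\nu)+(\nu,\nu)$, and $(\mu',\mu')$ is determined up to finitely many values by $d(\mu,\mu')$. Strict Titsness should force $(\mu',\nu)\to-\infty$ (or, for the relevant dominant-degree comparison, a decrease of $\langle\rho^\vee,(\mu'+\nu)_+\rangle$) as $\mu'$ leaves finite subsets of $u\Sigma$. In the affine case this is the familiar fact that translations push the $\delta$-coefficient of the dominant conjugate to $-\infty$. For the general Kac--Moody case I would try to adapt the positive-definiteness argument of Lemma~\ref{Lem:root_finiteness} together with the fact used in the proof of Proposition~\ref{Prop:weights_occuring1} that $\{\nu'\in W\nu\mid\nu'\geqslant w\nu\}$ is finite. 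Granting this claim, only finitely many $u$, hence finitely many $x$ and $\kappa$, appear in degree $\geqslant d$, which gives (II). Continuity then follows as before.
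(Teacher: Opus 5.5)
\textbf{There is a genuine gap, and it already appears in (2)(a), not only where you locate it.}

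Your argument for (1) is fine. So is the finiteness of each individual coefficient in (2)(a), which you get from Proposition~\ref{Prop:weights_occuring1} in the $T_{w'}X_{\mu'}$ basis together with the finiteness of stabilizers of strictly Tits weights. But the last sentence of (2)(a) does not follow.

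Condition (II$'$) on the second factor bounds only its weights. So for a fixed $\nu$ there may be infinitely many $u$ with nonzero coefficient. The weights coming from $X_\mu T_u$ with $d(\mu,\mu')\leqslant d'$ are $\mu'=u^{-1}\sigma$ with $\sigma$ in the finite set $\Sigma(\mu,d')$, so they range over an infinite union of $W$-translates.

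Knowing that each fixed $\kappa$ receives contributions from only finitely many $u$ says nothing about how many distinct $\kappa=\nu+u^{-1}\sigma$ have degree $\geqslant d$. To get (II$'$) for the product you need exactly the claim you postpone to (2)(b). So your ``new point'' of (2)(b) is already the whole difficulty of (2)(a). In both places the proposal only grants it: the invariant-form paragraph is a sketch (``should force'', ``I would try to adapt''), and completing it would need an estimate of the same kind as the one below.

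\textbf{The missing ingredient} is a linear-growth estimate, and this is how the paper argues. It needs no invariant form.

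Let $\nu$ be dominant and strictly Tits, and let $y\in W$. Then $\nu-y\nu$ is the sum, over the positive real roots $\beta$ with $y\beta<0$, of $\langle\nu,\beta^\vee\rangle\geqslant 0$ times a simple root. Moreover $\langle\nu,\beta^\vee\rangle=0$ only for the finitely many positive roots of the finite group $\operatorname{Stab}_W(\nu)$. Hence $\langle\rho^\vee,\nu-y\nu\rangle\geqslant\ell(y)-\ell(w_{0,\nu})$.

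Now take $\nu^1,\nu^2$ dominant and strictly Tits, and write $(\nu^1+w\nu^2)_+=y\nu^1+yw\nu^2$. Then
\[
\langle\rho^\vee,\nu^1+\nu^2-(\nu^1+w\nu^2)_+\rangle
=\langle\rho^\vee,\nu^1-y\nu^1\rangle+\langle\rho^\vee,\nu^2-yw\nu^2\rangle
\geqslant\ell(y)+\ell(yw)-C\geqslant\ell(w)-C .
\]

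Apply this after conjugating to the dominant case, which shifts lengths by a bounded amount. Take $\nu^1=\nu$ and $w=u^{-1}$, and let $\nu^2$ run over the finite set $\Sigma(\mu,d')$; its elements lie in $\operatorname{Conv}(W\mu)$ and are therefore strictly Tits. Using $\langle\rho^\vee,\sigma_+\rangle\leqslant\langle\rho^\vee,\mu_+\rangle$, you get that only $u$ of bounded length can produce product weights of degree $\geqslant d$.

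This gives (II$'$) in (2)(a). Combined with the finiteness of the pairs $(w,\mu)$ above each degree in the first factor, it also gives (II) in (2)(b). The paper packages this as: only finitely many $w$ contribute terms of bounded degree drop to $X_{\mu^1}T_wX_{\mu^2}$.
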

\begin{proof}
(1) follows by comparing (II) from Definition \ref{defi:small_completion} with (II') from Definition \ref{defi:large_completion}.

We will prove (2a).
Thanks to (II'), we reduce the proof to showing
\begin{itemize}
\item[($\star$)] For all $\mu^1,\mu^2\in \mathcal{T}^>$, all $b^1_w,b^2_w\in \Ring$ (for $w\in W$) and all $d>0$,  only finitely many elements $w$ contribute terms $T_{w'}X_{\mu'}$ with $d(\mu^1_++\mu^2_+,\mu')\leqslant d$ in the expansion of $X_{\mu^1}T_wX_{\mu^2}$ in the basis $T_?X_\bullet$ of $\Hecke^a_W$.
\end{itemize}
We will deduce this claim from Proposition \ref{Prop:weights_occuring1}. Thanks to that proposition, ($\star$) reduces to checking the following combinatorial claim:
\begin{itemize}
\item[($\star \star$)] For all $\nu^1,\nu^2\in \mathcal{T}^>$, and all $d>0$, there is $\ell$ (depending on $\nu^1,\nu^2,d$) such that $d(\nu^1_++\nu^2_+, \nu^1+w\nu^2)>d$ for all $w$ with $\ell(w)>\ell$.
\end{itemize}
In the proof we can assume that both $\nu^1,\nu^2$ are dominant. We note that $\langle\rho^\vee, \nu^i-w\nu^i\rangle> \ell(w)-\ell(w_{0,\nu^i})$
for $i=1,2$; hence
$$\langle\rho^\vee,\nu^1+\nu^2-(\nu^1+w\nu^2)_+\rangle=\langle\rho^\vee,(\nu^1-u\nu^1)+ (\nu^2-uw\nu^2)\rangle$$ for some $u\in W$. Since $\ell(u)+\ell(uw)\geqslant \ell(w)$, we see that $\langle\rho^\vee,\nu^1+\nu^2-(\nu^1+w\nu^2)_+\rangle\geqslant \ell(w)-C$, where $C$ is some constant independent of $w$,
which proves ($\star \star$).
The proof of (2b) is analogous: one uses ($\star$) to show that the product map $\Hecke^>_{\leqslant W\vec{\mu}}\times \Hecke^>_{\leqslant W\vec{\mu}'}\rightarrow \Hecke^>_{\leqslant W(\vec{\mu}*\vec{\mu}')}$ is continuous, where we equip the 1st factor and the target with a topology as in Definition \ref{defi:small_completion}, and the 2nd factor with the topology restricted from one described in
Remark \ref{rem:hat_topology}. Then the proof follows because $\breve{\mathcal{H}}^>_{\leqslant W(\vec{\mu}*\vec{\mu}')}$ is complete and separated.
\end{proof}



\section{The bar operation}\label{sec:barinvolution}

Recall the elements $a_w\in \Ring$ from Remark \ref{Rem:star_shriek_expansion} so that
\begin{equation}\label{eq:star_shriek_expansion}
\bone_*^1=\sum_{w\in W}a_w \bone_!^w.
\end{equation}
\begin{defi}
    For each $b\in \Hecke_W^{>}$, define
$$\bar{b}=(\sum_{w\in W}a_w T_w)\omega(b),$$
which is a well-defined element of $\widehat{\Hecke}_W^{>}$ by Lemma \ref{Lem:completion_product}(1).
\end{defi}

Note that we have the  well-defined map $$\sigma:\widehat{\mathcal{H}}_W^>\rightarrow \breve{\mathcal{B}}^+, \sum_{w,\mu}a_{w,\mu}T_w X_\mu\rightarrow \sum_{w,\mu}a_{w,\mu}\bone^w_! X_\mu.$$   It is easy to see that we have
\begin{align}
    \Upsilon(b)\bone_!^1 & = \sigma(\bar{b}), \forall b\in \Hecke^>_W.
\end{align}
Of course, if $\beta\in \breve{\mathcal{H}}^>$, then $\sigma(\beta)=\bone^!_1 \beta$.

We also have the map $\sigma_0:\widehat{\Hecke}_W\rightarrow \breve{\mathcal{B}}^+$ defined similarly. Note that for all $\beta\in \widehat{\mathcal{H}}_W, c\in \breve{\Hecke}^>_W$, we have
\begin{equation}\label{eq:bimod_assoc_sigma}
\sigma_0(\beta)c=\sigma(\beta c).
\end{equation}

Thanks to Remark \ref{Rem:finite_type}, $b\mapsto\bar{b}$ is the usual bar involution in the case when $W$ is finite.

The following is our main result on the properties of $\bar{\bullet}$.

\begin{Thm}\label{Thm:bar_Tits_properties}
We have the following:
\begin{enumerate}
\item There is a unique continuous extension of $\bar{\bullet}$ to a map $ \widehat{\Hecke}_W^{>}
\rightarrow \widehat{\Hecke}_W^{>}$ (again denoted by $\bar{\bullet}$).
\item 
This extension is an automorphism.
\item $\bar{\bullet}$ respects the product on
$\widehat{\Hecke}_W^{>}$.\end{enumerate}
\end{Thm}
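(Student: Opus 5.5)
The plan is to use the defining identity $\Upsilon(b)\bone_!^1=\sigma(\bar b)$, for $b\in\Hecke^>_W$, as the basic tool. Together with the fact that $\sigma$ is injective (because $\{\bone_!^w\}$ is a topological basis by Corollary \ref{cor:levelzero-a-t-relation}), this turns statements about $\bar{\bullet}$ into statements about commuting elements past $\bone_!^1$ inside the bimodule $\breve{\B}^+$.

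First I treat the continuous extension. By Proposition \ref{Prop:action_description} and its proof, $\omega$ is built from $\omega(X_\lambda)$ via $\omega(T_wX_\lambda T_u)=T_w\omega(X_\lambda)T_u$. Lemma \ref{Lem:convexity} places $\omega(X_\lambda)$ in $\breve{\Hecke}^>_{\leqslant W\lambda}$. Writing an element of $\widehat{\Hecke}^>_{\leqslant W\vec\mu}$ as $\sum_\mu(\sum_w a_{w,\mu}T_w)X_\mu$, I set
\[
\bar b=\sum_\mu\Big(\sum_w a_{w,\mu}T_w\Big)^{\!*}\,\overline{X_\mu},\qquad \Big(\sum_w a_wT_w\Big)^{\!*}=\sum_w \overline{a_w}\,T^{-1}_{w^{-1}}\in\widehat{\Hecke}_W,
\]
and define $\overline{X_\mu}=(\sum_w a_wT_w)\omega(X_\mu)$. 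Two things have to be checked. The first is that $T_w\mapsto T^{-1}_{w^{-1}}$ on $\Hecke_W$ is the bar in this setting: $T_w$ commutes with $\bone_!^1$ and $\Upsilon(T_w)=T^{-1}_{w^{-1}}$, so this follows from $\sigma$-injectivity. The second is that the sum over $\mu$ converges in $\widehat{\Hecke}^>$. For this, each $\overline{X_\mu}$ lies in $\widehat{\Hecke}^>_{\leqslant W\mu}$ by Lemma \ref{Lem:completion_product}(1), and its terms with $\langle\rho^\vee,\nu_+\rangle\geqslant d$ come from finitely many $\nu$. Condition (II') on $b$ then gives (II') for $\bar b$, and continuity in the topology of Remark \ref{rem:hat_topology} follows. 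Uniqueness holds because $\Hecke^>_{\leqslant W\vec\mu}$ is dense.

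Next, multiplicativity (3). For $b,c\in\Hecke^>_W$ I compute
\[
\sigma(\overline{bc})=\Upsilon(b)\Upsilon(c)\bone_!^1=\Upsilon(b)\sigma(\bar c).
\]
The aim is then to prove the identity $\Upsilon(b)\sigma(\beta)=\sigma(\bar b\beta)$ for $\beta\in\widehat{\Hecke}^>_W$. Applied with $\beta=\bar c$ this gives $\sigma(\overline{bc})=\sigma(\bar b\bar c)$, and injectivity of $\sigma$ finishes it. To prove the identity, I expand $\beta=\sum_w T_w c_w$ with $c_w\in\Ring\mathcal T^>$, so that $\sigma(\beta)=\sum_w \bone_!^1T_wc_w$. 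Then $\Upsilon(b)\bone_!^1T_wc_w=\sigma(\bar b)T_wc_w=\sigma(\bar bT_wc_w)$ by \eqref{eq:bimod_assoc_sigma}, after writing $\bar b$ as $\sum_w a_wT_w$ applied to an element of $\breve{\Hecke}^>$.

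\textbf{Main obstacle.} The hard step is justifying the interchange of the infinite sums: summing over $w$ on both sides, and using that the product of Lemma \ref{Lem:completion_product}(2a) is continuous, so that $\sum_w\bar bT_wc_w=\bar b\beta$. Two finiteness facts are needed here. On the bimodule side, the twisted action of Proposition \ref{Prop:small_completion_bimodule} must be continuous for infinite sums in $T_w$, which again uses the fact that each target coefficient receives contributions from only finitely many $w$. On the algebra side, the same fact is the statement $(\star\star)$ in the proof of Lemma \ref{Lem:completion_product}. I would try to deduce both from Propositions \ref{Prop:weights_occuring1} and \ref{Prop:weights_occuring2} exactly as in that proof. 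Once (3) holds on the dense subalgebra, continuity extends it to all of $\widehat{\Hecke}^>_W$.

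Finally, (2). Additivity and $\Ring$-semilinearity are clear, and multiplicativity is (3). For bijectivity, I construct an inverse by the symmetric construction with $!$ and $*$ interchanged: use $\Upsilon^{-1}$ and the expansion of $\bone_!^1$ in the $\bone_*^w$ basis. Uniqueness as in Lemma \ref{Lem:uniqueness_conjugation} and Remark \ref{Rem:uniqueness} then shows the two maps are mutually inverse. This last point may need care, since the paper stresses that $\bar{\bullet}$ need not be an involution.
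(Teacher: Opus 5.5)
Your argument for (2) has a genuine gap. Parts (1) and (3) are close to the paper's argument, apart from a convergence problem in (1) discussed below.

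\textbf{Part (2).} Since $\Upsilon^{-1}=\Upsilon$, your ``symmetric construction with $!$ and $*$ interchanged'' yields the map $\psi$ characterized by $\Upsilon(b)\bone_*^1=\bone_*^1\psi(b)$. That is, you conjugate through $\bone_*^1$ instead of through $\bone_!^1$. Because $\bone_*^1=\bone_!^1\alpha$ with $\alpha=\sum_w a_wT_w$, you get, formally,
\[
\bone_!^1\,\bar b\,\alpha=\Upsilon(b)\bone_!^1\alpha=\bone_!^1\,\alpha\,\psi(b),
\]
so $\psi(b)=\alpha^{-1}\bar b\,\alpha$. Thus $\psi$ is $\bar\bullet$ followed by conjugation by $\alpha$, not $\bar\bullet^{-1}$.

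This already fails for finite $W$. There $\bar\bullet$ is the classical involution and $\alpha$ is an honest invertible element of $\Hecke_W$. It is not central in $\Ha$: in rank one $\alpha=1+a_sT_s$ with $a_s\neq 0$, since $j_!\neq j_*$ in $K$-theory. Hence $\psi\circ\bar\bullet=\operatorname{Ad}(\alpha^{-1})\neq\mathrm{id}$, and no uniqueness statement such as Lemma \ref{Lem:uniqueness_conjugation} can repair this.

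The construction that genuinely inverts $\bar\bullet$ moves $b$ from the right of $\bone_!^1$ to the left, i.e.\ solves $\bone_!^1b=\Upsilon(\psi(b))\bone_!^1$. That produces $\Ring P$-coefficients standing to the left of infinite $T$-sums. But $\widehat{\Hecke}^>_W$ is not left--right symmetric: a fixed $X_\mu$ to the left of infinitely many $T_w$ produces the infinitely many weights $w^{-1}\mu$ at a single level. So proving that $\psi(b)$ satisfies (II') is a new finiteness problem, and you do not address it.

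The paper never builds an inverse. It proceeds as follows.
\begin{enumerate}
\item $\bar\bullet$ preserves the level filtration $\widehat{\Hecke}^>_{\leqslant W\vec\mu}(\leqslant d)$, which is complete and separated.
\item So it suffices to invert $\bar\bullet$ on each top layer $\widehat{\Hecke}^>_{W\lambda}$, where it is given by \eqref{eq:bar_top_layer}.
\item Filtering by the coset component $x$ and cancelling the invertible factor $T_{w_{0,\lambda}}^2$ reduces this to the map \eqref{eq:widehat_H_W_endomorphism} on $\widehat{\Hecke}_W$.
\item That map is unitriangular for the Bruhat order, because $a^w_w=1$ and $a^u_w\neq 0\Rightarrow w\succeq u$.
\end{enumerate}

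\textbf{Part (1).} The same Bruhat-triangularity is missing here. Your element $\sum_w\bar a_wT^{-1}_{w^{-1}}$ is not in $\widehat{\Hecke}_W$ when infinitely many $a_w\neq0$. The $T_1$-coefficient of $T^{-1}_{w^{-1}}$ is nonzero for every $w$ (e.g.\ $-\hbar$ for $w=s$), so each coefficient receives infinitely many contributions. Moreover $\overline{X_\mu}=\alpha\,\omega(X_\mu)$ lies in $\widehat{\Hecke}^>_W$ but in general not in $\breve{\Hecke}^>_W$. Lemma \ref{Lem:completion_product}(1) therefore does not supply a product $\widehat{\Hecke}_W\times\widehat{\Hecke}^>_W\to\widehat{\Hecke}^>_W$.

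The fix, as in \eqref{eq:bar_cont_extension}, is to combine $T^{-1}_{u^{-1}}$ with $\alpha$ first, via $\bone_*^u=\bone_!^1\sum_w a^u_wT_w$, and then regroup:
\[
\sum_u\bar b_u\sum_w a^u_wT_w=\sum_w\Bigl(\sum_{u\preceq w}\bar b_ua^u_w\Bigr)T_w .
\]
This has finite coefficients, and only then do you multiply by $\omega(X_\lambda)\in\breve{\Hecke}^>_W$.

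\textbf{Part (3).} Your route is a reasonable variant of the paper's: injectivity of $\sigma$ plus the identity $\Upsilon(b)\sigma(\beta)=\sigma(\bar b\beta)$, obtained by continuity. The paper instead proves $\omega(b_1b_2)=\omega(b_1)(\alpha\,\omega(b_2))$ inside $\breve{\Hecke}^>_W$, using injectivity of $h\mapsto\bone_*^1h$ and \eqref{eq:bimodule_equality}.
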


\begin{proof}
For (1), since $\Hecke_W^{>}$ is dense in $\widehat{\Hecke}_W^{>}$, uniqueness follows from continuity. So, we only need to establish the existence of an extension. In order to do this, we need to write a formula for $\bar{b}$ when $b=(\sum_{u\in W}b_uT_u)X_\lambda$ for $\lambda\in \mathcal{T}^>$. 

Now note that $\bone_*^1 T_{u^{-1}}^{-1}=\bone_*^u$. It follows that there are elements $a_w^u\in \Ring$ with $\bone_*^1 T_{u^{-1}}^{-1}= \bone_!^1\sum_{w} a_w^u T_w$ such that $w\succeq u$ whenever $a_{w}^u\neq 0$. Then we set
\begin{equation}\label{eq:bar_cont_extension}
\bar{b}=(\sum_{u,w}\bar{b}_u a_w^u T_w) \omega(X_{\lambda}).
\end{equation}
Since $\sum_{u,w}\bar{b}_u a_w^u T_w=\sum_{w\in W}(\sum_{u\preceq w}\bar{b}_u a_w^u)T_w$,  we use (1) of Lemma \ref{Lem:completion_product} to see that the right-hand side of (\ref{eq:bar_cont_extension}) is a well-defined element of $\widehat{\Hecke}_{\leqslant W\lambda}^{>}$. From here and the finiteness condition (II') in Definition \ref{defi:large_completion} we see that for every $$\sum_{i} (\sum_{w\in W}b^i_w T_{{w}^{-1}})X_{\lambda^i}\in \Hecke^>_{\leqslant W\vec{\mu}}\subset\widehat{\Hecke}_W^{>},$$ we get that
$$\sum_i(\sum_{u,w}\bar{b}^i_u a_w^u T_w) \omega(X_{\lambda^i})$$
is a well-defined element of $\Hecke^>_{\leqslant W\vec{\mu}}$. This shows (1).

We now prove (2). It is enough to show that $b\mapsto \bar{b}$ is an automorphism of
$\widehat{\mathcal{H}}^>_{\leqslant W\vec{\mu}}$ for each $\vec{\mu}$. We equip $\widehat{\mathcal{H}}^>_{\leqslant W\vec{\mu}}$
with the topology whose basis of neighborhoods of $0$ are the subsets $\widehat{\mathcal{H}}^>_{\leqslant W\vec{\mu}}(\leqslant d)$ for $d\in \Z$ consisting of all elements $\sum_{w,\mu}b_{w,\mu}T_w X_\mu$ with 
$b_{w,\mu}\neq 0\Rightarrow \langle\rho^\vee,\mu_+\rangle\leqslant d$. By the construction in the proof of (1), in particular, (\ref{eq:bar_cont_extension}), $\bar{\bullet}$ preserves each $\widehat{\mathcal{H}}^>_{\leqslant W\vec{\mu}}(\leqslant d)$. Also, it is easy to see that 
$\widehat{\mathcal{H}}^>_{\leqslant W\vec{\mu}}$ is complete and separated with respect to the topology above. So it is enough to prove the following claim: for each $\lambda\in P_+\cap \mathcal{T}^>$, the endomorphism of the top layer part
$$\widehat{\mathcal{H}}^>_{W\lambda}:=
\widehat{\mathcal{H}}^>_{\leqslant W\lambda}/ 
\widehat{\mathcal{H}}^>_{\leqslant W\lambda}(\leqslant \langle\rho^\vee,\lambda\rangle-1)$$ induced by $b\mapsto \bar{b}$
is invertible. 
Thanks to Lemma \ref{Lem:simplecases}, we can present $\widehat{\mathcal{H}}^>_{ W\lambda}$ 
as the set of all infinite sums $\sum_{u,x}b_{u,x} T_u X_\lambda T_x$ with $w\in W$ and $x$ shortest in $\operatorname{Stab}_W(\lambda)x$ subject to the following finiteness condition:
\begin{itemize}
\item  the set of all $x$ such that $b_{u,x}\neq 0$ for some $u$ is finite. 
\end{itemize}

By the case of $d=0$ in the proof of Proposition \ref{Prop:action_description}, if $\lambda$ is dominant, then 
$\omega(X_\lambda)=T_{w_{0,\lambda}}^2 X_\lambda$ in $\widehat{\mathcal{H}}^>_{W\lambda}$. So thanks to (\ref{eq:bar_cont_extension}),
the endomorphism $b\mapsto \bar{b}$ of $\widehat{\mathcal{H}}^>_{W\lambda}$ is given by 

\begin{equation}\label{eq:bar_top_layer}
\sum_{u,x}b_{u,x}T_u X_\lambda T_x\mapsto \sum_{u,w,x}\bar{b}_{u,x} a_w^u T_wT_{w_{0,\lambda}}^2 X_\lambda T^{-1}_{x^{-1}}.
\end{equation}
We can filter $\widehat{\mathcal{H}}^>_{W\lambda}$ by the poset $(W/\operatorname{Stab}_W(\lambda),\leqslant)$ according to the $x$-component. Since the right multiplication by $T_{w_{0,\lambda}}^2$ is an  automorphism of $\widehat{\mathcal{H}}_W$, we reduce to showing that the endomorphism 
\begin{equation}\label{eq:widehat_H_W_endomorphism}
\sum_{u\in W}b_u T_u\mapsto \sum_{u,w\in W}\bar{b}_u a^u_w T_w 
\end{equation}
of $\widehat{\mathcal{H}}_W$ is invertible. Recall that the elements $a^u_w$ are defined by $\bone^u_*=\sum_{w\in W}a_w^u \bone^w_!$, so that $a_w^w=1$ and $a_{w}^u\neq 0\Rightarrow w\succeq u$. The claim that (\ref{eq:widehat_H_W_endomorphism}) is an automorphism easily follows because it is strictly uni-upper-triangular with respect to the Bruhat order. 

Let us show (3): thanks to (1) it suffices to show that $\bar{\bullet}:\Hecke^>_W\rightarrow \widehat{\Hecke}^>_W$ respects the product.  Set $\alpha=\sum_{w\in W}a_w T_{w}$ so that $\bone_*^1 = \bone_!^1\alpha$.

By definition, $\bar{b}=\alpha \omega(b)$ for any $b\in \Hecke_W^{>}$.
Consequently, the required equality $\overline{b_1b_2}=\bar{b}_1\bar{b}_2$ is equivalent to $\alpha \omega(b_1b_2) = [\alpha \omega(b_1)][ \alpha \omega(b_2)]$. This will follow if we show
\begin{equation}\label{eq:omega_relation}
\omega(b_1b_2) = \omega(b_1)(\alpha \omega(b_2)),
\end{equation}
note that the right hand side is a well-defined element of $\breve{\Hecke}^>_W$ by (3) of Lemma \ref{Lem:completion_product}.

To prove (\ref{eq:omega_relation}), we utilize the $\breve{\Hecke}^+_W$-bimodule structure on $\breve{\B}^+$, Proposition \ref{Prop:small_completion_bimodule}. First, we claim that the map $\rho: \breve{\Hecke}_W^+ \to \breve{\B}^+$ defined by $\rho(h) = \bone_*^1 h$ is injective. Any $h \in \breve{\Hecke}_W^+$ can be written uniquely as $\sum_{w} T^{-1}_{w^{-1}} F_w$ with $F_w \in \Ring\breve{\mathcal{T}}$, and then
\[
\rho(h)=\sum_w \bone_*^w F_w,
\]

Since $\bone_*^w = \bone_!^w + \sum_{v\succ w} a_v^w \bone_!^v$, we see that if $w$ is a maximal element in the support of $h$ (in the Bruhat order), then the coefficient of $\bone_!^w$ in $\rho(h)$ involves the leading term of $F_w$ and cannot vanish unless $h=0$. This shows the injectivity claim.

Using the injectivity of $\rho$, we compute $\Upsilon(b_1 b_2)\bone_!^1$ in two ways. By definition,
$$\Upsilon(b_1 b_2)\bone_!^1 = \bone_*^1 \omega(b_1 b_2).$$
On the other hand, using the $\breve{\Hecke}^+_W$-bimodule structure on $\breve{\B}^+$, we have
$$\Upsilon(b_1) (\Upsilon(b_2) \bone_!^1) = \Upsilon(b_1) (\bone_*^1 \omega(b_2))$$
We claim that
\begin{equation}\label{eq:bimodule_equality}
\Upsilon(b_1) \sigma(\beta c_2)=\bone_*^1[\omega(b_1)(\beta c_2)], \forall \beta\in \widehat{\mathcal{H}}_W, b_1\in \Hecke^>_W, c_2\in \breve{\Hecke}^>_W.\end{equation}

By Lemma \ref{Lem:completion_product}, the product map
$$\breve{\Hecke}^>_W\times \widehat{\Hecke}_W\times \breve{\Hecke}^>_{W}\rightarrow \breve{\Hecke}^>_W, (a,b,c)\mapsto a(bc),$$
is well-defined and continuous. And by Proposition \ref{Prop:small_completion_bimodule}, the action map $\breve{\Hecke}_W^>\times \breve{\B}^+\times \breve{\Hecke}_W^>\rightarrow \breve{\B}^+$ is continuous. 
So it is enough to check (\ref{eq:bimodule_equality}) when $\beta\in \Hecke_W$ where it just follows from the associativity of the $\breve{\Hecke}_W$-bimodule product and the identity $\Upsilon(b_1)\bone^1_!=\bone^*_1\omega(b_1)$.

Finally, we deduce (\ref{eq:omega_relation}) from
(\ref{eq:bimodule_equality}). Take $\beta=\alpha, c_2=\omega(b_2)$. In the r.h.s. of (\ref{eq:bimodule_equality}) we have $\bone^1_* [\omega(b_1)(\alpha\omega(b_2))]$. Thanks to the injectivity of the map $\rho$, it is enough to show that in the l.h.s. we have $\bone^1_*\omega(b_1b_2)=\Upsilon(b_1)\Upsilon(b_2)\bone^1_!$. So we reduce to $\sigma(\alpha\omega(b_2))=\Upsilon(b_2)\bone^1_!$. By (\ref{eq:bimod_assoc_sigma}) $\sigma(\alpha\omega(b_2))=\sigma_0(\alpha)\omega_2(b)$. The right hand side equals $\bone^1_*\omega(b_2)=\Upsilon(b_2)\bone^!_1$. This finishes the proof. 
\end{proof}

\section{The level zero setting}\label{sec:level0}

We now turn to the level-zero affine case, where we make a similar construction of the bar operation in
a more explicit combinatorial form. The algebra in question is an appropriate localization of Cherednik's double affine Hecke algebra. After recalling two useful presentations of this algebra, we use the formal elements and relations introduced in \S\ref{sec:twisted-generators}
to motivate the construction of a certain ``semi-infinite
correction factor'' \(a\) (an infinite sum that coincides with an expansion of a suitable element of the localization to be introduced in Proposition~\ref{prop:a-limit-formula}), compute an explicit formula for this element, and use it to define the bar operation. Finally,
we show that the resulting bar operation becomes an involution after the
specialization \(q=1\).

\subsection{Cherednik's DAHA}

In this subsection, we recall Cherednik's presentation of the double affine
Hecke algebra \cite{Cherednik}. We then recall Cherednik's duality
anti-isomorphism $\varphi$, which we view as an instance of Dolbeault Langlands duality between the DAHA associated to a group and its Langlands dual, and we use it to pass to a second presentation, which will be more convenient
for the formulas we introduce. We now introduce the groups to which the algebras below will be naturally associated.

Let $G$ be a connected almost simple group with maximal torus $T$, and let
\[
(\Lambda,\Phi,\Lambda^\vee,\Phi^\vee)
=\bigl(X^*(T),\Phi,X_*(T),\Phi^\vee\bigr)
\]
be its root datum. The Langlands dual
group $G^\vee$ has dual root datum
\[
(\Lambda^\vee,\Phi^\vee,\Lambda,\Phi).
\]
Let \(Q_{\mathrm{fin}}\subset \Lambda\) be the root lattice generated by
\(\Phi\), and let \(Q_{\mathrm{fin}}^\vee\subset \Lambda^\vee\) be the coroot
lattice generated by \(\Phi^\vee\).
Write $\Phi^+$
for the positive roots and $W_{\mathrm f}$ for the (finite) Weyl group. We label the finite simple roots by
$\alpha_1,\dots,\alpha_n$, with corresponding simple coroots
\(\alpha_1^\vee,\ldots,\alpha_n^\vee\), and set
\(\Lambda_{\mathbb R}:=\Lambda\otimes_{\mathbb Z}\mathbb R\) and
\(\Lambda^\vee_{\mathbb R}:=\Lambda^\vee\otimes_{\mathbb Z}\mathbb R\).

We now pass to the associated untwisted affine setting and set
\[
\Lambda_{\mathrm{aff}}:=\Lambda\oplus \mathbb Z\delta,
\qquad
\Lambda_{\mathrm{aff}}^\vee:=\Lambda^\vee\oplus \mathbb Z\delta.
\]

Let $\theta$ and $\theta^\vee$ be the highest roots of $\Phi$ and
$\Phi^\vee$, respectively. We write
\[
\alpha_0=\delta-\theta,
\qquad
\alpha_0^\vee=\delta-\theta^\vee,
\]
so for
\(\lambda \in \Lambda\) we have
\[
\langle \lambda,\alpha_0^\vee\rangle=-\langle \lambda,\theta^\vee\rangle.
\]
We set
\[
W_{\mathrm{aff}}^\vee:=W_{\mathrm f}\ltimes Q_{\mathrm{fin}},\qquad
W_{\mathrm{ext}}^\vee:=W_{\mathrm f}\ltimes \Lambda.
\]

Let \(t_\lambda\in W_{\mathrm{ext}}^\vee\) denote translation
by \(\lambda\).

The actions of \(W_{\mathrm{ext}}^\vee\) on $\Lambda^\vee_{\mathrm{aff}}\otimes_{\mathbb{Z}}\mathbb{R}=\Lambda^\vee_{\mathbb{R}}\oplus \mathbb{R}\delta$ and its dual $\Lambda_{\mathbb{R}}\oplus \mathbb{R}d$ (where $d$ sends $\Lambda^\vee_{\mathbb{R}}$ to $0$ and $\delta$ to $1$) are  given by
\begin{equation}\label{eq:dual-affine-actions}
\begin{split}
&w(b+m\delta)=wb+m\delta,
\quad t_\lambda(b+m\delta)=b+\bigl(m-\langle\lambda,b\rangle\bigr)\delta,\\
&w(x+zd)=wx+zd, \quad t_\lambda(x+zd)=x+z\lambda+zd. 
\end{split}
\end{equation}
where \(x,\lambda\in\Lambda_{\mathbb{R}}\),
\(b\in\Lambda^\vee_{\mathbb R}\) and $m,z\in \mathbb{R}$.
The affine hyperplane arrangement on
\(\Lambda_{\mathbb R}
\)
is given by
\begin{equation}\label{eq:level-zero-affine-arrangement}
\langle x,\beta^\vee\rangle\in\mathbb Z
\qquad(\beta\in\Phi).
\end{equation}
Let \(A^+\) be the alcove in the finite dominant chamber with \(0\) in its
closure, and set \(A^-=-A^+\). Explicitly,
\begin{align}\label{eq:positive-negative-alcoves}
A^+
&=\left\{x\ \middle|\
\langle x,\alpha_i^\vee\rangle>0\ (1\leqslant i\leqslant n),\quad
\langle x,\theta^\vee\rangle<1\right\},\notag\\
A^-
&=\left\{x\ \middle|\
\langle x,\alpha_i^\vee\rangle<0\ (1\leqslant i\leqslant n),\quad
\langle x,\theta^\vee\rangle>-1\right\}.
\end{align}

We denote the affine Dynkin diagram by $\Gamma$.
For \(0\leqslant i,j\leqslant n\), let \(m_{ij}\) denote the order of \(s_is_j\) in
\(W_{\mathrm{aff}}^\vee\).

The extended affine Weyl group \(W_{\mathrm{ext}}^\vee\) contains a subgroup
\(\Pi \cong \Lambda/Q_{\mathrm{fin}}\) of elements of length zero relative to \(A^+\); their action
on \(\Lambda_{\mathrm{aff}}^\vee\) preserves the set of affine simple coroots
(and fixes \(\delta\)), and thus gives a distinguished subgroup of diagram
automorphisms of the affine Dynkin diagram.

\begin{defi}
    Let $O$ be the set of indices in the orbit of the zero vertex in the affine Dynkin diagram under automorphisms in $\Pi$ ($O= \{0\}$ for $E_8$, $F_4$, $G_2$). Let $O' = O \setminus \{0\}$.
\end{defi}

For \(\pi\in\Pi\), write \(\pi(i)=j\) when
\(\pi(\alpha_i^\vee)=\alpha_j^\vee\). For each $r \in O'$, there exists a unique element $\pi_r \in \Pi$ such that $\pi_r(0)=r$. When an index $r\in O'$ is used as a lattice element, it denotes the corresponding minuscule fundamental weight in \(\Lambda\). For a non-simply-connected group, only those minuscule fundamental weights that belong to \(\Lambda\) occur in \(O'\). For the simply connected form, in type $A_n$ one has $O'=\{1,\ldots,n\}$, and $\pi_r$ is the cyclic rotation carrying vertex $0$ to vertex $r$; in type $D_n$ one has $O'=\{1,n-1,n\}$, corresponding to the minuscule weights $\omega_1,\omega_{n-1},\omega_n$.

For each \(r\in O'\), let \(w_0^{(r)}\) be the longest element of the
stabilizer of \(r\) in \(W_{\mathrm f}\), and set
\[
u_r=w_0w_0^{(r)}.
\]
Writing \(p_{\mathrm f}:W_{\mathrm{ext}}^\vee\to W_{\mathrm f}\) for the
finite projection, we have
\begin{equation}\label{eq:positive-length-zero}
\pi_r=t_ru_r^{-1},
\qquad
p_{\mathrm f}(\pi_r)=u_r^{-1},
\end{equation}
and \(\pi_r\) preserves \(A^+\). We define an involution
\(r\mapsto r^*\) on \(O'\) by \(\pi_{r^*}=\pi_r^{-1}\); it satisfies
\(u_{r^*}=u_r^{-1}\). We extend it to \(O\) by setting \(0^*=0\). By \eqref{eq:dual-affine-actions},
\begin{equation}\label{eq:pi-affine-action}
\pi_r(b+m\delta)
=u_r^{-1}(b)
+\left(m-\left\langle r,u_r^{-1}(b)\right\rangle\right)\delta.
\end{equation}

Since \(\pi_r^{-1}\) preserves \(A^+\), its conjugate under \(x\mapsto -x\),
namely \(u_rt_r\), preserves \(A^-\). Therefore
\begin{equation}\label{eq:negative-alcove-shift}
A^-+r=u_r^{-1}A^-.
\end{equation}

Let \(q\) be an indeterminate and set
\[
\Ring_q=\Ring[q^{\pm1}].
\]

\begin{defi}\label{def:daha}
    The double affine Hecke algebra $\mathbb{H}_{G^\vee}$ is presented over $\Ring_q$ by the affine Hecke generators $\{T_i\}_{i=0}^n$, the pairwise commuting elements $\{X_b \mid b \in \Lambda^\vee\}$, and the group elements $\pi_r$ for $r \in O'$. The generators $\{X_b\}$ satisfy the group algebra relations $X_0 = 1$ and $X_b X_c = X_{b+c}$.
    For affine indices, we use the abbreviation
    \[
    X_{b+m\delta}=q^mX_b
    \qquad (b\in\Lambda^\vee,\ m\in\mathbb Z).
    \]
    Thus \(X_\delta=q\) and
    \(X_{\alpha_0^\vee}=qX_{-\theta^\vee}\).
    We additionally impose the following relations:
    \begin{enumerate}
        \item\label{rel:daha-quadratic} $(T_i - v)(T_i + v^{-1}) = 0$ for $0 \leqslant i \leqslant n$;
        \item\label{rel:daha-braid} $T_iT_jT_i \dots = T_jT_iT_j \cdots$ with $m_{ij}$ factors on each side;
        \item\label{rel:daha-pi-product} The elements \(\pi_r\) satisfy multiplication relations in \(\Pi\): if \(\pi_r\pi_s=\pi_t\) in \(\Pi\), with \(r,s,t\in O\), then the same relation holds in \(\mathbb H_{G^\vee}\) with $\pi_0 = 1$;
        \item\label{rel:daha-diagram} $\pi_r T_i \pi_r^{-1} = T_j$ if $\pi_r(i)=j$, for $r \in O'$;
        \item\label{rel:daha-bernstein} for \(0\leqslant i\leqslant n\) and \(f\in \Ring_q[X_{\Lambda^\vee}]\),
        \[
        T_if-f^{s_i}T_i
        =
        \hbar\,\frac{f-f^{s_i}}{1-X_{\alpha_i^\vee}},
        \]
        with the affine simple coroot interpreted using \(X_{\alpha_0^\vee}=qX_{-\theta^\vee}\);
        \item\label{rel:daha-pi-x} $\pi_r X_b \pi_r^{-1} = X_{\pi_r(b)}$, for $r \in O'$.
    \end{enumerate}
\end{defi}

The algebra $\mathbb{H}_{G^\vee}$ also contains the usual commuting dual lattice
elements $Y_\lambda$ for \(\lambda\in\Lambda\).  If \(\lambda\) is dominant, write \(t_\lambda=\pi_r u\) with
\(r\in O\) and \(u\in W_{\mathrm{aff}}^\vee\), and set
\[
Y_\lambda=\pi_r T_u.
\]
When \(\lambda\in Q_{\mathrm{fin}}\) is dominant, this is simply \(Y_\lambda=T_{t_\lambda}\), since
\(t_\lambda\in W_{\mathrm{aff}}^\vee\) can be written as a word in the simple
reflections \(s_0,\ldots,s_n\).
For arbitrary \(\lambda\in \Lambda\), choose dominant \(\lambda_+,\lambda_-\in \Lambda\) with
\(\lambda=\lambda_+-\lambda_-\), and define
\[
Y_\lambda=Y_{\lambda_+}Y_{\lambda_-}^{-1}.
\]
This is independent of the choice of
\(\lambda_+\) and \(\lambda_-\), and \(Y_\lambda Y_\mu=Y_{\lambda+\mu}\). In the rank $1$ case, for
example, \(t_\alpha=s_0s_1\), so \(Y_{\alpha}=T_0T_1\). With this
normalization, the Bernstein relation between the \(Y\)-variables and the
finite Hecke generators is
\begin{equation}\label{eq:source-y-bernstein}
Y_\lambda T_i-T_iY_{s_i\lambda}
=
\hbar\,\frac{Y_\lambda-Y_{s_i\lambda}}{1-Y_{-\alpha_i}}
\qquad
(\lambda\in\Lambda,\ 1\leqslant i\leqslant n).
\end{equation}

Thus the algebra \(\mathbb H_{G^\vee}\) has \(X\)-lattice \(\Lambda^\vee\)
and \(Y\)-lattice \(\Lambda\). Applying the same definition to the dual root
datum gives \(\mathbb H_G\), whose \(X\)-lattice is \(\Lambda\) and whose
\(Y\)-lattice is \(\Lambda^\vee\). We can then compare the algebras $\mathbb{H}_G$ and $\mathbb{H}_{G^\vee}$ as follows.
In the formulas below, \(X_r\) and \(X_{r^*}\) are therefore \(X\)-variables
in the algebra \(\mathbb H_G\), indexed by the corresponding minuscule
weights in \(\Lambda\).

\begin{Lem}[{\cite[\S 3.2.2]{Cherednik}}]
    There exists an $\Ring_q$-linear anti-isomorphism $\varphi : \mathbb{H}_{G^\vee} \to \mathbb{H}_{G}$ satisfying
    \begin{align*}
        \varphi(T_i) & = T_i \qquad (1\leqslant i\leqslant n),\\
        \varphi(X_b) & = Y_b^{-1},\\
        \varphi(\pi_r) & = T_{u_{r^*}}^{-1}X_r^{-1}
        =X_{r^*}T_{u_r}.
    \end{align*}
\end{Lem}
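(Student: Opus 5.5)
To prove the lemma, I will define $\varphi$ on generators of $\mathbb{H}_{G^\vee}$ and check that every defining relation of Definition~\ref{def:daha} is sent to the corresponding \emph{reversed} relation in $\mathbb{H}_G$. Then I will show that $\varphi$ is bijective. The natural choice is to specify $\varphi$ on $T_1,\dots,T_n$, on $X_b$ and on $\pi_r$ as in the statement, and to define $\varphi(T_0)$ so that it is compatible with these. From $t_{\theta^\vee}=s_0s_\theta$ in the dual extended affine Weyl group we get $T_0=X_{\theta^\vee}T_{s_\theta}^{-1}$ up to the normalization conventions (this is the $\mathbb{H}_{G^\vee}$ version of $Y_\alpha=T_0T_1$ in rank one). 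Hence the candidate is
\[
\varphi(T_0)=T_{s_\theta}^{-1}Y_{\theta^\vee}^{-1},
\]
with the order reversed because $\varphi$ is an anti-homomorphism. The identity $X_{r^*}T_{u_r}=T_{u_{r^*}}^{-1}X_r^{-1}$ inside $\mathbb{H}_G$ will be checked first, so that the formula for $\varphi(\pi_r)$ is unambiguous. This identity follows from $\pi_r\pi_{r^*}=1$ together with the Bernstein-type relation for the length-additive element $u_r$ acting on the minuscule weight $r$: no correction terms appear because $r$ is minuscule.

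Next I will verify the relations one family at a time. Relations (1) and (2) among the finite $T_i$ are immediate. Relations (1) and (2) involving $T_0$ reduce to the Bernstein relation \eqref{eq:source-y-bernstein} in $\mathbb{H}_G$ together with the fact that the $Y$'s commute. The commutativity of the $X_b$ and relation (5) for $1\leqslant i\leqslant n$ transport directly: the Bernstein relation in $X$ maps to the reversed Bernstein relation in $Y^{-1}$, which is exactly \eqref{eq:source-y-bernstein} after inverting $\lambda\mapsto-\lambda$. Relation (5) for $i=0$, with $X_{\alpha_0^\vee}=qX_{-\theta^\vee}$, is the genuinely double-affine relation. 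On the $\mathbb{H}_G$ side it becomes a statement about $T_{s_\theta}^{-1}Y^{-1}_{\theta^\vee}$ and $Y_b$, and I will verify it on the spanning set of $b$ with $\langle b,\theta\rangle\in\{0,\pm1\}$ combined with multiplicativity. Relations (3), (4) and (6) for $\pi_r$ become identities for the elements $X_{r^*}T_{u_r}$. For (6), $\varphi(\pi_r)^{-1}Y_b^{-1}\varphi(\pi_r)=Y^{-1}_{\pi_r(b)}$ unwinds, via \eqref{eq:pi-affine-action}, into the $q$-power coming from $X_{b+m\delta}=q^mX_b$ plus a twist by $u_r$; this is the step where the $q$-bookkeeping has to come out exactly right.

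Finally, for bijectivity I will construct the analogous anti-homomorphism $\varphi^\vee:\mathbb{H}_G\to\mathbb{H}_{G^\vee}$ by swapping the roles of the two root data, and check that $\varphi^\vee\circ\varphi$ fixes the generators $T_i$, $X_b$ and $\pi_r$. This reduces to the identity $\varphi^\vee(Y_\lambda)=X_\lambda^{-1}$, which follows by evaluating on dominant $\lambda$ written as $\pi_rT_u$. An alternative route is a PBW-basis argument (both algebras have bases $X_bT_wY_\lambda$), but the involution check is cleaner.

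I expect the main obstacle to be the $T_0$ and $\pi_r$ relations: the quadratic and braid relations involving $\varphi(T_0)$, and the compatibility of $\varphi(\pi_r)$ with the diagram automorphism relation (4) for $T_0$. These require precise control of $q$-shifts and of the Bernstein relation for $Y_{\theta^\vee}$ against $T_{s_\theta}$. If this turns out to be messy, my fallback is to use the faithful polynomial (Cherednik–Dunkl) representation of $\mathbb{H}_G$ on $\Ring_q[X_\Lambda]$ and verify the needed identities there, as in \cite[\S 3.2.2]{Cherednik}.
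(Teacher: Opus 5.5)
\textbf{The formula you propose for $\varphi(T_0)$ is wrong, and the relation checks that depend on it fail.}

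\emph{Where the error comes from.} In $\mathbb{H}_{G^\vee}$ the relevant group is $W^\vee_{\mathrm{ext}}=W_{\mathrm f}\ltimes\Lambda$. Its translations are by elements of $\Lambda$, and they correspond to the $Y$-variables of $\mathbb{H}_{G^\vee}$ (whose $Y$-lattice is $\Lambda$). They do not correspond to the $X$-variables, which are indexed by $\Lambda^\vee$. In particular $t_{\theta^\vee}$ is not even an element of $W^\vee_{\mathrm{ext}}$. The correct identity is $s_0=t_\theta s_\theta$ (with $\theta$ read as the root whose coroot is $\theta^\vee$). Hence $T_0=Y_\theta T_{s_\theta}^{-1}$ in $\mathbb{H}_{G^\vee}$, which is exactly the generalization of $Y_\alpha=T_0T_1$ that you quote.

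\emph{The correct value.} The stated formulas already force $\varphi(Y_r)=\varphi(T_{u_r})\varphi(\pi_r)=T_{u_r^{-1}}T_{u_r^{-1}}^{-1}X_r^{-1}=X_r^{-1}$. The consistent candidate is therefore $\varphi(T_0)=T_{s_\theta}^{-1}X_\theta^{-1}$. This lies in the $X$-affine Hecke subalgebra of $\mathbb{H}_G$, not the $Y$-one. When $O'\neq\varnothing$ there is in fact no freedom at all: relation~(4) of Definition~\ref{def:daha} forces $\varphi(T_0)=\varphi(\pi_r)T_r\varphi(\pi_r)^{-1}$.

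\emph{Rank-one check.} Write $\omega=\alpha/2$. Using $X_\omega T_1=T_1^{-1}X_{-\omega}$, relation~(4) gives $\varphi(T_0)=X_\omega T_1X_\omega^{-1}=T_1^{-1}X_{-\alpha}$. Your candidate is $T_1^{-1}Y_{\alpha^\vee}^{-1}=T_1^{-2}T_0^{-1}$. It does not even satisfy the quadratic relation: the trivial character $T_0,T_1\mapsto v$ of the affine Hecke subalgebra $\langle T_0,T_1\rangle\subset\mathbb{H}_G$ sends it to $v^{-3}\notin\{v,-v^{-1}\}$. Consequently the following all rest on a false formula:
\begin{itemize}
\item your checks of relations (1), (2), (4) and (5) with $i=0$;
\item the bijectivity step $\varphi^\vee(Y_\lambda)=X_\lambda^{-1}$ for non-minuscule $\lambda$, whose expression $\pi_rT_u$ involves $T_0$.
\end{itemize}

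\textbf{What remains after the correction.} With $\varphi(T_0)=T_{s_\theta}^{-1}X_\theta^{-1}$, the verification changes shape.
\begin{itemize}
\item The quadratic and braid relations involving $\varphi(T_0)$ become identities inside the $X$-affine Hecke algebra of $\mathbb{H}_G$, namely the Bernstein versus Iwahori--Matsumoto comparison. They are not consequences of \eqref{eq:source-y-bernstein}.
\item Relation (5) for $i=0$ and relation (6) become genuine $X$--$Y$ cross relations in $\mathbb{H}_G$. For example, (6) reads $\gamma_rY_b\gamma_r^{-1}=Y_{\pi_r(b)}$ with $\gamma_r=T_{u_r}^{-1}X_{r^*}^{-1}$. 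It must be derived from the $X$-presentation of $\mathbb{H}_G$, where $Y_b$ is a word in $T_0,\dots,T_n$ (and length-zero elements). One does this by pushing $X_{r^*}$ through using the $X$-Bernstein relations, including the affine one with $X_{\alpha_0}=qX_{-\theta}$.
\end{itemize}
In rank one this works: $T_1X_\omega T_1=X_{-\omega}$ and $T_0X_{-\omega}=q^{-1}X_\omega T_0^{-1}$ give $\gamma Y_{\alpha^\vee}\gamma^{-1}=q^{-1}Y_{-\alpha^\vee}$. This is the real content of Cherednik's theorem, and your proposal only asserts that the $q$-bookkeeping ``has to come out right''.

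The paper does not attempt this verification. It takes the lemma from \cite[\S 3.2.2]{Cherednik}, as the dual-root-datum form of Cherednik's anti-involution $\varepsilon\star$. So either carry out these cross-relation computations with the corrected $\varphi(T_0)$, or reduce to Cherednik's result as the paper does.
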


This is the dual-root-datum form of Cherednik's anti-involution
\(\phi=\varepsilon\star\) from \cite[(3.3.20)]{Cherednik}; it is related to (but distinct from) Cherednik's automorphism
\(\sigma\) in \cite[(3.2.15)]{Cherednik}.

For affine indices in either \(Y\)-lattice, we use the abbreviation
\begin{equation}\label{eq:Y-affine-abbreviation}
Y_{\lambda+m\delta}=q^{-m}Y_\lambda.
\end{equation}
This convention is compatible with Cherednik duality: by
\(\Ring_q\)-linearity,
\[
\varphi(X_{b+m\delta})=q^mY_{-b}
=Y_{-b-m\delta}=Y_{-(b+m\delta)}.
\]
In particular,
\(Y_{\alpha_0^\vee}=q^{-1}Y_{-\theta^\vee}\).

We write
\begin{equation}\label{eq:gamma_defn}
\gamma_r=\varphi(\pi_{r^*})=T_{u_r}^{-1}X_{r^*}^{-1}.
\end{equation}
With this indexing, applying Cherednik's duality map to the defining relations of $\mathbb{H}_{G^\vee}$
yields the following presentation of $\mathbb{H}_{G}$ in terms of the \(Y\)-variables.

\begin{Prop}\label{prop:tygamma}
    $\mathbb{H}_{G}$ is presented over $\Ring_q$ by the generators
    \(\varphi(T_0),T_1,\ldots,T_n\), pairwise commuting elements
    $\{Y_b \mid b \in \Lambda^\vee\}$ satisfying
    \(Y_0=1\) and \(Y_bY_c=Y_{b+c}\),
    and elements $\gamma_r$ for $r \in O'$ with the following relations,
    where \(\varphi(T_i)=T_i\) for \(1\leqslant i\leqslant n\):
    \begin{enumerate}
        \item\label{rel:tygamma-quadratic} $(\varphi(T_i) - v)(\varphi(T_i) + v^{-1}) = 0, 0 \leqslant i \leqslant n$;
        \item\label{rel:tygamma-braid} $\varphi(T_i)\varphi(T_j)\varphi(T_i) \dots = \varphi(T_j)\varphi(T_i)\varphi(T_j) \cdots,$ with $m_{ij}$ factors on each side;
        \item\label{rel:tygamma-gamma-product} The elements \(\gamma_r\) satisfy multiplication relations in \(\Pi\): if \(\pi_r\pi_s=\pi_t\) in \(\Pi\), with \(r,s,t\in O\), then \(\gamma_r\gamma_s=\gamma_t\), where \(\gamma_0 = 1\);
        \item\label{rel:tygamma-diagram} $\gamma_r \varphi(T_i) \gamma_r^{-1} = \varphi(T_j)$ if $\pi_r(i)=j$;
        \item\label{rel:tygamma-bernstein} for \(0\leqslant i\leqslant n\) and \(g\in \Ring_q[Y_{\Lambda^\vee}]\),
        \[
        g\varphi(T_i)-\varphi(T_i) g^{s_i}
        =
        \hbar\,\frac{g-g^{s_i}}{1-Y_{-\alpha_i^\vee}},
        \]
        with the affine simple coroot interpreted using
        \(Y_{\alpha_0^\vee}=q^{-1}Y_{-\theta^\vee}\). For \(i=0\), the
        denominator is
        \(1-qY_{\theta^\vee}=1-Y_{\theta^\vee-\delta}=1-Y_{-\alpha_0^\vee}\);
        \item\label{rel:tygamma-y-conjugation} $\gamma_r Y_b \gamma_r^{-1} = Y_{\pi_r(b)}$, $r \in O'$.
    \end{enumerate}
\end{Prop}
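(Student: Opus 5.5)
The plan is to transport the defining presentation of $\mathbb{H}_{G^\vee}$ (Definition~\ref{def:daha}) through Cherednik's anti-isomorphism $\varphi$. Since $\varphi:\mathbb{H}_{G^\vee}\to\mathbb{H}_G$ is an $\Ring_q$-linear anti-isomorphism, $\mathbb{H}_G$ is presented by the generators $\varphi(T_i)$ ($0\leqslant i\leqslant n$), $\varphi(X_b)=Y_b^{-1}=Y_{-b}$ and $\varphi(\pi_{r})$, subject to the images of the relations (1)--(6), each read with the order of factors reversed. The first step is to record that $\{Y_{-b}\mid b\in\Lambda^\vee\}=\{Y_b\mid b\in \Lambda^\vee\}$, so the commuting family is unchanged. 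The second step is to reindex the $\pi$-generators as $\gamma_r=\varphi(\pi_{r^*})$. Because $r\mapsto r^*$ is a bijection of $O'$ and $\pi_{r^*}=\pi_r^{-1}$, this is again a generating set. All that remains is to check the relations one at a time.

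Relations (1) and (2) map verbatim: reversing a word in a braid relation gives the same braid relation with the roles of $i,j$ swapped, which is symmetric. For (3), suppose $\pi_r\pi_s=\pi_t$. Applying $\varphi$ gives $\varphi(\pi_s)\varphi(\pi_r)=\varphi(\pi_t)$. Rewrite this using $\pi_{r^*}\pi_{s^*}=(\pi_s\pi_r)^{-1}$; since $\Pi$ is abelian, this equals $\pi_t^{-1}=\pi_{t^*}$. Hence $\gamma_r\gamma_s=\gamma_t$, with $\gamma_0=1$ following from $\pi_0=1$.

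For (4), apply $\varphi$ to $\pi_{r^*}T_i\pi_{r^*}^{-1}=T_{\pi_{r^*}(i)}$. This gives $\gamma_r^{-1}\varphi(T_i)\gamma_r=\varphi(T_{\pi_r^{-1}(i)})$, which is equivalent to $\gamma_r\varphi(T_j)\gamma_r^{-1}=\varphi(T_{\pi_r(j)})$. Relation (6) is handled the same way. Starting from $\pi_{r^*}X_b\pi_{r^*}^{-1}=X_{\pi_r^{-1}(b)}$, applying $\varphi$ gives $\gamma_r^{-1}Y_{-b}\gamma_r=Y_{-\pi_r^{-1}(b)}$, and this rearranges to (6). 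One must use the affine convention \eqref{eq:Y-affine-abbreviation}, under which $\varphi(X_{b+m\delta})=Y_{-(b+m\delta)}$, so that $\pi_r$ acting on affine coroots via \eqref{eq:pi-affine-action} is compatible with the $q$-shifts.

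The Bernstein relation (5) is the step I expect to be most delicate, because of signs and the $i=0$ normalization. Apply $\varphi$ to $T_if-f^{s_i}T_i=\hbar(f-f^{s_i})/(1-X_{\alpha_i^\vee})$, where $f\in\Ring_q[X_{\Lambda^\vee}]$, and set $g=\varphi(f)$. Because $\varphi$ is an anti-homomorphism, this becomes $g\varphi(T_i)-\varphi(T_i)g^{s_i}=\hbar(g-g^{s_i})/(1-Y_{-\alpha_i^\vee})$. Here I would check that $\varphi(f^{s_i})=\varphi(f)^{s_i}$, which holds since $s_i$ commutes with $b\mapsto -b$, including the affine action on $b+m\delta$. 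For $i=0$, $\varphi(X_{\alpha_0^\vee})=Y_{-\alpha_0^\vee}=qY_{\theta^\vee}$, which is the stated denominator. The right side lies in the commutative subalgebra, so its order is unaffected.

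Finally, because the defining relations of $\mathbb{H}_{G^\vee}$ map bijectively onto (1)--(6), and $\varphi$ is bijective on generators, these relations present $\mathbb{H}_G$. I would also double-check that $\gamma_r=T_{u_r}^{-1}X_{r^*}^{-1}$ agrees with the formula $\varphi(\pi_{r^*})$ from the Lemma, to confirm \eqref{eq:gamma_defn} is consistent.
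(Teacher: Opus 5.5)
Your proposal is correct and follows essentially the same route as the paper. Both transport the presentation of $\mathbb{H}_{G^\vee}$ through the anti-isomorphism $\varphi$ and reindex the length-zero generators as $\gamma_r=\varphi(\pi_{r^*})$. Both use commutativity of $\Pi$ for the group law, and obtain the Bernstein relation from $\varphi(f^{s_i})=\varphi(f)^{s_i}$ together with $\varphi(X_{\alpha_i^\vee})=Y_{-\alpha_i^\vee}$; you are somewhat more explicit on details such as the $\delta$-convention in relation (6) and the reversal of braid words.
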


\begin{proof}
    This is the presentation obtained from Definition~\ref{def:daha} by applying
    Cherednik's anti-isomorphism $\varphi$ and reindexing the length-zero
    generators by \(r\mapsto r^*\). The Hecke and braid relations are transported
    to the generators \(\varphi(T_i)\). The group law for the $\gamma_r$ follows from
    relation~(\ref{rel:daha-pi-product}) in Definition \ref{def:daha}, using that \(\Pi\cong \Lambda/Q_{\mathrm{fin}}\) is abelian
    and \(\pi_{r^*}\pi_{s^*}=\pi_{t^*}\) whenever \(\pi_r\pi_s=\pi_t\).
    The diagram and \(Y\)-conjugation relations (i.e.,(4) and (6)) are reindexed in the same way.
    For instance, if \(\pi_r(i)=j\), then applying \(\varphi\) to
    \(\pi_{r^*}T_j\pi_{r^*}^{-1}=T_i\) gives
    \(\gamma_r \varphi(T_i)\gamma_r^{-1}=\varphi(T_j)\), and the \(Y\)-conjugation relation is
    similar. It remains only to spell out the Bernstein relation in the dual
    variables. Let \(f\in\Ring_q[X_{\Lambda^\vee}]\), and put
    \(g=\varphi(f)\in\Ring_q[Y_{\Lambda^\vee}]\), so that
    \(\varphi(f^{s_i})=g^{s_i}\). Applying the anti-isomorphism \(\varphi\) to
    relation~(\ref{rel:daha-bernstein}) in Definition \ref{def:daha} gives
    \[
    g\varphi(T_i)-\varphi(T_i)g^{s_i}
    =
    \hbar\,\frac{g-g^{s_i}}{1-Y_{-\alpha_i^\vee}},
    \]
    because \(\varphi(X_{\alpha_i^\vee})=Y_{-\alpha_i^\vee}\). This is
    relation~(\ref{rel:tygamma-bernstein}).
\end{proof}

From this point onward we assume that \(G\) is simply connected and
\(O'\neq\varnothing\). Under these assumptions, the latter condition
excludes precisely types \(E_8\), \(F_4\), and \(G_2\). Thus, if
\(P_{\mathrm{fin}}\) and \(Q_{\mathrm{fin}}\) are the weight and root lattices
of \(\Phi\), and \(P_{\mathrm{fin}}^\vee\) and \(Q_{\mathrm{fin}}^\vee\) are the
coweight and coroot lattices, then
\[
\Lambda=P_{\mathrm{fin}},
\qquad
\Lambda^\vee=Q_{\mathrm{fin}}^\vee.
\]
We write \(\mathbb H=\mathbb H_G\). In this algebra the \(X\)-variables are
indexed by \(P_{\mathrm{fin}}\), while the \(Y\)-variables are indexed by
\(Q_{\mathrm{fin}}^\vee\). We call the presentation obtained by applying
Definition~\ref{def:daha} to the dual root datum the \(X\)-presentation, and
the presentation in Proposition~\ref{prop:tygamma} the \(Y\)-presentation.
Since \(\Lambda^\vee/Q_{\mathrm{fin}}^\vee\) is trivial, the \(X\)-presentation
is generated by the \(T_i\)'s and the \(X_\lambda\)'s.

\begin{defi}\label{def:upsilon}
    Let $\Upsilon:\mathbb H\to\mathbb H$ be the semilinear algebra involution
    given on the \(X\)-presentation of \(\mathbb H_G\) by
    \[
    \Upsilon(T_i)=T_i^{-1},
    \qquad
    \Upsilon(X_\lambda)=X_{-\lambda}
    \]
    for $0\leqslant i\leqslant n$ and \(\lambda\in\Lambda\), with $\Upsilon(v) = v^{-1}, \Upsilon(q) = q^{-1}$.
    Since
    \[
    \gamma_r=\varphi(\pi_{r^*})=T_{u_r}^{-1}X_{r^*}^{-1},
    \]
    the same involution acts on the $\gamma$-generators by
    \[
    \Upsilon(\gamma_r)
    =
    T_{u_{r^*}}X_{r^*}.
    \]
\end{defi}

We also fix the localization that will be used for the rest of this section.
Let
\[
F=\Ring_q[Y_{\Lambda^\vee}],
\qquad
\Ring_q[Y_{\Lambda^\vee}]_{\mathrm{loc}}=F_{\mathrm{loc}}
=
F\left[
\frac{1}{1-Y_{\beta^\vee}},
\frac{1}{1-v^2Y_{\beta^\vee}}
\right]_{\beta\in\Phi}.
\]
The extended affine Hecke algebra $\mathcal H_{\mathrm{ex}}$ is the subalgebra of $\mathbb H$ generated by
$F$ and the finite Hecke generators $\{T_i\}_{i=1}^n$. Thus we will use the same notation $\Upsilon$ for the restriction of this involution to $\mathcal H_{\mathrm{ex}}$ whenever we are working only with this subalgebra rather than the full double affine Hecke algebra. We set
\[
(\mathcal H_{\mathrm{ex}})_{\mathrm{loc}}
=
F_{\mathrm{loc}}\otimes_F\mathcal H_{\mathrm{ex}}
\]
and use the isomorphism
\[
(\mathcal H_{\mathrm{ex}})_{\mathrm{loc}}\cong F_{\mathrm{loc}}\rtimes W_{\mathrm f},
\]
denoting by $\eta_w$ the algebra element corresponding to any $w\in W_{\mathrm f}$.
We choose this isomorphism by the convention that if $f\in F_{\mathrm{loc}}$ then
$\eta_w f=f^w\eta_w$, where \(f^w\) denotes the image of $f$ under the action of $w$ on $F_{\mathrm{loc}}$. Under this isomorphism the element corresponding to a
simple reflection $s_i$ is
\begin{equation}\label{eq:eta-simple-localization}
\eta_{s_i}
=
v\,\frac{1-Y_{-\alpha_i^\vee}}{1-v^2Y_{-\alpha_i^\vee}}
\left(T_i-\frac{\hbar}{1-Y_{-\alpha_i^\vee}}\right),
\qquad 1\leqslant i\leqslant n.
\end{equation}
Equivalently,
\begin{equation}\label{eq:localized-Ti}
T_i=\frac{\hbar}{1-Y_{-\alpha_i^\vee}}
    +v^{-1}\frac{1-v^2Y_{-\alpha_i^\vee}}{1-Y_{-\alpha_i^\vee}}\,\eta_{s_i}.
\end{equation}
Let \((\Phi^\vee)_{\mathrm{aff}}^{\mathrm{re}}\) denote the set of real affine coroots,
and set
\[
F_{\mathrm{aff,loc}}
=
F\left[
\frac{1}{1-Y_{\widetilde\beta^\vee}},
\frac{1}{1-v^2Y_{\widetilde\beta^\vee}}
\right]_{\widetilde\beta^\vee\in(\Phi^\vee)_{\mathrm{aff}}^{\mathrm{re}}},
\]
where \(Y_{\beta^\vee+m\delta}\) is interpreted using the convention in
\eqref{eq:Y-affine-abbreviation}. We write
\[
\mathbb H_{\mathrm{loc}}
=
F_{\mathrm{aff,loc}}\otimes_F\mathbb H.
\]
The algebra $(\mathcal H_{\mathrm{ex}})_{\mathrm{loc}}$ identifies with the
subalgebra of $\mathbb H_{\mathrm{loc}}$ generated by the finite Hecke algebra
and the $Y$-lattice, via the inclusion \(F_{\mathrm{loc}}\subset
F_{\mathrm{aff,loc}}\).

\begin{Rem}\label{rem:dolbeault-localization}
Geometrically, this localization should be {basically} viewed as fixed-point localization
in equivariant \(K\)-theory of the affine Steinberg variety for \(LG^\vee\);
see \cite[\S 5.11]{CG} and \cite[\S 3.2]{DMB}. The normal weights at the
fixed points are the real affine roots, and the cotangent scaling contributes
the corresponding \(v^2\)-shift. Thus the Euler classes which are inverted have
the form \(1-Y_{\widetilde\beta^\vee}\) and \(1-v^2Y_{\widetilde\beta^\vee}\) for real
affine coroots \(\widetilde\beta^\vee\).
\end{Rem}

\subsection{Heuristic derivation of the bar operation formula}\label{sec:heuristic}
%
We put
\[
P_{\mathrm{fin},\mathbb R}=P_{\mathrm{fin}}\otimes_{\mathbb Z}\mathbb R,
\qquad
Q_{\mathrm{fin},\mathbb R}^\vee
=Q_{\mathrm{fin}}^\vee\otimes_{\mathbb Z}\mathbb R.
\]
In this simply connected setting, the affine Weyl group acting by the
coroot translations used below is
\[
W_{\mathrm{aff}}=W_{\mathrm f}\ltimes Q_{\mathrm{fin}}^\vee.
\]

Let $\mathcal{A}_0$ be the set of alcoves in $\Lambda_{\mathbb R}$. Then the level 0 alcoves in Remark \ref{Rem:alcove_dependence} are of the form $A\times \mathbb{R}\delta$.
We use the alcoves \(A^+,A^-\in\mathcal A_0\) fixed in
\eqref{eq:positive-negative-alcoves}. 

We now choose a point
$\lambda$ strictly dominant in $P_{\mathbb{R}}$, the dual affine Cartan (of dimension $\dim \Lambda_{\mathbb R}+2$), and close to zero and set
$\lambda_\epsilon=\epsilon\lambda$ with $\epsilon>0$. Let $\mu$ be strictly
dominant in the finite coroot lattice, and let $t_\mu$ be the corresponding
positive translation in \(W_{\mathrm{aff}}\). Applying
\eqref{eq:twisted-bimodule-right-action} with $x_1=1$, $x_2=t_{-\mu}$ gives
\begin{equation}\label{eq:limiting_bone}
\bone_!^1(t_{-\mu}\lambda_\epsilon)
=
\bone_!^{t_{-\mu}}(\lambda_\epsilon)\,\nabla_{t_{-\mu}}(\lambda_\epsilon)^{-1}.
\end{equation}
Let $\mathcal{J}\subset W$ be a finite poset ideal, and set
\[
Q_{\mathrm{fin,dom}}^\vee
=
\left\{
\mu\in Q_{\mathrm{fin}}^\vee
\ \middle|\
\langle\alpha_i,\mu\rangle\geqslant 0\text{ for all }1\leqslant i\leqslant n
\right\}.
\]
We write \(\mu\to+\infty\) if
\(\langle\alpha_i,\mu\rangle\to+\infty\) for every \(1\leqslant i\leqslant n\). Note that the lattice part of $W_{\mathrm{aff}}$ acts on real root
\(\beta=\bar\beta+k\delta\), with \(\bar\beta\in\Phi\) and
\(k\in\mathbb Z\),  by
\[
t_\mu\beta
=\bar\beta+\bigl(k-\langle\bar\beta,\mu\rangle\bigr)\delta,
\]
So, there is a strictly dominant element $\mu_0(\mathcal{J})$ such that for all
$\mu\in \mu_0(\mathcal{J})+Q_{\mathrm{fin,dom}}^\vee$ the following holds: for every positive real root $\beta$ relevant for $\mathcal{J}$, the root 
 \(t_\mu\beta\) is positive if and only if the finite root
\(\bar\beta\) is negative. For each
$\mu\in \mu_0(\mathcal{J})+Q_{\mathrm{fin,dom}}^\vee$, there is
$\epsilon(\mu)>0$ such that for all
$\epsilon\in (0,\epsilon(\mu))$, the relative interior of the level $0$
alcove $A^-$ and the element $t_{-\mu}\lambda_\epsilon$ lies in the
interior of the same $\mathcal{J}$-alcove.
According to Remark \ref{Rem:alcove_dependence},
the projections of $\bone_!^1(A^-)$ and
$\bone^1_!(t_{-\mu}\lambda_\epsilon)$ to $\mathcal{B}^{\mathcal{J}}$
coincide. It follows that
$\lim_{\mu\to+\infty}\lim_{\epsilon\to0^+}
\bone^1_!(t_{-\mu}\lambda_\epsilon)$ exists in $\mathcal{B}$ and
\begin{equation}\label{eq:bone_Aminus_formula}
\bone^1_!(A^-)
=
\lim_{\mu\to+\infty}\lim_{\epsilon\to0^+}
\bone^1_!(t_{-\mu}\lambda_\epsilon).
\end{equation}
This limit in the case of $\mathfrak{sl}_2$ is illustrated by Figure \ref{fig:sl2} (where we project $P_{\mathbb{R}}$ to the 2-dimensional plane along $\mathbb{R}\delta$).

Now we analyze the right hand side of (\ref{eq:limiting_bone}) for fixed dominant $\mu$ as $\epsilon\rightarrow 0^+$. By (\ref{eq:twisted-bimodule-action}) with $x_1=t_{-\mu},x_2=1$, $\bone^{t_{-\mu}}_!(\lambda_\epsilon)=\nabla_{t_{-\mu}}(-\lambda_\epsilon)\bone^1_!(\lambda_\epsilon)$. It follows that 
\begin{equation}\label{eq:limiting_bone1}
\bone_!^1(t_{-\mu}\lambda_\epsilon)=\nabla_{t_{-\mu}}(-\lambda_\epsilon)\bone_!^1(\lambda_\epsilon)\nabla_{t_{-\mu}}(\lambda_\epsilon)^{-1}.
\end{equation}
For fixed \(\mu\) and sufficiently small \(\epsilon>0\), the 
pairings of \(-\lambda_\epsilon\) and \(\lambda_\epsilon\) with the relevant positive roots lie respectively
in \((-1,0)\) and \((0,1)\). Hence
\eqref{eq:affine-simple-factor}--\eqref{eq:affine-twisted-costandard-general}
give
\[
\nabla_{t_{-\mu}}(-\lambda_\epsilon)=T_{t_{-\mu}},
\qquad
\nabla_{t_{-\mu}}(\lambda_\epsilon)=T_{t_\mu}^{-1}.
\]
Thus \(\nabla_{t_{-\mu}}(\lambda_\epsilon)^{-1}=T_{t_\mu}\). Finally, for each finite poset ideal $\mathcal{J}$, the projections of $\bone_!^1(\lambda_\epsilon)$ and $\bone^1_!$ to $\mathcal{B}^{\mathcal{J}}$ coincide as long as $\epsilon$ is sufficiently close to $0$. It follows that
\begin{equation}\label{eq:lim_fixed_mu}\lim_{\epsilon\rightarrow 0^+}\bone^1_!(t_{-\mu}\lambda_\epsilon)=T_{t_{-\mu}}\bone_!^1 T_{t_\mu}=T_{t_{-\mu}}T_{t_\mu}\bone_!^1.
\end{equation}

\begin{figure}[ht]
    \centering
    \includegraphics[width=0.75\textwidth]{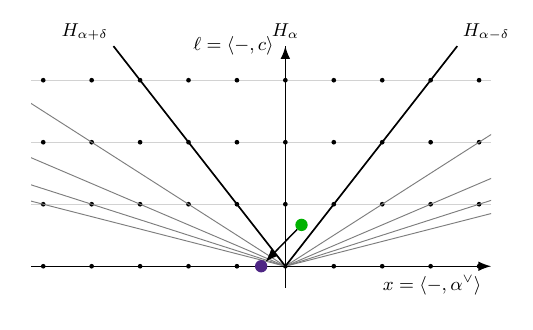}
    \caption{The limiting procedure in the affine rank $1$ case. The slanted
    lines are the real affine root hyperplanes in the positive-level parameter
    space, while the level-zero axis carries the alcoves in \(\mathcal A_0\).
    The arrow indicates the limit of a positive-level parameter (in green) to a point (in purple) in the
    level-zero alcove \(A^-\).\label{fig:sl2}}
\end{figure}
Combining (\ref{eq:bone_Aminus_formula}) with (\ref{eq:lim_fixed_mu}) we finally get
\begin{equation}\label{eqn:alimit}
\bone_!^1(A^-)
=
\lim_{\mu\to+\infty}
\left( T_{t_{-\mu}} T_{t_\mu} \right) \bone_!^1.
\end{equation}
Note that
\(T_{t_{-\mu}} = \Upsilon(Y_{-\mu})\), while
\(T_{t_\mu} = Y_\mu\). Thus the limiting operator is exactly
the element
\[
a=\lim_{\xi\to+\infty}\Upsilon(Y_{-\xi})Y_\xi.
\]
Equivalently, \(a\) is characterized by the property
\begin{equation}\label{eq:a-changes-alcove-class}
a\bone_!^1=\bone_!^1(A^-).
\end{equation}

We now recall the two deformation rules from
\S\ref{sec:twisted-generators} that motivate the rest of the argument. First,
specializing \eqref{eq:twisted-bimodule-alcove-translation} to \(x=1\) gives
the level-zero deformation rule
\begin{equation}\label{eq:levelzero-x-translation}
X_\nu\bone_!^1(A)X_\nu=\bone_!^1(A+\nu),
\qquad A\in\mathcal A_0,\ \nu\in P_{\mathrm{fin}}.
\end{equation}
Equivalently,
\begin{equation}\label{eq:levelzero-x-translation-one-sided}
X_\nu\bone_!^1(A)=\bone_!^1(A+\nu)X_{-\nu}.
\end{equation}
Second, we specialize relation
\eqref{eq:twisted-bimodule-mixed-transport}. Taking \(A=w^{-1}A^-\), we have
\(wA=A^-\). For \(w\in W_{\mathrm f}\), the relevant pairings
on \(w^{-1}A^-\) lie in \((0,1)\), so formulas (\ref{eq:affine-simple-factor}), (\ref{eq:affine-twisted-costandard-general}) give \[\nabla_w(-w^{-1}A^-)=T_w, \nabla_w(w^{-1}A^-)=T_{w^{-1}}^{-1},\] hence
\begin{equation}\label{eq:levelzero-finite-transport}
T_w\bone_!^1(w^{-1}A^-)
=
\bone_!^1(A^-)T_{w^{-1}}^{-1}
\qquad w\in W_{\mathrm f}.
\end{equation}

Now consider the dual generator $\gamma_r$, indexed so that it acts by the
length-zero element $\pi_r$ on the \(Y\)-lattice. By Definition
\ref{def:upsilon},
\[
\gamma_r=T_{u_r}^{-1}X_{r^*}^{-1},
\qquad
\Upsilon(\gamma_r)=T_{u_{r^*}}X_{r^*} .
\]
At \(v=1\), \(\Upsilon(\gamma_r)\) acts as
\(u_{r^*}t_{r^*}\), which preserves \(A^-\). Equation
\eqref{eq:negative-alcove-shift}, applied to \(r^*\), gives
\(A^-+r^*=u_{r^*}^{-1}A^-\). Combining
\eqref{eq:levelzero-x-translation-one-sided} with
\eqref{eq:levelzero-finite-transport}, we get
\[
\begin{aligned}
\Upsilon(\gamma_r)\bone_!^1(A^-)
&=T_{u_{r^*}}X_{r^*}\bone_!^1(A^-)\\
&=T_{u_{r^*}}\bone_!^1(A^-+r^*)X_{-r^*}\\
&=\bone_!^1(A^-)T_{u_r}^{-1}X_{-r^*}\\
&=\bone_!^1(A^-)\gamma_r .
\end{aligned}
\]
Thus we have
\begin{equation}\label{eq:gamma-a-minus-heuristic}
\Upsilon(\gamma_r)\bone_!^1(A^-)=\bone_!^1(A^-)\gamma_r.
\end{equation}
Substituting \eqref{eq:a-changes-alcove-class} into
\eqref{eq:gamma-a-minus-heuristic} gives the relation
\[
\Upsilon(\gamma_r)a\bone_!^1=a\bone_!^1\gamma_r
\]
in the topological module $\mathcal{B}$. Equivalently, after conjugating back from
\(\bone_!^1(A^-)\) to \(\bone_!^1\), this suggests introducing the  formal element
\begin{equation}
\omega_0(\gamma_r) = a^{-1} \Upsilon(\gamma_r) a,
\end{equation}
which one would then expect to satisfy
\begin{equation}
    \omega_0(\gamma_r)\bone_!^1 = \bone_!^1\gamma_r.
\end{equation}
Note that the use of $a^{-1}$ here means as of yet that the map $\omega_0$ is only defined heuristically; the preceding discussion motivates the semilinear map $\omega_0$ in what follows by
\[
\begin{aligned}
\omega_0(v)&=v, & \omega_0(q)&=q^{-1},\\
\omega_0(T_i)&=T_i\quad(1\leqslant i\leqslant n), & \omega_0(Y_b)&=Y_b,\\
\omega_0(\gamma_r)&=a^{-1}\Upsilon(\gamma_r)a.
\end{aligned}
\]
which will soon become precise once we explain an explicit formula for $a$ and its inverse.

The actual bar operation will then use the inverse map, and we will define
\[
\overline{x}=\omega_0^{-1}\circ\Upsilon(x).
\]
With this convention the \(\gamma\)-generators transform as
\(\overline{\gamma_r}=a\gamma_r a^{-1}\), matching the normalization of
\(\bone_!^1\).
In \S\ref{sec:levelzero-a-new}--\S\ref{sec:levelzero-involution}, we make this precise: first we compute $a$ inside the
localized algebra, and then we deduce these formulas purely from relations in $\mathbb H_{\mathrm{loc}}$.

\subsection{A formula for the element $a$}\label{sec:levelzero-a-new}

We now provide an explicit formula for the limit element $a$ which was motivated in the previous subsection. To do so, we first make precise the appropriate completion in which we consider this limit. Set
\[
Q_{\mathrm{fin},+}^\vee:=\sum_{i=1}^n\mathbb Z_{\geqslant 0}\alpha_i^\vee.
\]
We use the dominant completion
\[
\widehat F^{+}
=
\left\{
\sum_{\lambda\in Q_{\mathrm{fin}}^\vee} c_\lambda Y_\lambda\ \middle|\
c_\lambda\in\Ring_q,\
\{\lambda\mid c_\lambda\neq 0\}\subset
\bigcup_{j=1}^m(\lambda_j+Q_{\mathrm{fin},+}^\vee)\text{ for some }\lambda_j\in Q_{\mathrm{fin}}^\vee
\right\}.
\]
The submodules
consisting of series supported on coweights \(\lambda\) with
\(\langle\rho,\lambda\rangle\geqslant N\) form a fundamental system of
neighborhoods of zero, where $\rho$ has the usual meaning, i.e., half the sum of all positive roots.

We embed \(F_{\mathrm{loc}}\) into \(\widehat F^{+}\) by expanding every
denominator in the positive root direction:
\[
\frac{1}{1-cY_{\alpha^\vee}}=\sum_{m\geqslant 0}c^mY_{m\alpha^\vee},
\qquad
\frac{1}{1-cY_{-\alpha^\vee}}=-\sum_{m\geqslant 1}c^{-m}Y_{m\alpha^\vee}
\]
for any $\alpha\in\Phi^+$ and $c = q^n$ or $v^2q^n$ for $n \in \mathbb{Z}$.
Thus \((\mathcal H_{\mathrm{ex}})_{\mathrm{loc}}\), writing elements in the form \(\sum_{x\in W_{\mathrm f}}f_x\eta_x\), maps to the completed
module
\[
\widehat{\mathcal H}_{\mathrm{ex}}^{+}
=
\bigoplus_{x\in W_{\mathrm f}}\widehat F^{+}\eta_x.
\]
We write \(T_{w_0}\widehat{\mathcal H}_{\mathrm{ex}}^+\) for its left
translate, equipped with the topology transported by the bijection
\(h\mapsto T_{w_0}h\).

\begin{Prop}\label{prop:a-limit-formula}
Let \(\xi\in Q_{\mathrm{fin,dom}}^\vee\) tend to infinity deep in the
dominant chamber, in the sense that
\(\langle\alpha_i,\xi\rangle\to+\infty\) for every \(1\leqslant i\leqslant n\).
Then the limit
\[
\lim_{\xi\to+\infty}\Upsilon(Y_{-\xi})Y_\xi
\]
exists in \(T_{w_0}\widehat{\mathcal H}_{\mathrm{ex}}^{+}\), and is represented
by the image of the following element of
\((\mathcal H_{\mathrm{ex}})_{\mathrm{loc}}\):
\[
a = v^{-\ell(w_0)}T_{w_0}\eta_{w_0}
\prod_{\alpha\in\Phi^+}
\frac{1-v^2Y_{\alpha^\vee}}{1-Y_{\alpha^\vee}}.
\]
\end{Prop}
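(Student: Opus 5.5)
The plan is to reduce $\Upsilon(Y_{-\xi})Y_\xi$ to an expression of the form $T_{w_0}\cdot(\text{something in }(\mathcal H_{\mathrm{ex}})_{\mathrm{loc}})$. We then expand that element in the basis $\eta_x$ and show that every component except $x=w_0$ tends to zero in $\widehat F^{+}$.

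\emph{Step 1: conjugation formula for $\Upsilon$ on $Y$.} For dominant $\xi\in Q^\vee_{\mathrm{fin}}$ we have $Y_\xi=T_{t_\xi}$, and $\Upsilon(T_{t_\xi})=T_{t_{-\xi}}$ up to the inversion of letters. By the finite-type identity \eqref{eqn:bar-ylambda}, transported to the $Y$-lattice of $\mathcal H_{\mathrm{ex}}$ (which is an ordinary affine Hecke algebra, since its $Y$-Bernstein relation \eqref{eq:source-y-bernstein} has the standard form), I expect
\[
\Upsilon(Y_{\lambda})=T_{w_0}Y_{w_0\lambda}T_{w_0}^{-1}\qquad(\lambda\in Q^\vee_{\mathrm{fin}}).
\]
I would verify this on dominant $\lambda$ by the usual length argument: $T_{w_0}T_{t_{w_0\lambda}}=T_{t_{\lambda}}^{\pm}\cdots$. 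The identity then extends multiplicatively. Given the formula,
\[
\Upsilon(Y_{-\xi})Y_\xi=T_{w_0}\,Y_{-w_0\xi}\,T_{w_0}^{-1}\,Y_\xi .
\]

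\emph{Step 2: localize.} Using \eqref{eq:localized-Ti} and $T_i^{-1}=T_i-\hbar$, each $T_i^{-1}$ has the form $g_i+v^{-1}\frac{1-v^2Y_{-\alpha_i^\vee}}{1-Y_{-\alpha_i^\vee}}\eta_{s_i}$ with $g_i\in F_{\mathrm{loc}}$. Multiplying along a reduced word for $w_0$ gives $T_{w_0}^{-1}=\sum_{x\in W_{\mathrm f}}f_x\eta_x$. The top coefficient $f_{w_0}$ is obtained by always picking the $\eta$-term and commuting the $\eta$'s to the right via $\eta_wf=f^w\eta_w$, so
\[
f_{w_0}=\prod_{\beta\in\Phi^+}v^{-1}\frac{1-v^2Y_{-\beta^\vee}}{1-Y_{-\beta^\vee}},
\]
where the twisted simple roots run over all positive roots (possibly up to an overall sign of the $\beta$'s, to be checked). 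Then
\[
Y_{-w_0\xi}\,f_x\eta_x\,Y_\xi=f_x\,Y_{x\xi-w_0\xi}\,\eta_x .
\]
For $x=w_0$ this term is independent of $\xi$. Moreover $f_{w_0}\eta_{w_0}=\eta_{w_0}f_{w_0}^{w_0}$, and $f_{w_0}^{w_0}=v^{-\ell(w_0)}\prod_{\alpha>0}\frac{1-v^2Y_{\alpha^\vee}}{1-Y_{\alpha^\vee}}$. This produces exactly the claimed $a$ after left multiplication by $T_{w_0}$.

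\emph{Step 3: vanishing of the other terms.} For $x\ne w_0$, $x\xi-w_0\xi$ lies in $Q^\vee_{\mathrm{fin},+}$, and $\langle\rho,x\xi-w_0\xi\rangle\to+\infty$ as $\xi\to+\infty$ deep in the chamber, because $x\xi\neq w_0\xi$ and all $\langle\alpha_i,\xi\rangle\to\infty$. Each $f_x$ expands in $\widehat F^{+}$ with support in finitely many translates $\lambda_j+Q^\vee_{\mathrm{fin},+}$ independent of $\xi$. Hence $f_xY_{x\xi-w_0\xi}\to0$ in the topology defined by $\langle\rho,\lambda\rangle\ge N$. Since the topology on $T_{w_0}\widehat{\mathcal H}^+_{\mathrm{ex}}$ is transported from $\widehat{\mathcal H}^+_{\mathrm{ex}}$, the limit exists and equals $T_{w_0}f_{w_0}\eta_{w_0}=a$.

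The main obstacle is Step 1 together with the sign and root bookkeeping in Step 2. Specifically, one must confirm that with these conventions $\Upsilon$ really conjugates $Y_\lambda$ by $T_{w_0}$, and not by $T_{w_0}^{-1}$ or with $\lambda\mapsto-\lambda$. One must also confirm that the roots appearing in $f_{w_0}$ become $+\alpha^\vee$ after twisting by $w_0$, so that the final product has the stated form. I would settle both by first checking the rank-one case $Y_{\alpha^\vee}=T_0T_1$ explicitly.
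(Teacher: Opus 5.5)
Your proposal is correct and is essentially the paper's own proof. The paper also rewrites $\Upsilon(Y_{-\xi})Y_\xi=T_{w_0}Y_{-w_0\xi}T_{w_0}^{-1}Y_\xi$ via \eqref{eqn:bar-ylambda}, expands $T_{w_0}^{-1}$ in the $\eta_x$ basis to isolate the coefficient $v^{-\ell(w_0)}\prod_{\alpha\in\Phi^+}\frac{1-v^2Y_{-\alpha^\vee}}{1-Y_{-\alpha^\vee}}$ of $\eta_{w_0}$, and discards the $x\neq w_0$ terms because $\langle\rho,x\xi-w_0\xi\rangle=\langle\rho-x^{-1}w_0\rho,\xi\rangle\to+\infty$. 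Your sign concerns resolve as you expected: in rank one $\Upsilon(Y_{\alpha^\vee})=T_0^{-1}T_1^{-1}=T_1Y_{-\alpha^\vee}T_1^{-1}$, and the roots in the all-$\eta$ term are exactly $\Phi^+$, so twisting by $w_0$ gives the stated product.
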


\begin{proof}
By the affine Hecke bar formula \eqref{eqn:bar-ylambda}, we can write
\[
\Upsilon(Y_{-\xi})Y_\xi
=
T_{w_0}Y_{-w_0\xi}T_{w_0}^{-1}Y_\xi.
\]
By the definition of the translated completion, it is enough to compute the
limit of \(Y_{-w_0\xi}T_{w_0}^{-1}Y_\xi\) in
\(\widehat{\mathcal H}_{\mathrm{ex}}^{+}\) as
\(\xi\to+\infty\).

Set
\[
R_\alpha=\frac{1-v^2Y_{\alpha^\vee}}{1-Y_{\alpha^\vee}},
\]
and note that for any $1 \leqslant i\leqslant n$, (\ref{eq:eta-simple-localization}) implies
\begin{equation}
    T_i^{-1}
=
-\frac{\hbar}{1-Y_{\alpha_i^\vee}}
+
v^{-1}R_{-\alpha_i}\eta_{s_i}.\label{eqn:7first}
\end{equation}

Choose a reduced expression  $w_0=s_{i_1}\cdots s_{i_N},$ then
\[
T_{w_0}^{-1}=T_{i_N}^{-1}\cdots T_{i_1}^{-1}.
\]
Expanding this product using (\ref{eqn:7first}), the unique term obtained by choosing the
\(\eta\)-summand at every step can be written explicitly as follows. Let
\[
\beta_1=\alpha_{i_N},\quad
\beta_2=s_{i_N}\alpha_{i_{N-1}},\quad \ldots,\quad
\beta_N=s_{i_N}\cdots s_{i_2}\alpha_{i_1}.
\]
Using \(\eta_w f=f^w\eta_w\), the contribution of these summands is
\[
v^{-\ell(w_0)}R_{-\beta_1}R_{-\beta_2}\cdots R_{-\beta_N}\,\eta_{w_0}.
\]
The sequence \(\beta_1,\ldots,\beta_N\) is the ordering of
\(\Phi^+\) determined by the chosen reduced expression. Since the factors
\(R_{-\beta_j}\) lie in the commutative algebra \(F_{\mathrm{loc}}\), this
coefficient is
\[
v^{-\ell(w_0)}\prod_{\alpha\in\Phi^+}R_{-\alpha}.
\]
All other \(\eta_x\) terms which occur have \(x<w_0\). Hence
\begin{equation}
T_{w_0}^{-1}
=
v^{-\ell(w_0)}\prod_{\alpha\in\Phi^+}R_{-\alpha}\,\eta_{w_0}
+
\sum_{x<w_0} f_x\eta_x\label{eqn:7second}
\end{equation}
for some \(f_x\in F_{\mathrm{loc}}\).

Substituting (\ref{eqn:7second}), and using \(\eta_xY_\xi=Y_{x\xi}\eta_x\) gives
\begin{equation}
    Y_{-w_0\xi}T_{w_0}^{-1}Y_\xi
=
v^{-\ell(w_0)}\prod_{\alpha\in\Phi^+}R_{-\alpha}\,\eta_{w_0}
+
\sum_{x<w_0}Y_{x\xi-w_0\xi}f_x\eta_x.\label{eqn:7third}
\end{equation}
For \(x<w_0\), the difference \(x\xi-w_0\xi\) lies in
\(Q_{\mathrm{fin},+}^\vee\). Put \(z=x^{-1}w_0\neq 1\). Since
\(w_0\rho=-\rho\), we have
\[
\langle\rho,x\xi-w_0\xi\rangle
=
\langle\rho-z\rho_f,\xi\rangle.
\]
The weight \(\rho-z\rho\) is a nonzero sum of positive roots, so the
deep-dominant condition on \(\xi\) implies that this quantity tends to
\(+\infty\). Thus multiplication by
\(Y_{x\xi-w_0\xi}\) shifts the support of \(f_x\) arbitrarily far in the
positive direction, so every term labeled by $x<w_0$ in the sum in (\ref{eqn:7third}) tends to zero in
\(\widehat{\mathcal H}_{\mathrm{ex}}^{+}\).
Therefore
\[
\lim_{\xi\to+\infty}
Y_{-w_0\xi}T_{w_0}^{-1}Y_\xi
=
v^{-\ell(w_0)}\prod_{\alpha\in\Phi^+}R_{-\alpha}\,\eta_{w_0}.
\]
Finally,
\[
\left(\prod_{\alpha\in\Phi^+}R_{-\alpha}\right)\eta_{w_0}
=
\eta_{w_0}\prod_{\alpha\in\Phi^+}R_\alpha.
\]
Multiplying on the left by \(T_{w_0}\), in the translated completion
\(T_{w_0}\widehat{\mathcal H}_{\mathrm{ex}}^{+}\), we obtain
\[
\lim_{\xi\to+\infty}\Upsilon(Y_{-\xi})Y_\xi
=
v^{-\ell(w_0)}T_{w_0}\eta_{w_0}
\prod_{\alpha\in\Phi^+}
\frac{1-v^2Y_{\alpha^\vee}}{1-Y_{\alpha^\vee}},
\]
as claimed.
\end{proof}

\begin{defi}\label{def:levelzero-a}
Set
\[
a:=
v^{-\ell(w_0)}T_{w_0}\eta_{w_0}
\prod_{\alpha\in\Phi^+}
\frac{1-v^2Y_{\alpha^\vee}}{1-Y_{\alpha^\vee}}
\in
(\mathcal H_{\mathrm{ex}})_{\mathrm{loc}}.
\]
\end{defi}

\begin{Ex}\label{ex:rank-one-a}
If $\Phi$ has rank $1$, write its positive root as $\alpha$. Then
$w_0=s_1$, and Definition~\ref{def:levelzero-a} gives
\[
a
=
v^{-1}T_1\eta_{s_1}\frac{1-v^2Y_{\alpha^\vee}}{1-Y_{\alpha^\vee}}.
\]
By the Bernstein relation, this is
\[
a = 1+T_1\frac{\hbar}{1-Y_{\alpha^\vee}}.
\]
Expanding the rational function in the dominant completion and using the
\(X\)-presentation identity $Y_{\alpha^\vee}=T_0T_1$ gives the semi-infinite expression
\begin{align*}
a
&=
1+\hbar\sum_{m\geqslant 0}T_1Y_{\alpha^\vee}^m\\
&=
1+\hbar\bigl(T_1+T_1T_0T_1+T_1T_0T_1T_0T_1+\cdots\bigr)\\
&=
\lim_{n\to\infty}(T_1T_0)^n(T_0T_1)^n
=
\lim_{n\to\infty}\Upsilon(Y_{-n\alpha^\vee})Y_{n\alpha^\vee},
\end{align*}
which is the rank-one version of the limiting picture above.
This spells out the conclusion of Proposition~\ref{prop:a-limit-formula} explicitly in the rank $1$ setting.
\end{Ex}

\begin{Rem}
    It was explained to us by Quan Situ that the formula for \(a\) in Definition \ref{def:levelzero-a} can also be
    obtained from a natural bar involution on Lusztig's periodic Hecke module from \cite{Lgeneric}. This
    involution is introduced in \cite[Theorem~11.2]{LPeriodic}, and writing an explicit formula for this operator on elements arising from the natural inclusion of the affine Hecke algebra gives a derivation which is alternative to the proof in Proposition \ref{prop:a-limit-formula}.
\end{Rem}

\subsection{Explicit formula in the localized algebra}\label{sec:localized-formula}
We use the same symbol $\Upsilon$ when referring to the natural extension to $\mathbb H_{\mathrm{loc}}$.

The next proposition isolates the cases in which the present construction of $\bar{\bullet}$ on the full level-zero algebra can be carried out (ultimately, this is the condition that allows us to eliminate
\(\varphi(T_0)\) from Cherednik's dual presentation; see Remark \ref{rem:not0}.)

\begin{Prop}\label{prop:levelzero-generators}
Assume $O'\neq\varnothing$, and choose \(r_0\in O'\). Set
\[
\tau=\gamma_{r_0}^{-1}T_{r_0}\gamma_{r_0}.
\]
Then $\mathbb H$ is presented over $\Ring_q$ by the generators
\[
\{T_i\}_{i=1}^n,\qquad \{Y_b\}_{b\in Q_{\mathrm{fin}}^\vee},\qquad \{\gamma_r\}_{r\in O'},
\]
with the relations of Proposition~\ref{prop:tygamma} after replacing every
occurrence of \(\varphi(T_0)\) by \(\tau\).
The algebra \(\mathbb H_{\mathrm{loc}}\) is obtained from the same
presentation by localizing the subalgebra generated by the elements $Y_b$.
\end{Prop}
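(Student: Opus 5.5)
The plan is to start from the $Y$-presentation of Proposition~\ref{prop:tygamma} and show that the generator $\varphi(T_0)$ is redundant, being equal to $\tau=\gamma_{r_0}^{-1}T_{r_0}\gamma_{r_0}$. First, by definition of $r_0\in O'$ we have $\pi_{r_0}(0)=r_0$, so $r_0$ is a finite index ($1\leqslant r_0\leqslant n$) and $T_{r_0}=\varphi(T_{r_0})$ is one of the finite Hecke generators. Relation~(\ref{rel:tygamma-diagram}) of Proposition~\ref{prop:tygamma} with $i=0$, $j=r_0$ reads $\gamma_{r_0}\varphi(T_0)\gamma_{r_0}^{-1}=\varphi(T_{r_0})=T_{r_0}$. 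Solving gives $\varphi(T_0)=\gamma_{r_0}^{-1}T_{r_0}\gamma_{r_0}=\tau$. Thus the subalgebra generated by $\{T_i\}_{i\ge1}$, $\{Y_b\}$ and $\{\gamma_r\}$ already contains $\varphi(T_0)$, hence is all of $\mathbb H$.

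Next I will make the Tietze transformation formal. Let $\mathbb H'$ be the algebra defined by the stated generators and the relations of Proposition~\ref{prop:tygamma} with $\varphi(T_0)$ replaced by $\tau$. There is an obvious map $\mathbb H'\to\mathbb H$ sending generators to generators, well-defined because $\tau\mapsto\varphi(T_0)$ and all substituted relations hold in $\mathbb H$. Conversely, define $\mathbb H\to\mathbb H'$ by $\varphi(T_0)\mapsto\tau$ and the identity on the other generators. To see this is well defined I must check each relation of Proposition~\ref{prop:tygamma} in $\mathbb H'$. All relations not involving $\varphi(T_0)$ are imposed verbatim. Those involving $\varphi(T_0)$ become, after substitution, exactly the imposed relations. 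The one subtle point is relation~(\ref{rel:tygamma-diagram}) for $r=r_0,i=0$: in $\mathbb H'$ it reads $\gamma_{r_0}\tau\gamma_{r_0}^{-1}=T_{r_0}$, which holds tautologically by the definition of $\tau$. The two maps are mutually inverse on generators, so they are isomorphisms, and the presentation claim follows.

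For the localized statement, $\mathbb H_{\mathrm{loc}}=F_{\mathrm{aff,loc}}\otimes_F\mathbb H$ is obtained from any presentation of $\mathbb H$ by adjoining inverses of the elements $1-Y_{\widetilde\beta^\vee}$ and $1-v^2Y_{\widetilde\beta^\vee}$. Since the generating set $\{Y_b\}$ and the commutative subalgebra $F$ are unchanged by the Tietze move, the same localization applies verbatim. One should note that $F_{\mathrm{aff,loc}}\otimes_F\mathbb H$ is an algebra: this uses an Ore-type condition, namely that the multiplicative set is stable under the $W_{\mathrm{aff}}$-action, which is permuted by $s_i$ and by $\pi_r$ via relations (\ref{rel:tygamma-bernstein}) and (\ref{rel:tygamma-y-conjugation}). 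This is already implicit in the paper's definition of $\mathbb H_{\mathrm{loc}}$.

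I expect the main obstacle to be bookkeeping rather than substance. The first point is confirming that $r_0\ne0$ is a finite index so that $T_{r_0}$ is genuinely among the retained generators, which follows from $r_0\in O'=O\setminus\{0\}$. The second is ensuring that the reindexing $r\mapsto r^*$ in Proposition~\ref{prop:tygamma}(\ref{rel:tygamma-diagram}) really pairs $\gamma_{r_0}$ with $\pi_{r_0}(0)=r_0$ rather than with $r_0^*$. I would double-check the latter by recomputing $\varphi(\pi_{r_0^*}T_{r_0}\pi_{r_0^*}^{-1})$ directly from Definition~\ref{def:daha}(\ref{rel:daha-diagram}).
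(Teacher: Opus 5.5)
Your proposal is correct and follows essentially the same route as the paper. You use relation~(4) with $r=r_0$, $i=0$ to get $\varphi(T_0)=\tau$, define the two mutually inverse maps between the presentations, and note that the multiplicative set is stable under the affine reflections and under $\Pi$, so the localization carries over unchanged.

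Your worry about the $r\mapsto r^*$ reindexing is settled by the check you propose. Since $\pi_{r_0^*}=\pi_{r_0}^{-1}$, we have $\pi_{r_0^*}T_{r_0}\pi_{r_0^*}^{-1}=T_0$, and applying the anti-isomorphism $\varphi$ to this gives exactly $\gamma_{r_0}^{-1}T_{r_0}\gamma_{r_0}=\varphi(T_0)$.
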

\begin{proof}
Relation~(\ref{rel:tygamma-diagram})
for \(r_0\) gives
\[
\gamma_{r_0}\varphi(T_0)\gamma_{r_0}^{-1}=T_{r_0},
\]
since \(\pi_{r_0}(0)=r_0\), and hence
\[
\varphi(T_0)=\gamma_{r_0}^{-1}T_{r_0}\gamma_{r_0}=\tau.
\]
Thus the substituted presentation in Proposition \ref{prop:tygamma} maps to \(\mathbb H\), while the full
presentation maps back by sending \(\varphi(T_0)\) to \(\tau\) and fixing all remaining
generators. These two maps are inverse on generators, so the presentations are
isomorphic.
It remains to check that this presentation is compatible with the localization
used above. Let \(S\subset F\) be the multiplicative set generated by the
elements
\[
1-Y_{\widetilde\beta^\vee},\qquad 1-v^2Y_{\widetilde\beta^\vee}
\qquad
(\widetilde\beta^\vee\in(\Phi^\vee)_{\mathrm{aff}}^{\mathrm{re}}),
\]
so that \(S^{-1}F=F_{\mathrm{aff,loc}}\). The set \(S\) is stable under the
simple affine reflections and under the diagram automorphisms coming from
\(\Pi\). Hence the conjugation relations for the \(\gamma_r\)'s preserve
\(S^{-1}F\). The Bernstein relation~(\ref{rel:tygamma-bernstein}) also extends to
this localization: if \(g\in S\), then multiplying
\[
sT_i-T_ig^{s_i}
=
\hbar\,\frac{g-g^{s_i}}{1-Y_{-\alpha_i^\vee}}
\]
on the left by \(g^{-1}\) and on the right by \((g^{s_i})^{-1}\) gives
\[
g^{-1}T_i-T_i(g^{s_i})^{-1}
=
\hbar\,\frac{g^{-1}-(g^{s_i})^{-1}}{1-Y_{-\alpha_i^\vee}}.
\]
Thus the same Bernstein relation makes sense with coefficients in
\(F_{\mathrm{aff,loc}}\), and the localized algebra is obtained by replacing
the coefficient algebra \(F\) by \(F_{\mathrm{aff,loc}}\) in the presentation
above. Therefore localizing over \(F\) gives the asserted presentation of
\(\mathbb H_{\mathrm{loc}}\).
\end{proof}

From now on we assume $O'\neq\varnothing$, and all statements are made inside the localized algebra $\mathbb H_{\mathrm{loc}}$. We first establish the identities needed to define the bar operation.

\begin{Lem}\label{lem:a-y-relation}
For any $b \in Q_{\mathrm{fin}}^\vee$, we have the relation
\[
a Y_b  = \Upsilon(Y_b) a.
\]
\end{Lem}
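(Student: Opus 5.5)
The plan is to reduce the identity to the classical formula \eqref{eqn:bar-ylambda} for the bar involution of the affine Hecke algebra $\mathcal H_{\mathrm{ex}}$ in the $Y$-variables. Then commute $Y_b$ past the factors of $a$ from Definition~\ref{def:levelzero-a} using only the crossed-product rule $\eta_w f=f^w\eta_w$ in $(\mathcal H_{\mathrm{ex}})_{\mathrm{loc}}\cong F_{\mathrm{loc}}\rtimes W_{\mathrm f}$ and the commutativity of $F_{\mathrm{loc}}$.

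\emph{Step 1.} First I would establish
\[
\Upsilon(Y_b)=T_{w_0}Y_{w_0b}T_{w_0}^{-1}\qquad (b\in Q^\vee_{\mathrm{fin}}).
\]
This is the identity already used at the start of the proof of Proposition~\ref{prop:a-limit-formula}, with $b=-\xi$. Both sides are multiplicative in $b$: the left because $\Upsilon$ is a ring map, the right because the $Y$'s commute. So it suffices to treat dominant $b$.

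For dominant $b\in Q^\vee_{\mathrm{fin}}$ we have $Y_b=T_{t_b}$, and $\Upsilon$ sends $T_{t_b}$ to $T_{t_{-b}}^{-1}$. Hence $\Upsilon(Y_b)$ coincides with Lusztig's bar involution on the affine Hecke algebra. The required identity is then the formula \eqref{eqn:bar-ylambda} from \cite{L83}, transported to the present conventions for the $Y$-variables, namely the Bernstein relation \eqref{eq:source-y-bernstein} with denominator $1-Y_{-\alpha_i}$.

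\emph{Step 2.} With Step~1 in hand, set $R=\prod_{\alpha\in\Phi^+}R_\alpha\in F_{\mathrm{loc}}$ and compute
\[
aY_b=v^{-\ell(w_0)}T_{w_0}\eta_{w_0}RY_b=v^{-\ell(w_0)}T_{w_0}\eta_{w_0}Y_bR=v^{-\ell(w_0)}T_{w_0}Y_{w_0b}\eta_{w_0}R .
\]
The second equality uses that $F_{\mathrm{loc}}$ is commutative, and the third uses $\eta_{w_0}Y_b=Y_{w_0b}\eta_{w_0}$. On the other side,
\[
\Upsilon(Y_b)a=T_{w_0}Y_{w_0b}T_{w_0}^{-1}\cdot v^{-\ell(w_0)}T_{w_0}\eta_{w_0}R=v^{-\ell(w_0)}T_{w_0}Y_{w_0b}\eta_{w_0}R,
\]
so the two sides agree.

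\emph{Main obstacle.} The only real risk lies in Step~1: checking that, with this paper's sign conventions, the bar involution on $\mathcal H_{\mathrm{ex}}$ really produces $T_{w_0}Y_{w_0b}T_{w_0}^{-1}$ and not, say, $T_{w_0}^{-1}Y_{w_0b}T_{w_0}$. The sign conventions involved are $\Upsilon(T_i)=T_i^{-1}$, $\Upsilon(Y_\lambda)$ computed through the $X$-presentation, and $Y_\lambda=T_{t_\lambda}$ for dominant $\lambda$.

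To settle this I would first test the formula in rank one, where $Y_{\alpha^\vee}=T_0T_1$. There $\Upsilon(Y_{\alpha^\vee})=T_0^{-1}T_1^{-1}$, and I would compare it with $T_1Y_{-\alpha^\vee}T_1^{-1}=T_1T_1^{-1}T_0^{-1}T_1^{-1}$; these agree. I would then argue in general from the Bernstein relation \eqref{eq:source-y-bernstein}. The map $x\mapsto T_{w_0}\,x^{\sharp}\,T_{w_0}^{-1}$, where $\sharp$ is the automorphism fixing each $T_i$ and sending $Y_\lambda\mapsto Y_{w_0\lambda}$ (composed with the diagram automorphism $-w_0$ on indices), should agree with $\Upsilon$ on the generators $T_i$ and $Y_\lambda$. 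Agreement on the $T_i$ uses $T_{w_0}T_{i^*}T_{w_0}^{-1}=T_i$ together with the quadratic relation.

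Since both sides of the lemma lie in the localized algebra, where $\Upsilon$ extends uniquely, no further completion issues arise.
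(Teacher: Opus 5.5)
Your proposal is correct and is essentially the paper's proof. The paper's argument is exactly your Step~2 (commute $Y_b$ past $R$ and $\eta_{w_0}$, then compare with $\Upsilon(Y_b)a$), with the identity $\Upsilon(Y_b)=T_{w_0}Y_{w_0b}T_{w_0}^{-1}$ simply quoted from the classical formula \eqref{eqn:bar-ylambda}.

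One small slip in your optional check of Step~1: the auxiliary map must be the semilinear one with $T_i\mapsto T_{i^*}^{-1}$ and $Y_\lambda\mapsto Y_{w_0\lambda}$. A map with $T_i\mapsto T_{i^*}$ gives $T_{w_0}T_{i^*}T_{w_0}^{-1}=T_i\neq\Upsilon(T_i)$ and does not respect the Bernstein relation \eqref{eq:source-y-bernstein}. Alternatively, your own reduction to dominant $b$ already settles Step~1 directly: there the claim becomes $T_{t_{-b}}T_{w_0}=T_{w_0}T_{t_{-w_0b}}$, which holds by length additivity.
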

\begin{proof}
Set
\[
R:=\prod_{\alpha\in\Phi^+}\frac{1-v^2Y_{\alpha^\vee}}{1-Y_{\alpha^\vee}}.
\]
By Definition~\ref{def:levelzero-a}, we have
\[
a=v^{-\ell(w_0)}T_{w_0}\eta_{w_0}R.
\]
Since \(R\in F_{\mathrm{loc}}\), it commutes with \(Y_b\). Using the relation \(\eta_w f=f^w\eta_w\), we compute
\begin{align*}
a Y_b
&=v^{-\ell(w_0)}T_{w_0}\eta_{w_0}RY_b\\
&=v^{-\ell(w_0)}T_{w_0}\eta_{w_0}Y_bR\\
&=v^{-\ell(w_0)}T_{w_0}Y_{w_0b}\eta_{w_0}R.
\end{align*}
On the other hand, by
\[
\Upsilon(Y_b)=T_{w_0}Y_{w_0b}T_{w_0}^{-1},
\]
we have
\begin{align*}
\Upsilon(Y_b)a
&=T_{w_0}Y_{w_0b}T_{w_0}^{-1}
  v^{-\ell(w_0)}T_{w_0}\eta_{w_0}R\\
&=v^{-\ell(w_0)}T_{w_0}Y_{w_0b}\eta_{w_0}R;
\end{align*}
the expressions for $aY_b, \Upsilon(Y_b)a$ agree.
\end{proof}

\begin{Cor}\label{cor:a-y-inverse-relations}
For any \(b\in Q_{\mathrm{fin}}^\vee\), one has
\[
Y_b=a^{-1}\Upsilon(Y_b)a,
\qquad
\Upsilon(a)^{-1}Y_b=\Upsilon(Y_b)\Upsilon(a)^{-1}.
\]
\end{Cor}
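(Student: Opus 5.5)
The first identity follows directly from Lemma~\ref{lem:a-y-relation}, once we know that $a$ is invertible in $(\mathcal H_{\mathrm{ex}})_{\mathrm{loc}}$. To see invertibility, write $a=v^{-\ell(w_0)}T_{w_0}\eta_{w_0}R$ with
\[
R=\prod_{\alpha\in\Phi^+}\frac{1-v^2Y_{\alpha^\vee}}{1-Y_{\alpha^\vee}}.
\]
Each factor is a unit:
\begin{itemize}
\item $T_{w_0}$ is invertible in the finite Hecke algebra.
\item $\eta_{w_0}$ is invertible in $F_{\mathrm{loc}}\rtimes W_{\mathrm f}$, with $\eta_{w_0}^{-1}=\eta_{w_0}$, since $w_0^2=1$ and the $\eta_w$ realize the group law.
\item $R$ is a unit in $F_{\mathrm{loc}}$, because both $1-Y_{\alpha^\vee}$ and $1-v^2Y_{\alpha^\vee}$ are inverted there.
\end{itemize}
Hence
\[
a^{-1}=v^{\ell(w_0)}R^{-1}\eta_{w_0}T_{w_0}^{-1}.
\]
Multiplying $aY_b=\Upsilon(Y_b)a$ on the left by $a^{-1}$ gives $Y_b=a^{-1}\Upsilon(Y_b)a$.

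For the second identity, apply $\Upsilon$ to the relation $aY_b=\Upsilon(Y_b)a$. We need $\Upsilon$ to extend to $(\mathcal H_{\mathrm{ex}})_{\mathrm{loc}}$, or at least to $\mathbb H_{\mathrm{loc}}$ as the paper asserts. This holds provided $\Upsilon$ maps the multiplicative set of denominators into the invertible elements.

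This is the one step requiring care. The paper uses $\Upsilon(Y_b)=T_{w_0}Y_{w_0b}T_{w_0}^{-1}$, so $\Upsilon$ restricted to $F$ is conjugation by $T_{w_0}$ composed with $w_0$, twisted by $v\mapsto v^{-1}$ and $q\mapsto q^{-1}$. Under this map $1-Y_{\beta^\vee}$ goes to $T_{w_0}(1-Y_{w_0\beta^\vee})T_{w_0}^{-1}$, which is invertible in the localization. Likewise $1-v^2Y_{\beta^\vee}$ goes to $T_{w_0}(1-v^{-2}Y_{w_0\beta^\vee})T_{w_0}^{-1}$, and $1-v^{-2}Y_{\gamma^\vee}=-v^{-2}Y_{\gamma^\vee}(1-v^2Y_{-\gamma^\vee})$ is a unit. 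The same argument works for affine coroots, with $q\mapsto q^{-1}$ absorbed by $Y_{\beta^\vee+m\delta}=q^{-m}Y_{\beta^\vee}$. So $\Upsilon$ extends to a semilinear involutive ring automorphism of the localized algebra, and I will take this extension as the meaning of ``the natural extension''.

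Since $\Upsilon$ is an involution, applying it to $aY_b=\Upsilon(Y_b)a$ yields
\[
\Upsilon(a)\Upsilon(Y_b)=Y_b\Upsilon(a).
\]
Because $\Upsilon$ is a ring automorphism and $a$ is invertible, $\Upsilon(a)$ is invertible with inverse $\Upsilon(a^{-1})$. Multiplying the displayed identity by $\Upsilon(a)^{-1}$ on both the left and the right gives
\[
\Upsilon(Y_b)\Upsilon(a)^{-1}=\Upsilon(a)^{-1}Y_b,
\]
which is the second identity. The only genuine obstacle is checking that $\Upsilon$ preserves the localization; the rest is formal.
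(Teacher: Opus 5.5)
Your proposal is correct and follows the paper's argument: left-multiply Lemma~\ref{lem:a-y-relation} by $a^{-1}$ for the first identity, then apply $\Upsilon$ to get $\Upsilon(a)\Upsilon(Y_b)=Y_b\Upsilon(a)$ and multiply on both sides by $\Upsilon(a)^{-1}$ for the second. Your extra checks make explicit what the paper takes for granted, and both are sound: the factorization showing $a$ is a unit, and the observation that $\Upsilon$ sends the inverted denominators to units, so it extends to the localization by the universal property of Ore localization.
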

\begin{proof}
The first identity is Lemma~\ref{lem:a-y-relation} multiplied by \(a^{-1}\)
on the left. Applying \(\Upsilon\) to Lemma~\ref{lem:a-y-relation} gives
\(\Upsilon(a)\Upsilon(Y_b)=Y_b\Upsilon(a)\), which is equivalent to the
second identity.
\end{proof}

We introduce the following shorthand, which will be convenient in
Lemma~\ref{lem:t-a-relation} and the proof of
Proposition~\ref{prop:omega0-endomorphism}.
\begin{defi}
For $1\leqslant i\leqslant n$, set
\[
f_i=\frac{1+Y_{\alpha_i^\vee}}{1-Y_{\alpha_i^\vee}}\in F_{\mathrm{loc}}.
\]
\end{defi}

\begin{Lem}\label{lem:t-a-relation}
For any $i$ with $1 \leqslant i \leqslant n$,
\begin{equation}
    T_i a = a \left( T_i + \hbar f_i \right).
\end{equation}
\end{Lem}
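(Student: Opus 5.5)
The plan is a direct computation in $(\mathcal H_{\mathrm{ex}})_{\mathrm{loc}}\cong F_{\mathrm{loc}}\rtimes W_{\mathrm f}$, starting from the formula $a=v^{-\ell(w_0)}T_{w_0}\eta_{w_0}R$ with $R=\prod_{\alpha\in\Phi^+}R_\alpha$ and $R_\alpha=\frac{1-v^2Y_{\alpha^\vee}}{1-Y_{\alpha^\vee}}$, as in the proof of Lemma~\ref{lem:a-y-relation}. The idea is to move $T_i$ through $T_{w_0}$, then through $\eta_{w_0}$, then through $R$, and to recognize the resulting element of $F_{\mathrm{loc}}\oplus F_{\mathrm{loc}}\eta_{s_i}$ as $T_i+\hbar f_i$.

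\emph{Step 1.} Let $j$ be the index with $s_j=w_0s_iw_0$. Since $\ell(s_iw_0)=\ell(w_0s_j)=\ell(w_0)-1$, the braid relations give $T_iT_{w_0}=T_{w_0}T_j$. Hence
\[T_ia=v^{-\ell(w_0)}T_{w_0}T_j\eta_{w_0}R.\]

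\emph{Step 2.} Expand $T_j$ using \eqref{eq:localized-Ti}:
\[T_j=\frac{\hbar}{1-Y_{-\alpha_j^\vee}}+v^{-1}\frac{1-v^2Y_{-\alpha_j^\vee}}{1-Y_{-\alpha_j^\vee}}\,\eta_{s_j}.\]
Now move $\eta_{w_0}$ to the left using $f\eta_{w_0}=\eta_{w_0}f^{w_0}$ and $\eta_{s_j}\eta_{w_0}=\eta_{w_0}\eta_{s_i}$. Since $w_0\alpha_j^\vee=-\alpha_i^\vee$, this gives
\[T_j\eta_{w_0}=\eta_{w_0}\Bigl(\frac{\hbar}{1-Y_{\alpha_i^\vee}}+v^{-1}R_{\alpha_i}\eta_{s_i}\Bigr).\]

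\emph{Step 3.} Move $R$ to the left past $\eta_{s_i}$ via $\eta_{s_i}R=R^{s_i}\eta_{s_i}$. The reflection $s_i$ permutes $\Phi^+\setminus\{\alpha_i\}$ and sends $\alpha_i$ to $-\alpha_i$, so $R^{s_i}=R\,R_{-\alpha_i}R_{\alpha_i}^{-1}$. All factors lie in the commutative algebra $F_{\mathrm{loc}}$, so $R_{\alpha_i}$ cancels and
\[T_j\eta_{w_0}R=\eta_{w_0}R\Bigl(\frac{\hbar}{1-Y_{\alpha_i^\vee}}+v^{-1}R_{-\alpha_i}\eta_{s_i}\Bigr).\]

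\emph{Step 4.} Compare the bracket with \eqref{eqn:7first}, which says $T_i^{-1}=-\frac{\hbar}{1-Y_{\alpha_i^\vee}}+v^{-1}R_{-\alpha_i}\eta_{s_i}$. The bracket therefore equals
\[T_i^{-1}+\frac{2\hbar}{1-Y_{\alpha_i^\vee}}=T_i-\hbar+\frac{2\hbar}{1-Y_{\alpha_i^\vee}}=T_i+\hbar f_i.\]
Reassembling the factors $v^{-\ell(w_0)}T_{w_0}\eta_{w_0}R$ on the left gives $T_ia=a(T_i+\hbar f_i)$.

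The only delicate point is bookkeeping in Steps 2--3: the direction of the convention $\eta_wf=f^w\eta_w$, the identity $w_0\alpha_j^\vee=-\alpha_i^\vee$, and the sign pattern in $R^{s_i}$. I would sanity-check these against the rank one case, where Example~\ref{ex:rank-one-a} gives $a=1+T_1\frac{\hbar}{1-Y_{\alpha^\vee}}$ and the identity can be verified by hand using the Bernstein relation.
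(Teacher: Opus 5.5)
Your proposal is correct and is essentially the paper's own computation: commute $T_i$ past $T_{w_0}$ to get $T_{w_0}T_j$ with $s_j=w_0s_iw_0$, expand $T_j$ via \eqref{eq:localized-Ti}, push it through $\eta_{w_0}$ and then through $R$, and recognize the resulting bracket as $T_i+\hbar f_i$. The only cosmetic difference is that the paper splits off the $s_i$-invariant factor $\prod_{\alpha\neq\alpha_i}R_\alpha$ and conjugates by $R_{\alpha_i}$ instead of using $R^{s_i}=R\,R_{-\alpha_i}R_{\alpha_i}^{-1}$, and it simplifies the final bracket directly rather than through \eqref{eqn:7first}.
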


\begin{proof}
    Let $i'$ be the index such that $s_{i'}=w_0s_iw_0$. Then
    $T_iT_{w_0}=T_{w_0}T_{i'}$, while
    $w_0(\alpha_{i'})=-\alpha_i$ and
    $\eta_{s_{i'}}\eta_{w_0}=\eta_{w_0}\eta_{s_i}$.
    Starting with the definition of $a$, we compute $T_i a$ by commuting $T_i$ through the expression for $a$:
\begin{align*}
    T_i a &= v^{-\ell(w_0)} T_i T_{w_0} \eta_{w_0} \prod_{\alpha \in \Phi^+} \frac{1 - v^2 Y_{\alpha^\vee}}{1 - Y_{\alpha^\vee}} \\
    &= v^{-\ell(w_0)} T_{w_0} T_{i'} \eta_{w_0} \prod_{\alpha \in \Phi^+} \frac{1 - v^2 Y_{\alpha^\vee}}{1 - Y_{\alpha^\vee}} \\
    &= v^{-\ell(w_0)} T_{w_0} \left( \frac{\hbar}{1 - Y_{-\alpha_{i'}^\vee}} + v^{-1} \frac{1 - v^2 Y_{-\alpha_{i'}^\vee}}{1 - Y_{-\alpha_{i'}^\vee}} \eta_{s_{i'}} \right) \eta_{w_0} \prod_{\alpha \in \Phi^+} \frac{1 - v^2 Y_{\alpha^\vee}}{1 - Y_{\alpha^\vee}} \\
    &= v^{-\ell(w_0)} T_{w_0} \eta_{w_0} \left( \frac{\hbar}{1 - Y_{\alpha_i^\vee}} + v^{-1} \frac{1 - v^2 Y_{\alpha_i^\vee}}{1 - Y_{\alpha_i^\vee}} \eta_{s_i} \right) \prod_{\alpha \in \Phi^+} \frac{1 - v^2 Y_{\alpha^\vee}}{1 - Y_{\alpha^\vee}} \\
    &= v^{-\ell(w_0)} T_{w_0} \eta_{w_0} \left( \prod_{\alpha \in \Phi^+ \setminus \{\alpha_i\}} \frac{1 - v^2 Y_{\alpha^\vee}}{1 - Y_{\alpha^\vee}} \right) \\
    & \quad \cdot \left( \frac{\hbar}{1 - Y_{\alpha_i^\vee}} + v^{-1} \frac{1 - v^2 Y_{\alpha_i^\vee}}{1 - Y_{\alpha_i^\vee}} \eta_{s_i} \right) \frac{1 - v^2 Y_{\alpha_i^\vee}}{1 - Y_{\alpha_i^\vee}} \\
    &= a \frac{1 - Y_{\alpha_i^\vee}}{1 - v^2 Y_{\alpha_i^\vee}} \left( \frac{\hbar}{1 - Y_{\alpha_i^\vee}} + v^{-1} \frac{1 - v^2 Y_{\alpha_i^\vee}}{1 - Y_{\alpha_i^\vee}} \eta_{s_i} \right) \frac{1 - v^2 Y_{\alpha_i^\vee}}{1 - Y_{\alpha_i^\vee}}.
\end{align*}

To complete the proof, we simplify the term to the right of $a$:
\begin{align*}
    &\frac{1 - Y_{\alpha_i^\vee}}{1 - v^2 Y_{\alpha_i^\vee}} \left( \frac{\hbar}{1 - Y_{\alpha_i^\vee}} + v^{-1} \frac{1 - v^2 Y_{\alpha_i^\vee}}{1 - Y_{\alpha_i^\vee}} \eta_{s_i} \right) \frac{1 - v^2 Y_{\alpha_i^\vee}}{1 - Y_{\alpha_i^\vee}} \\
    &\quad = \frac{\hbar}{1 - Y_{\alpha_i^\vee}} + v^{-1} \eta_{s_i} \frac{1 - v^2 Y_{\alpha_i^\vee}}{1 - Y_{\alpha_i^\vee}} \\
    &\quad = \hbar \frac{1 + Y_{\alpha_i^\vee}}{1 - Y_{\alpha_i^\vee}} + T_i.
\end{align*}
\end{proof}

\begin{Cor}\label{cor:a-t-relation}
    For any $i \in \{1, \dots, n\}$, we have
    \begin{align}
        aT_i & = (T_i - \hbar \Upsilon(f_i))a.
    \end{align}
\end{Cor}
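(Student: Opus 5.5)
The plan is to apply the semilinear involution \(\Upsilon\) to the identity of Lemma~\ref{lem:t-a-relation} and then rearrange. Applying \(\Upsilon\) to \(T_i a = a(T_i+\hbar f_i)\) gives
\[
T_i^{-1}\Upsilon(a)=\Upsilon(a)\bigl(T_i^{-1}-\hbar\Upsilon(f_i)\bigr),
\]
since \(\Upsilon(\hbar)=v^{-1}-v=-\hbar\). This relates \(\Upsilon(a)\) rather than \(a\), so it is not directly the claim. I would therefore work directly with \(a\) instead.

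Starting from Lemma~\ref{lem:t-a-relation}, write \(aT_i = T_i a - \hbar a f_i\). It then suffices to show \(af_i=\Upsilon(f_i)a\), since this gives \(aT_i=(T_i-\hbar\Upsilon(f_i))a\).

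The identity \(af_i=\Upsilon(f_i)a\) follows from Lemma~\ref{lem:a-y-relation}. That lemma states \(aY_b=\Upsilon(Y_b)a\) for all \(b\in Q_{\mathrm{fin}}^\vee\), so conjugation by \(a\), namely \(x\mapsto axa^{-1}\), agrees with \(\Upsilon\) on the Laurent polynomial ring \(F=\Ring_q[Y_{Q^\vee_{\mathrm{fin}}}]\), apart from the scalars. The scalar \(v\) is not an issue here, because \(f_i\) involves no \(v\) at all. Conjugation by \(a\) is a ring homomorphism, and so is \(\Upsilon\) restricted to the subalgebra \(\Ring_q[Y]\) with \(v\)-coefficients dropped (both agree on \(\mathbb Z\)-combinations of the \(Y_b\)). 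Both therefore extend uniquely to the localization at \(1-Y_{\alpha_i^\vee}\). Hence
\[
a(1+Y_{\alpha_i^\vee})(1-Y_{\alpha_i^\vee})^{-1}a^{-1}=\Upsilon(1+Y_{\alpha_i^\vee})\,\Upsilon(1-Y_{\alpha_i^\vee})^{-1}.
\]

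The only point needing care is that \(\Upsilon(1-Y_{\alpha_i^\vee})\) is invertible in \(\mathbb H_{\mathrm{loc}}\). It equals \(a(1-Y_{\alpha_i^\vee})a^{-1}\), which is a conjugate of an invertible element. This uses that \(a\) is invertible in \((\mathcal H_{\mathrm{ex}})_{\mathrm{loc}}\), and that is clear from Definition~\ref{def:levelzero-a}: \(T_{w_0}\), \(\eta_{w_0}\), and the product of ratios \(\frac{1-v^2Y}{1-Y}\) are all units after localization. I expect this invertibility check to be the main point, and a minor one; the rest is formal.

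Combining the two steps gives
\[
aT_i=T_ia-\hbar\Upsilon(f_i)a=(T_i-\hbar\Upsilon(f_i))a,
\]
which is the claim.
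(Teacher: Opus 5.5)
Your proof is correct and follows the paper's own argument: rearrange Lemma~\ref{lem:t-a-relation} as $aT_i = T_ia - \hbar af_i$, then use Lemma~\ref{lem:a-y-relation} to get $af_i = \Upsilon(f_i)a$. Your extra care in passing from the $Y_b$ to the rational function $f_i$ fills in a step the paper leaves implicit; that step uses the invertibility of $a$ and the universal property of localization. The opening detour of applying $\Upsilon$ to the lemma is harmless but unnecessary.
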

\begin{proof}
    By Lemma~\ref{lem:t-a-relation}, we have
    \begin{align}
        aT_i & = T_ia - \hbar af_i.
    \end{align}
    Lemma~\ref{lem:a-y-relation} gives $af_i=\Upsilon(f_i)a$, so the claim follows.
\end{proof}

\begin{Lem}\label{lem:eliminated-t0-relations}
Let \(A\) be an \(\Ring_q\)-algebra with elements \(T_i\) for \(1\leqslant i\leqslant n\),
\(Y_b\) for \(b\in Q_{\mathrm{fin}}^\vee\), and invertible elements \(\gamma_r\) for \(r\in O'\).
Fix \(r_0\in O'\), set
\[
\tau=\gamma_{r_0}^{-1}T_{r_0}\gamma_{r_0},
\qquad
\gamma_0=1,
\]
and suppose that these elements satisfy the relations in
Proposition~\ref{prop:tygamma} that do not involve \(\varphi(T_0)\), together with
\begin{equation}\label{eq:eliminated-affine-conjugation}
\gamma_r\tau\gamma_r^{-1}=T_r\qquad(r\in O').
\end{equation}
If \(O'=\{r\}\) and \(m_{0r}<\infty\), suppose in addition that
\(\tau\) and \(T_r\) satisfy their \(m_{0r}\)-term braid relation.
Then all relations in the presentation of Proposition~\ref{prop:tygamma} hold
in \(A\) after substituting \(\tau\) for \(\varphi(T_0)\).
\end{Lem}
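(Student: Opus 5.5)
The approach is to go through the relations of Proposition~\ref{prop:tygamma} one by one. Only those involving $\varphi(T_0)$ need checking, and after substituting $\tau$ these are: the quadratic relation for $i=0$, the braid relations between $0$ and $j\geqslant 1$, the diagram relations (\ref{rel:tygamma-diagram}) with $i=0$ or $j=0$, and the Bernstein relation (\ref{rel:tygamma-bernstein}) for $i=0$. In every case I will transport an already-known relation for some $T_r$ ($r\in O'$) by conjugating with a suitable $\gamma_r$. The key inputs are \eqref{eq:eliminated-affine-conjugation} for all $r\in O'$ and the group law (\ref{rel:tygamma-gamma-product}).

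\emph{Quadratic and diagram relations.} Since $\tau=\gamma_{r_0}^{-1}T_{r_0}\gamma_{r_0}$ is conjugate to $T_{r_0}$, it satisfies $(\tau-v)(\tau+v^{-1})=0$. For the diagram relations:
\begin{itemize}
\item For $i=0$, i.e.\ $\pi_r(0)=r$, the required relation is exactly \eqref{eq:eliminated-affine-conjugation}.
\item For $j=0$, i.e.\ $\pi_r(i)=0$, we have $i=r^*$, since $\pi_{r^*}=\pi_r^{-1}$ sends $0$ to $r^*$. Apply \eqref{eq:eliminated-affine-conjugation} to $r^*$ to get $\gamma_{r^*}\tau\gamma_{r^*}^{-1}=T_{r^*}$. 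The group law gives $\gamma_r\gamma_{r^*}=\gamma_0=1$, and conjugating by $\gamma_r$ then yields $\gamma_rT_{r^*}\gamma_r^{-1}=\tau$, as required.
\end{itemize}

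\emph{Braid relations.} Fix $j\geqslant1$. From \eqref{eq:eliminated-affine-conjugation} we have $\tau=\gamma_r^{-1}T_r\gamma_r$ for every $r\in O'$. Set $k=\pi_r(j)$. If $k\neq0$, the diagram relation gives $T_j=\gamma_r^{-1}T_k\gamma_r$. In that case the $m_{0j}$-term braid relation for $(\tau,T_j)$ is the $\gamma_r^{-1}$-conjugate of the braid relation for $(T_r,T_k)$. This uses $m_{0j}=m_{rk}$, because $\pi_r$ is a diagram automorphism, and both $r$ and $k$ are nonzero, so that relation is among the given ones.

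The condition $k\neq0$ means $j\neq r^*$. If $|O'|\geqslant2$, we may choose $r\in O'$ with $r^*\neq j$, since $r\mapsto r^*$ is a bijection of $O'$. If $O'=\{r\}$, this choice fails only for $j=r^*=r$, and that is exactly the case covered by the extra hypothesis. When $m_{0j}=\infty$ there is nothing to check.

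\emph{Bernstein relation for $i=0$.} Take the relation (\ref{rel:tygamma-bernstein}) for $T_{r_0}$ with argument $g'=\gamma_{r_0}g\gamma_{r_0}^{-1}$, and conjugate it by $\gamma_{r_0}^{-1}$. By (\ref{rel:tygamma-y-conjugation}), conjugation by $\gamma_{r_0}$ acts on $\Ring_q[Y_{\Lambda^\vee}]$ through $\pi_{r_0}$, extended $\Ring_q$-linearly with $\pi_{r_0}(\delta)=\delta$. Three facts then convert the $r_0$-relation into the $i=0$ relation with $\tau$ in place of $\varphi(T_0)$:
\begin{itemize}
\item $\pi_{r_0}(\alpha_0^\vee)=\alpha_{r_0}^\vee$;
\item $\pi_{r_0}s_0\pi_{r_0}^{-1}=s_{r_0}$, so that $(g')^{s_{r_0}}=\gamma_{r_0}g^{s_0}\gamma_{r_0}^{-1}$;
\item the denominator $1-Y_{-\alpha_{r_0}^\vee}$ pulls back to $1-Y_{-\alpha_0^\vee}$.
\end{itemize}

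I expect this last step to be the main obstacle. One has to check carefully, using \eqref{eq:pi-affine-action} together with the conventions $Y_{b+m\delta}=q^{-m}Y_b$ and $Y_{\alpha_0^\vee}=q^{-1}Y_{-\theta^\vee}$, that the $q$-powers match. In particular, conjugating $1-Y_{-\alpha_{r_0}^\vee}$ must give precisely $1-qY_{\theta^\vee}$. I would verify this first on the element $Y_{-\alpha_0^\vee}$, and then extend multiplicatively, using that conjugation by $\gamma_{r_0}$ is an algebra automorphism of $\Ring_q[Y_{\Lambda^\vee}]$.
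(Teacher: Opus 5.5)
Your proposal is correct and follows essentially the same route as the paper. Each relation involving $\varphi(T_0)$ is obtained by conjugating an already-available relation by a suitable $\gamma_r$, and the singleton case $j=r^*=r$ of the braid relation is exactly the extra hypothesis; your handling of the diagram case $\pi_r(i)=0$ via $i=r^*$ and $\gamma_r\gamma_{r^*}=1$ is just a slightly more direct version of the paper's argument. The $q$-bookkeeping you flag in the Bernstein step is immediate: $\pi_{r_0}$ fixes $\delta$ and sends $\alpha_0^\vee$ to $\alpha_{r_0}^\vee$ in $\Lambda^\vee_{\mathrm{aff}}$, so conjugation by $\gamma_{r_0}^{-1}$ sends $1-Y_{-\alpha_{r_0}^\vee}$ to $1-Y_{-\alpha_0^\vee}=1-qY_{\theta^\vee}$.
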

\begin{proof}
The case \(i=0\) of relation~(\ref{rel:tygamma-quadratic}) follows by conjugating
the case \(i=r_0\) of relation~(\ref{rel:tygamma-quadratic}). The case \(i=0\)
of relation~(\ref{rel:tygamma-bernstein}) is obtained in the same way, using
relation~(\ref{rel:tygamma-y-conjugation}). The
case \(\pi_r(0)=r\) of relation~(\ref{rel:tygamma-diagram}) is
exactly \eqref{eq:eliminated-affine-conjugation}. If
\(\pi_s(i)=0\), then choose \(r\in O'\); relation
(\ref{rel:tygamma-gamma-product}), the instance
\((\gamma_r\gamma_s)T_i(\gamma_r\gamma_s)^{-1}=T_r\) of
relation~(\ref{rel:tygamma-diagram}), and
\eqref{eq:eliminated-affine-conjugation} give the case of
relation~(\ref{rel:tygamma-diagram}) with target \(\varphi(T_0)\). Finally, for
relation~(\ref{rel:tygamma-braid}), the only remaining case involves \(\varphi(T_0)\) and
some \(T_i\), with \(m_{0i}<\infty\). If \(|O'|>1\), choose \(r\in O'\)
with \(r^*\neq i\), and put \(j=\pi_r(i)\). Then \(1\leqslant j\leqslant n\), and
conjugating by \(\gamma_r\), using
\eqref{eq:eliminated-affine-conjugation} and
relation~(\ref{rel:tygamma-diagram}), turns this into a braid relation
involving only $T_i$ with $1\leqslant i\leqslant n$. If \(O'=\{r\}\), the same argument works
unless \(i=r^*=r\); the remaining braid relation is precisely the additional
hypothesis above.
\end{proof}

\begin{Lem}\label{lem:singleton-eliminated-braid}
Suppose \(O'=\{r\}\) and \(m_{0r}<\infty\). In
\(\mathbb H_{\mathrm{loc}}\), set
\[
\tau=\gamma_r^{-1}T_r\gamma_r,
\qquad
f_0=\frac{1+Y_{\alpha_0^\vee}}{1-Y_{\alpha_0^\vee}},
\qquad
u=\Upsilon(\gamma_r),
\]
and
\[
D_r=T_r-\hbar\Upsilon(f_r),
\qquad
D_0=\Upsilon(\tau^{-1})-\hbar\Upsilon(f_0).
\]
Then \(m_{0r}=2\), and the element
\[
G=a^{-1}ua
\]
satisfies
\[
[G^{-1}T_rG,T_r]=0.
\]
\end{Lem}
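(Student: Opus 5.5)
The plan has two parts. First I show $m_{0r}=2$ by a case check. Then I reduce the commutator identity, by conjugating with $\Upsilon(\gamma_r)$ and $a$, to the commutation of two explicit elements built only from the rank-one pieces at the nodes $0$ and $r$.

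\emph{Step 1: $m_{0r}=2$.} Under our standing assumptions ($G$ simply connected, $O'\neq\varnothing$), $O'=\{r\}$ means $\Pi\cong\Z/2$ and $\pi_r$ swaps the vertices $0$ and $r$. Running through the types with $|P_{\mathrm{fin}}/Q_{\mathrm{fin}}|=2$, one gets $A_1$, $B_n$ ($r=1$), $C_n$ ($r=n$) and $E_7$ ($r=7$). The type $A_1$ is excluded because there $m_{01}=\infty$. In each remaining case the vertices $0$ and $r$ are not joined in the affine Dynkin diagram: for $B_n$ both are attached to vertex $2$, and for $C_n$ and $E_7$ they sit at opposite ends. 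Hence $\langle\alpha_r,\alpha_0^\vee\rangle=\langle\alpha_0,\alpha_r^\vee\rangle=0$ and $m_{0r}=2$. In particular, by Lemma~\ref{lem:eliminated-t0-relations} and Proposition~\ref{prop:levelzero-generators}, $\tau$ commutes with $T_r$ in $\mathbb H_{\mathrm{loc}}$.

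\emph{Step 2: conjugation bookkeeping.} By Corollary~\ref{cor:a-t-relation}, $aT_ra^{-1}=D_r$. Therefore
\[
G^{-1}T_rG=a^{-1}u^{-1}D_rua .
\]
Since $T_r=a^{-1}D_ra$, the element $G^{-1}T_rG$ commutes with $T_r$ if and only if $u^{-1}D_ru$ commutes with $D_r$. I then compute $u^{-1}D_ru$ termwise, using that $\Upsilon$ is a semilinear automorphism of $\mathbb H_{\mathrm{loc}}$. For the Hecke term,
\[
u^{-1}T_ru=\Upsilon\bigl(\gamma_r^{-1}T_r^{-1}\gamma_r\bigr)=\Upsilon(\tau^{-1}).
\]
For the $Y$-term, relation (\ref{rel:tygamma-y-conjugation}) gives $\gamma_r^{-1}Y_{\alpha_r^\vee}\gamma_r=Y_{\pi_r^{-1}(\alpha_r^\vee)}=Y_{\alpha_0^\vee}$, because $\pi_r$ swaps $0$ and $r$. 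Hence $\gamma_r^{-1}f_r\gamma_r=f_0$, and applying $\Upsilon$ yields $u^{-1}\Upsilon(f_r)u=\Upsilon(f_0)$. The coefficient $\hbar$ is fixed by conjugation, so $u^{-1}D_ru=D_0$, and the claim reduces to $[D_0,D_r]=0$.

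\emph{Step 3: $[D_0,D_r]=0$.} I apply $\Upsilon^{-1}$, using $\Upsilon(\hbar)=-\hbar$. This turns the claim into the commutation of $\tau+\hbar f_0$ with $T_r^{-1}+\hbar f_r$, or equivalently of $\tau+\hbar f_0$ with $T_r+\hbar f_r$, since $T_r^{-1}=T_r-\hbar$. Each cross term vanishes:
\begin{itemize}
\item $[\tau,T_r]=0$ by Step 1;
\item $[f_0,f_r]=0$ by commutativity of $F_{\mathrm{loc}}$;
\item $T_r$ commutes with $f_0$ by the Bernstein relation (\ref{rel:tygamma-bernstein}), because $s_r(\alpha_0^\vee)=\alpha_0^\vee-\langle\alpha_r,\alpha_0^\vee\rangle\alpha_r^\vee=\alpha_0^\vee$, so $f_0^{s_r}=f_0$;
\item $\tau$ commutes with $f_r$ by the case $i=0$ of the same Bernstein relation, because $f_r^{s_0}=f_r$.
\end{itemize}
This proves the lemma.

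The main obstacle I expect is the sign and indexing bookkeeping in Step 2. One has to check that $\pi_r^{-1}$ (equal to $\pi_r$ here) sends $\alpha_r^\vee$ to $\alpha_0^\vee$ with the affine convention $Y_{\alpha_0^\vee}=q^{-1}Y_{-\theta^\vee}$, and that $\Upsilon$ applied to the $\gamma$-conjugation relations produces exactly $\Upsilon(\tau^{-1})$ and $\Upsilon(f_0)$ as they appear in the definition of $D_0$. The case check in Step 1 is routine.
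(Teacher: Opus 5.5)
Your proof is correct and follows the paper's argument: the same case check giving $m_{0r}=2$, the same reduction via $aT_ra^{-1}=D_r$ and $u^{-1}D_ru=D_0$ (using $\gamma_r^{-1}f_r\gamma_r=f_0$ and then applying $\Upsilon$) to $[D_0,D_r]=0$, and the same four commutations (the braid relation, the two Bernstein relations, and commutativity of $F_{\mathrm{aff,loc}}$) to prove that; the paper merely establishes $[D_0,D_r]=0$ before the conjugation step. Two harmless slips: $\Upsilon^{-1}(D_0)$ is $\tau^{-1}+\hbar f_0$, not $\tau+\hbar f_0$, but the two differ by the central scalar $\hbar$; and with the paper's conventions (the affine diagram of $\Phi^\vee$) the labels are $r=n$ for $B_n$ and $r=1$ for $C_n$, which does not affect nonadjacency.
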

\begin{proof}
The irreducible simply connected cases with \(O'\) a singleton are
\(A_1,B_n,C_n\), and \(E_7\). In type \(A_1\), one has
\(m_{01}=\infty\). In the other three cases, the vertices \(0\) and \(r\)
are nonadjacent, so \(m_{0r}=2\).

The original braid relation gives
\([\tau^{-1},T_r^{-1}]=0\). Since the reflections \(s_0\) and \(s_r\)
fix each other's simple coroots, the Bernstein relations also give
\[
[\tau^{-1},f_r]=0,
\qquad
[f_0,T_r^{-1}]=0.
\]
Together with \([f_0,f_r]=0\), applying \(\Upsilon\) to these four
commutation relations gives
\[
[D_0,D_r]=0.
\]

Since \(\pi_r\) exchanges \(0\) and \(r\), one has
\[
\tau^{-1}=\gamma_r^{-1}T_r^{-1}\gamma_r,
\qquad
\gamma_r^{-1}f_r\gamma_r=f_0.
\]
Applying \(\Upsilon\) gives
\[
u^{-1}D_ru=D_0.
\]
Corollary~\ref{cor:a-t-relation} gives
\[
aT_ra^{-1}=D_r.
\]
Consequently,
\[
G^{-1}T_rG=a^{-1}D_0a,
\qquad
T_r=a^{-1}D_ra,
\]
so the asserted commutation relation follows from \([D_0,D_r]=0\).
\end{proof}

\begin{Prop}\label{prop:omega0-endomorphism}
    The formulas
    \begin{align}
        \omega_0(v) & = v,\qquad \omega_0(q)=q^{-1},\\
        \omega_0(T_i) & = T_i \qquad (1\leqslant i\leqslant n),\\
        \omega_0(Y_b) & = Y_b \qquad (b\in Q_{\mathrm{fin}}^\vee),\\
        \omega_0(\gamma_r) & = a^{-1}\Upsilon(\gamma_r)a \qquad (r\in O')
    \end{align}
    give a well-defined ring automorphism
    $\omega_0:\mathbb H_{\mathrm{loc}}\to \mathbb H_{\mathrm{loc}}$.
    Its inverse fixes \(v\), sends \(q\) to \(q^{-1}\), fixes the \(T_i\)'s
    and the \(Y_b\)'s, and is given on the
    \(\gamma\)-generators by
    \[
    \omega_0^{-1}(\gamma_r)
    =
    T_{u_r}^{-1}a\gamma_r^{-1}a^{-1}T_{u_{r^*}}.
    \]
\end{Prop}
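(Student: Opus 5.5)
The plan is to use the presentation of $\mathbb H_{\mathrm{loc}}$ from Proposition~\ref{prop:levelzero-generators}. Well-definedness of $\omega_0$ amounts to checking that the proposed images satisfy every defining relation. Lemma~\ref{lem:eliminated-t0-relations} reduces the list to three groups: the relations of Proposition~\ref{prop:tygamma} not involving $\varphi(T_0)$; the conjugation relation \eqref{eq:eliminated-affine-conjugation}; and, when $O'=\{r\}$ with $m_{0r}<\infty$, one extra braid relation. Because $\omega_0$ is semilinear ($q\mapsto q^{-1}$), I read each relation after applying that coefficient twist. In particular, $\omega_0$ must send $Y_{\alpha_0^\vee}=q^{-1}Y_{-\theta^\vee}$ to $qY_{-\theta^\vee}=\Upsilon(Y_{\alpha_0^\vee})$.

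\textbf{Relations not involving $\gamma$.} The finite quadratic, braid and Bernstein relations among $T_i$ ($1\leqslant i\leqslant n$) and $Y_b$ are fixed on the nose, since $v$ is fixed. One must also check that $\omega_0$ preserves the localizing set $S$ of Proposition~\ref{prop:levelzero-generators}. The finite coroots are fixed, and affine coroots go to affine coroots with $\delta\mapsto-\delta$, so $S$ is preserved.

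\textbf{Relations involving $\gamma$.} The idea is to write $\omega_0(\gamma_r)=a^{-1}\Upsilon(\gamma_r)a$, so that on $\gamma$'s $\omega_0$ is $\Upsilon$ followed by conjugation by $a^{-1}$. Corollary~\ref{cor:a-y-inverse-relations} gives $a^{-1}\Upsilon(Y_b)a=Y_b$, and Corollary~\ref{cor:a-t-relation} gives $a^{-1}(T_i-\hbar\Upsilon(f_i))a=T_i$. So I define the auxiliary elements $D_i:=\Upsilon(T_i)+\hbar\Upsilon(f_i)-\hbar\Upsilon(f_i)\cdots$, precisely chosen so that $a^{-1}D_ia=T_i$; for $1\leqslant i\leqslant n$ this is $D_i=T_i-\hbar\Upsilon(f_i)=\Upsilon(T_i^{-1})-\hbar\Upsilon(f_i)+\hbar\cdots$, using $T_i-\hbar=T_i^{-1}$ and $\Upsilon(f_i)=-f_i$-type identities. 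With this, each $\gamma$-relation pulls back under conjugation by $a$ to the $\Upsilon$-image of a relation already in $\mathbb H$:
\begin{itemize}
\item[(a)] the group law $\gamma_r\gamma_s=\gamma_t$ is immediate, since the $a$'s telescope;
\item[(b)] $\gamma_rY_b\gamma_r^{-1}=Y_{\pi_r(b)}$ becomes $\Upsilon(\gamma_r)\Upsilon(Y_b)\Upsilon(\gamma_r)^{-1}=\Upsilon(Y_{\pi_r b})$ (the $q$-twist matches because $\Upsilon$ inverts $q$);
\item[(c)] the diagram relations $\gamma_rT_i\gamma_r^{-1}=T_j$ with $i,j\neq 0$ become $\Upsilon(\gamma_r)D_i\Upsilon(\gamma_r)^{-1}=D_j$.
\end{itemize}
Relation (c) follows by applying $\Upsilon$ to $\gamma_rT_i^{-1}\gamma_r^{-1}=T_j^{-1}$ and $\gamma_rf_i\gamma_r^{-1}=f_j$.

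\textbf{Relation \eqref{eq:eliminated-affine-conjugation}.} This is handled in the same way, with $D_0=\Upsilon(\tau^{-1})-\hbar\Upsilon(f_0)$ defined as in Lemma~\ref{lem:singleton-eliminated-braid}. The computation $u^{-1}D_ru=D_0$ appearing there generalizes to arbitrary $r$, and shows that $\omega_0(\tau)=a^{-1}D_0a$. The extra braid relation in the singleton case is exactly Lemma~\ref{lem:singleton-eliminated-braid}.

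\textbf{Inverse map and automorphism.} I then define a candidate inverse $\omega_0'$, fixing $T_i$ and $Y_b$, with $q\mapsto q^{-1}$ and $\gamma_r\mapsto T_{u_r}^{-1}a\gamma_r^{-1}a^{-1}T_{u_{r^*}}$. I check $\omega_0\omega_0'=\mathrm{id}$ on generators. This needs $\omega_0(a)=a$: indeed $a\in(\mathcal H_{\mathrm{ex}})_{\mathrm{loc}}$ involves only $T_i$ and finite $Y$'s with coefficients in $\Ring$, so it is fixed. It also needs the identity $\Upsilon(\gamma_r)^{-1}=T_{u_r}^{-1}\gamma_r^{-1}T_{u_{r^*}}$, which follows from $\gamma_r=T_{u_r}^{-1}X_{r^*}^{-1}$ and $\Upsilon(\gamma_r)=T_{u_{r^*}}X_{r^*}$. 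Then
\[
\omega_0(\omega_0'(\gamma_r))=T_{u_r}^{-1}a\cdot a^{-1}\Upsilon(\gamma_r)^{-1}a\cdot a^{-1}T_{u_{r^*}}=\gamma_r
\]
after substituting that identity, and symmetrically for the other composite. Well-definedness of $\omega_0'$ follows by the same relation check, or alternatively from injectivity plus surjectivity of $\omega_0$, since the generators lie in its image.

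\textbf{Main obstacle.} I expect the hardest part to be the Bernstein relation for $i=0$ and the precise form of $D_0$. This is where the $q\mapsto q^{-1}$ twist and the $\gamma$-conjugation must be matched exactly: one has to verify $\Upsilon(\gamma_r)^{-1}D_r\Upsilon(\gamma_r)=D_0$ using $\gamma_r^{-1}f_r\gamma_r=f_0$, tracking the $q$-shifts in $Y_{\alpha_0^\vee}$.
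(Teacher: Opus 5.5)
Your relation check is essentially the paper's. You use the reduction via Lemma~\ref{lem:eliminated-t0-relations}, the elements $D_i=T_i-\hbar\Upsilon(f_i)$ with $a^{-1}D_ia=T_i$ from Corollary~\ref{cor:a-t-relation}, Lemma~\ref{lem:a-y-relation} for $Y$-conjugation, telescoping for the group law, and Lemma~\ref{lem:singleton-eliminated-braid} for the extra braid relation. For \eqref{eq:eliminated-affine-conjugation} you push $\gamma_r\tau^{-1}\gamma_r^{-1}=T_r^{-1}$ and $\gamma_rf_0\gamma_r^{-1}=f_r$ through $\Upsilon$ and conjugation by $a$. The paper instead reduces it to the finite diagram relation via $\pi_s=\pi_r\pi_{r_0}^{-1}$; both work. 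The Bernstein relation for $i=0$ is not an obstacle, since Lemma~\ref{lem:eliminated-t0-relations} already gets it by conjugation.

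\textbf{The gap is injectivity.} The composites $\omega_0\omega_0'$ and $\omega_0'\omega_0$ only make sense once $\omega_0'$ is a well-defined ring homomorphism, and neither of your justifications establishes that. Deducing it from injectivity of $\omega_0$ is circular, since injectivity is what the two-sided inverse was meant to prove. Nor is it ``the same relation check''. The images $T_{u_r}^{-1}a\gamma_r^{-1}a^{-1}T_{u_{r^*}}$ are not of the form $a^{-1}\Upsilon(x)a$, so Corollaries~\ref{cor:a-y-inverse-relations} and~\ref{cor:a-t-relation} do not apply. Already the $Y$-conjugation relation for $\omega_0'$ would need $a^{-1}T_{u_{r^*}}Y_bT_{u_{r^*}}^{-1}a$. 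Read on elements, your computation $\omega_0(T_{u_r}^{-1}a\gamma_r^{-1}a^{-1}T_{u_{r^*}})=\gamma_r$ proves only surjectivity, which is where the paper's argument also starts.

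The paper gets injectivity from Noetherianity of $\mathbb H_{\mathrm{loc}}$. The two-sided ideals $\ker\omega_0^n$ form an ascending chain; once it stabilizes, surjectivity forces $\ker\omega_0=0$.

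You can also keep your explicit inverse. Put $c=\Upsilon(a)a$. The opening of the proof of Theorem~\ref{thm:levelzero-q1-involution} uses only Lemmas~\ref{lem:a-y-relation}, \ref{lem:t-a-relation} and Corollary~\ref{cor:a-t-relation}, and shows that $c$ commutes with every $T_i$ and $Y_b$. Hence $c$ commutes with $(\mathcal H_{\mathrm{ex}})_{\mathrm{loc}}$, which contains $a$ and $\Upsilon(a)$. Then $x\mapsto c\,\Upsilon(\omega_0(\Upsilon(x)))\,c^{-1}$ is a composite of well-defined ring maps. It fixes $v$, the $T_i$ and the $Y_b$, and sends $q\mapsto q^{-1}$. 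It sends $\gamma_r$ to $T_{u_r}^{-1}(c\Upsilon(a)^{-1})\gamma_r^{-1}(\Upsilon(a)c^{-1})T_{u_{r^*}}=T_{u_r}^{-1}a\gamma_r^{-1}a^{-1}T_{u_{r^*}}$, so it is your $\omega_0'$.

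Two slips to fix:
\begin{itemize}
\item From $\gamma_r=T_{u_r}^{-1}X_{r^*}^{-1}$ and $\Upsilon(\gamma_r)=T_{u_{r^*}}X_{r^*}$ one gets $\Upsilon(\gamma_r)^{-1}=T_{u_r}\gamma_rT_{u_{r^*}}^{-1}$, not $T_{u_r}^{-1}\gamma_r^{-1}T_{u_{r^*}}$. Only the former makes your displayed composite equal $\gamma_r$.
\item $qY_{-\theta^\vee}$ is $\omega_0(Y_{\alpha_0^\vee})$, not $\Upsilon(Y_{\alpha_0^\vee})$.
\end{itemize}
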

\begin{proof}
    Fix \(r_0\in O'\) as in Proposition~\ref{prop:levelzero-generators}.
    By the presentation in Proposition~\ref{prop:levelzero-generators}, it is
    enough to check the relations among the generators $\{T_i\}_{i=1}^n$,
    $\{Y_b\}_{b\in Q_{\mathrm{fin}}^\vee}$, and $\{\gamma_r\}_{r\in O'}$ obtained from
    Proposition~\ref{prop:tygamma} after eliminating \(\varphi(T_0)\).
    We regard the target as an \(\Ring_q\)-algebra through the coefficient
    involution \(v\mapsto v,\ q\mapsto q^{-1}\).
    The coefficient involution \(q\mapsto q^{-1}\) preserves the affine
    localization, since
    \[
    \omega_0(Y_{\beta^\vee+m\delta})
    =
    \omega_0(q^{-m}Y_{\beta^\vee})
    =
    q^mY_{\beta^\vee}
    =
    Y_{\beta^\vee-m\delta}.
    \]
    The relations involving only the finite Hecke generators and the $Y$-lattice
    are immediate because $\omega_0$ fixes those generators. For the
    \(Y\)-conjugation relation~(\ref{rel:tygamma-y-conjugation}) of
    Proposition~\ref{prop:tygamma}, write
    \(\pi_r(b)=c+m\delta\), with \(c\in Q_{\mathrm{fin}}^\vee\).
    Then Lemma~\ref{lem:a-y-relation} gives
    \begin{align*}
        \omega_0(\gamma_r)\omega_0(Y_b) & = a^{-1}\Upsilon(\gamma_r)aY_b\\
        & = a^{-1}\Upsilon(\gamma_rY_b)a\\
        & = a^{-1}\Upsilon(Y_{\pi_r(b)}\gamma_r)a\\
        & = q^m a^{-1}\Upsilon(Y_c)\Upsilon(\gamma_r)a\\
        & = q^mY_c a^{-1}\Upsilon(\gamma_r)a\\
        & = \omega_0(Y_{\pi_r(b)})\omega_0(\gamma_r).
    \end{align*}
    The multiplication relations~(\ref{rel:tygamma-gamma-product}) among the $\gamma_r$ coming from the group
    $\Pi$ are also preserved, because
    \[
    \omega_0(\gamma_r)\omega_0(\gamma_s)
    =
    a^{-1}\Upsilon(\gamma_r\gamma_s)a.
    \]
    For the diagram relation~(\ref{rel:tygamma-diagram}) among finite simple generators, suppose
    $\pi_r(i)=j$ with $1\leqslant i,j\leqslant n$. Then
    \begin{align*}
        \omega_0(\gamma_r)\omega_0(T_i) & = a^{-1}\Upsilon(\gamma_r)aT_i\\
        & = a^{-1}\Upsilon(\gamma_r)(T_i - \hbar \Upsilon(f_i))a\\
        & = a^{-1}\Upsilon(\gamma_rT_i^{-1})a - \hbar a^{-1}\Upsilon(\gamma_rf_i)a\\
        & = a^{-1}\Upsilon(T_j^{-1}\gamma_r)a - \hbar a^{-1}\Upsilon(f_j\gamma_r)a\\
        & = a^{-1}(T_j - \hbar \Upsilon(f_j))\Upsilon(\gamma_r)a\\
        & = T_ja^{-1}\Upsilon(\gamma_r)a\\
        & = \omega_0(T_j)\omega_0(\gamma_r).
    \end{align*}
   To see that $\omega_0$ extends to a ring endomorphism, it remains, by Lemma~\ref{lem:eliminated-t0-relations}, to check the
    conjugation relation for the eliminated affine generator and, in the
    singleton case, the additional braid relation. Let
    \(G_r=\omega_0(\gamma_r)\), and put
    \[
    \tau_\omega=G_{r_0}^{-1}T_{r_0}G_{r_0}.
    \]
    For \(r\in O'\), choose \(s\in O\) with \(\pi_s=\pi_r\pi_{r_0}^{-1}\), and
    set \(G_0=1\). Since \(\pi_s(r_0)=r\), the instance
    \(G_sT_{r_0}G_s^{-1}=T_r\) of relation~(\ref{rel:tygamma-diagram}) checked
    above gives
    \[
    G_r\tau_\omega G_r^{-1}
    =(G_rG_{r_0}^{-1})T_{r_0}(G_rG_{r_0}^{-1})^{-1}
    =G_sT_{r_0}G_s^{-1}
    =T_r.
    \]
    If \(O'\) is a singleton and \(m_{0r}<\infty\),
    Lemma~\ref{lem:singleton-eliminated-braid} supplies the additional braid
    relation required in Lemma~\ref{lem:eliminated-t0-relations}.
    Thus all relations involving the substituted \(\varphi(T_0)\) are also preserved, and
    the assignment extends to an endomorphism of $\mathbb H_{\mathrm{loc}}$.

    Now we prove that $\omega_0$ is an automorphism. Since
    \[
    \Upsilon(\gamma_r)
    =
    T_{u_{r^*}}\gamma_r^{-1}T_{u_r}^{-1},
    \]
    set
    \[
    L_r=a^{-1}T_{u_{r^*}},
    \qquad
    R_r=T_{u_r}^{-1}a.
    \]
    Then \(\omega_0(\gamma_r)=L_r\gamma_r^{-1}R_r\). Since
    \(\omega_0\) fixes \(a\) and the finite Hecke generators, it fixes
    \(L_r\) and \(R_r\), and therefore
    \[
    \omega_0(R_r\gamma_r^{-1}L_r)
    =
    R_r(L_r\gamma_r^{-1}R_r)^{-1}L_r
    =
    \gamma_r.
    \]
    Together with the coefficient action and the fixed generators \(T_i\) and
    \(Y_b\), this shows that \(\omega_0\) is surjective. 
    
    To show that $\omega_0$ is an automorphism, we will show that
    \(\mathbb H_{\mathrm{loc}}\) is Noetherian, so that every surjective ring endomorphism
    is an automorphism. 

    We introduce an algebra filtration on $\mathbb H_{\mathrm{loc}}$ by assigning degrees to generators as follows: $\deg F_{\mathrm{aff},\mathrm{loc}}=0$, $\deg T_i=1$ for $i=0,\ldots,n,$ and $\deg \gamma_r=0$ for all $r\in O'$. The associated graded with respect to this filtration is the wreath product $F_{\mathrm{aff},\mathrm{loc}}\# W^\vee_{\mathrm{ext}}$. It is sufficient to prove that the wreath product is Noetherian. Note that it is a finitely generated module over its  subalgebra $F_{\mathrm{aff},\mathrm{loc}}\#Q_{\mathrm{fin}}$. The latter is an Ore localization of a q-Weyl algebra. q-Weyl algebras are Noetherian and the Ore localization preserves the property of being Noetherian, finishing the proof. 
    
    Since
    \[
    R_r\gamma_r^{-1}L_r
    =
    T_{u_r}^{-1}a\gamma_r^{-1}a^{-1}T_{u_{r^*}},
    \]
    its inverse is the one stated above.
\end{proof}

\begin{Rem}\label{rem:not0}
    Among simply connected almost simple groups, the excluded cases with $O' = \varnothing$ are precisely $F_4$, $G_2$, and $E_8$. In those cases Proposition \ref{prop:levelzero-generators} does not apply, because there are no $\pi$- or $\gamma$-generators and one must still control \(\varphi(T_0)\) directly. Our argument therefore does not define $\overline{\bullet}$ on the full localized algebra in those types. Concretely, this would require specifying
    \[
    \omega_0(\varphi(T_0)),
    \]
    or equivalently specifying its value on the standard expression for \(\varphi(T_0)\) in
    the original presentation. While such an assignment may well exist, the
    argument in \S\ref{sec:heuristic} does not suggest one explicitly.
\end{Rem}

\begin{defi}\label{def:levelzero-bar}
    For $h \in \mathbb{H}_{\mathrm{loc}}$, set
    \[
    \overline{h} = (\omega_0^{-1} \circ \Upsilon)(h).
    \]
    On coefficients and the generators from
    Proposition~\ref{prop:levelzero-generators}, this gives
    \[
    \overline v=v^{-1},
    \qquad
    \overline q=q,
    \qquad
    \overline{T_i}=T_i^{-1},
    \qquad
    \overline{Y_b}=\Upsilon(Y_b),
    \qquad
    \overline{\gamma_r}=a\gamma_r a^{-1}.
    \]
    The last identity follows from
    \(\omega_0(a\gamma_r a^{-1})=\Upsilon(\gamma_r)\), since \(\omega_0\)
    fixes \(a\).
\end{defi}

\subsection{The localized level-zero bar is an involution at $q=1$}\label{sec:levelzero-involution}

Assume throughout this subsection that $O'\neq\varnothing$, so that
Proposition~\ref{prop:omega0-endomorphism} defines the automorphism
\[
\omega_0:\mathbb H_{\mathrm{loc}}\to \mathbb H_{\mathrm{loc}}.
\]
Set
\[
c=\Upsilon(a)a\in (\mathcal H_{\mathrm{ex}})_{\mathrm{loc}}=F_{\mathrm{loc}}\rtimes W_{\mathrm f}.
\]
We will show that the square of $\overline{\bullet}=\omega_0^{-1}\circ\Upsilon$ is the automorphism \(x\mapsto cxc^{-1}\), and that for $q=1$ this conjugation becomes trivial.

\begin{Thm}\label{thm:levelzero-q1-involution}
Assume $O'\neq\varnothing$, and let $\overline{\bullet}$ be the automorphism of $\mathbb H_{\mathrm{loc}}$ defined above. Then the element \(c=\Upsilon(a)a\) lies in $F_{\mathrm{loc}}^{W_{\mathrm f}}$. Moreover, on the generators of Proposition~\ref{prop:levelzero-generators} one has
\[
\overline{\overline{T_i}}=T_i,
\qquad
\overline{\overline{Y_b}}=Y_b,
\qquad
\overline{\overline{\gamma_r}}=c\gamma_r c^{-1}.
\]
Consequently, $\overline{\bullet}^{\,2}$ is the inner automorphism \(x\mapsto cxc^{-1}\) on all of $\mathbb H_{\mathrm{loc}}$. The automorphism $\overline{\bullet}$ is an involution if and only if
\[
\pi_r(c)=c\qquad\text{for all }r\in O'.
\]
In particular, \(\overline{\bullet}\) descends to the specialization
\(\mathbb H_{\mathrm{loc}}/(q-1)\), and there it is an involution.
\end{Thm}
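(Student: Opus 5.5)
The plan is to never expand $c=\Upsilon(a)a$ explicitly. Instead we show that $c$ commutes with the generators of $(\mathcal H_{\mathrm{ex}})_{\mathrm{loc}}$, using only Lemma~\ref{lem:a-y-relation}, Corollary~\ref{cor:a-y-inverse-relations} and Corollary~\ref{cor:a-t-relation}. We then compute $\overline{\bullet}^{\,2}$ on the generators of Proposition~\ref{prop:levelzero-generators}.

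\emph{Step 1: $c\in F_{\mathrm{loc}}^{W_{\mathrm f}}$.} Applying $\Upsilon$ to Lemma~\ref{lem:a-y-relation} gives $\Upsilon(a)\Upsilon(Y_b)=Y_b\Upsilon(a)$. Hence
\[
cY_b=\Upsilon(a)\Upsilon(Y_b)a=Y_b\Upsilon(a)a=Y_bc .
\]
So $c$ commutes with $F$, and therefore with $F_{\mathrm{loc}}$. Since $W_{\mathrm f}$ acts faithfully on $F_{\mathrm{loc}}$, the centralizer of $F_{\mathrm{loc}}$ in $F_{\mathrm{loc}}\rtimes W_{\mathrm f}$ is $F_{\mathrm{loc}}$ itself, so $c\in F_{\mathrm{loc}}$.

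Next, apply $\Upsilon$ to Corollary~\ref{cor:a-t-relation}, using $\Upsilon(\hbar)=-\hbar$ and $\Upsilon\Upsilon=\mathrm{id}$. This gives $\Upsilon(a)T_i^{-1}=(T_i^{-1}+\hbar f_i)\Upsilon(a)$. We also have $\Upsilon(a)\Upsilon(f_i)=f_i\Upsilon(a)$ by the same argument as for $Y_b$. Writing $T_i=T_i^{-1}+\hbar$, we obtain
\[
cT_i=\Upsilon(a)(T_i^{-1}+\hbar-\hbar\Upsilon(f_i))a=(T_i^{-1}+\hbar f_i)c+\hbar c-\hbar f_ic=T_ic .
\]
An element of $F_{\mathrm{loc}}$ commuting with every $T_i$ commutes with every $\eta_{s_i}$ by \eqref{eq:eta-simple-localization}, so it is $W_{\mathrm f}$-invariant. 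Thus $c$ is central in $(\mathcal H_{\mathrm{ex}})_{\mathrm{loc}}$.

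\emph{Step 2: the square on generators.} The map $\omega_0^{-1}$ fixes $T_i$, $Y_b$ and the $\Ring$-coefficients, so it fixes $(\mathcal H_{\mathrm{ex}})_{\mathrm{loc}}$ pointwise. Both $a$ and $\Upsilon(a)$ involve no $q$, so they are fixed as well. Since $\Upsilon$ is an involution preserving this subalgebra, we get $\overline{\overline{T_i}}=T_i$, $\overline{\overline{Y_b}}=Y_b$, and $\overline a=\Upsilon(a)$. Applying the automorphism $\overline{\bullet}$ to $\overline{\gamma_r}=a\gamma_ra^{-1}$ gives
\[
\overline{\overline{\gamma_r}}=\Upsilon(a)\,a\gamma_ra^{-1}\,\Upsilon(a)^{-1}=c\gamma_rc^{-1}.
\]
Conjugation by $c$ is an automorphism that fixes $T_i$ and $Y_b$ by Step~1, and it agrees with $\overline{\bullet}^{\,2}$ on $\gamma_r$ and on the coefficients (both fix $v$ and $q$). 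By Proposition~\ref{prop:levelzero-generators} these elements generate $\mathbb H_{\mathrm{loc}}$, so the two automorphisms coincide. Consequently $\overline{\bullet}$ is an involution iff $c\gamma_rc^{-1}=\gamma_r$ for all $r$. By relation~(\ref{rel:tygamma-y-conjugation}), extended to $F_{\mathrm{aff,loc}}$ as in the proof of Proposition~\ref{prop:levelzero-generators}, this is equivalent to $\pi_r(c)=c$.

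\emph{Step 3: specialization $q=1$.} Since $\overline q=q$, the map $\overline{\bullet}$ preserves the ideal generated by $q-1$ and so descends to the quotient. There the identity $\overline{\bullet}^{\,2}=\mathrm{Ad}(c)$ persists, with $\gamma_r$ now acting on $Y_b$ through $\pi_r(b)$ with the $\delta$-component forgotten. By \eqref{eq:pi-affine-action} this is the finite Weyl group action of $u_r^{-1}$, and $c$ is $W_{\mathrm f}$-invariant, so $\pi_r(c)=c$ and we get an involution. The main point requiring care is the bookkeeping in this step. One must check that the relevant denominators $1-q^mY_{\beta^\vee}$ and $1-v^2q^mY_{\beta^\vee}$ specialize to nonzero elements, so that the localized relations and the identity $\overline{\bullet}^{\,2}=\mathrm{Ad}(c)$ make sense modulo $q-1$. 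One must also confirm that the $q$-shifts in $\pi_r$ are the only obstruction to invariance. By contrast, Steps 1 and 2 are formal consequences of the earlier lemmas.
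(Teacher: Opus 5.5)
Your proposal is correct and follows the paper's proof essentially step for step. You show that $c$ commutes with the $Y_b$ and the $T_i$, which gives $c\in F_{\mathrm{loc}}^{W_{\mathrm f}}$; the only cosmetic difference is that you apply $\Upsilon$ to Corollary~\ref{cor:a-t-relation} where the paper applies it to Lemma~\ref{lem:t-a-relation}. Then $\overline{a}=\Upsilon(a)$ gives $\overline{\overline{\gamma_r}}=c\gamma_rc^{-1}$, and at $q=1$ the $\gamma_r$ act on $F_{\mathrm{loc}}$ through $u_r^{-1}$, exactly as in the paper.

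Two small remarks. First, $\omega_0^{-1}$ does not fix $(\mathcal H_{\mathrm{ex}})_{\mathrm{loc}}$ pointwise, since it inverts $q$; it fixes only the $q$-free part, which is all you actually use. Second, the extra ``bookkeeping'' you flag in Step~3 is unnecessary: $q$ is central, so $(q-1)$ is a two-sided ideal and the identity $\overline{\bullet}^{\,2}=\mathrm{Ad}(c)$ descends automatically.
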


\begin{proof}
From Lemma~\ref{lem:a-y-relation}, for $b\in Q_{\mathrm{fin}}^\vee$ we can deduce
\[
\Upsilon(a)aY_b=\Upsilon(a)\Upsilon(Y_b)a=Y_b\Upsilon(a)a.
\]
Writing
\[
\Upsilon(a)a=\sum_{w\in W_{\mathrm f}} g_w\eta_w\qquad (g_w\in F_{\mathrm{loc}}),
\]
we can compare coefficients in
\[
\Upsilon(a)aY_b=Y_b\Upsilon(a)a.
\]
If $w\neq 1$, choose $b\in Q_{\mathrm{fin}}^\vee$ with $wb\neq b$; then the coefficient of $\eta_w$ gives
$g_w(Y_{wb}-Y_b)=0$. Since $F_{\mathrm{loc}}$ is a localization of a domain, this
forces $g_w=0$. Thus in fact \(\Upsilon(a)a\in F_{\mathrm{loc}}\).

Next, apply $\Upsilon$ to Lemma~\ref{lem:t-a-relation}. Using $\Upsilon(T_i)=T_i^{-1}=T_i-\hbar$, we obtain
\[
T_i\Upsilon(a)=\Upsilon(a)\bigl(T_i-\hbar\,\Upsilon(f_i)\bigr).
\]
Combining this with Corollary~\ref{cor:a-t-relation}, namely
\[
aT_i=\bigl(T_i-\hbar\,\Upsilon(f_i)\bigr)a,
\]
we get
\[
T_i\Upsilon(a)a=\Upsilon(a)\bigl(T_i-\hbar\,\Upsilon(f_i)\bigr)a=\Upsilon(a)aT_i.
\]
Now \(\Upsilon(a)a\in F_{\mathrm{loc}}\), so using the localization formula
\eqref{eq:localized-Ti} for $T_i$ once more gives
\[
T_i\Upsilon(a)a-\Upsilon(a)aT_i
=
 v^{-1}\frac{1-v^2Y_{-\alpha_i^\vee}}{1-Y_{-\alpha_i^\vee}}\,\bigl((\Upsilon(a)a)^{s_i}-\Upsilon(a)a\bigr)\eta_{s_i}.
\]
The scalar factor and $\eta_{s_i}$ are invertible in the localization, hence
\((\Upsilon(a)a)^{s_i}=\Upsilon(a)a\) for every simple reflection $s_i$, and
therefore \(c=\Upsilon(a)a\in F_{\mathrm{loc}}^{W_{\mathrm f}}\).

We now compute the square of the bar. On the \(q\)-independent localized
finite Hecke expressions occurring here, \(\overline{\bullet}\) agrees with
\(\Upsilon\). Hence it fixes \(T_i\) and \(Y_b\) after two applications, and
\[
\overline{a}=\Upsilon(a).
\]
For \(r\in O'\), the generator formula in Definition~\ref{def:levelzero-bar}
gives
\[
\overline{\overline{\gamma_r}}
=
\overline{a\gamma_r a^{-1}}
=
\Upsilon(a)a\gamma_r a^{-1}\Upsilon(a)^{-1}
=
c\gamma_r c^{-1}.
\]
Thus \(\overline{\bullet}^{\,2}\) is the inner automorphism \(x\mapsto cxc^{-1}\) on the generating set
from Proposition~\ref{prop:levelzero-generators}, and hence on all of
\(\mathbb H_{\mathrm{loc}}\). Since $c$ commutes with the $T_i$ and the $Y_b$,
the displayed formulas follow. Thus $\overline{\bullet}$ is an involution if and only if $c$
commutes with each $\gamma_r$. Because $c\in F_{\mathrm{loc}}$, this is
equivalent to
\[
\gamma_r c\gamma_r^{-1}=\pi_r(c)=c
\qquad (r\in O').
\]

Finally, specialize to $q=1$. Since
\(\omega_0^{-1}(q)=\Upsilon(q)=q^{-1}\), both maps preserve the ideal
\((q-1)\), while \(\overline q=q\). Hence \(\overline{\bullet}\) descends to
the specialization \(q=1\). By
\eqref{eq:positive-length-zero}, the finite projection of \(\pi_r\) is
\(u_r^{-1}\). Thus, for each \(b\in Q_{\mathrm{fin}}^\vee\), the difference
\(\pi_r(b)-u_r^{-1}(b)\) is an integral multiple of \(\delta\). Hence, after setting
$q=1$, we have
\[
Y_{\pi_r(b)}=Y_{u_r^{-1}(b)}.
\]
So the action of $\gamma_r$ on $F_{\mathrm{loc}}$ factors through the finite Weyl group element $u_r^{-1}$. Because $c\in F_{\mathrm{loc}}^{W_{\mathrm f}}$, we obtain
\[
\pi_r(c)=u_r^{-1}(c)=c
\qquad\text{for all }r\in O'.
\]
Therefore $\overline{\bullet}^{\,2}=\mathrm{id}$ after the specialization $q=1$.
\end{proof}

\begin{Ex}[The rank 1 case]
In rank $1$ one can compute the element \(c=\Upsilon(a)a\) explicitly. Writing
$\alpha=\alpha_1$ and $\gamma=\gamma_1$, the theorem says that
$\overline{\overline{\gamma}}=\gamma$ if and only if $\gamma$ commutes with $c$.

In rank $1$ one has
\begin{align*}
    a & = 1 + T_1\frac{\hbar}{1 - Y_{\alpha^\vee}}.
\end{align*}
After applying $\Upsilon$, the expression becomes the right-handed variant. Indeed, note that
\[
\Upsilon(Y_{\alpha^\vee})=T_1Y_{-\alpha^\vee}T_1^{-1},
\]
and a direct simplification using the rank-one Bernstein relation gives
\begin{align*}
    \Upsilon(a) & = 1 + \hbar \frac{Y_{\alpha^\vee}}{1 - Y_{\alpha^\vee}}T_1^{-1}.
\end{align*}
For any rational function $g$ in one variable, the Bernstein relation reads
\begin{align*}
    g(Y_{\alpha^\vee})T_1 - T_1g(Y_{-\alpha^\vee})
    =
    \hbar \frac{g(Y_{\alpha^\vee}) - g(Y_{-\alpha^\vee})}{1 - Y_{-\alpha^\vee}},
\end{align*}
which yields the useful cancellation identity
\begin{align}
\label{eq:rank-one-cross-term}
    T_1\frac{1}{1-Y_{\alpha^\vee}}
    +
    \frac{Y_{\alpha^\vee}}{1-Y_{\alpha^\vee}}T_1^{-1}
    =
    -2\hbar\frac{Y_{\alpha^\vee}}{(1-Y_{\alpha^\vee})^2}.
\end{align}
Also,
\[
(\Upsilon(a)-1)(a-1)
=
\hbar^2\frac{Y_{\alpha^\vee}}{(1-Y_{\alpha^\vee})^2}.
\]
Therefore \eqref{eq:rank-one-cross-term} gives
\begin{align*}
    \Upsilon(a)a
    &=
    1
    +\hbar\left(
        T_1\frac{1}{1-Y_{\alpha^\vee}}
        +
        \frac{Y_{\alpha^\vee}}{1-Y_{\alpha^\vee}}T_1^{-1}
    \right)
    +\hbar^2\frac{Y_{\alpha^\vee}}{(1-Y_{\alpha^\vee})^2}\\
    &=
    1-\hbar^2\frac{Y_{\alpha^\vee}}{(1-Y_{\alpha^\vee})^2}\\
    & = 1 + \hbar^2 \frac{1}{(1 - Y_{\alpha^\vee})(1 - Y_{-\alpha^\vee})}.
\end{align*}
This expression is \(W_{\mathrm f}\)-invariant, and its coefficient is fixed by
\(v\mapsto v^{-1}\), so it is fixed by \(\Upsilon\).
Thus
\[
c=\Upsilon(a)a
=1+\hbar^2\frac{1}{(1 - Y_{\alpha^\vee})(1 - Y_{-\alpha^\vee})}.
\]
Via inverse Cherednik duality, write the corresponding element in the dual
\(X\)-presentation as \(\varphi^{-1}(c)\). Then
\begin{align*}
    \varphi^{-1}(c) & = 1 + \hbar^2 \frac{1}{(1 - X_{\alpha^\vee})(1 - X_{-\alpha^\vee})}.
\end{align*}
The question is therefore whether \(\varphi^{-1}(c)\) commutes with
\(\varphi^{-1}(\gamma)=\pi\). In rank one,
\eqref{eq:pi-affine-action} gives
\(\pi(\alpha^\vee)=-\alpha^\vee+\delta\). Thus
relation~\ref{rel:daha-pi-x} in Definition~\ref{def:daha} gives
\(\pi X_{\alpha^\vee}\pi^{-1}=qX_{-\alpha^\vee}\), and hence
\begin{align*}
    \pi \varphi^{-1}(c)\pi^{-1} & = 1 + \hbar^2 \frac{1}{(1 - qX_{-\alpha^\vee})(1 - q^{-1}X_{\alpha^\vee})}.
\end{align*}
This is \(\varphi^{-1}(c)\) if and only if $q=1$. Equivalently, in rank $1$ the condition $\pi(c)=c$ from Theorem \ref{thm:levelzero-q1-involution} is exactly the condition $q=1$.
\end{Ex}

\begin{Rem}
The obstruction to involutivity is therefore uniform in all types with
$O'\neq\varnothing$, and it is a localized phenomenon: it is exactly the failure
of the $W_{\mathrm f}$-invariant rational function \(c=\Upsilon(a)a\) to be invariant
under the full extended affine action coming from the elements $\pi_r$. In rank $1$, one has
\[
c=1+\hbar^2\frac{1}{(1-Y_{\alpha^\vee})(1-Y_{-\alpha^\vee})},
\]
and the preceding example shows that the condition $\pi(c)=c$ is precisely the condition $q=1$.
\end{Rem}

\bibliographystyle{amsalpha}
\bibliography{bard_bib}

@misc{GG,
      title={Affine {H}ecke algebras associated to {K}ac-{M}oody groups}, 
      author={H. Garland and I. Grojnowski},
      year={1995},
      eprint={q-alg/9508019},
      archivePrefix={arXiv},
      primaryClass={q-alg},
      url={https://arxiv.org/abs/q-alg/9508019}, 
}

@misc{DMB,
      title={{Hodge} theory, intertwining functors, and the {Orbit Method} for real reductive groups}, 
      author={Dougal Davis and Lucas Mason-Brown},
      year={2025},
      eprint={2503.14794},
      archivePrefix={arXiv},
      primaryClass={math.RT},
      url={https://arxiv.org/abs/2503.14794}, 
}

@book{Kac_infdim,
  author    = {Kac, Victor G.},
  title     = {Infinite-Dimensional Lie Algebras},
  edition   = {3},
  publisher = {Cambridge University Press},
  year      = {1990},
  doi       = {10.1017/CBO9780511626234}
}

@book{CG,
  author    = {Chriss, Neil and Ginzburg, Victor},
  title     = {Representation {T}heory and Complex {G}eometry},
  series    = {Modern Birkh{\"a}user Classics},
  publisher = {Birkh{\"a}user Boston},
  address   = {Boston, MA},
  year      = {2010},
  note      = {Reprint of the 1997 edition},
  doi       = {10.1007/978-0-8176-4938-8},
  isbn      = {978-0-8176-4937-1}
}

@article{Garland,
author= {Garland, H.},
title= {A Cartan decomposition for p-adic loop groups},
journal = {Math. Ann.},
volume = {302},
year = {1995},
pages = {151-175}}

@article{HM,
author = {H{\'e}bert, Auguste and Muthiah, Dinakar},
title = {Completed Iwahori-Hecke algebra for Kac-Moody groups over local fields},
year = {2026},
journal = {Proceedings of the London Mathematical Society},
doi = {10.1112/plms.70189}
}

@book{Kumar,
  author    = {Kumar, Shrawan},
  title     = {{Kac}-{M}oody Groups, Their Flag Varieties and Representation Theory},
  series    = {Progress in Mathematics},
  volume    = {204},
  publisher = {Birkh{\"a}user Boston},
  address   = {Boston, MA},
  year      = {2002},
  doi       = {10.1007/978-1-4612-0105-2},
  isbn      = {978-0-8176-4227-3}
}

@article{L83,
  author  = {Lusztig, G.},
  title   = {Singularities, character formulas, and a q-analog of weight multiplicities},
  journal = {Ast{\'e}risque},
  volume  = {101--102},
  pages   = {208--229},
  year    = {1983}
}

@article {Lgeneric,
    AUTHOR = {Lusztig, G.},
     TITLE = {{H}ecke algebras and {J}antzen's generic decomposition
              patterns},
   JOURNAL = {Advances in Mathematics},
    VOLUME = {37},
      YEAR = {1980},
    NUMBER = {2},
     PAGES = {121--164},
       DOI = {10.1016/0001-8708(80)90031-6},
       URL = {https://doi.org/10.1016/0001-8708(80)90031-6},
}

@article{KL79,
  author  = {Kazhdan, David and Lusztig, George},
  title   = {Representations of {C}oxeter groups and {H}ecke algebras},
  journal = {Inventiones Mathematicae},
  volume  = {53},
  number  = {2},
  pages   = {165--184},
  year    = {1979},
  doi     = {10.1007/BF01390031}
}

@article {V,
    AUTHOR = {Vasserot, Eric},
     TITLE = {Induced and simple modules of double affine {H}ecke algebras},
   JOURNAL = {Duke Math. J.},
  FJOURNAL = {Duke Mathematical Journal},
    VOLUME = {126},
      YEAR = {2005},
    NUMBER = {2},
     PAGES = {251--323},
      ISSN = {0012-7094,1547-7398},
   MRCLASS = {20C08 (14M15 16E20 17B37)},
  MRNUMBER = {2115259},
       DOI = {10.1215/S0012-7094-04-12623-5},
       URL = {https://doi.org/10.1215/S0012-7094-04-12623-5},
}

@phdthesis{PPThesis,
  author = {Paul Philippe},
  title = {Masures et th{\'e}orie de {Kazhdan}-{L}usztig affine pour les groupes de {Kac}-{M}oody},
  school = {Universit{\'e} Jean Monnet Saint-\'Etienne},
  year = {2025},
  type = {Th{\`e}se de doctorat},
  note = {\'Ecole Doctorale No. 488, SIS -- Sciences Ing{\'e}nierie Sant{\'e}; soutenue publiquement le 30 juin 2025},
  url = {https://pphilippe.pages.math.cnrs.fr/webpage/Paul%20Philippe%20PhD%20Thesis.pdf}
}

@incollection {K,
    AUTHOR = {Kashiwara, M.},
     TITLE = {The flag manifold of {K}ac-{M}oody {L}ie algebra},
 BOOKTITLE = {Algebraic analysis, geometry, and number theory ({B}altimore,
              {MD}, 1988)},
     PAGES = {161--190},
 PUBLISHER = {Johns Hopkins Univ. Press, Baltimore, MD},
      YEAR = {1989},
      ISBN = {0-8018-3841-X},
   MRCLASS = {17B67 (14M15 20G15)},
  MRNUMBER = {1463702},
MRREVIEWER = {Venkatramani\ Lakshmibai},
}

@article {LPeriodic,
    AUTHOR = {Lusztig, G.},
     TITLE = {Periodic {$W$}-graphs},
   JOURNAL = {Represent. Theory},
  FJOURNAL = {Representation Theory. An Electronic Journal of the American
              Mathematical Society},
    VOLUME = {1},
      YEAR = {1997},
     PAGES = {207--279},
      ISSN = {1088-4165},
   MRCLASS = {20G05 (17B10 20F55)},
  MRNUMBER = {1464171},
MRREVIEWER = {R.\ W.\ Carter},
       DOI = {10.1090/S1088-4165-97-00033-2},
       URL = {https://doi.org/10.1090/S1088-4165-97-00033-2},
}

@book {Cherednik,
    AUTHOR = {Cherednik, Ivan},
     TITLE = {Double affine {H}ecke algebras},
    SERIES = {London Mathematical Society Lecture Note Series},
    VOLUME = {319},
 PUBLISHER = {Cambridge University Press, Cambridge},
      YEAR = {2005},
     PAGES = {xii+434},
      ISBN = {0-521-60918-6},
   MRCLASS = {32G34 (05E15 11L05 17B20 20C08 33D52 33D80)},
  MRNUMBER = {2133033},
       DOI = {10.1017/CBO9780511546501},
       URL = {https://doi.org/10.1017/CBO9780511546501},
}

@article {Yun,
    AUTHOR = {Yun, Zhiwei},
     TITLE = {Weights of mixed tilting sheaves and geometric {R}ingel
              duality},
   JOURNAL = {Selecta Math. (N.S.)},
  FJOURNAL = {Selecta Mathematica. New Series},
    VOLUME = {14},
      YEAR = {2009},
    NUMBER = {2},
     PAGES = {299--320},
      ISSN = {1022-1824,1420-9020},
   MRCLASS = {14F20 (14L30 20G25)},
  MRNUMBER = {2480718},
MRREVIEWER = {Ada\ Boralevi},
       DOI = {10.1007/s00029-008-0066-8},
       URL = {https://doi.org/10.1007/s00029-008-0066-8},
}

@article{Braverman_Kazhdan_2011,
  author  = {Braverman, Alexander and Kazhdan, David},
  title   = {The spherical {H}ecke algebra for affine {K}ac--{M}oody groups {I}},
  journal = {Annals of Mathematics},
  volume  = {174},
  number  = {3},
  pages   = {1603--1642},
  year    = {2011},
  doi     = {10.4007/annals.2011.174.3.5},
  url     = {https://doi.org/10.4007/annals.2011.174.3.5}
}

@article{Braverman_Kazhdan_Patnaik,
  author  = {Braverman, Alexander and Kazhdan, David and Patnaik, Manish M.},
  title   = {{Iwahori}--{H}ecke algebras for p-adic loop groups},
  journal = {Inventiones Mathematicae},
  volume  = {204},
  number  = {2},
  pages   = {347--442},
  year    = {2016},
  doi     = {10.1007/s00222-015-0612-x},
  url     = {https://doi.org/10.1007/s00222-015-0612-x},
  eprint  = {1403.0602},
  archivePrefix = {arXiv},
  primaryClass = {math.RT}
}

@misc{Bouthier_Vasserot,
  author  = {Bouthier, Alexis and Vasserot, Eric},
  title   = {On the geometric {S}atake equivalence for {K}ac--{M}oody groups},
  year    = {2025},
  eprint  = {2510.11466},
  archivePrefix = {arXiv},
  primaryClass = {math.RT},
  url     = {https://arxiv.org/abs/2510.11466}
}

@article{Gaussent_Rousseau,
  author  = {Gaussent, St{\'e}phane and Rousseau, Guy},
  title   = {Spherical {H}ecke algebras for {K}ac--{M}oody groups over local fields},
  journal = {Annals of Mathematics},
  volume  = {180},
  number  = {3},
  pages   = {1051--1087},
  year    = {2014},
  doi     = {10.4007/annals.2014.180.3.5},
  url     = {https://doi.org/10.4007/annals.2014.180.3.5}
}

@article{BPGR,
  author  = {Bardy-Panse, Nicole and Gaussent, St{\'e}phane and Rousseau, Guy},
  title   = {{Iwahori}--{H}ecke algebras for {K}ac--{M}oody groups over local fields},
  journal = {Pacific Journal of Mathematics},
  volume  = {285},
  number  = {1},
  pages   = {1--61},
  year    = {2016},
  doi     = {10.2140/pjm.2016.285.1},
  url     = {https://doi.org/10.2140/pjm.2016.285.1}
}

@article{Muthiah_2018,
  author  = {Muthiah, Dinakar},
  title   = {On {Iwahori}--{H}ecke algebras for p-adic loop groups: double coset basis and {B}ruhat order},
  journal = {American Journal of Mathematics},
  volume  = {140},
  number  = {1},
  pages   = {221--244},
  year    = {2018},
  doi     = {10.1353/ajm.2018.0004},
  url     = {https://doi.org/10.1353/ajm.2018.0004}
}

@article{Muthiah_Orr,
  author  = {Muthiah, Dinakar and Orr, Daniel},
  title   = {On the double-affine {B}ruhat order: the {$\epsilon=1$} conjecture and classification of covers in {ADE} type},
  journal = {Algebraic Combinatorics},
  volume  = {2},
  number  = {2},
  pages   = {197--216},
  year    = {2019},
  doi     = {10.5802/alco.37},
  url     = {https://doi.org/10.5802/alco.37}
}

@article{Deodhar_1985,
  author  = {Deodhar, Vinay V.},
  title   = {On some geometric aspects of {B}ruhat orderings. {I}. {A} finer decomposition of {B}ruhat cells},
  journal = {Inventiones Mathematicae},
  volume  = {79},
  number  = {3},
  pages   = {499--511},
  year    = {1985},
  doi     = {10.1007/BF01388520},
  url     = {https://doi.org/10.1007/BF01388520}
}

@misc{Muthiah_2019,
  author  = {Muthiah, Dinakar},
  title   = {Double-affine {K}azhdan--{L}usztig polynomials via masures},
  year    = {2019},
  eprint  = {1910.13694},
  archivePrefix = {arXiv},
  primaryClass = {math.RT},
  url     = {https://arxiv.org/abs/1910.13694}
}

@article{Hebert_Philippe,
  author  = {H{\'e}bert, Auguste and Philippe, Paul},
  title   = {On affine {K}azhdan--{L}usztig {R}-polynomials for {K}ac--{M}oody groups},
  journal = {Transformation Groups},
  year    = {2026},
  doi     = {10.1007/s00031-026-09974-y},
  url     = {https://doi.org/10.1007/s00031-026-09974-y},
  note    = {Published online June 13, 2026},
  eprint  = {2410.04872},
  archivePrefix = {arXiv},
  primaryClass = {math.RT}
}

@misc{SchnellMHM,
  author = {Schnell, Christian},
  title = {An overview of Morihiko Saito's theory of mixed {H}odge modules},
  year = {2014},
  eprint = {1405.3096},
  archivePrefix = {arXiv},
  primaryClass = {math.AG},
  url = {https://arxiv.org/abs/1405.3096}
}

\end{document}